\theoremstyle{plain}
\newtheorem{theorem}{Theorem}[section]
\newtheorem{lemma}[theorem]{Lemma}
\newtheorem{corollary}[theorem]{Corollary}
\newtheorem{proposition}[theorem]{Proposition}
\theoremstyle{definition}
\newtheorem{definition}[theorem]{Definition}
\newtheorem{definition-theorem}[theorem]{Definition-Theorem}
\theoremstyle{remark}
\newtheorem{remark}[theorem]{Remark}
\numberwithin{equation}{section}
\newcommand{\tr}{\operatorname{Tr}}
\newcommand{\Id}{\operatorname{Id}}
\newcommand{\End}{\operatorname{End}}
\newcommand{\Ker}{\operatorname{Ker}}
\newcommand{\Ad}{\operatorname{Ad}}
\newcommand{\Aut}{\operatorname{Aut}}
\newcommand{\Ric}{\operatorname{Ric}}
\newcommand{\ddiv}{\operatorname{div}}
\newcommand{\CC}{{\mathbb C}}
\newcommand{\RR}{{\mathbb R}}
\newcommand{\surj}{\to\kern-1.8ex\to}
\newcommand{\cG}{\mathcal{G}}
\newcommand{\cA}{\mathcal{A}}
\newcommand{\cL}{\mathcal{L}}
\newcommand{\cK}{\mathcal{K}}
\newcommand{\cU}{\mathcal{U}}
\newcommand{\cT}{\mathcal{T}}
\newcommand{\cV}{\mathcal{V}}
\newcommand{\cR}{\mathcal{R}}
\newcommand{\cZ}{\mathcal{Z}}
\newcommand{\Diff}{\mathrm{Diff}}
\newcommand{\cC}{\mathcal{C}}
\newcommand{\Conf}{\mathrm{Conf}}
\newcommand{\Met}{\mathrm{Met}}
\newcommand{\s}{\mathrm{S}}
\newcommand{\G}{\mathrm{G}}
\newcommand{\cE}{\mathcal{E}}
\newcommand{\cO}{\mathcal{O}}
\newcommand{\cN}{\mathcal{N}}
\newcommand{\arxiv}[1]{{\tt
		\href{http://www.arXiv.org/abs/#1}{arXiv:#1}}}
\newcommand{\norm}[1]{\left\lVert#1\right\rVert}
\newcommand{\cS}{\mathcal{S}}
\newcommand{\cI}{\mathcal{I}}
\newcommand{\dd}{\mathrm{d}}
\newcommand{\frg}{\mathfrak{g}}
\newcommand{\frc}{\mathfrak{c}}
\newcommand{\Iso}{\mathrm{Iso}}
\newcommand{\frF}{\mathfrak{F}}
\newcommand{\frG}{\mathfrak{G}}
\newcommand{\mG}{\mathbb{G}}
\newcommand{\frad}{{\mathfrak{Ad}}}
\newcommand{\U}{\mathrm{U}}
\setlist[itemize]{leftmargin=*}
\newcolumntype{P}[1]{>{\centering\arraybackslash}p{#1}}
\begin{document}

\title[The local moduli space of the Einstein Yang-Mills system]{The local moduli space of the Einstein-Yang-Mills system}
 
\author[S. Bunk]{Severin Bunk} \address{Mathematical Institute, University of Oxford, United Kingdom}
\email{severin.bunk@maths.ox.ac.uk}

\author[V. Mu\~noz]{Vicente Mu\~noz} \address{Instituto de Matem\'atica Interdisciplinar and Departamento de \'Algebra, Geometr\'{\i}a y Topolog\'{\i}a, Universidad Complutense de Madrid, Spain}
\email{vicente.munoz@ucm.es}

\author[C. S. Shahbazi]{C. S. Shahbazi} \address{Departamento de Matem\'aticas, Universidad UNED - Madrid, Reino de Espa\~na}
\email{cshahbazi@mat.uned.es} 
\address{Fakult\"at f\"ur Mathematik, Universit\"at Hamburg, Bundesrepublik Deutschland.}
\email{carlos.shahbazi@uni-hamburg.de}

\keywords{Einstein metrics, Yang-Mills connections, moduli spaces, slice theorems, Kuranishi theory}

\begin{abstract} 
We study the deformation theory of the Einstein-Yang-Mills system on a principal bundle with a compact structure group over a compact manifold. We first construct, as an application of the general slice theorem of Diez and Rudolph \cite{DiezRudolph}, a smooth slice in the tame Fréchet category for the coupled action of bundle automorphisms on metrics and connections. Using this result, together with a careful analysis of the linearization of the Einstein-Yang-Mills system, we realize the moduli space of Einstein-Yang-Mills pairs modulo automorphism as an analytic set in a finite-dimensional tame Fréchet manifold, extending classical results of Koiso for Einstein metrics and Yang-Mills connections to the Einstein-Yang-Mills system. Furthermore, we introduce the notion of \emph{essential deformation} of an Einstein-Yang-Mills pair, which we characterize in full generality and explore in more detail in the four-dimensional case, proving a decoupling result for trace deformations when the underlying Einstein-Yang-Mills pair is a Ricci-flat metric coupled to an anti-self-dual instanton. In particular, we find a novel obstruction that does not occur in the \emph{decoupled} Einstein or Yang-Mills moduli problems. Finally, we prove that every essential deformation of the four-dimensional Einstein-Yang-Mills system based on a Calabi-Yau metric coupled to an instanton is of restricted type. 
\end{abstract}

\maketitle

\setcounter{tocdepth}{1} 
\tableofcontents


\section{Introduction}
\label{sec:intro}



\subsection*{Introduction and motivation}


The main purpose of this article is to investigate the deformation problem of the fully coupled Einstein-Yang-Mills system, presented in Equation \eqref{eq:YMequations}, in terms of the variational problem associated with the Einstein-Yang-Mills functional \eqref{eq:EYMFunctional}, both at the infinitesimal and local levels. Whereas the theory of Einstein metrics and Yang-Mills connections, especially instantons, treated individually, are by now classical \cite{AtiyahBott,Besse,DonaldsonKronheimer,LubkeTeleman}, their interaction through naturally coupled differential systems is, arguably, still in its infancy. Pioneering results in this direction include \cite{Alvarez-Consul:2011tpl,Alvarez-ConsulGFO,Alvarez-ConsulGFOP,ChanSuen,GFThesis,GFTipler,Huang,LiuTu}, where the authors study various natural first-order coupled deformation problems in complex geometry, as well as the recent explosion of activity in the context of coupled moduli problems motivated by supergravity and string theory \cite{Clarke:2020erl,GFR,GFRST,GFRSTII,Garcia-Fernandez:2015hja,GFRTGauge,Moroianu:2021kit,Moroianu:2023jof}. Given its relevance, the latter problem has also been extensively considered in the physics literature, see \cite{Grana:2014rpa,Grana:2014vxa,GranaTriendl,McOrist:2019mxh,Tomasiello} and references therein for more details. 

The Einstein-Yang-Mills system is a natural second-order non-linear system of partial differential equations for a Riemannian metric $g$ on a compact manifold $M$ and a connection $A$ on a principal bundle $P$ over $M$ with a compact structure group $\G$. In particular, this system provides a natural extension, or alternatively, a natural coupling, between the Einstein equations for Riemannian metrics and the Yang-Mills equations for connections on a principal bundle. Despite arising from arguably the most natural combination of the Einstein and Yang-Mills functionals, the Riemannian Einstein-Yang-Mills system has been only scarcely studied in the literature, with some notable exceptions in the abelian case $\G=S^1$ on a complex manifold \cite{ApostolovMaschler,LeBrun1,LeBrun2,LeBrun3}, on Lie algebras \cite{KocaLejmi}, and very recently on complete hyperbolic manifolds \cite{Lopes}. Closely related to the Einstein-Yang-Mills system on a complex manifold, in references \cite{Alvarez-Consul:2011tpl,Alvarez-ConsulGFO,Alvarez-ConsulGFOP,GFThesis,GFTipler} the authors study Kähler metrics on a fixed complex manifold coupled to Yang-Mills connections via various differential systems that admit natural momentum map interpretations. On the other hand, the Einstein-Yang-Mills system in Lorentzian signature has a long history in the mathematics literature, see for instance \cite{ChenYau,SmollerWassermanYauMcLeod,SmollerWasserman,SmollerWassermanII} as well as their references and citations. In fact, we can think of the solutions of the Einstein-Yang-Mills system considered in this article as \emph{static} solutions with trivial \emph{warp factor} of the Lorentzian Einstein-Yang-Mills system in one higher dimension. Additionally, the Riemannian Einstein-Yang-Mills pairs that we consider in this article occur as particular case of the self-similar solitons of the Ricci-Yang-Mills flow \cite{Streets}.

The local moduli spaces of solutions of both the Einstein and Yang-Mills equations have been studied by Koiso \cite{KoisoI,KoisoII,KoisoIII,KoisoIV,KoisoV}, who greatly contributed to developing the foundations for the deformation problems of these systems. In particular, Koiso introduced the notion of essential deformation (these consist of infinitesimal deformations that cannot be eliminated via the infinitesimal action of the symmetry group) and infinitesimal and local rigidity, with the goal of proving various rigidity results for Einstein metrics on symmetric spaces. In turn, Koiso's approach builds on the seminal ideas of Kuranishi \cite{KuranishiI,KuranishiII}, who first constructed the local \emph{versal} moduli of complex structures on a compact manifold via functional analytic methods on infinite-dimensional manifolds. In this article, we will apply the ideas of Koiso and Kuranishi, further explored and expanded by Dondalson and others \cite{DonaldsonKronheimer}, to the more involved case of the Einstein-Yang-Mills system with the goal of obtaining a \emph{Kuranishi model} for the local moduli of Einstein-Yang-Mills pairs modulo isomorphism. Hence, our working framework is that of \cite{MeerssemanNicolau,DiezRudolphII}, where the general theory of Kuranishi models for moduli spaces of appropriately defined geometric structures on manifolds was developed in full generality. As explained in the main text, the deformation problem of the Einstein-Yang-Mills system, being a truly coupled deformation problem, has several genuine properties that cannot be understood from the separate study of the moduli of Einstein metrics and Yang-Mills connections and therefore deserves a proper study by itself. 


\subsection*{Main results}


The layout of this article is as follows. In Section \ref{sec:PrincipalBundles} we describe the automorphism group of a principal bundle $P$ with a compact base and a compact structure group as a closed tame Fréchet Lie subgroup of the tame Fréchet Lie group of all smooth diffeomorphisms of $P$. This is the natural symmetry group of the Einstein-Yang-Mills system. In Section \ref{sec:EinsteinYM} we introduce the Einstein-Yang-Mills system and describe some of its elementary properties, realizing the configuration space of the system as a closed tame Fréchet submanifold of the tame Fréchet manifold of smooth metrics on $P$. In Section \ref{sec:SliceP}, see Theorem \ref{thm:slice}, we construct a smooth Fréchet slice for the action of $\Aut(P)$ on $\Conf(P)$ as an application of the general slice theorem obtained in \cite{DiezRudolph}%
\footnote{Our naming convention for symmetries and gauge transformations of the principal bundle $P$ differs from that in~\cite{BunkMuellerSzabo, BunkShahbazi, Collier}: what we refer to as \textit{automorphisms} here is called \textit{symmetries} in those articles, and what we call \textit{gauge transformations} here is called \textit{automorphisms} in those articles.
The present conventions may be more commonly used in differential geometry.}.
In doing so, we obtain an  \emph{infinitesimal slice condition}, which does not seem to have been explored in the literature, for the infinitesimal deformations of a given pair $(g,A)\in \Conf(P)$ (Cf. \cite{Lopes}). In Section \ref{sec:Kuranishi} we obtain the deformation complex of an Einstein-Yang-Mills pair $(g,A)$ as a self-dual elliptic resolution of the sheaf of infinitesimal automorphisms of $(g,A)$. Furthermore, following Koiso \cite{KoisoII}, we introduce the notion of \emph{essential deformation} of an Einstein-Yang-Mills pair and we characterize it generally, obtaining a seemingly novel pair of obstructions for a pair $(h,a)\in T_{(g,A)}\Conf(P)$ to be an essential deformation of $(g,A)$. Using the aforementioned characterization together with the smooth slice constructed in Section \ref{sec:SliceP}, in Theorem \ref{thm:localKuranishi} we describe the local moduli space of Einstein-Yang-Mills pairs as an analytic subspace of a finite-dimensional tame Fréchet manifold whose tangent space at $(g,A)$ is given by the vector space of essential deformations of the latter. This is the Kuranishi model for the moduli space of Einstein-Yang-Mills. In Section \ref{sec:instantonK3} we fix $M$ to be four-dimensional and we study the essential deformations of a pair $(g,A)$ consisting of a Ricci-flat metric $g$ and an anti-self-dual instanton $A$, proving in Theorem \ref{thm:deformationsEYMASD} that every trace deformation of $(g,A)$ decouples and is determined as the potential of a certain explicit cohomology class that must vanish. Furthermore, when $g$ is a Calabi-Yau metric and $A$ an anti-self-dual instanton, we show in Theorem \ref{thm:CYcase} that every essential Einstein-Yang-Mills deformation of $(g,A)$ is actually an essential deformation of $(g,A)$ as a Calabi-Yau metric coupled to an anti-self-dual instanton. Finally, in Appendix \ref{app:curvatureformulae} we summarize the Riemannian formulae that we use throughout the article with the goal of establishing our notation and conventions.


\subsection*{Open problems}


The present article aims at establishing the foundations of the deformation problem of the Einstein-Yang-Mills system and, as a consequence, it leaves open many possible lines of research. Most of these potential research lines are motivated by extending the theory of Einstien metrics currently available in the literature to the Einstein-Yang-Mills case. We highlight the following potential open problems:
\begin{itemize}

\item Construct examples of Einstein-Yang-Mills pairs whose Yang-Mills connection is not an instanton.

\item Classify homogenous Riemannian manifolds carrying natural Einstein-Yang-Mills pairs consisting of the normal homogeneous Riemannian metric and canonical homogeneous connection. 

\item Study the vector space of essential deformations of an Einstein-Yang-Mills pair and the rigidity of Einstein-Yang-Mills pairs on explicit examples.

\item Study the second variation of the Einstein-Yang-Mills functional and the stability of Einstein-Yang-Mills pairs, see \cite{KoisoII,Kroncke,KronckeII} and their references and citations for analog results in the Einstein case. 

\item Generalize the results of \cite{NagySemmelmann} in order to investigate the second-order deformations of the Einstein-Yang-Mills system.
\end{itemize}

\noindent
We hope to address some of these problems in the future.


\subsection*{Acknowledgements}


CSS would like to thank Tobias Diez for his interesting comments and suggestions. SB's research was funded by the Deutsche Forschungsgemeinschaft (DFG, German Research Foundation) under the project number 468806966. VM has been partially supported by Ministerio de Ciencia e Innovaci\'on Project PID2020-118452GB-I00 (Spain). The work of CSS is funded by the Germany Excellence Strategy \emph{Quantum Universe} - 390833306 and the 2022 Leonardo Grant for Researchers and Cultural Creators of the BBVA Foundation.


\section{Principal bundles and transitive Lie algebroids}
\label{sec:PrincipalBundles}


In this section, we summarize the background on principal bundles and their groups of automorphisms that we shall need throughout the article. In the following, $M$ will denote a connected, compact, and oriented $n$-dimensional manifold without boundary, and $\G$ will denote a compact Lie group with Lie algebra $\frg$.


\subsection{The Atiyah algebroid of a principal bundle}
\label{sec:Atiyahalgebroid}


Let $\pi\colon P\to M$ be a principal bundle over $M$ with structure group $\G$. We denote the principal-bundle right action $\Psi\colon P\times \G \to P$ of $\G$ on $P$ simply by juxtaposition:
\begin{equation*}
\Psi\colon P\times \G \to P \, , \qquad (p,o)\mapsto p o\, , \qquad \Psi_o (p) := p o\, .
\end{equation*}

\noindent
We denote by $\frad(P) = P \times_{\Ad} \frg$ the adjoint bundle of Lie algebras associated with $P$. Taking the differential of the action $\Psi$ at each given element $g\in \G$ we obtain the infinitesimal action $\partial\Psi$ induced naturally by $\Psi$ on $TP$:
\begin{equation*}
\partial\Psi\colon TP\times \G \to TP \, , \qquad (X,o)\mapsto \dd\Psi_o(X)\, .
\end{equation*}

\noindent
In particular, for every $o\in \G$ the map $\dd\Psi_o\colon TP\to TP$ is a vector bundle isomorphism covering the equivariant diffeomorphism $\Psi_o\colon P \to P$. The tangent bundle $\varrho\colon TP \to P$ admits a trivialization atlas which is equivariant with respect to the action $\partial\Psi$. Therefore the corresponding quotient:
\begin{equation*}
\cA_P := \frac{TP}{\G}\, ,
\end{equation*}

\noindent
admits a unique vector bundle structure $\pi_P\colon \cA_P \to M$ over $M$ such that the canonical quotient projection $\beta_P\colon TP\to \cA_{P}$ is a surjective submersion and a fiber bundle map making the following diagram commutative \cite[Appendix A]{KMackenzie}:
\[ \begin{tikzcd}
TP \arrow{r}{\beta_P} \arrow[swap]{d}{\varrho} & \cA_P \arrow{d}{\pi_P} \\%
P \arrow{r}{\pi}& M
\end{tikzcd}
\]

\noindent
where $\pi_P\colon \cA_P \to M$ is the vector bundle projection of $\cA_P$, given by construction as follows:
\begin{equation*}
\pi_P(\beta_P(X)) = \pi(\varrho(X))\, , \qquad X\in TP\, . 
\end{equation*}

\noindent
Note that defining $\pi_P\colon \cA_P \to M$ by the previous equation is consistent since for every fixed $a\in \cA_P$ the elements in $\beta^{-1}_P(a)$ are vectors over the same orbit in $P$. More explicitly, the fiber $\cA_P\vert_m$ of $\cA_P$ at the point $m\in M$ is given by the vector space of all vectors on $\pi^{-1}(m)$ modulo the action $\partial\Psi$, that is:
\begin{equation*}
\cA_P\vert_m = \frac{TP\vert_{\pi^{-1}(m)}}{\sim} \, , \quad T_{p_1}P\ni X_1 \sim X_2 \in T_{p_2}P \,\,\, \mathrm{iff}\,\,\, \exists\, o\in \G:\, \dd\Psi_{o}(X_1) = X_2\, .
\end{equation*}

\noindent
The fact that $\pi\circ \Psi_o = \pi$ for every $o\in \G$ implies that the differential map $\dd\pi \colon TP \to TM$ factors through the canonical projection $\beta_P \colon TP\to \cA_P$ and therefore defines a vector bundle map $\rho_P \colon \cA_P \to TM$ given by $a \mapsto \dd\pi(X)$ for any representative $X\in \beta^{-1}_P(a)  \subset TP$. This map is surjective, and its kernel defines a vector bundle over $M$ that we denote by:
\begin{equation*}
\frG_P = \mathrm{Ker}(\rho_P) = \frac{\mathrm{Ker}(\dd\pi)}{\G}\, .
\end{equation*}

\noindent
This vector bundle is canonically embedded in $\cA_P$. Altogether we obtain the following short exact sequence of vector bundles:
\begin{equation}
\label{eq:Atiyahsequence}
0\to \frG_P \hookrightarrow \cA_P \xrightarrow{\rho_P} TM \to 0\, .
\end{equation}

\noindent
The geometric significance of $\cA_P$ is made apparent by the following result, which is a direct consequence of the definition of $\beta_P \colon TP \to \cA_P$. 

\begin{proposition}
There exists a canonical isomorphism of $C^{\infty}(M)$-modules:
\begin{equation*}
\theta_P\colon\mathfrak{X}(P)^{\G} \xrightarrow{\simeq} \Gamma(\cA_P)\, , \qquad  X \mapsto \beta_P(X)\, ,
\end{equation*}

\noindent
between the $C^{\infty}(M)$ module $\mathfrak{X}(P)^{\G}$ of $\G$-invariant vector fields on $P$ and the $C^{\infty}(M)$ module of sections of $\cA_P$.
\end{proposition}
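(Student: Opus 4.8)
The plan is to realize $\theta_P$ as the map induced on $\G$-invariant sections by a $\G$-equivariant isomorphism of vector bundles over $P$, which simultaneously yields well-definedness, $C^\infty(M)$-linearity, and bijectivity. Concretely, I would consider the vector bundle map
\begin{equation*}
b \colon TP \to \pi^* \cA_P\, , \qquad v \mapsto \big(\varrho(v), \beta_P(v)\big)\, ,
\end{equation*}
covering $\mathrm{id}_P$, where $\pi^*\cA_P \to P$ is the pullback of $\cA_P$ along $\pi$. The group $\G$ acts on $TP$ through the infinitesimal action $\partial\Psi$ and on $\pi^*\cA_P$ by $(p,a)\mapsto (po,a)$, both covering $\Psi_o$ on $P$; since $\pi\circ\Psi_o = \pi$ the relevant pullback fibers coincide, and the identity $\beta_P(\dd\Psi_o(v)) = \beta_P(v)$ --- which is built into the definition of $\beta_P$ as the quotient of $TP$ by $\partial\Psi$ --- shows that $b$ is $\G$-equivariant.

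The key step is to prove that $b$ is a fiberwise isomorphism, i.e.\ that for each $p \in P$ the linear map $\beta_P\colon T_pP \to \cA_P\vert_{\pi(p)}$ is bijective. Surjectivity is clear because $\G$ acts transitively on the fiber $\pi^{-1}(\pi(p))$, so every equivalence class in $\cA_P\vert_{\pi(p)} = TP\vert_{\pi^{-1}(\pi(p))}/\!\sim$ admits a representative tangent to $P$ at $p$. Since $\dim T_pP = \dim M + \dim\G = \rk \cA_P$ by the Atiyah sequence \eqref{eq:Atiyahsequence}, surjectivity forces injectivity and hence bijectivity. (Equivalently, injectivity follows directly from the freeness of the $\G$-action, as each $\dd\Psi_o$ is a linear isomorphism $T_pP \to T_{po}P$.) Being a fiberwise-bijective vector bundle map covering the identity, $b$ is a vector bundle isomorphism over $P$.

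Passing to sections, $b$ induces a $C^\infty(P)$-linear isomorphism $\mathfrak{X}(P) = \Gamma(TP) \to \Gamma(\pi^*\cA_P)$ intertwining the two $\G$-actions, hence restricts to an isomorphism $\mathfrak{X}(P)^{\G} \xrightarrow{\simeq} \Gamma(\pi^*\cA_P)^{\G}$ on invariant sections. Finally, because $\pi^*\cA_P$ is a pullback, its $\G$-invariant sections are exactly the pullbacks of sections of $\cA_P$, giving the canonical descent isomorphism $\Gamma(\pi^*\cA_P)^{\G} \xrightarrow{\simeq} \Gamma(\cA_P)$, which is linear over $C^\infty(P)^{\G}\cong C^\infty(M)$. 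Composing and unwinding the definitions, the resulting map sends a $\G$-invariant field $X$ to the section $m\mapsto \beta_P(X_p)$ (for any $p\in\pi^{-1}(m)$, the value being independent of the choice by $\G$-invariance), which is precisely $\theta_P$; this establishes at once that $\theta_P$ is well-defined, $C^\infty(M)$-linear, and bijective. The step I expect to require the most care is the descent isomorphism $\Gamma(\pi^*\cA_P)^{\G} \cong \Gamma(\cA_P)$, i.e.\ the \emph{smoothness} on $M$ of the section obtained from a $\G$-invariant smooth section on $P$; I would establish it using local trivializations of $P$, over which the Atiyah sequence \eqref{eq:Atiyahsequence} splits and the descent becomes manifest.
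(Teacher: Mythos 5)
Your proof is correct. The paper offers no written proof of this proposition: it asserts the statement as ``a direct consequence of the definition of $\beta_P$'' and only records, in the subsequent remark, the explicit inverse $\theta_P^{-1}(a)\vert_p = \beta_P^{-1}(a\vert_{\pi(p)})\cap T_pP$. The implicit argument there is the direct pointwise one --- each class in $\cA_P\vert_m$ has a unique representative in $T_pP$ for each $p\in\pi^{-1}(m)$, by freeness and transitivity of the $\G$-action on fibers --- which is exactly the fiberwise bijectivity of $\beta_P\vert_{T_pP}$ on which your argument also hinges. What you do differently is package this observation as a $\G$-equivariant bundle isomorphism $TP\cong\pi^{\ast}\cA_P$ and then invoke descent of invariant sections along $\pi$; this is a more structured route that makes the well-definedness, smoothness, and $C^{\infty}(M)$-linearity of $\theta_P$ fall out simultaneously rather than being checked by hand, at the cost of the extra (standard, and correctly flagged) step identifying $\Gamma(\pi^{\ast}\cA_P)^{\G}$ with $\Gamma(\cA_P)$ via local trivializations. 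All the individual steps --- equivariance of $b$, surjectivity from transitivity, injectivity from freeness or from the rank count $\rk\cA_P=\dim M+\dim\G$ supplied by the Atiyah sequence, and the descent of smoothness --- are sound.
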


\begin{remark}
The inverse of an element $a\in \Gamma(\cA_P)$ by the previous bijection is given, at a point $p\in \pi^{-1}(m)$, $m\in M$, by:
\begin{equation*}
\theta_P^{-1}(a)\vert_p = \beta_P^{-1}(a\vert_{\pi(p)})\in T_p P\, ,
\end{equation*}

\noindent
where the right-hand side denotes the unique element in the preimage of $a\vert_{\pi(p)}$ by $\beta_P$ that belongs to $T_p P$. In the following, we will consider $\mathfrak{X}(P)^{\G} = \Gamma(\cA_P)$ to be identified as established by the previous bijection. 
\end{remark}

\noindent
Using the previous identification between $\mathfrak{X}(P)^{\G}$ and $\Gamma(\cA_P)$ we define a bilinear map
\begin{equation*}
    [\cdot,\cdot]_{\cA_P}\colon \Gamma(\cA_P)\times \Gamma(\cA_P) \to \Gamma(\cA_P)
\end{equation*}
on the sections of $\cA_P$ as the bracket of the corresponding $\G$-invariant vector fields on $P$ (recall that the bracket of two $\G$-invariant vector fields is again $\G$-invariant), that is:
\begin{eqnarray*}
[a_1,a_2]_{\cA_P} = \theta_P([\theta_P^{-1}(a_1),\theta_P^{-1}(a_2)])\, , \qquad a_1, a_2 \in \Gamma(\cA_P)\, ,
\end{eqnarray*}

\noindent
where $[\cdot,\cdot]$ denotes the standard bracket of vector fields on $P$. The bracket $[\cdot,\cdot]_{\cA_P}$ is skew and satisfies both the Jacobi identity and a Leibniz-type identity with respect to $\rho_P\colon \cA_P\to TM$. Indeed, given $f\in C^{\infty}(M)$, we have:
\begin{align*}
[a_1, f a_2]_{\cA_P}
&= \theta_P([\theta_P^{-1}(a_1),\theta_P^{-1}( f a_2)])
= \theta_P([\theta_P^{-1}(a_1), (f\circ \pi) \theta_P^{-1}( a_2)])
\\
&= \theta_P( (f\circ \pi)  [\theta_P^{-1}(a_1),  \theta_P^{-1}( a_2)] + \theta_P^{-1}(a_1)(f\circ \pi)\, \theta_P^{-1}( a_2) )
\\
&= f [a_1,a_2]_{\cA} + \rho_P(a_1)(f) a_2\, .
\end{align*} 

\noindent
Therefore, if $a_1,a_2\in \Gamma(\frG_P)$ then $[a_1, f a_2]_{\cA_P} = f [a_1, a_2]_{\cA_P}$ for every function $f\in C^{\infty}(M)$, which immediately implies that $(\frG_P , [\cdot, \cdot]_{\frG_P})$, where $[\cdot, \cdot]_{\frG_P}$ denotes the restriction of $[\cdot, \cdot]_{\cA_P}$ to $\Gamma(\frG_P)$, is a Lie algebra bundle. Furthermore, for every $a_1 , a_2 \in \Gamma(\cA_P)$ we have:
\begin{eqnarray*}
& \rho_P ([a_1,a_2]_{\cA_{P}}) = \rho_{P}(\theta_P([\theta_P^{-1}(a_1),\theta_P^{-1}(a_2)])) = \dd \pi ([\theta_P^{-1}(a_1),\theta_P^{-1}(a_2)]) = [\dd \pi\circ\theta_P^{-1}(a_1),\dd \pi\circ\theta_P^{-1}(a_2)]\\
& = [\rho_{P}(a_1),\rho_{P}(a_2)]
\end{eqnarray*}

\noindent
and thus $\rho_P\colon \cA_P \to TM$ defines a morphism of Lie algebras from $(\Gamma(\cA_P), [\cdot,\cdot]_{\cA_P})$ to $(\mathfrak{X}(M),[\cdot ,\cdot])$.

\begin{definition}
Considering $\frG_P$ to be equipped with $[\cdot,\cdot]_{\frG_P}$ and $\cA_P$ to be equipped with $[\cdot,\cdot]_{\cA_P}$ the short exact sequence of Lie algebroids \eqref{eq:Atiyahsequence} is the Atiyah sequence of $P$. 
\end{definition}

\noindent
For every $p\in P$, define $\Psi_p\colon \G \to P$ by $\Psi_p(o)= p o$. The differential of this map at the identity $e\in \G$ defines an injective linear map of vector spaces $\dd_e \Psi_p\colon \frg\to T_p P$ which can be used to define the infinitesimal action of $\G$ on $P$, namely:
\begin{equation*}
D\Psi \colon P\times \frg \to TP\, , \qquad (p,x)\mapsto \dd_e\Psi_p(x)\, . 
\end{equation*}

\noindent
In particular, the fundamental vector field $v_x\in\mathfrak{X}(P)$ associated to $x\in\frg$ is by definition:
\begin{equation*}
v_x\vert_p = \dd_e\Psi_p(x)\, , \qquad p\in P\, .
\end{equation*}

\noindent
Note that $D\Psi \colon P\times \frg \to TP$ defines by corestriction an isomorphism of vector bundles:
\begin{equation*}
\dd\Psi_{\pi} \colon M\times \frg \to \cV
\end{equation*}

\noindent
that trivializes the vertical bundle $\cV := \Ker(\dd\pi)$. The infinitesimal action $D\Psi \colon P\times \frg \to TP$ can be shown to be equivariant with respect to $\partial\Psi\colon TP\times \G \to TP$ on $TP$ and the right action: 
\begin{equation*}
P\times \frg \times \G \to P\times \frg\, , \qquad (p,x,o)\mapsto (p o,\Ad_o^{-1}(x))\, ,
\end{equation*}

\noindent
on $P\times \frg$, where $\G\ni o\mapsto \Ad_o\in \Aut(\frg)$ denotes the adjoint representation of $\G$. Therefore, $D\Psi \colon P\times \frg \to TP$ descends to a bundle map $\dd\Psi_{\G} $ between the corresponding vector bundle quotients by $\G$:
\begin{equation*}
\dd\Psi_{\G} \colon \mathfrak{Ad}(P) \to \cA_P\, , \qquad [p,x] \mapsto \dd_e\Psi_p(x)\, ,
\end{equation*}

\noindent
for every $p\in P$ and $x\in \frg$. This map is clearly injective and its image is precisely $\frG_P \subset \cA_P$. The following identification is well-known.

\begin{proposition}
\label{prop:isoLiealgebrabundles}
The bundle map $\dd\Psi_{\G} \colon \mathfrak{Ad}(P) \to \frG_P$ defines an isomorphism of Lie algebra bundles between the adjoint bundle of algebras $\mathfrak{Ad}(P)$ and $(\frG_P , [\cdot , \cdot]_{\frG_P})$.
\end{proposition}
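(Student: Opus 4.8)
The plan is to reduce the statement to a computation with sections and then to the pointwise bracket on the typical fibre $\frg$. Since $\dd\Psi_{\G}\colon \mathfrak{Ad}(P)\to\frG_P$ has already been shown to be injective with image exactly $\frG_P$, it is a fibrewise linear isomorphism onto $\frG_P$, and the only remaining point is that it intertwines the two Lie brackets. Both brackets are fibrewise: the bracket on $\mathfrak{Ad}(P)=P\times_{\Ad}\frg$ is by construction the fibrewise bracket inherited from $\frg$ (the $\Ad$-cocycle acting by Lie algebra automorphisms), while $[\cdot,\cdot]_{\frG_P}$ is $C^{\infty}(M)$-bilinear---this is exactly the identity $[a_1,fa_2]_{\cA_P}=f[a_1,a_2]_{\cA_P}$ established above for $a_1,a_2\in\Gamma(\frG_P)$, which holds because $\rho_P$ vanishes on $\frG_P$. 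Hence it suffices to check bracket-compatibility at the level of sections, or even in a local trivialisation.

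First I would describe $\dd\Psi_{\G}$ on sections. Under the identification $\Gamma(\cA_P)=\mathfrak{X}(P)^{\G}$, a section of $\mathfrak{Ad}(P)$ is the same datum as an $\Ad$-equivariant map $\xi\colon P\to\frg$, i.e. $\xi(po)=\Ad_{o^{-1}}\xi(p)$, and the corresponding $\G$-invariant vertical vector field is
\[
X_\xi\vert_p \defeq \dd_e\Psi_p(\xi(p)) = v_{\xi(p)}\vert_p\, .
\]
Its $\G$-invariance follows from the equivariance formula $\dd\Psi_o(v_x\vert_p)=v_{\Ad_{o^{-1}}x}\vert_{po}$ (a direct consequence of $p\exp(tx)o=(po)\exp(t\Ad_{o^{-1}}x)$) together with the equivariance of $\xi$: indeed $\dd\Psi_o(X_\xi\vert_p)=v_{\Ad_{o^{-1}}\xi(p)}\vert_{po}=v_{\xi(po)}\vert_{po}=X_\xi\vert_{po}$. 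Thus $\theta_P\circ\dd\Psi_{\G}$ is the assignment $\xi\mapsto X_\xi$, manifestly $C^{\infty}(M)$-linear and fibrewise the linear isomorphism already recorded.

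The heart of the proof is the bracket computation. Fixing a basis $\{e_a\}$ of $\frg$ with $[e_a,e_b]=c_{ab}^{\,c}e_c$ and writing $\xi=\xi^a e_a$, $\eta=\eta^b e_b$, the Leibniz rule for vector fields gives (with summation over repeated indices understood)
\[
[X_\xi,X_\eta] = \xi^a\eta^b\,[v_a,v_b] + X_\xi(\eta^b)\,v_b - X_\eta(\xi^a)\,v_a\, .
\]
Here $[v_a,v_b]=v_{[e_a,e_b]}$ because the fundamental-vector-field map of the right action $\Psi$ is a Lie algebra homomorphism, so the first term is $v_{[\xi,\eta]_\frg}$ fibrewise. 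For the remaining terms I would exploit that $X_\xi\vert_p$ is vertical and differentiate the equivariant function $\eta$ along it: from $\eta(p\exp(s\xi(p)))=\Ad_{\exp(-s\xi(p))}\eta(p)$ one gets $X_\xi(\eta)\vert_p=-[\xi(p),\eta(p)]_\frg$, and symmetrically for $X_\eta(\xi)$. Collecting terms identifies $[X_\xi,X_\eta]$ with $\pm X_{[\xi,\eta]_\frg}$, i.e. with the fibrewise $\frg$-bracket transported by $\dd\Psi_{\G}$; the sign is fixed once and for all by the convention orienting the bracket on $\mathfrak{Ad}(P)$ (equivalently, by whether one reads the fibre bracket through the fundamental fields of the right action or of the associated left action). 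Together with the fibrewise linear isomorphism, this exhibits $\dd\Psi_{\G}$ as an isomorphism of Lie algebra bundles.

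I expect the sign bookkeeping in the last paragraph to be the only delicate point: the equivariance of $\xi,\eta$ makes the derivative terms $X_\xi(\eta)$ and $X_\eta(\xi)$ nonzero and of the same order as the structural term $v_{[\xi,\eta]_\frg}$, so they cannot be neglected and must be combined carefully. A clean alternative that sidesteps the global computation is to trivialise $P$ over an open $U\subset M$: since $[\cdot,\cdot]_{\frG_P}$ is $C^{\infty}(M)$-bilinear it is a genuine fibrewise operation, computable in any trivialisation, and there $X_\xi$ becomes an $m$-dependent right-invariant vector field on $\G$ with no base-direction component, so that its bracket is taken purely along the fibre and reduces immediately to the bracket of $\frg$.
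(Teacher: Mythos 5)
The paper does not actually prove this proposition -- it is stated as ``well-known'' immediately after the observation that $\dd\Psi_{\G}$ is injective with image $\frG_P$ -- so there is nothing to compare your argument against; I can only assess it on its own terms. Your argument is the standard one and is essentially correct: reduce to sections via $\theta_P$, identify sections of $\mathfrak{Ad}(P)$ with $\Ad$-equivariant maps $\xi\colon P\to\frg$ and the image of $\dd\Psi_{\G}$ on such a section with the invariant vertical field $X_\xi = \xi^a v_a$, then expand $[X_\xi,X_\eta]$ by the Leibniz rule using the two correct key identities $[v_a,v_b]=v_{[e_a,e_b]}$ (the fundamental-vector-field map of a \emph{right} action is a homomorphism) and $X_\xi(\eta)=-[\xi,\eta]_{\frg}$ (differentiating the equivariance of $\eta$). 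The appeal to $C^{\infty}(M)$-bilinearity of $[\cdot,\cdot]_{\frG_P}$, already established in the text, to justify working fibrewise or in a trivialisation is also fine.

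The one point you should not leave open is the sign. Your three collected terms are $v_{[\xi,\eta]}$, $-v_{[\xi,\eta]}$ and $-v_{[\xi,\eta]}$, so the computation determines $[X_\xi,X_\eta]=-X_{[\xi,\eta]_{\frg}}$ unambiguously; writing ``$\pm X_{[\xi,\eta]_{\frg}}$'' suggests an indeterminacy in the calculation where there is none. The genuine ambiguity sits entirely in which bracket one declares on $\mathfrak{Ad}(P)=P\times_{\Ad}\frg$ (equivalently, whether $[\cdot,\cdot]_{\frg}$ is read off left- or right-invariant fields); with the naive fibrewise bracket $[[p,x],[p,y]]=[p,[x,y]_{\frg}]$ and the left-invariant convention, $\dd\Psi_{\G}$ is an \emph{anti}-isomorphism, consistent with your own observation that in a trivialisation $X_\xi$ restricts to right-invariant fields on the fibres. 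Since the paper never pins down the bracket on $\mathfrak{Ad}(P)$, this is a matter of convention rather than an error, but a complete proof should state which convention makes the map a Lie algebra bundle isomorphism on the nose.
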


\noindent
Connections on $P$ admit a convenient geometric reinterpretation in terms of its associated Atiyah sequence, that is, there exists a one-to-one correspondence between connections on $P$ and vector-bundle splittings of the Atiyah sequence of $P$ as a sequence of vector bundles (splittings which respect the brackets correspond to flat connections). Every splitting $\kappa \colon TM \to \cA_P$ of $\cA_P$, or equivalently, every connection on $P$, defines an isomorphism of vector bundles:
\begin{equation*}
\psi_{\kappa} \colon TM\oplus \frG_P \to \cA_P\, , \qquad v + \tau \mapsto \kappa(v) + \tau\, ,
\end{equation*}

\noindent
where $\frG_P$ is canonically embedded in $\cA_P$ via the Atiyah sequence of $P$. Using this vector bundle isomorphism we can transport the bundle map $\rho_{P}\colon \cA_P \to TM$ and the bracket $[\cdot,\cdot]_{\cA_P}$ on $\cA_P$ to $TM \oplus \frG_P$. In order to do this we define:
\begin{equation*}
\rho_{\kappa}:= \rho_{P}\circ \psi_{\kappa}\colon TM \oplus \frG_P\to TM\, ,\qquad  [v_1 + \tau_1,v_2+\tau_2]_{\kappa} = \psi_{\kappa}^{-1}([\psi_{\kappa}(v_1 + \tau_1),\psi_{\kappa}(v_2 + \tau_2)]_{\cA_P})\, ,
\end{equation*}

\noindent
for every $v_1 + \tau_1, v_2 + \tau_2\in TM\oplus \frG_P$. A direct computation shows that:
\begin{equation}
\label{eq:standardAityah}
\rho_{\kappa}(v + \tau) = v\, , \quad [v_1 + \tau_1,v_2+\tau_2]_{\kappa} = [v_1,v_2] + \nabla^{\kappa}_{v_1}\tau_2 - \nabla^{\kappa}_{v_2}\tau_1 + \cR^{\kappa}(v_1,v_2) + [\tau_1,\tau_2]_{\frG_P}\, ,
\end{equation}

\noindent
where $[\cdot,\cdot]$ denotes the standard bracket of vector fields on $M$, $\nabla^{\kappa}$ is the connection induced on $\frG_P$ by $\kappa\colon TM \to \cA_P$ and $\cR^{\kappa}$ is its curvature.

\begin{remark}
The splitting $\kappa\colon TM \to \cA_P$ is equivalent, as explained above, to a connection $A$ on $P$, which in turn induces a connection $\nabla^{\kappa}$ on the adjoint bundle $\mathfrak{Ad}(P)$. By Proposition \ref{prop:isoLiealgebrabundles} this connection transfers to $\frG_P$ and in addition preserves the Lie algebra bundle structure of $(\frG_P, [\cdot,\cdot]_{\frG_P})$. We have the formula:
\begin{equation*}
\mathfrak{R}^{\kappa}(v_1,v_2) \tau = \nabla^{\kappa}_{v_1}\nabla^{\kappa}_{v_2}\tau - \nabla^{\kappa}_{v_2}\nabla^{\kappa}_{v_1}\tau  -\nabla^{\kappa}_{[v_1,v_2]}\tau =  [\cR^{A}(v_1,v_2),\tau]_{\frG_P}\, ,
\end{equation*}

\noindent
where $\cR^{A}\in \Omega^2(\mathfrak{Ad}(P))$ denotes the curvature of the connection $A$ on $P$.
\end{remark}

\noindent
In the following we will always consider the adjoint bundle $\mathfrak{Ad}(P)$ to be identified with $(\frG_P,[\cdot,\cdot]_{\frG_P})$.


\subsection{The automorphism group of a principal bundle}


Let $\pi\colon P\to M$ be a principal bundle over $M$ with structure group $\G$. Note that since both $M$ and the fibers of $\pi\colon P\to M$ are compact without boundary, the total space $P$ is again a compact manifold without boundary. We denote by $\Diff(P)$ the group of diffeomorphisms of $P$, which we consider as a Fréchet Lie group \cite{Hamilton,Leslie}.

\begin{remark}
The diffeomorphism group of a compact manifold carries a richer structure: it is in fact a \emph{tame} Fréchet Lie group \cite{Hamilton,Subramanian} as well as a strong Inverse Limit Hilbert Lie group \cite{OmoriBook}. In particular, it can be realized as a projective limit of Banach manifolds endowed with the structure of a topological group. We will use this extra structure at our convenience in the sequel. 
\end{remark}

\noindent
We refer to \cite{FrechetProjective,Hamilton} for the basic definitions regarding the theory of Fréchet manifolds and groups and their realizations as projective limits of Hilbert manifolds. We define the automorphism group $\Aut(P)$ of $P$ as the group of equivariant diffeomorphisms of $P$, that is:
\begin{equation*}
\Aut(P) := \left\{ u\in \Diff(P) \,\,\, \vert \,\,\, u(po) = u(p)o \,\,\, \forall\, o\in \G  \right\}\, .
\end{equation*}

\noindent
Every equivariant diffeomorphism $u\in \Aut(P)$ covers a unique diffeomorphism $f_u\colon M\to M$ of $M$ fitting into the following commutative diagram:
\begin{equation}
\begin{tikzcd}[column sep=1.75cm, row sep=1cm]
P \ar[r, "u"] \ar[d]
& P \ar[d]
\\
M \ar[r, "f_u"]
& M
\end{tikzcd}
\end{equation}

\noindent
We introduce the gauge group $\cG(P)\subset \Aut(P)$ of $P$ as those equivariant diffeomorphisms of $P$ that cover the identity diffeomorphism of $M$:
\begin{equation*}
\cG(P) := \left\{ u\in \Aut(P) \,\,\, \vert \,\,\, \pi\circ u = \pi \right\}\, .  
\end{equation*}

\noindent
This is the group usually considered as the symmetry group of the differential systems studied in mathematical gauge theory. We will refer to elements in $\cG(P)$ simply as \emph{gauge transformations}. 

\begin{proposition}
\label{prop:AutPFrechet}
The automorphism group $\Aut(P)\subset \Diff(P)$ is a closed tame Fréchet Lie subgroup of $\Diff(P)$, locally modeled on the tame Fréchet Lie algebra $\mathfrak{X}(P)^{\G}$ with the standard bracket of vector fields or, equivalently, $\Gamma(\cA_{P})$ with Lie bracket $[\cdot,\cdot]_{\cA_{P}}$. Furthermore, $\cG(P)\subset \Aut(P)$ is a closed tame Fréchet Lie subgroup of $\Aut(P)$, locally modeled on the Fréchet Lie algebra $\Gamma(\frG_P)$ with Lie bracket $[\cdot,\cdot]_{\frG_P}$.
\end{proposition}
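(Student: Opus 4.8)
The plan is to derive the Lie-subgroup structure of both $\Aut(P)$ and $\cG(P)$ from a single \emph{equivariant} exponential chart on $\Diff(P)$, reducing each subgroup to a tame-complemented linear model inside $\mathfrak{X}(P)$. First I would fix a Riemannian metric $g_P$ on $P$ that is both $\G$-invariant and has totally geodesic fibres; such a metric exists because $\G$ is compact. Choose a connection on $P$, an $\Ad$-invariant inner product on $\frg$ (average any inner product over $\G$), and a metric on $M$, and assemble them into the associated Kaluza--Klein metric, whose fibres are totally geodesic precisely because the fibre metric is bi-invariant. Since $\Psi_o$ is then an isometry of $g_P$ for every $o\in\G$, the geodesic exponential of $g_P$ is $\G$-equivariant,
\begin{equation*}
\Psi_o\big(\exp_p(v)\big) = \exp_{po}\big(\dd\Psi_o(v)\big)\, , \qquad p\in P\, ,\ v\in T_pP\, ,\ o\in\G\, .
\end{equation*}
The map $\Phi(X)(p) := \exp_p(X_p)$ is the standard tame chart realizing $\Diff(P)$ as a tame Fréchet Lie group near $\Id_P$ (see \cite{Hamilton}), with model space $\mathfrak{X}(P)$.

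For the $\Aut(P)$ statement I would compute the chart-expression of the conjugation action $u\mapsto \Psi_o\circ u\circ \Psi_{o^{-1}}$: the equivariance of $\exp$ gives $\Psi_o\circ\Phi(X)\circ\Psi_{o^{-1}} = \Phi\big((\Psi_o)_*X\big)$, where $(\Psi_o)_*$ is the push-forward on $\mathfrak{X}(P)$. Hence $\Phi(X)\in\Aut(P)$ if and only if $X$ is $\G$-invariant, so in this chart $\Aut(P)$ is exactly the closed linear subspace $\mathfrak{X}(P)^{\G}$. To upgrade ``closed subspace in a chart'' to ``tame Fréchet submanifold'', I would exhibit $\mathfrak{X}(P)^{\G}$ as a \emph{tame direct summand} using the averaging projection
\begin{equation*}
\Pi(X) = \int_{\G} (\Psi_o)_* X \, \dd o\, ,
\end{equation*}
with $\dd o$ the normalized Haar measure on $\G$, a tame idempotent linear operator with image $\mathfrak{X}(P)^{\G}$, yielding the tame splitting $\mathfrak{X}(P) = \mathfrak{X}(P)^{\G}\oplus \Ker\Pi$. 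Closedness of $\Aut(P)$ in $\Diff(P)$ is immediate, being the common fixed-point set of the continuous conjugations by $\{\Psi_o\}_{o\in\G}$. Together with the identification $\theta_P\colon\mathfrak{X}(P)^{\G}\xrightarrow{\simeq}\Gamma(\cA_P)$ and the fact that the bracket on $\Aut(P)$ is the restriction of the vector-field bracket, this gives the first claim.

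For $\cG(P)$ I would restrict the same chart to $X\in\mathfrak{X}(P)^{\G}$ and impose $\pi\circ\Phi(X)=\pi$. Because the fibres of $g_P$ are totally geodesic, for small $X$ one has $\exp_p(X_p)$ in the fibre through $p$ if and only if $X_p$ is vertical; hence $\pi\circ\Phi(X)=\pi$ holds exactly when $X\in\Gamma(\cV)^{\G}$, the space of $\G$-invariant vertical vector fields, which under $\theta_P$ is precisely $\Gamma(\frG_P)=\Gamma(\Ker\rho_P)$. The splitting $\psi_{\kappa}\colon TM\oplus\frG_P\xrightarrow{\simeq}\cA_P$ induced by the chosen connection exhibits $\Gamma(\frG_P)$ as a tame direct summand of $\Gamma(\cA_P)$, so $\cG(P)$ is again a tame Fréchet submanifold and subgroup, with Lie algebra $\Gamma(\frG_P)$ carrying $[\cdot,\cdot]_{\frG_P}$; closedness in $\Aut(P)$ follows since $\cG(P)=\{u \mid \pi\circ u=\pi\}$ is a closed condition.

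The hard part, and the place where care is genuinely needed, is tameness rather than the algebra: one must check that the exponential chart $\Phi$, the averaging projection $\Pi$, and the connection splitting $\psi_{\kappa}$ are all \emph{tame} in the sense of Hamilton — which they are, being built from fixed smooth geometric data and integration over the compact group $\G$ — so that the local linear models are honest tame direct summands and the submanifold charts satisfy the estimates required in the tame Fréchet category. A secondary subtlety is the existence and use of the totally geodesic metric in the $\cG(P)$ level-set argument; alternatively one could avoid it by showing that $q\colon\Aut(P)\to\Diff(M)$, $u\mapsto f_u$, is a tame Lie group homomorphism admitting a tame local section built from horizontal lift via $\kappa$, and identifying $\cG(P)=\Ker q$, but the totally geodesic chart keeps both statements within a single computation.
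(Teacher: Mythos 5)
Your proposal is correct and shares the paper's central mechanism — an exponential chart built from a $\G$-invariant metric on $P$, whose equivariance identifies $\Aut(P)$ in the chart with the closed subspace $\mathfrak{X}(P)^{\G}$ — but it differs in how the two technical ingredients are supplied. For the tame complement, the paper passes to the Sobolev completions $\mathfrak{X}_s(P)$, takes the Hilbert orthogonal complement of $\mathfrak{X}_s(P)^{\G}$ at each level, and invokes the open mapping theorem on the projective limit; you instead use the Haar averaging projector $\Pi(X)=\int_{\G}(\Psi_o)_*X\,\dd o$, a zeroth-order tame idempotent with image $\mathfrak{X}(P)^{\G}$, which produces a single complement $\Ker\Pi$ valid at every Sobolev level at once (and, since the invariant metric makes each $(\Psi_o)_*$ an $H_s$-isometry, $\Pi$ is in fact the orthogonal projection the paper constructs level by level). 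Your closedness argument via fixed-point sets of the continuous conjugations $u\mapsto\Psi_{o^{-1}}\circ u\circ\Psi_o$ replaces the paper's sequential pointwise-convergence argument and is cleaner. For $\cG(P)$ the paper only sketches the reduction ("similarly, using the splitting $\mathfrak{X}(P)^{\G}=\mathfrak{X}(M)\oplus\Gamma(\frG_P)$"), whereas you supply an actual chart-level identification: the totally geodesic fibres of the Kaluza--Klein metric give $\pi\circ\Phi(X)=\pi$ if and only if $X\in\Gamma(\cV)^{\G}\cong\Gamma(\frG_P)$, modulo the small uniformity check (injectivity radius and local agreement of intrinsic and extrinsic fibre distances) that you correctly flag; your fallback via $\cG(P)=\Ker\big(q\colon\Aut(P)\to\Diff(M)\big)$ with a horizontal-lift section is also sound. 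What your route buys is a complement and a submanifold chart that are manifestly tame without appeal to projective-limit bookkeeping; what the paper's route buys is uniformity with the Inverse Limit Hilbert framework it reuses later (e.g.\ in the slice and Kuranishi arguments).
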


\begin{proof}
Let $\bar{g}$ be a $\G$-invariant metric on $P$ and consider $\mathfrak{X}(P)$ as a tame Fréchet space with respect to the family of Sobolev norms $\left\{\norm{-}_s\right\}_{s>n+4}$ constructed out of $\bar{g}$. Let $(X_k)\subset \mathfrak{X}(P)^{\G}$ be a sequence of invariant vector fields on $P$ converging to an element $X\in \mathfrak{X}(P)$. By definition, we have:
\begin{equation*}
\lim_{k\to \infty} \norm{ X_k - X}_s = 0 \qquad \forall\,\, s>n+4\,.
\end{equation*}

\noindent
For every $o\in \G$ we have:
\begin{equation*}
0=\lim_{k\to \infty} \norm{ X_k- X}_s = \lim_{n\to \infty} \norm{ \dd\Psi_o (X_k) - \dd\Psi_o\circ \dd\Psi^{-1}_o (X)}_s = \lim_{k\to \infty} \norm{X_k  -  \dd\Psi^{-1}_o (X)}_s \qquad \forall\,\, s>n+4\,,
\end{equation*}

\noindent
which implies:
\begin{equation*}
\lim_{k\to \infty} X_k = \dd\Psi^{-1}_o (X)\,,
\end{equation*}

\noindent
for every $o\in\G$ and $s>n+4$. By uniqueness of limits we conclude $\dd\Psi_o (X) = X$ whence $X\in\mathfrak{X}(P)^{\G}$. Therefore, $\mathfrak{X}(P)^{\G}$ is closed in $\mathfrak{X}(P)$. Consequently, we obtain a short exact sequence of tame Fréchet spaces:
\begin{equation*}
0 \to \mathfrak{X}(P)^{\G} \to \mathfrak{X}(P) \to \mathfrak{X}(P)/\mathfrak{X}(P)^{\G}  \to 0\,.
\end{equation*}

\noindent
Short exact sequences do not necessarily split in the Fréchet category. Equivalently, closed subspaces of a Fréchet space do not necessarily admit a topological complement. Our case is, however, special since the Fréchet space $\mathfrak{X}(P)$ is the projective limit of the Sobolev chain $\left\{ \mathfrak{X}_s(P)\, , \, s > n+4 \right\}$, where $\mathfrak{X}_s(P)$ is the completion of $\mathfrak{X}(P)$ in the Sobolev norm $H_s = L^2_s$. In particular:
\begin{equation*}
\mathfrak{X}(P) = \bigcap_{s>n+4} \mathfrak{X}_s(P)
\end{equation*}

\noindent
with the relative topology of the intersection as a subspace of each factor $\mathfrak{X}_s(P)$. Similarly, we denote by $\mathfrak{X}_s(P)^{\G}$ the completion of $\mathfrak{X}(P)^{\G}$ in the Sobolev norm $H_s$. Let $\mathfrak{X}_s(P)^{\G}_{\perp} \subset \mathfrak{X}_s(P)$ be the orthogonal complement of $\mathfrak{X}_s(P)^{\G}$ in $\mathfrak{X}_s(P)$ with respect to its Hilbert inner product. Then:
\begin{equation*}
f_s \colon \mathfrak{X}_s(P)^{\G}  \oplus \mathfrak{X}_s(P)^{\G}_{\perp} \to \mathfrak{X}_s(P) \, , \qquad (X_1,X_2) \mapsto X_1 + X_2
\end{equation*}

\noindent
is an isomorphism of Hilbert spaces. The projective limit of $\left\{ f_s \right\}_{s> n+4}$ preserves continuity and bijectivity and therefore by the open mapping theorem for Fréchet spaces it defines a homeomorphism of tame Fréchet spaces:
\begin{equation*}
 f \colon \mathfrak{X}(P)^{\G}  \oplus \mathfrak{X}(P)^{\G}_{\perp} \to \mathfrak{X}(P)\, , \qquad \mathfrak{X}(P)^{\G}  = \bigcap_{s>n+4} \mathfrak{X}_s(P)^{\G}\,.
\end{equation*}

\noindent
Hence $\mathfrak{X}(P)^{\G}$ admits a topological complement in $\mathfrak{X}(P)$. Let $\cU \subset \mathfrak{X}(P)$ and $\phi \colon \cU \to \Diff(P)$ be a Fréchet chart of $\Diff(P)$ around the identity element. This chart is equivariant since $\bar{g}$ is $\G$-invariant. Shrinking $\cU$ if necessary, we can write $\cU = \cU_o \times \cV_o$, where $\cU_o \subset \mathfrak{X}(P)^{\G}$ and $\cV_o \subset \mathfrak{X}(P)^{\G}_{\perp}$ are neighborhoods of $0$ in $\mathfrak{X}(P)^{\G}$ and $\mathfrak{X}(P)^{\G}_{\perp}$, respectively. Equivariance of $\phi\colon \cU \to \Diff(P)$ amounts to the identities:
\begin{equation}
\label{eq:EquivarianceExp}
\mathrm{E}^{\bar{g}}_{po}(\dd_p\Psi_o(X_p))  = \Psi_o (\mathrm{E}^{\bar{g}}_{p}(X_p)) \, , \qquad \forall \,\, p\in P \quad \forall\,\, o\in \G \quad \forall\,\, X\in \mathfrak{X}(P) \,,
\end{equation}

\noindent
where $\mathrm{E}^{\bar{g}}\colon TP \to P$ denotes the exponential map associated to $\bar{g}$. The previous equation in turn implies that:
\begin{equation*}
\phi(\cU_o\times \left\{ 0\right\}) \subseteq \phi(\cU) \cap \Aut(P)\,.
\end{equation*}

\noindent
Conversely, if $X\in \cU$ such that $\phi(X)\in \phi(\cU) \cap \Aut(P)$ then we have:
\begin{equation*}
\Psi_o (\mathrm{E}^{\bar{g}}_{p}(X_p)) = \mathrm{E}^{\bar{g}}_{po}(X_{po})\,,
\end{equation*}

\noindent
which, using Equation \eqref{eq:EquivarianceExp}, gives $\dd_p\Psi_o(X_p) = X_{po}$ for every $p\in P$ and every $o\in\G$. Hence $X\in\mathfrak{X}(P)^{\G}$ and $\phi(\cU_o\times \left\{ 0\right\}) = \phi(\cU) \cap \Aut(P)$ whence $\Aut(P)$ is a tame Fréchet submanifold of $\Diff(P)$.

\noindent
It remains to show the closedness. Let now $\left\{u_k\right\}$ be a sequence in $\Aut(P)$ converging to an element $u\in \Diff(P)$. Since the convergence is uniform it implies \emph{pointwise} convergence. Therefore:
\begin{eqnarray*}
\lim_{k\to \infty} u_k(p) = u(p)
\end{eqnarray*}

\noindent
for every $p\in P$. Hence:
\begin{equation*}
u(po) = \lim_{k\to \infty} u_k(p o) = \lim_{k\to \infty} (u_k(p)o) = (\lim_{k\to \infty} u_k(p)) o= u(p) o
\end{equation*}

\noindent
for every $p\in P$ and every $o\in \G$. Therefore, $\Aut(P)$ is closed in $\Diff(P)$. The fact that $\cG(P)\subset \Aut(P)$ is a closed tame Fréchet Lie subgroup of $\Aut(P)$ can be proven similarly after noticing that $\Gamma(\frG_P)$ is closed inside $\Gamma(\cA_P)$ by using the splitting $\mathfrak{X}(P)^{\G} = \mathfrak{X}(M)\oplus \Gamma(\frG_P)$ with bracket \eqref{eq:standardAityah}. In fact, $\cG(P)$ is not only a projective limit of Hilbert manifolds but of Hilbert groups, since the Lie bracket \eqref{eq:standardAityah} restricted to $\Gamma(\frG_P)$ pointwise identifies with the bracket in $\frg$ and involves no \emph{loss of derivatives}. 
\end{proof}

\noindent
Note that, in particular, the connected components of the identity $\Aut^o(P)\subset \Aut(P)$ and $\Diff^o(M)\subset \Diff(M)$ are also tame Fréchet Lie groups. We have the following short exact sequence of tame Fréchet Lie groups: 
\begin{equation*}
1 \to \cG(P) \to \Aut(P) \to \Diff^{\prime}(M)\to 1 \,.
\end{equation*} 
 
\noindent
where $\Diff^{\prime}(M)$ denotes the group of diffeomorphisms that can be covered by elements in $\Aut(P)$ and that includes all those isotopic to the identity. Taking the differential of this sequence at the identity we obtain a corresponding short exact sequence of Fréchet Lie algebras:
\begin{equation}
\label{eq:shortexactLie}
0 \to \Gamma(\frG_P) \to \Gamma(\cA_P) = \mathfrak{X}(P)^{\G} \to \mathfrak{X}(M)\to 0 
\end{equation}

\noindent
fitting in the following commutative diagram with vertical arrows given by the exponential map in the Fréchet Lie group category:
\begin{equation}
\begin{tikzcd}
0 \ar[r] & \Gamma(\frG_P) \ar[d] \ar[r] & \Gamma(\cA_P) \ar[d] \ar[r] & \mathfrak{X}(M) \ar[d] \ar[r] & 0\\ 
1 \ar[r] & \cG(P) \ar[r] & \Aut(P) \ar[r] & \Diff^{\prime}(M) \ar[r] & 1
\end{tikzcd}
\end{equation}

\noindent
Recall that the exponential map $\Gamma(\frG_P) \to \cG(P)$ is fiberwise induced by the standard exponential map of $\G$. On the other hand, the short exact sequence \eqref{eq:shortexactLie} can be obtained by applying the global section functor to the Atiyah sequence of $P$, which therefore can be interpreted as encoding the infinitesimal symmetries of $P$. A choice of smooth connection $A$ on $P$ determines uniquely a smooth splitting of the sequence \eqref{eq:shortexactLie}:
\begin{equation*}
s_A\colon \mathfrak{X}(M) \to \mathfrak{X}(P)^{\G}\, ,
\end{equation*}

\noindent
which maps a vector field $v\in\mathfrak{X}(M)$ to its unique horizontal lifting as determined by the connection $A\in \Omega^1(P,\mathfrak{g})$. Such splitting determines an isomorphism of Lie algebras:
\begin{equation*}
\mathfrak{X}(P)^{\G} = \mathfrak{X}(M) \oplus \Gamma(\frG_P) 
\end{equation*}

\noindent
where $\mathfrak{X}(M) \oplus \Gamma(\frG_P)$ is equipped with the Lie bracket:
\begin{equation}
\label{eq:bracketXP}
[v_1 + \tau_1 , v_2 + \tau_2]_P =  [v_1,v_2] + \nabla^A_{v_1} \tau_2 - \nabla^A_{v_2}\tau_1 + [\cR^{A}(v_1,v_2),\,\cdot\,]_{\frG_P} + [\tau_1,\tau_2]_{\frG_P} 
\end{equation}

\noindent
as explained in Subsection \ref{sec:Atiyahalgebroid}. Given an element $u\in \Aut(P)$, its differential is a bundle map fitting into the following commutative diagram:
\begin{equation}
\begin{tikzcd}[column sep=1.75cm, row sep=1cm]
TP \ar[r, "\dd u"] \ar[d]
& TP \ar[d]
\\
P \ar[r, "u"]
& P
\end{tikzcd}
\end{equation}
 
\noindent
Since $u$ satisfies $\pi \circ u = f_u \circ \pi$ it follows that $\dd u\colon TP  \to TP$ maps the tangent bundle of the orbit $\pi(m)$ isomorphically to the tangent bundle of the orbit $\pi(u(m)) = f_u(\pi(m))$. Furthermore, since $u$ is equivariant, namely it satisfies $u\circ \Psi_o = \Psi_o\circ u $ for every $o\in\G$, it follows that $\dd u\colon TP\to TP$ descends to $\cA_P$ and defines a vector bundle isomorphism:
\begin{equation*}
\dd \underline{u} \colon \cA_P \to \cA_P\, , \qquad  [v] \mapsto [\dd u(v)]
\end{equation*}

\noindent
It can be shown that $\dd\underline{u} \colon \cA_P \to \cA_P$ preserves the bracket $[\cdot , \cdot]_{\cA_P}$ and the anchor map $\rho_P\colon \cA_P \to TM$ and therefore is by definition an automorphism of $(\cA_P ,[\cdot , \cdot]_{\cA_P}, \rho_P)$. We have a morphism of groups:
\begin{equation*}
\Aut(P) \to \Aut(\cA_P)\, , \qquad u \mapsto \dd\underline{u} 
\end{equation*}

\noindent
where $\Aut(\cA_P)$ denotes the automorphism group of $(\cA_P ,[\cdot , \cdot]_{\cA_P}, \rho_P)$. This map is in general neither surjective nor injective, illustrating the fact that the theory of connections on $P$ modulo isomorphism of principal bundles may not be equivalent to the theory of splittings of a transitive Lie algebroid modulo isomorphisms of transitive Lie algebroids. Here we will exclusively consider the principal bundle point of view, although the transitive Lie algebroid point of view can also be of interest (see also the analyses in~\cite{Collier, BunkShahbazi}).


\section{The Einstein-Yang-Mills system on a principal bundle}
\label{sec:EinsteinYM}


In this section, we consider the Einstein-Yang-Mills system on a principal bundle and study its action functional, linearization, and some basic examples. The conventions for the various curvature operators and linear operations occurring below are summarized in Appendix \ref{app:curvatureformulae}.


\subsection{Configuration space and action functional}
\label{subsec:confspaceMaxwell}


Let $P$ be a principal bundle over $M$ with structure group $\G$. Let $\mathfrak{c}$ be a non-degenerate bilinear pairing on $\frG_P$ induced by a positive definite adjoint-invariant inner product on $\frg$, which we denote for simplicity by the same symbol. The configuration space of the Einstein-Yang-Mills system is the following product space:
\begin{equation*}
\Conf(P) = \Met(M)\times \cC(P)\, ,
\end{equation*}

\noindent
where $\Met(M)$ denotes the convex cone of Riemannian metrics on $M$ and $\cC(P)$ denotes the affine space of connections on $P$, both considered as tame Fréchet manifolds. The  tame Fréchet manifold $\Met(M)$ is is locally modeled on $\Gamma(T^{\ast}M\odot T^{\ast}M)$ whereas the tame Fréchet manifold $\cC(P)$ is locally modeled on $\Omega^1(M,\frG_P)$. Hence, we consider $\Conf(P)$ as a product of tame Fréchet manifolds. Given a connection $A\in \cC(P)$, we denote by $F_A\in \Omega^2(M,\frG_P)$ its curvature. The tangent space $T_{(g,A)}\Conf(P)$ of $\Conf(P)$ at $(g,A) \in \Conf(P)$ is given by the following Fréchet space:
\begin{eqnarray*}
T_{(g,A)}\Conf(P) = \Gamma(T^{\ast}M\odot T^{\ast}M)\oplus \Omega^1(M,\frG_P)\, .
\end{eqnarray*}

\noindent
The Einstein-Yang-Mills system determined by $\mathfrak{c}$ on $P$ is defined as the system of partial differential equations obtained through the variational problem defined by the following action functional:
\begin{equation}
\label{eq:EYMFunctional}
\cS_{P, \mathfrak{c}} \colon \Conf(P) \to \mathbb{R}\, ,\quad (g,A)\mapsto \cS_{P, \mathfrak{c}} [g,A]
= \int_M \left( s^g + \kappa \vert F_A \vert^2_{g,\mathfrak{c}}\right) \nu_g\, ,
\end{equation}

\noindent
where $\kappa\in \left\{ -1, 1\right\}$ is a sign, $\nu_g$ denotes the Riemannian volume form associated to $g$ and $\vert {-}\vert^2_{g,\mathfrak{c}}$ denotes the norm induced by $g$ and $\mathfrak{c}$ on the bundle of polyforms taking values in the adjoint bundle $\frG_P$. The action $\cS_{P, \mathfrak{c}}$ is usually referred to as the \emph{Einstein-Yang-Mills functional} in the literature. We denote its associated Lagrangian density by:
\begin{equation*}
\cL_{P, \mathfrak{c}} \colon \Conf(P) \to \Omega^n (M)\, ,\quad (g,A)\mapsto \cL_{P, \mathfrak{c}} [g,A] = ( s^g + \kappa\vert F_A \vert^2_{g,\mathfrak{c}})\nu_g\, .
\end{equation*}

\noindent
which is a smooth map of Fréchet manifolds. The first term in the previous Lagrangian is usually referred to as the \emph{Einstein-Hilbert term} whereas the second term is usually referred to as the \emph{Yang-Mills term}.
 
\begin{remark}
In Lorentzian signature the relative sign of the Einstein-Hilbert and Yang-Mills terms in $\cL_{P, \mathfrak{c}}$ cannot be chosen at will and it is unambiguously determined to be negative by requiring positivity for the kinetic energy of the Yang-Mills \emph{field}. Therefore, for $\kappa = -1$ we can think of the Lagrangian $\cL_{P, \mathfrak{c}}$ as the restriction of the Lorentzian Einstein-Yang-Mills Lagrangian to direct product configurations on a manifold of the type $\mathbb{R}^k\times M$ equipped with the pullback bundle of a principal bundle over $M$. The choice of sign $\kappa = 1$ on the other hand does not seem to have a clear interpretation in relation to the physics origin of Einstein-Yang-Mills theory, although it has been preferred in the mathematical literature \cite{ApostolovMaschler,LeBrun1,LeBrun2,LeBrun3}. In the following, we will simultaneously consider both cases $\kappa \in \left\{ -1, 1\right\}$.
\end{remark}


\subsection{The Einstein-Yang-Mills equations of motion}
\label{subsec:eqsYM}


For further reference we introduce a linear operation $\circ_{g,\frc}$ on pairs of forms $\alpha_1 , \alpha_2 \in \Omega^k(M,\frG_P)$ taking values in $\frG_P$ as follows:
\begin{equation} \label{eq:circgc}
 \alpha_1\circ_{g,\frc} \alpha_2 \in \Gamma(T^{\ast}M\odot T^{\ast}M)\, , \quad 
 (\alpha_1\circ_{g,\frc} \alpha_2)(v_1,v_2) = \frac{1}{2} \Big(\langle \alpha_1(v_1), \alpha_2(v_2)\rangle_{g,\mathfrak{c}} + \langle \alpha_1(v_2), \alpha_2(v_1)\rangle_{g,\mathfrak{c}}\Big),
\end{equation}

\noindent
for every $v_1,v_2\in T^{\ast}M$, where $\langle \cdot , \cdot \rangle_{g,\frc}$ is the positive definite metric on $\wedge T^{\ast}M \otimes \frG_P$ induced naturally by $g$ and $\frc$ and whose associated norm was denoted by $\vert {-} \vert_{g,\frc}$ in Subsection \ref{subsec:confspaceMaxwell}. For standard differential forms not taking values in $\frG_P$ we introduce an analogous operation denoted simply by $\circ_g$. Similarly, for symmetric tensors $\tau_1 , \tau_2 \in \Gamma(T^{\ast}M\odot T^{\ast}M)$ we define:
\begin{equation*}
\tau_1\circ_{g} \tau_2 \in \Gamma(T^{\ast}M\odot T^{\ast}M)\, , \quad (\tau_1\circ_{g} \tau_2)(v_1,v_2) = \frac{1}{2} \Big( g(\tau_1(v_1), \tau_2(v_2)) +  g(\tau_1(v_2), \tau_2(v_1))\Big)
\end{equation*}

\noindent
The metric induced by $g$ on the tensor bundles of $M$ will be denoted again by $g$ for ease of notation.

\begin{lemma}
\label{lemma:YMequations}
A pair $(g,A) \in \Conf(P)$ is a critical point of $\cS_{P, \mathfrak{c}}$ if and only if it satisfies the following equations:
\begin{equation}
\label{eq:YMequations}
\begin{array}{l} \displaystyle
\Ric^g - \frac12 s^g\, g = \frac{\kappa}{2} \vert F_A \vert^2_{g,\mathfrak{c}}\,  g  - \kappa\, F_A\circ_{g,\frc} F_A\, , \\[10pt]
\dd_A^{g\ast} F_{A} = 0\, ,
\end{array}
\end{equation}

\noindent
where $\dd_A^{g\ast}\colon \Omega^2(M,\frG_P) \to \Omega^1(M,\frG_P)$ denotes the formal adjoint of the exterior covariant derivative determined by $A$.
\end{lemma}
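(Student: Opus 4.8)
The plan is to compute the first variation of $\cS_{P,\frc}$ separately along the two factors of the splitting $T_{(g,A)}\Conf(P) = \Gamma(T^\ast M\odot T^\ast M)\oplus\Omega^1(M,\frG_P)$, that is, along a symmetric tensor $h$ with $A$ fixed and along a $\frG_P$-valued one-form $a$ with $g$ fixed, and then to impose that both variations vanish. The fundamental lemma of the calculus of variations, applicable because the relevant fiber pairings are non-degenerate, converts the two vanishing conditions into the two pointwise equations.

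For the variation in the connection I would fix $g$ and observe that the Einstein--Hilbert term $\int_M s^g\,\nu_g$ is independent of $A$, so only the Yang--Mills term contributes. Since $F_{A+ta} = F_A + t\,\dd_A a + O(t^2)$, differentiating at $t=0$ gives $\frac{\dd}{\dd t}\big|_{0}\int_M \kappa\,\vert F_{A+ta}\vert^2_{g,\frc}\,\nu_g = 2\kappa\int_M \langle F_A, \dd_A a\rangle_{g,\frc}\,\nu_g$. Integrating by parts through the formal adjoint $\dd_A^{g\ast}$—with no boundary contribution because $M$ is closed—this equals $2\kappa\int_M \langle \dd_A^{g\ast}F_A, a\rangle_{g,\frc}\,\nu_g$, so arbitrariness of $a$ yields the second equation $\dd_A^{g\ast}F_A = 0$.

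For the variation in the metric I would fix $A$, so that $F_A$ is a fixed $\frG_P$-valued two-form and only the metric dependence of the scalar curvature, the volume form, and the fiber norm varies. I would assemble three standard ingredients. First, $\frac{\dd}{\dd t}\big|_0\nu_{g+th} = \tfrac12(\tr_g h)\,\nu_g$. Second, the first variation of the total scalar curvature on a closed manifold, $\frac{\dd}{\dd t}\big|_0\int_M s^{g+th}\,\nu_{g+th} = \int_M \langle \tfrac12 s^g\,g - \Ric^g,\, h\rangle_g\,\nu_g$, where the pointwise identity $\delta s^g = -\Delta(\tr_g h) + \ddiv\ddiv h - \langle \Ric^g, h\rangle_g$ contributes two divergence terms that integrate to zero. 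Third, the pointwise variation of the fiber norm, $\frac{\dd}{\dd t}\big|_0\vert F_A\vert^2_{g+th,\frc} = -\langle F_A\circ_{g,\frc}F_A,\, h\rangle_g$, which follows by differentiating the two inverse-metric contractions in $\vert F_A\vert^2_{g,\frc}$ and recognizing $F_A\circ_{g,\frc}F_A$, per its definition in \eqref{eq:circgc}, as the symmetric tensor whose $(v_1,v_2)$-entry is $\langle \iota_{v_1}F_A, \iota_{v_2}F_A\rangle_{g,\frc}$.

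Combining the three contributions gives $\frac{\dd}{\dd t}\big|_0\cS_{P,\frc}[g+th,A] = \int_M \big\langle \tfrac12(s^g + \kappa\vert F_A\vert^2_{g,\frc})\,g - \Ric^g - \kappa\,F_A\circ_{g,\frc}F_A,\ h\big\rangle_g\,\nu_g$; requiring this to vanish for all symmetric $h$ and rearranging reproduces the first equation of \eqref{eq:YMequations}. The main technical obstacle is the third ingredient, namely computing the variation of the $\frG_P$-valued fiber norm and matching it, with the correct sign and normalization, to $-\langle F_A\circ_{g,\frc}F_A, h\rangle_g$; one must also keep careful track of the factor $\tfrac12$ occurring both in the norm and in \eqref{eq:circgc}, and confirm that the divergence terms in the scalar-curvature variation integrate away on the closed manifold $M$.
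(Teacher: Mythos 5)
Your proposal is correct and follows essentially the same route as the paper's proof: separate variations in $a$ and in $h$, the expansion $F_{A+ta}=F_A+t\,\dd_Aa+O(t^2)$ with integration by parts for the Yang--Mills term, and the standard first-variation formulas for $\nu_g$, for the total scalar curvature (with the divergence terms integrating to zero on the closed manifold), and for the fiber norm $\vert F_A\vert^2_{g,\frc}$ giving $-g(h,F_A\circ_{g,\frc}F_A)$. The only cosmetic difference is your sign convention for the Laplacian in the pointwise variation of $s^g$, which is immaterial since those terms vanish upon integration.
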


\begin{proof}
The differential:
\begin{equation*}
\dd_{(g,A)} \cL_{P, \mathfrak{c}}\colon \Gamma(T^{\ast}M\odot T^{\ast}M)\oplus \Omega^1(M,\frG_P) \to C^{\infty}(M) 
\end{equation*}

\noindent
of $ \cL_{P, \mathfrak{c}} \colon \Conf(P) \to C^{\infty}(M)$ at the point $(g,A)\in \Conf(P)$ evaluated on the tangent vector $(h,a)\in \Gamma(T^{\ast}M\odot T^{\ast}M)\oplus \Omega^1(\frG_P)$ is given by the usual formula as follows:
\begin{equation*}
\dd_{(g,A)} \cL_{P, \mathfrak{c}}(h,a) = \frac{\dd}{\dd t}  \cL_{P, \mathfrak{c}}[g_t,A_t ]\vert_{t=0}\, ,
\end{equation*}

\noindent
where $t\mapsto (g_t,A_t )$ is a smooth curve in $\Conf(P)$ satisfying:
\begin{equation*}
(g_t,A_t )\vert_{t=0} = (g,A)\, , \qquad \frac{\dd}{\dd t}(g_t,A_t )\vert_{t=0} = (h,a)\, .
\end{equation*}

\noindent
By linearity in the arguments of $\dd_{(g,A)} \cL_{P, \mathfrak{c}}$, we can work separately on $h$ and $a$. Take first a vector $(0,a)$ and consider the curve $t\mapsto (g_t,A_t)=(g, A+t a)$. Then $F_{A_t}=F_A+t\,d_A a+t^2 a\wedge a$. Hence, $\frac{\dd }{\dd t}  F_{A_t}\vert_{t=0} = \dd_A a$ and so:
\begin{eqnarray*}
\frac{\dd}{\dd t} \cL_{P, \mathfrak{c}}[g,A_t ]\vert_{t=0} = 
\frac{\dd}{\dd t} \left( s^g + \kappa\langle F_{A_t},F_{A_t} \rangle_{g,\mathfrak{c}}\right)\nu_g\vert_{t=0}
= 2 \kappa\langle F_{A},d_{A} a \rangle_{g,\mathfrak{c}} \nu_g\, .
\end{eqnarray*}

\noindent
Therefore, integrating by parts we obtain: 
\begin{equation*}
 \dd_{(g,A)}\cS_{P, \mathfrak{c}}(0, a)= \int_M 2 \kappa\langle F_{A}, \dd_{A} a \rangle_{g,\mathfrak{c}} \nu_g
= \int_M 2 \kappa\langle \dd_A^{g\ast} F_{A}, a \rangle_{g,\mathfrak{c}} \nu_g\, ,
\end{equation*}

\noindent 
where $\dd_A^{g\ast}$ is the formal adjoint of the exterior covariant derivative $\dd_A\colon \Omega^{r}(M,\frG_P)\to \Omega^{r+1}(M,\frG_P)$ determined by $A$. Imposing $\dd_{(g,A)}\cS_{P, \mathfrak{c}}(0, a) = 0$ for every $a \in \Omega^1(M,\frG_P)$ we conclude that $(g,A)$ is a critical point of $\cS_{P, \mathfrak{c}}$ only if:
\begin{equation*}
\dd_A^{g\ast} F_{A} = 0\, .
\end{equation*}

\noindent
Now we take a vector $(h,0)$, given by the curve $t\mapsto (g+th,A)$ for $\vert t\vert < \varepsilon$ with $\varepsilon > 0$ small enough. By Lemma \ref{lemma:variationRs} we have:
\begin{equation*}
\frac{\dd }{\dd t}( s^{g_t})\vert_{t=0} = \Delta_g \tr_g(h) + \nabla^{g\ast}\nabla^{g\ast}h- g(h , \Ric^g)\, ,
\end{equation*}

\noindent
which, together with Equation \eqref{eq:variationdetg}, implies:
\begin{equation*}
\frac{\dd }{\dd t}( s^{g_t}\,\nu_{g_t})\vert_{t=0} = ( \Delta_g \tr_g(h) + \ddiv_g(\ddiv_g(h))- g(h , \Ric^g) + \frac12 \tr_g(h) s^g) \nu_g\, .
\end{equation*}

\noindent
Thus, we conclude: 
\begin{equation*}
\frac{\dd }{\dd t}\vert_{t=0} \int_M s^{g_t}\,\nu_{g_t} = \int_M g(h ,-\Ric^g+\frac12 s^g g) \nu_g \, ,
\end{equation*}

\noindent
since the integral of the Laplacian and the integral of the divergence vanish on a compact manifold. Regarding the derivative of the Yang-Mills term in the action functional, we compute:
\begin{equation*}
\frac{\dd }{\dd t} \vert F_A \vert^2_{g_t,\mathfrak{c}}\vert_{t=0}  = - g(h , F_A\circ_{g,\frc} F_A)\, , 
\end{equation*}

\noindent
where the operation $F_A\circ_{g,\frc} F_A \in \Gamma(T^{\ast}M\odot T^{\ast}M)$ is defined in 
(\ref{eq:circgc}). Altogether, we obtain: 
\begin{equation*}
\dd_{(g,A)}\cS_{P, \mathfrak{c}}(h,0)= \int_M g\Big(h , -\Ric^g+\frac12 \big( s^g+ \kappa \vert F_A \vert^2_{g,\mathfrak{c}}\big)  g   - \kappa\, (F_A\circ_{g,\frc} F_A) \Big) \nu_g\, .
\end{equation*}
 
\noindent
Therefore, by the discussion above, $(g,A)$ is a critical point of $\cS_{P, \mathfrak{c}}$ if and only if it satisfies the two equations:
\begin{equation*}
\dd_A^{g\ast} F_{A} = 0
\quad \text{and} \quad
\Ric^g - \frac12 s^g\, g = \frac{\kappa}{2} \vert F_A \vert^2_{g,\mathfrak{c}}\,  g  - \kappa  F_A\circ_{g,\frc} F_A
\end{equation*}

\noindent
simultaneously. 
\end{proof}

\noindent
Equations \eqref{eq:YMequations} define the \emph{Einstein-Yang-Mills system}. The first equation in \eqref{eq:YMequations} will be referred to as the \emph{Einstein equation}, whereas the second equation in \eqref{eq:YMequations} will be referred to as the \emph{Yang-Mills} equation. 

\begin{definition}
A pair $(g,A)\in \Conf(P)$ consisting of a Riemannian metric $g$ on $M$ and a connection $A$ on $P$ is an \emph{Einstein-Yang-Mills pair} if it solves the Einstein-Yang-Mills system \eqref{eq:YMequations}. 
\end{definition}

\noindent
If $(g,A)$ is an Einstein-Yang-Mills pair such that $F_A\neq 0$ we will say that $(g,A)$ is \emph{non-trivial}.

\begin{remark}
If $n=2$ then $2\, \mathrm{Ric}^g = s^g$ and the Einstein equation reduces to  $\vert F_A \vert^2_{g,\mathfrak{c}} = 0$. Therefore, every Einstein-Yang-Mills pair $(g,A)$ in this dimension consists of a flat metric on $T^2$ together with a flat connection on $P$. Consequently, in the following, we will assume $n>2$. 
\end{remark}

\noindent
For further reference we introduce the \emph{Einstein tensor} $\mathbb{G}^g$ of a Riemannian metric $g$ on $M$ as:
\begin{equation*}
\mathbb{G}^g = \mathrm{Ric}^g - \frac{1}{2} s^g g\,.
\end{equation*}

\noindent
Note that $\nabla^{g\ast}\mathbb{G}^g = 0$ for every $g\in \Met(M)$.


\subsection{Elementary properties of Einstein-Yang-Mills pairs}
\label{subsec:elementaryEinstein-Yang-Millspairs}


Taking the trace of the Einstein equation with respect to $g$ we obtain:
\begin{equation*}
s^g - \frac12 n s^g  = \frac{\kappa}{2} n \vert F_A \vert^2_{g,\mathfrak{c}}   - 2\kappa \, \vert F_A \vert^2_{g,\mathfrak{c}}  \, ,
\end{equation*}
using that $\mathrm{Tr}(F_A\circ_{g,\frc} F_A)= 2
\vert F_A \vert^2_{g,\mathfrak{c}}$.
Solving for $s^g$ we obtain the following prescription for the scalar curvature of every Einstein-Yang-Mills pair $(g,A)$:
\begin{equation*}
s^g = \frac{\kappa (n-4)}{(2-n)} \vert F_A \vert^2_{g,\mathfrak{c}}\, .
\end{equation*}

\noindent
Note that this equation already shows that the cases $\kappa =-1$ and $\kappa =1$ may be non-equivalent since $\kappa$ prescribes the sign of the scalar curvature and the latter may be obstructed in the positive curvature case \cite{Hitchin,Lichnerowicz}. Plugging the previous equation back into the Einstein equation, we obtain:
\begin{equation*}
\Ric^g = \kappa \left(\frac{1}{n-2}\vert F_A \vert^2_{g,\mathfrak{c}}\,  g  -F_A\circ_{g,\frc} F_A\right)\, .
\end{equation*}

\noindent
This form of the Einstein equation is sometimes more convenient from the analytic point of view since it only involves the Ricci curvature operator instead of both the Ricci and scalar curvature operators. 

\begin{definition}
\label{def:energymomentum}
The \emph{energy-momentum tensor} of the Einstein-Yang-Mills system is the following smooth map of Fréchet manifolds:
\begin{equation*}
\cT\colon \Conf(P) \to \Gamma(T^{\ast}M \odot T^{\ast}M )\, , \qquad (g,A)\mapsto \frac{\kappa}{2} \vert F_A \vert^2_{g,\mathfrak{c}}\,  g  - \kappa\, F_A\circ_{g,\frc} F_A\, ,
\end{equation*} 
	
\noindent
where $\frc$ is the given inner product on $\frG_P$.
\end{definition}

\noindent
In terms of the energy-momentum tensor, the Einstein equation of the Einstein-Yang-Mills system adopts the standard form:
\begin{equation*}
\mG^g = \cT(g,A)\, ,
\end{equation*}

\noindent
where $\mG^g\in \Gamma(T^{\ast}M\odot T^{\ast}M)$ is the Einstein tensor of $g$. 

\begin{lemma}
The following formula holds:
\begin{equation*}
\nabla^{g\ast}(F_A\circ_{g,\frc} F_A)(v) = \langle \dd_A^{g\ast} F_A , \iota_v F_A \rangle_{g,\frc} - \frac{1}{2}\iota_v \dd \vert F_A \vert^2_{g,\frc} 
\end{equation*}

\noindent
for every $(g,A)\in \Conf(P)$ and $v\in \mathfrak{X}(M)$.
\end{lemma}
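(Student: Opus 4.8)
The plan is to verify the identity by a direct computation in a local coordinate frame, reducing everything to the second Bianchi identity $\dd_A F_A = 0$ and to the compatibility of the relevant connections with the metrics. First I would fix the index conventions of Appendix~\ref{app:curvatureformulae}, write $F_A$ with components $F_{ij}$, and record from the definition \eqref{eq:circgc} together with the symmetry of $\langle\cdot,\cdot\rangle_{\frc}$ that $(F_A \circ_{g,\frc} F_A)(\partial_a,\partial_e) = g^{ij}\langle F_{ai}, F_{ej}\rangle_{\frc}$. Since the Levi-Civita connection satisfies $\nabla g = 0$ and the connection induced by $A$ on $\frG_P$ is compatible with $\frc$ (because $\frc$ arises from an $\Ad$-invariant inner product on $\frg$), differentiating and contracting the first index gives, with the Einstein summation convention,
\begin{equation*}
-\big(\nabla^{g\ast}(F_A\circ_{g,\frc}F_A)\big)_e = g^{ab}g^{ij}\langle\nabla_b F_{ai}, F_{ej}\rangle_{\frc} + g^{ab}g^{ij}\langle F_{ai}, \nabla_b F_{ej}\rangle_{\frc} =: \mathrm{I} + \mathrm{II}\, .
\end{equation*}
The first term is immediate: contracting $\nabla_b$ against the first slot of $F$ produces the codifferential, so that $\mathrm{I} = -\langle\dd_A^{g\ast}F_A, \iota_{\partial_e}F_A\rangle_{g,\frc}$ once one recalls $(\dd_A^{g\ast}F_A)_i = -g^{ab}\nabla_b F_{ai}$.

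The content of the lemma is therefore concentrated in the second term, which I expect to be the main obstacle. To handle it I would apply the second Bianchi identity $\nabla_b F_{ej} + \nabla_e F_{jb} + \nabla_j F_{be} = 0$, which, after using the antisymmetry $F_{jb}=-F_{bj}$ in one summand, rewrites $\mathrm{II}$ as
\begin{equation*}
\mathrm{II} = g^{ab}g^{ij}\langle F_{ai}, \nabla_e F_{bj}\rangle_{\frc} - g^{ab}g^{ij}\langle F_{ai}, \nabla_j F_{be}\rangle_{\frc} = \nabla_e\vert F_A\vert^2_{g,\frc} - Q\, ,
\end{equation*}
where the first summand is recognised as $\nabla_e\vert F_A\vert^2_{g,\frc}$ (using $\vert F_A\vert^2_{g,\frc} = \tfrac12\, g^{ab}g^{ij}\langle F_{ai},F_{bj}\rangle_{\frc}$, as forced by $\tr_g(F_A\circ_{g,\frc}F_A)=2\vert F_A\vert^2_{g,\frc}$, together with metric compatibility), and where $Q := g^{ab}g^{ij}\langle F_{ai}, \nabla_j F_{be}\rangle_{\frc}$.

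A single use of Bianchi only relates $\mathrm{II}$ to $Q$ and is therefore circular on its own; the decisive observation is that $\mathrm{II} = Q$. Indeed, raising indices and using the antisymmetry $F_{ej}=-F_{je}$ shows $\mathrm{II} = -F^{bj}\nabla_b F_{je}$, while relabelling the dummy indices $b\leftrightarrow j$ in $Q$ together with $F^{bj}=-F^{jb}$ gives the identical expression $Q = -F^{bj}\nabla_b F_{je}$. Combining $\mathrm{II}=Q$ with the Bianchi relation $\mathrm{II}+Q = \nabla_e\vert F_A\vert^2_{g,\frc}$ forces $\mathrm{II} = \tfrac12\,\nabla_e\vert F_A\vert^2_{g,\frc}$. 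Assembling $\mathrm{I}+\mathrm{II}$, inserting the overall sign coming from $\nabla^{g\ast}$, and rewriting $\nabla_e\vert F_A\vert^2_{g,\frc} = \iota_{\partial_e}\dd\vert F_A\vert^2_{g,\frc}$ then yields the stated formula for $v=\partial_e$; $C^\infty(M)$-linearity in $v$ gives the general case. I would emphasise that only the Bianchi identity is used, so the formula holds for arbitrary $(g,A)\in\Conf(P)$ rather than merely for Yang-Mills connections; it is precisely the Noether-type identity underlying the divergence behaviour of the Einstein-Yang-Mills energy-momentum tensor $\cT$.
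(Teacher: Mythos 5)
Your proof is correct and follows essentially the same route as the paper's: a direct local computation using the Leibniz rule together with the second Bianchi identity $\dd_A F_A = 0$, with the first contraction producing the $\dd_A^{g\ast}F_A$ term and the Bianchi identity converting the remaining term into $\tfrac{1}{2}\iota_v\dd\vert F_A\vert^2_{g,\frc}$. The only difference is notational — you work in index notation and extract the factor $\tfrac12$ via the relabelling argument $\mathrm{II}=Q$, whereas the paper works in an adapted orthonormal frame and obtains the same factor from $\langle F_A,\nabla^{g,A}_v F_A\rangle_{g,\frc}=\tfrac12\,\iota_v\dd\vert F_A\vert^2_{g,\frc}$ after using $\sum_i e^i\wedge\iota_v\nabla^{g,A}_{e_i}F_A=\nabla^{g,A}_v F_A$.
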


\begin{proof}
Let $(e_1,\hdots , e_n)$ be a local orthonormal frame with dual local coframe $(e^1,\hdots , e^n)$ around a point $p\in M$, chosen so that $\nabla^g_{e_i} e_j (p)=0$. Given $v\in \mathfrak{X}(M)$ we compute at $p$:
\begin{align*}
-\nabla^{g\ast}(F_A\circ_{g,\frc} F_A)(v) &=   \sum_{i=1}^n\nabla^g_{e_i}(F_A\circ_{g,\frc} F_A) \, (e_i,v)  \\
&=  \sum_{i=1}^n \Big(\nabla^g_{e_i} \langle \iota_{e_i} F_A , \iota_v F_A \rangle_{g,\frc} - \langle \iota_{e_i} F_A , \iota_{\nabla^g_{e_i}v} F_A \rangle_{g,\frc}\Big)   \\
& = \sum_{i=1}^n \Big( \langle \nabla^{g,A}_{e_i} F_A , e^i\wedge \iota_v F_A \rangle_{g,\frc} + \langle  F_A , e^i\wedge\nabla^{g,A}_{e_i} ( \iota_v F_A) \rangle_{g,\frc}  - \langle \iota_{e_i} F_A , \iota_{\nabla^g_{e_i}v} F_A \rangle_{g,\frc}\Big) \\
& = \sum_{i=1}^n  \Big(\langle \iota_{e_i} \nabla^{g,A}_{e_i} F_A ,  \iota_v F_A \rangle_{g,\frc} +   \langle  F_A , e^i\wedge\iota_v\nabla^{g,A}_{e_i} F_A \rangle_{g,\frc}\Big) \\
&= -\langle \dd^{g\ast}_A F_A ,  \iota_v F_A \rangle_{g,\frc} + \frac{1}{2}\iota_v \dd \vert F_A \vert^2_{g,\frc}
\end{align*}

\noindent
where $\nabla^{g,A}$ denotes the tensor product of the Levi-Civita connection and the connection induced by $A$ on $\frG_P$ and we have used the Bianchi identity $\dd_A F_A =0$.
\end{proof}

\noindent
By the previous Lemma, we immediately obtain the following result for the energy-momentum tensor of the Einstein-Yang-Mills theory.
\begin{proposition}
\label{prop:DivergenceT}
The divergence of the energy-momentum tensor evaluated at $(g,A) \in \Conf(P)$ is given by:
\begin{equation*}
-\nabla^{g\ast}(\cT(g,A))(v) = \kappa \langle \dd_A^{g\ast} F_A , \iota_v F_A \rangle_{g,\frc} \, , \qquad v\in \mathfrak{X}(M)\, .
\end{equation*}

\noindent
In particular, $\nabla^{g\ast}(\cT(g,A)) = 0$ whenever $A$ is a Yang-Mills connection.
\end{proposition}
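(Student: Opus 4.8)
The plan is to exploit the linearity of the divergence operator $\nabla^{g\ast}$ together with the formula established in the previous Lemma, so that the only new input needed is the divergence of the pure-trace term $\frac{\kappa}{2}\vert F_A\vert^2_{g,\frc}\, g$. Writing $\cT(g,A) = \frac{\kappa}{2}\vert F_A\vert^2_{g,\frc}\, g - \kappa\, F_A\circ_{g,\frc} F_A$ and applying $\nabla^{g\ast}$, I would treat the two summands separately.

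For the first summand I would compute the divergence of $f\, g$ for the scalar function $f = \vert F_A\vert^2_{g,\frc}$. Choosing a local orthonormal frame $(e_1,\dots,e_n)$ with $\nabla^g_{e_i}e_j(p)=0$ at the point $p$ and using that the Levi-Civita connection is metric, so that $\nabla^g g = 0$, only the derivative of $f$ survives, and one obtains $\nabla^{g\ast}(f\,g)(v) = -\iota_v\dd f = -\iota_v\dd\vert F_A\vert^2_{g,\frc}$ for every $v\in\mathfrak{X}(M)$. This is the only genuine calculation, and it is elementary. For the second summand I would invoke the previous Lemma verbatim, namely $\nabla^{g\ast}(F_A\circ_{g,\frc} F_A)(v) = \langle \dd_A^{g\ast}F_A,\iota_v F_A\rangle_{g,\frc} - \frac{1}{2}\iota_v\dd\vert F_A\vert^2_{g,\frc}$.

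Combining the two contributions, the terms proportional to $\iota_v\dd\vert F_A\vert^2_{g,\frc}$ enter with coefficients $-\frac{\kappa}{2}$ and $+\frac{\kappa}{2}$ and cancel exactly, leaving $-\nabla^{g\ast}(\cT(g,A))(v) = \kappa\langle\dd_A^{g\ast}F_A,\iota_v F_A\rangle_{g,\frc}$, which is the claimed identity; the final assertion is then immediate, since $\dd_A^{g\ast}F_A=0$ for a Yang-Mills connection forces the right-hand side to vanish. I expect no serious obstacle; the only point requiring care is that the sign and normalization convention for $\nabla^{g\ast}$ on symmetric $2$-tensors must match the one implicit in the previous Lemma, so that the cancellation of the $\iota_v\dd\vert F_A\vert^2_{g,\frc}$ terms is exact rather than merely up to a constant factor.
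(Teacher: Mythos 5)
Your proposal is correct and follows essentially the same route as the paper, which simply states that the proposition follows immediately from the preceding Lemma; your computation $\nabla^{g\ast}(f\,g)(v)=-\iota_v\dd f$ for $f=\vert F_A\vert^2_{g,\frc}$ and the exact cancellation of the $\frac{\kappa}{2}\iota_v\dd\vert F_A\vert^2_{g,\frc}$ terms is precisely the omitted elementary step. The sign conventions are consistent with the paper's definition of $\nabla^{g\ast}$ on symmetric two-tensors, so there is no gap.
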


\noindent
Therefore, the Yang-Mills equation that occurs as part of the Einstein-Yang-Mills system guarantees that the energy-momentum tensor is divergence-free when evaluated on pairs $(g,A)$ with $A$ Yang-Mills, as expected by physical consistency. For further reference, we introduce the \emph{reversed energy-momentum tensor} as the following smooth map of tame Fréchet manifolds:
\begin{equation}
\label{eqn:reverse}
\hat{\cT}\colon \Conf(P) \to \Gamma(T^{\ast}M \odot T^{\ast}M )\, , \qquad (g,A)\mapsto \kappa \Big(\frac{1}{n-2}\vert F_A \vert^2_{g,\mathfrak{c}}\,  g  -F_A\circ_{g,\frc} F_A\Big)\, ,
\end{equation}

\noindent
which is sometimes more convenient than the standard energy-momentum tensor for computations.


\subsection{Examples of Einstein-Yang-Mills pairs}


Solutions to the Einstein-Yang-Mills equations on a compact manifold are hard to come by except for some notable exceptions. A particular class of Einstein-Yang-Mills pairs $(g,A)$ can be characterized by requiring that the Ricci tensor of $g$ and the reversed energy-momentum tensor of $(g,A)$ are both equally proportional to $g$, that is:
\begin{equation}
\label{eq:examplecondition}
\Ric^g = \Lambda g = \kappa \Big(\frac{1}{n-2}\vert F_A \vert^2_{g,\mathfrak{c}}\,  g  -F_A\circ_{g,\frc} F_A\Big)\, ,
\end{equation}

\noindent
for a constant $\Lambda \in \mathbb{R}$. In particular, $(M,g)$ is an Einstein manifold. Within the previous ansatz assume that $A$ is not flat. We then have the following distinguished cases:

\begin{itemize}
\item If $n=3$ then Equation \eqref{eq:examplecondition} reduces to:
\begin{equation*}
\kappa \Lambda g =  \vert F_A \vert^2_{g,\mathfrak{c}}\,  g  - F_A\circ_{g,\frc} F_A  = \ast_g F_A\otimes_{\frc}\ast_g F_A\, ,
\end{equation*}

\noindent
where $\ast_g F_A\otimes_{\frc}\ast_g F_A \in \Gamma(T^{\ast}M\odot T^{\ast}M)$ is defined by taking the usual tensor product together with the norm induced by $\frc$ on the adjoint bundle. Therefore, no solution exists in dimension three. In fact, when $n=3$ it is more natural to assume that $(M,g)$ is a metric contact manifold and $g$ is $\eta$-Einstein. This produces a natural ansatz for $\Ric^g$ better suited to fit with the structure of the reversed energy-momentum tensor and can be used to produce Einstein-Yang-Mills pairs, at least when $\G=\U(1)$.

\item If $n=4$ then the trace of \eqref{eq:examplecondition} implies $\Lambda = 0$ and therefore $g$ is Ricci-flat. Furthermore:
\begin{equation*}
F_A\circ_{g,\frc} F_A = \frac{1}{2}\vert F_A \vert^2_{g,\mathfrak{c}}\,  g \, .
\end{equation*}

\noindent
This equation is satisfied automatically by every \emph{instanton}, namely by every self-dual or anti-self-dual connection. Since instantons satisfy the Yang-Mills equation, every pair $(g,A)$ consisting of a Ricci-flat metric $g$ and an instanton $A$ defines an Einstein-Yang-Mills pair. In particular, every poly-stable holomorphic vector bundle over a K3 surface or complex torus canonically defines an associated Einstein-Yang-Mills pair. This will be our main case of study in Section \ref{sec:instantonK3}, where we consider the local moduli space of Einstein-Yang-Mills pairs around a stable holomorphic bundle over K3.

\item Assume $n>4$. By taking the trace of \eqref{eq:examplecondition} we conclude that $\Lambda$ is necessarily non-zero and its sign is opposite to $\kappa$. In this \emph{generic} dimension it is not evident that \eqref{eq:examplecondition} admits solutions. Nonetheless, examples can be found on symmetric spaces equipped with their normal homogeneous Riemannian metric and canonical homogeneous connection \cite{HarnadTafelShnider}.  
\end{itemize}


\subsection{Lift to the total space $P$}


Let $\Met(P)$ denote the Fréchet manifold of Riemannian metrics on $P$. For every Killing form $\frc$ on the Lie algebra $\frg$ of the structure group $\G$ of $P$ we have a canonical map:
\begin{equation*}
\Theta^{\frc} \colon \Conf(P) \to \Met(P)\, , 
\end{equation*}

\noindent
which, to every pair $(g,A) \in \Conf(P)$, associates the following Riemannian metric on $P$:
\begin{equation*}
\Theta^{\frc}_{g,A}(X_1,X_2) = g(\dd \pi (X_1) , \dd \pi (X_2)) + \frc (A(X_1) , A(X_2))\, , \quad X_1 , X_2 \in \mathfrak{X}(P)\, ,
\end{equation*}

\noindent
where $\dd \pi \colon TP\to TM$ is the differential of the principal bundle projection $\pi \colon P\to M$.

\begin{lemma}
For every Killing form $\frc$ on $\frg$ and every pair $(g,A) \in \Conf(P)$ the Riemannian metric is $\G$-invariant and satisfies:
\begin{equation*}
\Theta^{\frc}_{g,A}(D\Psi(x_1),D\Psi(x_2))\vert_p = \frc(x_1,x_2)\, ,
\end{equation*}

\noindent
for every $x_1, x_2 \in \frg$ and $p\in P$.
\end{lemma}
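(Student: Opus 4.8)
The plan is to reduce both assertions to the two defining properties of a principal connection one-form together with the $\Ad$-invariance of $\frc$, treating the Einstein--Hilbert summand $g(\dd\pi(-),\dd\pi(-))$ and the Yang--Mills summand $\frc(A(-),A(-))$ of $\Theta^{\frc}_{g,A}$ separately throughout. (That $\Theta^{\frc}_{g,A}$ is positive-definite, and hence a genuine Riemannian metric, follows at once from the orthogonal splitting $TP = \cV \oplus \Ker A$ with $\cV = \Ker(\dd\pi)$: on $\Ker A$ the map $\dd\pi$ is an isomorphism onto $TM$ while $A$ vanishes, and on $\cV$ the form $A$ is an isomorphism onto $\frg$ while $\dd\pi$ vanishes, so positivity of $g$ and $\frc$ transfers to each summand.)

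For the $\G$-invariance I would verify $\Psi_o^*\Theta^{\frc}_{g,A} = \Theta^{\frc}_{g,A}$ for each $o\in\G$ by evaluating on $(\dd\Psi_o(X_1),\dd\Psi_o(X_2))$. The Einstein--Hilbert summand is invariant because $\pi\circ\Psi_o=\pi$ forces $\dd\pi\circ\dd\Psi_o=\dd\pi$, leaving $g(\dd\pi(X_1),\dd\pi(X_2))$ unchanged. For the Yang--Mills summand I would invoke the right-translation equivariance of the connection, $A\circ\dd\Psi_o = \Ad_{o^{-1}}\circ A$, together with the $\Ad$-invariance of $\frc$ (which holds since $\frc$ is induced by an $\Ad$-invariant inner product on $\frg$), giving $\frc(A(\dd\Psi_o X_1),A(\dd\Psi_o X_2)) = \frc(\Ad_{o^{-1}}A(X_1),\Ad_{o^{-1}}A(X_2)) = \frc(A(X_1),A(X_2))$. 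Adding the two summands yields invariance.

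For the fiber formula I would evaluate $\Theta^{\frc}_{g,A}$ on the fundamental vector fields $D\Psi(x_i)=v_{x_i}$. Since these are vertical, $\dd\pi(v_{x_i})=0$, the Einstein--Hilbert summand vanishes identically; the reproduction property $A(v_x)=x$ of a principal connection then gives $\frc(A(v_{x_1})\vert_p,A(v_{x_2})\vert_p)=\frc(x_1,x_2)$, independent of $p$, which is exactly the claim. The computation is essentially bookkeeping, so the only real obstacle is conceptual rather than technical: I must be careful to use the connection one-form $A\in\Omega^1(P,\frg)$ with its right-equivariance $R_o^*A=\Ad_{o^{-1}}\circ A$ and reproduction property $A(v_x)=x$, rather than the Atiyah-algebroid splitting description of $A$ employed elsewhere in the text, and to record the $\Ad$-invariance of $\frc$ explicitly at the outset so that both verifications are unambiguous.
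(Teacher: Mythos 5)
Your proposal is correct and follows exactly the paper's (much terser) argument: both rest on the equivariance $\Psi_o^{\ast}A=\Ad_{o^{-1}}(A)$, the reproduction property $A(D\Psi(x))=x$, and the $\Ad$-invariance of $\frc$, with your write-up merely spelling out the details the paper leaves implicit. No issues.
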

 
\begin{remark}
Conversely, every $\G$-invariant Riemannian metric on $P$ that satisfies the previous equation is of the form $\Theta^{\frc}_{g,A}$ for a certain unique $(g,A)\in \Conf(P)$. 
\end{remark}

\begin{proof}
The result follows directly from the standard properties satisfied by the principal connections of $P$, namely:
\begin{equation*}
\Psi^{\ast}_o A = \Ad_{o^{-1}}(A)\, , \quad A(D\Psi(x)) = x\, , \quad o\in \G\, , \,\, x\in \frg\, ,
\end{equation*}

\noindent
together with the invariance of $\frc$ under adjoint transformations.  
\end{proof}

\noindent
We will refer to elements in the image of $\Theta^{\frc}$ as $\frc$-\emph{adapted} Riemannian metrics on $P$. These metrics are sometimes called \emph{Kaluza-Klein metrics} in the literature \cite{Bourguignon}.

\begin{proposition}
\label{prop:ImThetaFrechet}
For every Killing form $\frc$ on $\frg$ the map $\Theta^{\frc} \colon \Conf(P) \to \Met(P)$ is smooth, tame and injective. Its image $\mathrm{Im}(\Theta^{\frc})\subset \Conf(P)$ is a smooth, closed, tame Fréchet submanifold of $\Met(P)$.
\end{proposition}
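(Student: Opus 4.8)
The plan is to establish the three asserted properties of $\Theta^{\frc}$ --- smoothness and tameness, injectivity, and the submanifold structure of the image --- by leveraging the local product structure of $\Conf(P)$ and the explicit pointwise formula for $\Theta^{\frc}_{g,A}$. Smoothness and tameness are the most routine: since $\Theta^{\frc}_{g,A}$ is, at each point $p\in P$, a fixed polynomial (in fact affine-quadratic) expression in the components of $g$ and $A$ built from the fixed data $\dd\pi$ and $\frc$, the map is manifestly a smooth nonlinear differential operator of order zero. Such operators are tame between the relevant tame Fréchet spaces of sections, so smoothness and tameness follow from the standard calculus of tame maps in Hamilton's framework \cite{Hamilton}, which I would invoke rather than reprove.

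For injectivity I would argue directly from the defining formula. Suppose $\Theta^{\frc}_{g_1,A_1} = \Theta^{\frc}_{g_2,A_2}$. Restricting both sides to pairs of vertical vectors $D\Psi(x_1), D\Psi(x_2)$ recovers, via the preceding Lemma, only the fixed value $\frc(x_1,x_2)$ and hence gives no information; the useful information comes from the horizontal and mixed directions. Evaluating on two vectors that are horizontal for the connection $A_1$ isolates the metric $g$ pulled back through $\dd\pi$, so testing against all such horizontal pairs and using surjectivity of $\dd\pi$ forces $g_1 = g_2$. To recover the connection, I would evaluate on a mixed pair consisting of one vertical vector $D\Psi(x)$ and one arbitrary $X$: the cross term $\frc(A_i(D\Psi(x)), A_i(X)) = \frc(x, A_i(X))$ together with nondegeneracy of $\frc$ on $\frg$ then pins down $A_1(X) = A_2(X)$ for all $X$, whence $A_1 = A_2$. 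Thus $\Theta^{\frc}$ is injective.

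The main obstacle, and the part deserving the most care, is showing that $\mathrm{Im}(\Theta^{\frc})$ is a closed tame Fréchet submanifold of $\Met(P)$. The natural strategy is to characterize the image intrinsically: by the Remark following the preceding Lemma, a $\G$-invariant metric $\bar g$ on $P$ lies in $\mathrm{Im}(\Theta^{\frc})$ if and only if it satisfies $\bar g(D\Psi(x_1), D\Psi(x_2)) = \frc(x_1,x_2)$ pointwise. I would therefore exhibit the image as the intersection of $\Met(P)^{\G}$ with the preimage of a single point under the smooth tame evaluation map $\bar g \mapsto \bar g(D\Psi(\cdot), D\Psi(\cdot))|_{\text{vertical}}$ landing in the fixed fiber metrics, and argue that $\Theta^{\frc}$ itself furnishes a global tame parametrization (chart) of this locus over $\Conf(P)$. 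Closedness is then immediate since the constraint is a pointwise closed condition preserved under Sobolev limits (mirroring the argument in the proof of Proposition \ref{prop:AutPFrechet}), and the submanifold structure follows because $\Theta^{\frc}$ is a tame injective map admitting a tame left inverse: one reconstructs $(g,A)$ from $\bar g\in\mathrm{Im}(\Theta^{\frc})$ by the horizontal-projection and nondegeneracy-of-$\frc$ formulas used in the injectivity step, and this inverse is again a zeroth-order tame operator. The delicate point to verify is that this reconstruction is genuinely tame and that $\Theta^{\frc}$ is a tame embedding --- i.e. a homeomorphism onto its image in the subspace topology --- which I would confirm at each Sobolev level and pass to the projective limit, exactly as in the splitting argument of Proposition \ref{prop:AutPFrechet}.
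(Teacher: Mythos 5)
Your proposal follows essentially the same route as the paper: smoothness and tameness from the fact that $\Theta^{\frc}$ is a zeroth-order polynomial operator in $(g,A)$, injectivity from non-degeneracy of $\frc$ and surjectivity of $\dd\pi$, and the submanifold structure by exhibiting $\mathrm{Im}(\Theta^{\frc})$ as the intersection of the open set $\Met(P)$ with the closed affine locus of $\G$-invariant symmetric tensors whose restriction to $\cV\times\cV$ equals the fixed tensor $\frc(\dd\Psi_{\pi}^{-1}(\cdot),\dd\Psi_{\pi}^{-1}(\cdot))$ --- this is exactly the map $f$ the paper uses, and once the image is an open subset of a closed affine subspace of a tame Fréchet space the submanifold and closedness claims are automatic, so your additional Sobolev-level verification that $\Theta^{\frc}$ is a tame embedding, while harmless, is not needed for the statement as phrased.

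One small logical slip in the injectivity argument: you first test $\Theta^{\frc}_{g_1,A_1}=\Theta^{\frc}_{g_2,A_2}$ on $A_1$-horizontal pairs and conclude $g_1=g_2$, but on such vectors the right-hand side still carries the term $\frc(A_2(X_1),A_2(X_2))$, which you cannot discard until you know $A_1=A_2$. The fix is simply to reverse the order: the mixed vertical--arbitrary evaluation gives $\frc(x,A_1(X))=\frc(x,A_2(X))$ with no metric contribution (since $\dd\pi$ annihilates vertical vectors), hence $A_1=A_2$ by non-degeneracy of $\frc$; only then do the two horizontal distributions coincide and the horizontal evaluation yields $g_1=g_2$.
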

 
\begin{proof}
The fact that $\Theta^{\frc}$ is injective follows from the non-degeneracy of $\frc$ together with the fact that $\dd\pi$ is a linear isomorphism when restricted to the horizontal distribution determined by $A$. Since $\Theta^{\frc}$ is locally a polynomial with smooth coefficients in the components of $(g,A)$, it is clearly smooth. The fact that $\Theta^{\frc} \colon \Conf(P) \to \Met(P)$ is tame follows from the fact that partial differential operators between spaces of smooth sections are tame \cite{Subramanian}. The set of smooth metrics $\Met(P)$ is a smooth manifold modeled on the tame Fréchet space $\Gamma(T^{\ast}P\odot T^{\ast}P)$ and it is actually a contractible open subset of the latter. The vector space $\Gamma(T^{\ast}P\odot T^{\ast}P)^{\G}$ of all $\G$-invariant symmetric tensors on $P$ is a closed vector subspace of $\Gamma(T^{\ast}P\odot T^{\ast}P)$ and hence a tame Fréchet vector space itself. Consider the following continuous map:
\begin{equation*}
f\colon \Gamma(T^{\ast}P\odot T^{\ast}P)^{\G} \to \Gamma (V^{\ast}\odot V^{\ast})\, , \qquad H \mapsto H\vert_{\cV\times \cV} - \frc(\dd\Psi^{-1}_{\pi}(-) ,  \dd\Psi^{-1}_{\pi}(-))
\end{equation*}

\noindent 
where $V^{\ast}$ is the vector bundle dual to the vertical distribution $V$ in $TP$. We have:
\begin{equation*}
\mathrm{Im}(\Theta^{\frc}) = \Met(P) \cap f^{-1}(0)
\end{equation*}

\noindent
which implies, since $\Met(P)$ is an open subset of a tame Fréchet space and $f^{-1}(0)$ is a closed affine subspace, that $\mathrm{Im}(\Theta^{\frc})$ is a tame Fréchet manifold modeled on the tame Fréchet space of $\G$-invariant symmetric two-tensors on $P$ that vanish on $\cV\times \cV$.
\end{proof}

\noindent
Therefore, the map $\Theta^{\frc}$ gives a natural, smooth, and tame correspondence between $\frc$-adapted metrics on $P$ and elements of the configuration space of the Einstein-Yang-Mills system on $(P,\frc)$.


\section{A slice theorem for Einstein-Yang-Mills pairs}
\label{sec:SliceP}



\subsection{Preliminaries}


The Fréchet Lie group of automorphisms $\Aut(P)$ acts smoothly on the tame Fréchet manifold $\Conf(P)$ as follows:
\begin{equation*}
\Phi\colon \Conf(P)\times \Aut(P) \to \Conf(P)\, , \qquad (g,A,u) \mapsto (f^{\ast}_{u}g , u^{\ast}(A))
\end{equation*}

\noindent
where $f^{\ast}_{u}g$ denotes the pullback of $g$ along the diffeomorphism $f_u \colon M\to M$ covered by $u\in \Aut(P)$ and $ u^{\ast}(A)\in \Omega^1(P,\frg)$ is the pullback of $A$ by the diffeomorphism $u\colon P\to P$. Hence, we have that $\Phi$ is a smooth tame action.

\noindent
The goal of this section is to prove a slice theorem for the aforementioned action of $\Aut(P)$ on $ \Conf(P)$. In order to achieve this, the correspondence between elements $(g,A) \in \Conf(P)$ and $\frc$-adapted metrics on $P$ established by $\Theta^{\frc} \colon \Conf(P) \to \Met(P)$ will be particularly useful. For every $u\in \Aut(P)$ we have a commutative diagram of Fréchet manifolds:
\begin{equation}
\begin{tikzcd}[column sep=1.75cm, row sep=1cm]
\Conf(P)  \ar[r, "\Theta^{\frc}"] \ar[d, "\Phi_u"']
& \Met(P) \ar[d, "u^{\ast}"]
\\
\Conf(P) \ar[r, "\Theta^{\frc}"']
& \Met(P)
\end{tikzcd}
\end{equation}

\noindent
where:
\begin{equation*}
\Phi_u \colon \Conf(P) \to \Conf(P) \, , \qquad (g,A) \mapsto (f^{\ast}_u g, u^{\ast}A)\, ,
\end{equation*}

\noindent
and $u^{\ast}\colon \Met(P) \to \Met(P)$ denotes the action of the diffeomorphism $u\in \Diff(P)$ via pullback. Therefore, the smooth action $\Phi$ of $\Aut(P)$ on $\Conf(P)$ can be equivalently studied through the pullback action of $\Aut(P)$ on the metrics lying in the image of $\Theta^{\frc}$. In particular, $u^{\ast}\Theta^{\frc}_{g,A} \in 
\mathrm{Im}(\Theta^{\frc})$ for every $(g,A) \in \Conf(P)$ and $u\in \Aut(P)$.


\subsection{Isotropy subgroups in $\Aut(P)$ and $\cG(P)$}
\label{subsec:IsotropyP}


The \emph{isotropy group} $\cI_{(g,A)} \subset \Aut(P)$ of an element $(g,A)\in\Conf(P)$ is by definition its group of symmetries, namely:  
\begin{equation*}
\cI_{(g,A)} := \left\{ u\in \Aut(P) \,\,\, \vert\,\,\, (f^{\ast}_{u}g , u^{\ast}(A)) = (g , A) \right\}\, .
\end{equation*}

\noindent
For further reference, we denote by $\cI_{A} \subset \cG(P)$ the isotropy group of the connection $A$ on $P$, namely the subgroup of $\cG(P)$ that preserves $A$ under the natural action by pullback.

\begin{lemma}
Let $A$ be a connection on $P$ and fix a point $m\in M$. Then, the map:
\begin{equation*}
\cI_{A} \to C[\mathrm{Hol}_m(A),\Aut(P_m)]\, ,
\quad
u \mapsto u_m \coloneqq u_{|P_m}
\end{equation*}
	
\noindent
is an isomorphism of groups, where $C[\mathrm{Hol}_m(A),\Aut(P_m)]$ denotes the centralizer of the holonomy of $A$ at $m\in M$ inside the automorphism group $\Aut(P_m) \cong\G$ of the fiber $P_m$. 
\end{lemma}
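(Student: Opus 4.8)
The plan is to construct the stated map explicitly and then exhibit a two-sided inverse. First I would unpack what an element $u \in \cI_A$ is: it is a gauge transformation (an element of $\cG(P)$, since it covers the identity by the very definition of $\cI_A \subset \cG(P)$) that preserves $A$ under pullback, i.e. $u^*A = A$. Because $u$ covers the identity on $M$, it restricts to each fiber $P_m$ as a $\G$-equivariant diffeomorphism $u_m := u_{|P_m} \colon P_m \to P_m$, and such a map is precisely an element of $\Aut(P_m) \cong \G$. This shows the map $u \mapsto u_m$ is well defined into $\Aut(P_m)$, and it is manifestly a group homomorphism since restriction to a fiber commutes with composition. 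The first substantive step is then to verify that the image actually lands in the centralizer $C[\mathrm{Hol}_m(A), \Aut(P_m)]$. Here I would use the standard fact that parallel transport along a loop $\gamma$ based at $m$ is an element of $\Aut(P_m)$, and that $u^*A = A$ forces $u$ to intertwine parallel transport: if $u$ preserves the connection, then $u_m$ commutes with the holonomy transformations $\mathrm{Hol}_m(A, \gamma)$ for every loop $\gamma$. Concretely, $u$ maps horizontal curves to horizontal curves (this is exactly the content of $u^*A = A$, since the horizontal distribution is $\Ker A$), so $u$ commutes with horizontal lifting, and evaluating at endpoints of a based loop gives $u_m \circ \mathrm{Hol}_m(A,\gamma) = \mathrm{Hol}_m(A,\gamma) \circ u_m$.

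Next I would establish injectivity. Suppose $u_m = \mathrm{id}_{P_m}$. I would argue that $u$ must then be the identity on the whole of $P$ by a holonomy/connectedness argument: any point $p \in P$ over a point $m' \in M$ can be reached from the fiber $P_m$ by horizontal transport along a path from $m$ to $m'$ (using that $M$ is connected, an assumption in force throughout the paper). Since $u$ preserves horizontal paths and fixes the starting fiber pointwise, it fixes $p$. Hence $u = \mathrm{id}_P$, and the kernel of the homomorphism is trivial.

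For surjectivity — which I expect to be the main obstacle — I would reverse the construction: given $\phi \in C[\mathrm{Hol}_m(A), \Aut(P_m)]$, I must build a genuine gauge transformation $u \in \cI_A$ with $u_m = \phi$. The natural recipe is to define $u$ by horizontal transport: for $p \in P$ over $m'$, pick a path $c$ from $m$ to $m'$, let $\tilde c$ be the horizontal lift ending at $p$ with some starting point $q \in P_m$, and set $u(p)$ to be the endpoint of the horizontal lift of $c$ starting at $\phi(q)$. The hard part is verifying that this $u(p)$ is \emph{independent of the chosen path $c$}: changing the path multiplies $q$ by a holonomy element, and well-definedness is precisely where the centralizer condition $\phi \circ \mathrm{Hol}_m = \mathrm{Hol}_m \circ \phi$ is used to cancel the ambiguity. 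I would then check that the resulting $u$ is smooth (locally it is defined by smooth horizontal lifting), $\G$-equivariant (since $\phi$ is $\G$-equivariant and horizontal transport commutes with the $\G$-action because $A$ is a principal connection), covers the identity, and satisfies $u^*A = A$ by construction (it sends horizontal vectors to horizontal vectors). Finally, $u_m = \phi$ holds by restricting to the starting fiber along constant paths. The only delicate points beyond path-independence are smoothness of $u$ across different homotopy classes and the equivariance bookkeeping, but both follow from the standard theory of principal connections recalled in Subsection~\ref{sec:Atiyahalgebroid}. This gives the inverse homomorphism $\phi \mapsto u$, completing the proof that the map is a group isomorphism.
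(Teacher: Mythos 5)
Your proposal follows essentially the same route as the paper's proof: well-definedness via the fact that $u^{\ast}A = A$ forces $u$ to commute with parallel transport, injectivity from connectedness of $M$, and surjectivity by extending an element of the centralizer along horizontal lifts, with path-independence supplied exactly by the centralizer condition. Your version spells out the verification steps (equivariance, smoothness, $u^{\ast}A = A$ for the extension) in somewhat more detail than the paper does, but the argument is the same.
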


\begin{proof}
We first show that the map is well-defined.
Consider the holonomy group $\mathrm{Hol}_m(A)$ of $A$ based at a fixed point $m\in M$. A gauge transformation $u\in \cG(P)$ preserves $A$ if and only if it preserves its associated horizontal distribution, namely if and only if it commutes with the parallel transport prescribed by $A$. Therefore, considering loops based at $m \in M$ it follows that $u_m\in \Aut(P_m)$ commutes with $\mathrm{Hol}_m(A)$ for every $u\in \cI_{A}$.
Next, we show that the map is injective. Since $M$ is connected, if $u\in\cI_{A}$ is trivial at $m\in M$ then $u$ acts trivially on $P$. In particular, the evaluation map:
\begin{equation*}
\cI_{A}\ni u \mapsto u_m \in \Aut(P_m)\, ,
\end{equation*}
	
\noindent
is an injective homomorphism. Consequently, the image of $\cI_{A}$ in $\Aut(P_m)$ commutes with $\mathrm{Hol}_m(A)$.
Finally, we check that the maps is also surjective. Every element $u_0 \in C[\mathrm{Hol}_m(A),\Aut(P_m)]$ can be extended by the parallel transport prescribed by $A$ to a unique gauge transformation $u\in \cG(P)$ such that $u_m = u_0$. This extension is independent of the paths used to connect any two given points precisely because $u_0$ belongs to $C[\mathrm{Hol}_m(A),\Aut(P_m)]$.
\end{proof}

\noindent
Given a fixed point $m\in M$, by the previous lemma we obtain, c.f.\ Equation \eqref{eq:shortexactLie}, that $\cI_{(g,A)}$ fits non-canonically into the following short exact sequence:
\begin{equation}
1 \to C[\mathrm{Hol}_m(A),\G] \to \cI_{(g,A)} \to \Iso(M,g)^{\prime}\to 1\, ,
\end{equation}

\noindent
where $\Iso(M,g)^{\prime}$ denotes the Lie subgroup of the isometry group of $(M,g)$ that can be covered by elements in $\cI_{(g,A)}$. Recall that, since both $C[\mathrm{Hol}_m(A),\G]$ and $\Iso(M,g)^{\prime}$ are finite-dimensional Lie groups it follows that $\cI_{(g,A)}$ is a finite-dimensional Lie group. Furthermore, $\Theta^{\frc} \colon \Conf(P) \to \Met(P)$ identifies $\cI_{(g,A)}$ with the intersection of $\Aut(P)$ and the isometry group of $\Theta^{\frc}_{(g,A)}$ in $\Diff(P)$. Since the latter is compact and $\Aut(P)$ is closed in $\Diff(P)$ we conclude that $\cI_{(g,A)}$ is a compact Lie subgroup of $\Aut(P)$. Using this, we obtain the following result.

\begin{proposition}\cite[Page 44]{Subramanian}
\label{prop:quotientorbit}
Let $(g,A)\in \Conf(P)$. The quotient $\cI_{(g,A)}\backslash\Aut(P)$ has a unique tame manifold structure such that:
\begin{itemize}
    \item The natural projection $\Upsilon\colon \Aut(P)\to \cI_{(g,A)}\backslash\Aut(P)$ satisfies $\Ker(\dd_u \Upsilon) = (\dd_e \mathrm{R}_u) (T_e \cI_{(g,A)})$, where $\mathrm{R}_u \colon \Aut(P)\to \Aut(P)$ denotes right multiplication. 
    
    \item The projection  $\Upsilon\colon \Aut(P)\to \cI_{(g,A)}\backslash\Aut(P)$ admits local smooth tame sections.
\end{itemize}

\noindent
Furthermore, a map $f\colon \cI_{(g,A)}\backslash\Aut(P) \to Y$, where $Y$ is a smooth tame Fréchet manifold is smooth tame if and only if $f\circ\Upsilon \colon \Aut(P) \to Y$ is smooth tame.
\end{proposition}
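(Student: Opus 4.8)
The plan is to recognize this as a quotient-manifold theorem for the compact Lie subgroup $H \defeq \cI_{(g,A)}$ of the tame Fréchet Lie group $G \defeq \Aut(P)$ acting on $G$ by left multiplication, and to produce the manifold structure on $\cI_{(g,A)}\backslash\Aut(P)$ by transporting a single slice chart at the identity coset around by right translations. By the discussion preceding the statement, $H$ is a compact, finite-dimensional Lie subgroup of $G$, and this compactness is the feature I would exploit at every step: it makes the left action free and proper, and it permits averaging over the Haar measure of $H$.

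First I would construct an $\Ad(H)$-invariant slice at $e$. Write $\Gamma(\cA_P)$ for the tame Fréchet Lie algebra of $G$ and $\mathfrak{h} \defeq T_e\cI_{(g,A)} \subset \Gamma(\cA_P)$ for the (finite-dimensional) Lie algebra of $H$. Since $\mathfrak{h}$ is finite-dimensional it is complemented in $\Gamma(\cA_P)$ by a closed subspace; averaging the associated projection $\pi_0$ over $H$ with respect to the adjoint action of $H$ on $\Gamma(\cA_P)$, via $\pi_{\mathfrak{h}} = \int_H \Ad_h\circ \pi_0\circ \Ad_{h^{-1}}\, dh$, yields an $\Ad(H)$-equivariant projection onto $\mathfrak{h}$ whose kernel $\mathfrak{m}$ is a closed $\Ad(H)$-invariant complement, so $\Gamma(\cA_P) = \mathfrak{h}\oplus \mathfrak{m}$. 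Using the $\G$-equivariant exponential chart $\phi$ around $e\in G$ produced in the proof of Proposition \ref{prop:AutPFrechet}, I would set $S \defeq \phi(V)$ for a small neighborhood $V \subset \mathfrak{m}$ of $0$; this is a tame Fréchet submanifold whose tangent at $e$ is $\mathfrak{m}$, hence transverse to the orbit $H$ at $e$.

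The key step is to show that the multiplication map $\mu\colon H\times S \to G$, $(h,s)\mapsto hs$, is a tame diffeomorphism from a neighborhood of $(e,e)$ onto a neighborhood of $e$ in $G$. Its differential at $(e,e)$ is the sum map $\mathfrak{h}\oplus\mathfrak{m}\to \Gamma(\cA_P)$, an isomorphism by construction, and invertibility persists on a neighborhood with tame estimates because left translation acts by tame isomorphisms; the Nash--Moser inverse function theorem in the tame Fréchet category then gives the claim. From the bijectivity of $\mu$ near $(e,e)$ it follows that $\Upsilon|_S$ is injective and a homeomorphism onto an open neighborhood of $[e]$, giving a chart at the identity coset. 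I would then homogenize: the right translations $\mathrm{R}_u$ descend to $\cI_{(g,A)}\backslash\Aut(P)$ because left and right multiplications commute and $\mathrm{R}_u$ permutes the orbits $Hv$, so transporting the chart at $[e]$ by these descended translations yields an atlas whose transition maps are compositions of translations with the slice chart, hence smooth tame. Composing $(\Upsilon|_S)^{-1}$ with right translations produces the smooth tame local sections of $\Upsilon$, establishing the second bullet. The kernel computation in the first bullet is then immediate: the fiber of $\Upsilon$ through $u$ is the orbit $Hu$, whose tangent space at $u$ is $(\dd_e \mathrm{R}_u)(T_e\cI_{(g,A)})$, so $\Ker(\dd_u\Upsilon) = (\dd_e\mathrm{R}_u)(T_e\cI_{(g,A)})$.

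Finally I would deduce the universal property. If $f$ is smooth tame then $f\circ\Upsilon$ is smooth tame since $\Upsilon$ is smooth tame; conversely, if $f\circ\Upsilon$ is smooth tame, then near any coset $f$ agrees with $(f\circ\Upsilon)\circ\sigma$ for one of the local sections $\sigma$ just constructed, hence is smooth tame. Uniqueness of the tame structure follows formally from this criterion, since any two structures satisfying the two bullet properties make the identity map smooth tame in both directions. I expect the main obstacle to be the tame inverse function theorem step for $\mu$: one must verify that $\mu$ is a smooth tame map admitting a \emph{tame} family of inverses of its derivative in a neighborhood of the orbit, and this is precisely where the compactness of $H$ (for the averaging and for properness of the action) and the projective-limit-of-Hilbert structure of $G$ from Proposition \ref{prop:AutPFrechet} must be brought to bear.
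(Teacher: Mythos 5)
The paper does not actually prove this proposition: it is imported verbatim as a citation to Subramanian's thesis (\cite[Page 44]{Subramanian}), and the only in-text justification the authors supply is the preceding observation that $\cI_{(g,A)}$ is a compact, finite-dimensional Lie subgroup of $\Aut(P)$, which is exactly the hypothesis under which Subramanian's quotient theorem applies. Your proposal is therefore not an alternative to anything in the paper but a from-scratch reconstruction of the standard argument, and as such it is essentially correct: the $\Ad(H)$-averaged closed complement $\mathfrak{m}$ (legitimate because $\mathfrak{h}$ is finite-dimensional and $H$ compact), the slice $S=\phi(V)$ built from the equivariant chart of Proposition \ref{prop:AutPFrechet}, the Nash--Moser inversion of $\mu\colon H\times S\to G$, homogenization by right translations, and the deduction of the universal property and uniqueness are all the right steps, and you correctly identify the tame invertibility of $\dd\mu$ on a neighbourhood as the genuine technical crux.

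One step is stated too quickly. You claim that ``from the bijectivity of $\mu$ near $(e,e)$ it follows that $\Upsilon|_S$ is injective.'' Local bijectivity of $\mu$ near $(e,e)$ only rules out $hs=s'$ for $h$ \emph{close to} $e$; it does not by itself exclude two distinct points of $S$ lying in the same $H$-orbit via some $h$ far from the identity. To close this you must shrink $S$ using compactness of $H$: if $h_ns_n=s_n'$ with $s_n,s_n'\to e$ and $h_n\neq e$, pass to a convergent subsequence $h_n\to h$, observe $h=e$ by continuity, and then contradict local injectivity of $\mu$ at $(e,e)$. You have the right ingredient in hand (you invoke properness and compactness at the outset), but the deduction as written skips this argument, and it is precisely the point where compactness of $\cI_{(g,A)}$ enters beyond the averaging step. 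With that supplied, the remaining assertions (the kernel identification via the fibers $Hu$, the local sections $\sigma=(\Upsilon|_S)^{-1}$ transported by right translation, and uniqueness from the universal property) follow as you describe.
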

 
\noindent
Therefore, in the terminology of \cite{DiezRudolph} the subgroup $\cI_{(g,A)}\subset \Aut(P)$ is a principal Lie subgroup of $\Aut(P)$.


\subsection{The infinitesimal action and its adjoint map}


Given $(g,A)\in \Conf(P)$, we introduce the \emph{orbit map} of $(g,A)$ as the following smooth map of Fréchet manifolds:
\begin{equation*}
\Phi_{(g,A)}\colon\Aut(P) \to \Conf(P)\, , \qquad u \mapsto (f^{\ast}_u g , u^{\ast}(A))\, .	
\end{equation*}

\noindent
With this definition, the orbit $\cO_{(g,A)}\subset \Conf(P)$ of $\Aut(P)$ passing through $(g,A)\in \Conf(P)$ is simply given by $\cO_{(g,A)} = \mathrm{Im}(\Phi_{(g,A)})$. 
Recall from Proposition~\ref{prop:AutPFrechet} that the Lie algebra of $\Aut(P)$ is canonically identified with $\mathfrak{X}(P)^G$.

\begin{lemma}
The differential $\dd_{e}\Phi_{(g,A)}\colon \mathfrak{X}(P)^{\G} \to T_{(g,A)}\Conf(P)$ of $\Phi_{(g,A)}$ at the identity in $\Aut(P)$ is given by:
\begin{equation*}
\dd_{e}\Phi_{(g,A)}(v,\tau) = (\cL_v g , \dd_A\tau + \iota_v F_A)
\end{equation*}
	
\noindent
where we identify $\mathfrak{X}(P)^{\G}  = \mathfrak{X}(M) \oplus \Gamma(\frG_P)$ through the splitting of the Atiyah sequence \eqref{eq:Atiyahsequence} determined by the horizontal distribution associated to $A$. In particular, $T_{(g,A)}\cO_{(g,A)}\subset T_{(g,A)}\Conf(P)$ is a closed vector subspace of $T_{(g,A)}\Conf(P)$.
\end{lemma}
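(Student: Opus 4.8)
The plan is to differentiate the two components of the orbit map $\Phi_{(g,A)}(u) = (f^{\ast}_u g,\, u^{\ast}A)$ separately. I would take a one-parameter family $u_t \subset \Aut(P)$ with $\frac{\dd}{\dd t}u_t\vert_{t=0}$ equal to the $\G$-invariant vector field $X \in \mathfrak{X}(P)^{\G}$ corresponding to $(v,\tau)$ under the splitting of the Atiyah sequence \eqref{eq:Atiyahsequence} induced by $A$; that is, $X = s_A(v) + \theta_P^{-1}(\tau)$ is the sum of the $A$-horizontal lift of $v$ and the vertical fundamental field determined by $\tau$. For the metric component, I would use that $u\mapsto f_u$ is the projection $\Aut(P)\to \Diff^{\prime}(M)$ whose differential at $e$ is the anchor $\rho_P\colon \mathfrak{X}(P)^{\G}\to \mathfrak{X}(M)$, $(v,\tau)\mapsto v$. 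Composing with the standard formula $\frac{\dd}{\dd t}f_t^{\ast}g\vert_{t=0} = \cL_w g$ for the variation of the pullback along a flow with generator $w$, the chain rule yields $\cL_v g$ for the first component.

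For the connection component, the differential of $u\mapsto u^{\ast}A$ at $e$ is $X\mapsto \cL_X A$, the Lie derivative of the connection one-form $A\in \Omega^1(P,\frg)$ along $X$. I would apply Cartan's formula $\cL_X A = \dd(\iota_X A) + \iota_X \dd A$ together with the structure equation $\dd A = \Omega - \tfrac12[A\wedge A]$, where $\Omega$ is the $\frg$-valued curvature form, and then split $X$ into horizontal and vertical parts. Since $A$ annihilates horizontal vectors and reproduces the generator on vertical ones, $\iota_X A$ is the equivariant $\frg$-valued function descending to $\tau\in \Gamma(\frG_P)$; since $\Omega$ is horizontal and equivariant, $\iota_X \Omega = \iota_{s_A(v)}\Omega$ descends to $\iota_v F_A$; and a direct check gives $\dd(\iota_X A) - \tfrac12\iota_X[A\wedge A] = \dd\tau + [A,\tau]$, which is exactly the covariant derivative $\dd_A\tau$ on the adjoint bundle. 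Collecting terms and descending to $M$ produces the second component $\dd_A\tau + \iota_v F_A$. The main care here is bookkeeping conventions and checking that each $\G$-invariant object descends correctly to the base.

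For the final assertion, I would note that $L \defeq \dd_e\Phi_{(g,A)}$ is a first-order linear differential operator $\mathfrak{X}(M)\oplus\Gamma(\frG_P) \to \Gamma(T^{\ast}M\odot T^{\ast}M)\oplus\Omega^1(M,\frG_P)$ and compute its principal symbol at a nonzero covector $\xi$. The term $\iota_v F_A$ is of order zero and drops out, leaving $(v,\tau)\mapsto (\Sym(\xi\otimes v^{\flat}),\, \xi\otimes\tau)$ with $v^{\flat}=g(v,\cdot)$. This symbol is injective: $\xi\otimes\tau=0$ forces $\tau=0$, and $\xi\odot v^{\flat}=0$ with $\xi\neq 0$ forces $v^{\flat}=0$, since two covectors with vanishing symmetric product cannot both be nonzero when $n>1$. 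Hence $L$ is overdetermined elliptic, so $L^{\ast}L$ is elliptic with finite-dimensional kernel $\ker L$, and on each Sobolev completion one obtains the $L^2$-orthogonal Hodge-type decomposition of the target into $\mathrm{Im}(L)$ and the finite-dimensional space $\ker(L^{\ast})$ of smooth sections. Passing to the inverse limit over Sobolev orders, as in the proof of Proposition \ref{prop:AutPFrechet}, this decomposition persists and exhibits $T_{(g,A)}\cO_{(g,A)} = \mathrm{Im}(L)$ as a closed subspace of $T_{(g,A)}\Conf(P)$.

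The main obstacle I anticipate is this last step: promoting the elliptic closed-range statement from the Sobolev completions to the tame Fréchet category, where in general the image of a bounded operator need not be closed. The resolution rests entirely on the overdetermined ellipticity established from the symbol, which guarantees a uniform splitting at every Sobolev level that survives the projective limit.
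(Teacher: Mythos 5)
Your proposal follows essentially the same route as the paper: differentiate the two components of the orbit map along a curve $u_t$, obtain $\cL_v g$ for the metric, and compute $\cL_X A$ via Cartan's formula and the structure equation $\dd A = \Omega_A - \tfrac12[A,A]$ before descending to $M$; the closedness of the image is likewise established in the paper (in the text immediately following the lemma and in Lemma \ref{lemma:deformationelliptic}) by exactly your injective-symbol argument yielding the $L^2$ decomposition $T_{(g,A)}\Conf(P) = \mathrm{Im}(\dd_e\Phi_{(g,A)}) \oplus \Ker((\dd_e\Phi_{(g,A)})^{\ast})$ on Sobolev completions and passing to the projective limit. One slip worth correcting: for an overdetermined elliptic operator $L$ it is $\Ker(L)$ that is finite-dimensional, not $\Ker(L^{\ast})$ --- indeed $\Ker((\dd_e\Phi_{(g,A)})^{\ast})$ is the (infinite-dimensional) infinitesimal slice --- but this does not affect the closed-range conclusion, which rests only on the injectivity of the symbol.
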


\begin{proof}
Let $(g,A)\in\Conf(P)$ and consider a smooth curve $u_t\in \Aut(P)$ such that $u_0 = \Id$ and:
\begin{equation*}
\frac{\dd}{\dd t} u_t \vert_{t=0} = X \in \mathfrak{aut}(P) = \mathfrak{X}(P)^{\G}\, .
\end{equation*}
	
\noindent
Using the connection $A$ we obtain, as explained in Section \ref{sec:Atiyahalgebroid}, a canonical isomorphism of vector bundles $\mathfrak{X}(P)^{\G} = \mathfrak{X}(M) \oplus \Gamma(\frG_P)$ identifying $X=v\oplus \tau$. Here $v$ is the unique vector field on $M$ lifting through $A$ to the horizontal projection of $X$, and $\tau$ is the vertical projection understood as a section of the adjoint bundle of $P$. Hence:
\begin{equation*}
\dd_{e}\Phi_{(g,A)}(v,\tau) = \frac{\dd}{\dd t}\Phi_{(g,A)}(u_t)\vert_{t=0} = (\cL_v g,\cL_X A)\, .
\end{equation*}
	
\noindent
Writing $X = X_H \oplus X_V$ in terms of its horizontal $X_H$ and vertical $X_V$ components with respect the decomposition of $TP$ determined by $A$ we compute:
\begin{equation*}
\cL_X A = \dd\iota_X A + \iota_X\dd A =\dd(A(X_V)) + \Omega_A(X) - \frac{1}{2} [A,A](X) = \dd\bar{\tau} + [A,\bar{\tau}] + \Omega_A(X)\, , 
\end{equation*}
	
\noindent
where $\Omega_A$ is the curvature of $A$ as a Lie algebra valued two-form on $P$ and $\bar{\tau} = A(X_V) = \dd \Psi^{-1}(X_V)$ is the $\frg$-valued function on $P$ corresponding to $\tau\in \Gamma(\frG_P)$. Projecting the previous equation to $M$ we conclude.
\end{proof}

\begin{remark}
By the previous Lemma, it follows that the Lie algebra $\mathfrak{i}_{(g,A)} $ of $\cI_{(g,A)}$ for a pair $(g,A) \in \Conf(P)$ can be given as follows:
\begin{eqnarray*}
\mathfrak{i}_{(g,A)} = \left\{ (v,\tau) \in \mathfrak{X}(M) \oplus \Gamma(\frG_P)\,\, \vert\,\, \cL_v g = 0\, , \,\, \dd_A\tau + \iota_v F_A = 0 \right\}
\end{eqnarray*}

\noindent
Condition $\dd_A\tau = - \iota_v F_A$ is reminiscent of the definition of momentum map in symplectic geometry, with the curvature $F_A$ playing the role of a symplectic form. This intriguing idea is proposed and explored in \cite{Elgood:2020svt,Elgood:2020nls}, \emph{crystalizing} in several physics applications. To the best of our knowledge, this proposal has not been studied in the mathematics literature.
\end{remark}

\noindent
In order to proceed further, we endow $\Conf(P)$ with the $L^2$ metric $[\![ \cdot,\cdot]\!]_{g,\frc}$ determined by $g$ and $\frc$, which in our situation is explicitly given at the point $(g,A)\in \Conf(P)$ by the following expression:
\begin{equation*}
[\![ (h,a),(h,a) ]\!]_{g,\frc}  := \int_M ( g(h,h) +   \vert a\vert_{g,\frc}^2 ) \nu_g\, ,
\end{equation*}

\noindent
for every $(h,a) \in T_{(g,A)}\Conf(P)$. This defines a \emph{weak} Riemannian metric on $\Conf(P)$, invariant under $\Aut(P)$ transformations. The symbol of $\dd_{e}\Phi_{(g,A)}\colon \mathfrak{X}(P)^{\G} \to T_{(g,A)}\Conf(P)$ is injective, which implies the following $L^2$ orthogonal decomposition with closed factors:
\begin{equation*}
T_{(g,A)}\Conf(P) = \mathrm{Im}(\dd_{e}\Phi_{(g,A)}) \oplus \Ker(\dd_{e}\Phi_{(g,A)})^{\ast} \, ,
\end{equation*}

\noindent
where $(\dd_{e}\Phi_{(g,A)})^{\ast}\colon T_{(g,A)}\Conf(P) \to \mathfrak{X}(P)^{\G}$ denotes the formal $L^2$-adjoint of $\dd_{e}\Phi_{(g,A)}$.

\begin{lemma}
\label{lemma:adjointdPhi}
The adjoint differential operator $(\dd_{e}\Phi_{(g,A)})^{\ast}\colon T_{(g,A)}\Conf(P) \to \mathfrak{X}(P)^{\G}$ on $M$ is given by:
\begin{equation}
\label{eq:adjointdPhi}
(\dd_{e}\Phi_{(g,A)})^{\ast}(h,a) = \big( 2 (\nabla^{g\ast}h)^{\sharp_g} - (a \lrcorner_g^{\frc} F_A)^{\sharp_g} , \dd^{g\ast}_A a \big)\, .
\end{equation}

\noindent
In particular, the orthogonal complement of $T_{(g,A)}\cO_{(g,A)}$ in $T_{(g,A)}\Conf(P)$ is given by:
\begin{equation*}
(T_{(g,A)}\cO_{(g,A)})^{\perp_g} = \left\{ (g,a) \in T_{(g,A)}\Conf(P) \,\, \vert \,\,  2 (\nabla^{g\ast}h)^{\sharp_g} = ( a \lrcorner_g^{\frc} F_A)^{\sharp_g} \, , \, \, \dd^{g\ast}_A a = 0\right\}\, .
\end{equation*}

\noindent
Here we have defined $a \lrcorner_g^{\frc} F_A \in \Omega^1(M)$ by $(a \lrcorner_g^{\frc} F_A)(v) = - \langle a , \iota_v F_A \rangle_{g,\frc}$ for every $v\in \mathfrak{X}(M)$. 
\end{lemma}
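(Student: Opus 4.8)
The plan is to compute the formal $L^2$-adjoint directly from its defining relation
\begin{equation*}
[\![ \dd_{e}\Phi_{(g,A)}(v,\tau), (h,a) ]\!]_{g,\frc} = \int_M \Big( g(V,v) + \langle T, \tau\rangle_{\frc}\Big)\,\nu_g\, ,
\end{equation*}
required to hold for all $(v,\tau)\in \mathfrak{X}(M)\oplus \Gamma(\frG_P)$, where $(V,T) = (\dd_{e}\Phi_{(g,A)})^{\ast}(h,a)$ is to be determined. Substituting $\dd_{e}\Phi_{(g,A)}(v,\tau) = (\cL_v g, \dd_A\tau + \iota_v F_A)$ from the previous lemma, the left-hand side splits by bilinearity of the pairing into the three integrals
\begin{equation*}
\int_M g(\cL_v g, h)\,\nu_g\, , \qquad \int_M \langle \dd_A\tau, a\rangle_{g,\frc}\,\nu_g\, , \qquad \int_M \langle \iota_v F_A, a\rangle_{g,\frc}\,\nu_g\, .
\end{equation*}
I would treat each term separately and then read off $V$ and $T$ by matching against the right-hand side.

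For the first integral I would use $\cL_v g = 2\,\mathrm{Sym}(\nabla^g v^{\flat_g})$, so that $g(\cL_v g, h) = 2\,g(\nabla^g v^{\flat_g}, h)$ by the symmetry of $h$; integrating by parts on the closed manifold $M$ then yields $2\int_M g(v, (\nabla^{g\ast}h)^{\sharp_g})\,\nu_g$, contributing the term $2(\nabla^{g\ast}h)^{\sharp_g}$ to $V$. For the second integral, the definition of $\dd_A^{g\ast}$ as the formal adjoint of $\dd_A$ gives directly $\int_M \langle \tau, \dd_A^{g\ast}a\rangle_{\frc}\,\nu_g$, contributing $\dd_A^{g\ast}a$ to $T$. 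For the third integral I would unwind the (pointwise, algebraic) definition of the contraction $\lrcorner_g^{\frc}$ from the statement, namely $\langle \iota_v F_A, a\rangle_{g,\frc} = \langle a, \iota_v F_A\rangle_{g,\frc} = -(a \lrcorner_g^{\frc} F_A)(v) = -g((a \lrcorner_g^{\frc} F_A)^{\sharp_g}, v)$, which contributes $-(a \lrcorner_g^{\frc} F_A)^{\sharp_g}$ to $V$. Collecting the $\mathfrak{X}(M)$- and $\Gamma(\frG_P)$-components produces exactly \eqref{eq:adjointdPhi}.

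Finally, the orthogonal complement $(T_{(g,A)}\cO_{(g,A)})^{\perp_g}$ coincides with $\Ker((\dd_{e}\Phi_{(g,A)})^{\ast})$: since $T_{(g,A)}\cO_{(g,A)} = \mathrm{Im}(\dd_{e}\Phi_{(g,A)})$, the $L^2$-orthogonal splitting with closed factors recorded immediately before the lemma identifies the orthogonal complement of the image with the kernel of the adjoint. Setting both components of \eqref{eq:adjointdPhi} to zero gives the stated equations $2(\nabla^{g\ast}h)^{\sharp_g} = (a \lrcorner_g^{\frc} F_A)^{\sharp_g}$ and $\dd_A^{g\ast}a = 0$.

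The main obstacle will be purely one of bookkeeping conventions: the factor of $2$ arising from the symmetry of $h$ paired against $\cL_v g$, together with the sign conventions built into $\nabla^{g\ast}$ and into the definition of $\lrcorner_g^{\frc}$. None of the three terms presents an analytic difficulty, as the integrations by parts are precisely those used in the first variation of the Einstein and Yang-Mills functionals; the content lies entirely in assembling the pieces with the correct coefficients and signs and in invoking the previously established orthogonal decomposition to pass from the adjoint to the orthogonal complement.
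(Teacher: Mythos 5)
Your proposal is correct and follows essentially the same route as the paper: both compute $(\dd_e\Phi_{(g,A)})^{\ast}$ from the defining $L^2$-pairing, split the integrand into the metric, gauge, and curvature-contraction terms, integrate by parts on the first two, and read off the third algebraically from the definition of $\lrcorner_g^{\frc}$, then identify $(T_{(g,A)}\cO_{(g,A)})^{\perp_g}$ with $\Ker((\dd_e\Phi_{(g,A)})^{\ast})$. The signs and the factor of $2$ all come out as in the paper.
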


\begin{proof}
Let $(g,A)\in \Conf(P)$. Consider $(h,a)\in T_{(g,A)}\Conf(P)$ and $X\in \mathfrak{X}(P)^G$ in the presentation $X=v\oplus \tau$ determined by $A$, we compute:
\begin{align*}
 [\![ (\dd_{e}\Phi_{(g,A)})(v,\tau) , (h, a)  ]\!]_{g,\frc} &= [\![ (\cL_v g, \dd_A\tau + \iota_v F_A) , (h, a)  ]\!]_{g,\frc} \\ &= \int_M ( g(\cL_v g,h) +   \langle \dd_A\tau + \iota_v F_A , a\rangle_{g,\frc} ) \nu_g \\
& = \int_M ( g(v, 2\nabla^{g\ast} h) +   \langle  \tau, \dd^{g\ast}_A a\rangle_{g,\frc} + \langle  F_A , v\wedge a\rangle_{g,\frc} ) \nu_g  \\ &= \int_M ( g(v, 2\nabla^{g\ast} h) +   \langle  \tau, \dd^{g\ast}_A a\rangle_{g,\frc} -  \langle  a \lrcorner_g^{\frc} F_A , v \rangle_{g,\frc} ) \nu_g\\
& = \int_M ( g(v, 2\nabla^{g\ast} h -  a \lrcorner_g^{\frc} F_A) +   \langle  \tau, \dd^{g\ast}_A a\rangle_{g,\frc} ) \nu_g \\ &= [\![ (v,\tau) , (2\nabla^{g\ast} h -  a \lrcorner_g^{\frc} F_A, \dd^{g\ast}_A a)  ]\!]_{g,\frc}
\end{align*}

\noindent
and hence we conclude.
\end{proof}

\noindent
The tame Fréchet space $(T_{(g,A)}\cO_{(g,A)})^{\perp_g}\subset T_{(g,A)}\Conf(P)$ is a natural candidate of \emph{infinitesimal slice} for the action of $\Aut(P)$ on $\Conf(P)$. We will verify that this is indeed the case in the following subsection.


\subsection{The slice theorem}
\label{subsec:slicetheorem}


In this subsection, we prove a smooth local slice theorem for the action of $\Aut(P)$ on $\Conf(P)$ modelled on the tame Fréchet space $(T_{(g,A)}\cO_{(g,A)})^{\perp_g}$. We take \cite[Definition 2.2]{DiezRudolph} as our definition of a slice in the tame Fréchet category. The existence of a slice around every $(g,A) \in \Conf(P)$ can be obtained as an application of the general theorem proved in \cite[Theorem 3.28]{DiezRudolph} for tame Fréchet smooth actions. The hypothesis of \cite[Theorem 3.28]{DiezRudolph} as well as an explicit proof of the existence of a slice follows by using the celebrated slice theorem for $\Diff(P)$ on $\Met(P)$ in the Fréchet category \cite{CorroKordass,Ebin,Hamilton,Subramanian}, together with the Fréchet tame identification of $\Conf(P)$ with the image of $\Theta^{\frc}$ in $\Met(P)$. 

\begin{theorem}
\label{thm:slice}
Let $(g,A)\in \Conf(P)$. Then, there exists a slice $\mathbb{S}  \subset \Conf(P)$ around $(g,A)\in \Conf(P)$ which is the image of an equivariant diffeomorphism $\mathrm{E}\colon \cU \to \mathbb{S} $, where $\cU \subset (T_{(g,A)}\cO_{(g,A)})^{\perp_g}$ is an open neighbourhood of $0$ in $(T_{(g,A)}\cO_{(g,A)})^{\perp_g}$.
\end{theorem}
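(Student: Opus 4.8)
The plan is to obtain $\mathbb{S}$ as an application of the general slice theorem of Diez and Rudolph \cite[Theorem 3.28]{DiezRudolph}, whose hypotheses I would verify for the action $\Phi$ of $\Aut(P)$ on $\Conf(P)$, and to supply the required tame local model by transporting, through the embedding $\Theta^{\frc}$, the classical slice theorem for $\Diff(P)$ acting on $\Met(P)$ \cite{CorroKordass, Ebin, Hamilton, Subramanian}. Three ingredients must be assembled: (i) the action is smooth and tame, which was already noted; (ii) the isotropy group $\cI_{(g,A)}$ is a principal Lie subgroup of $\Aut(P)$, which is exactly the content of Proposition \ref{prop:quotientorbit}, $\cI_{(g,A)}$ being moreover compact; and (iii) a tame, $\cI_{(g,A)}$-invariant complement to the orbit tangent space that integrates to an honest transversal, the candidate being the infinitesimal slice $(T_{(g,A)}\cO_{(g,A)})^{\perp_g}$ produced in Lemma \ref{lemma:adjointdPhi}.

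First I would promote the $L^2$-orthogonal decomposition
\begin{equation*}
T_{(g,A)}\Conf(P) = \mathrm{Im}(\dd_e\Phi_{(g,A)}) \oplus \Ker((\dd_e\Phi_{(g,A)})^{\ast}),
\end{equation*}
already recorded before Lemma \ref{lemma:adjointdPhi} from the injectivity of the symbol of $\dd_e\Phi_{(g,A)}$, from the level of a single Sobolev completion to a tame splitting of Fréchet spaces. Exactly as in the proof of Proposition \ref{prop:AutPFrechet}, one realizes $T_{(g,A)}\Conf(P)$ as the projective limit of its Sobolev completions, checks that the elliptic projections onto the two factors are compatible across the chain, and invokes the open mapping theorem for Fréchet spaces to obtain closed, complemented, tame factors. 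This identifies $\mathrm{Im}(\dd_e\Phi_{(g,A)}) = T_{(g,A)}\cO_{(g,A)}$ with the orbit tangent space and exhibits the infinitesimal slice $(T_{(g,A)}\cO_{(g,A)})^{\perp_g} = \Ker((\dd_e\Phi_{(g,A)})^{\ast})$, described explicitly by \eqref{eq:adjointdPhi}, as its tame complement. Its $\cI_{(g,A)}$-invariance is immediate from the $\cI_{(g,A)}$-invariance of the weak $L^2$ metric $[\![ \cdot,\cdot ]\!]_{g,\frc}$.

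To integrate this complement I would pass to the total space via $\Theta^{\frc}$. Set $\bar{g} := \Theta^{\frc}_{g,A} \in \Met(P)$. By Proposition \ref{prop:ImThetaFrechet} the map $\Theta^{\frc}$ is a tame embedding with closed image, and by the commutative diagram intertwining $\Theta^{\frc}$ and $\Phi_u$ it conjugates $\Phi$ into the pullback action of $\Aut(P)\subset \Diff(P)$ on the $\Aut(P)$-invariant submanifold $\mathrm{Im}(\Theta^{\frc})$. Ebin's slice theorem furnishes a slice for $\Diff(P)$ at $\bar{g}$ as the image of a neighbourhood of $0$ in the $L^2$-orthogonal complement of the $\Diff(P)$-orbit under the exponential map $\mathrm{Exp}^{\bar{g}}$ of the weak $L^2$ metric on $\Met(P)$. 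Because $\cI_{(g,A)}$ is realized by $\Theta^{\frc}$ as a group of $\bar{g}$-isometries (as noted in Subsection \ref{subsec:IsotropyP}), the map $\mathrm{Exp}^{\bar{g}}$ is $\cI_{(g,A)}$-equivariant and preserves the relevant orthogonal complements. Restricting $\mathrm{Exp}^{\bar{g}}$ to the image $\dd\Theta^{\frc}\big((T_{(g,A)}\cO_{(g,A)})^{\perp_g}\big)$, intersecting with $\mathrm{Im}(\Theta^{\frc})$, and transporting back by $(\Theta^{\frc})^{-1}$ defines the $\cI_{(g,A)}$-equivariant map $\mathrm{E}\colon \cU \to \mathbb{S}$, with $\cU \subset (T_{(g,A)}\cO_{(g,A)})^{\perp_g}$ an open neighbourhood of $0$. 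It then remains to check the axioms of \cite[Definition 2.2]{DiezRudolph}: that $\mathbb{S}$ is $\cI_{(g,A)}$-invariant and that the induced map $\Aut(P)\times_{\cI_{(g,A)}} \mathbb{S} \to \Conf(P)$ is a tame diffeomorphism onto an $\Aut(P)$-invariant neighbourhood of the orbit; both follow from the corresponding properties of the Ebin slice together with the tameness of $\Theta^{\frc}$.

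I expect the main obstacle to be the integration step itself, namely upgrading the linear infinitesimal slice to a genuine nonlinear transversal in the Fréchet category, where the ordinary inverse function theorem is unavailable. This is precisely the difficulty handled by the Nash--Moser tame inverse function theorem underlying both \cite[Theorem 3.28]{DiezRudolph} and the classical metric slice theorem. The delicate point specific to our situation is that Ebin's construction produces the complement of the ambient \emph{$\Diff(P)$-orbit in $\Met(P)$}, whereas the slice must model the complement of the \emph{$\Aut(P)$-orbit in $\Conf(P)$}; matching these two transversals under $\dd\Theta^{\frc}$, and confirming that the restriction of $\mathrm{Exp}^{\bar{g}}$ genuinely lands in and sweeps out a neighbourhood inside $\mathrm{Im}(\Theta^{\frc})$, is the crux of the argument.
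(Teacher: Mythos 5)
Your proposal follows essentially the same route as the paper: both invoke the general slice theorem \cite[Theorem 3.28]{DiezRudolph}, transport the problem to $\Met(P)$ via the closed tame embedding $\Theta^{\frc}$, take $(T_{(g,A)}\cO_{(g,A)})^{\perp_g}$ as the infinitesimal slice, and realize the slice as the exponential image of a neighbourhood of zero in that complement. The one place where you genuinely diverge --- and which you yourself flag as the unresolved crux --- is in sourcing the nonlinear transversal from Ebin's slice for the \emph{full} $\Diff(P)$-action at $\bar{g}=\Theta^{\frc}_{g,A}$ and then attempting to match that transversal with the normal space of the $\Aut(P)$-orbit inside $T\,\mathrm{Im}(\Theta^{\frc})$. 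The paper bypasses this matching problem entirely: it verifies hypotheses (i)--(iv) of \cite[Theorem 3.28]{DiezRudolph} directly for the action of $\Aut(P)$ on the closed invariant submanifold $\mathrm{Im}(\Theta^{\frc})$ --- properness inherited from the proper $\Diff(P)$-action on $\Met(P)$, the principal subgroup condition from Proposition \ref{prop:quotientorbit}, tame regularity from the injective symbol of $\dd_e\Phi_{(g,A)}$ (cf.\ Lemma \ref{lemma:deformationelliptic}), and, crucially, an exponential map which is the one \emph{induced} on $\mathrm{Im}(\Theta^{\frc})$ by restricting the Levi-Civita connection of the weak $L^2$ metric, rather than the ambient exponential of $\Met(P)$ intersected with $\mathrm{Im}(\Theta^{\frc})$. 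The general theorem then yields the slice $\bar{\mathbb{S}}_{\bar{g}}$ for the $\Aut(P)$-action on $\mathrm{Im}(\Theta^{\frc})$ as the exponential image of a neighbourhood of zero in $\cN\cO_{\bar{g}}$, and $(\Theta^{\frc})^{-1}$ carries it to $\Conf(P)$ because $\Theta^{\frc}$ preserves $L^2$-orthogonality. So the difficulty you isolate --- ambient $L^2$-geodesics of $\Met(P)$ possibly leaving $\mathrm{Im}(\Theta^{\frc})$, and the mismatch between the $\Diff(P)$- and $\Aut(P)$-transversals --- is not something to be overcome but something to be avoided by working intrinsically on $\mathrm{Im}(\Theta^{\frc})$ from the start; with that substitution your argument closes and coincides with the paper's.
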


\begin{proof}
Recall that $\Diff(P)$ is a tame Fréchet Lie group acting tamely and properly on the tame Fréchet manifold $\Met(P)$, see \cite{Hamilton} and \cite[Page 68]{Subramanian}. Consider $\Aut(P) \subset \Diff(P)$ acting on $\mathrm{Im} (\Theta^{\frc}) \subset \Met(P)$ via diffeomorphisms. By Proposition \ref{prop:AutPFrechet}, $\Aut(P)$ is a closed tame Fréchet Lie subgroup of $\Diff(P)$ and by Proposition  \ref{prop:ImThetaFrechet}, $\mathrm{Im} (\Theta^{\frc})$ is a closed tame Fréchet submanifold of $\Met(P)$ preserved by $\Aut(P)$. Hence, the action of $\Aut(P)$ on $\mathrm{Im} (\Theta^{\frc})$ is also tame and proper. This proves point $(i)$ in \cite[Theorem 3.28]{DiezRudolph}. In particular, given $\bar{g}\in \mathrm{Im} (\Theta^{\frc})$, its orbit $\cO_{\bar{g}}$ under the action of $\Aut(P)$ is homeomorphic to $\cI_{(g,A)}\backslash \Aut(P)$, which by Proposition \ref{prop:quotientorbit} is a tame Fréchet manifold. Again by Proposition \ref{prop:quotientorbit}, we conclude that $\cI_{(g,A)}$ is a principal tame Fréchet Lie subgroup of $\Aut(P)$, which proves point $(ii)$ in \cite[Theorem 3.28]{DiezRudolph}. The fact that the differential of the orbit map $\Phi_{(g,A)}\colon \Aut(P)\to \Conf(P)$ has injective symbol, proven explicitly in Lemma \ref{lemma:deformationelliptic} below, implies that the action of $\Aut(P)$ on $\mathrm{Im} (\Theta^{\frc})$ is a tame regular map, which yields point $(iii)$ in \cite[Theorem 3.28]{DiezRudolph}. By \cite[Proposition 3.17]{DiezRudolph} this shows, in turn, that the orbit $\cO_{\bar{g}}$ is a smooth closed submanifold of $\Aut(P)$ and that the $L^2$ orthogonal complements of $T\cO_{\bar{g}}$ inside $T\mathrm{Im} (\Theta^{\frc})\vert_{\cO_{\bar{g}}}$ assemble into a smooth split normal subbundle $\cN\cO_{\bar{g}}$ of the latter. In particular, $\cN\cO_{\bar{g}}$ is a smooth tame subbundle of $T\mathrm{Im} (\Theta^{\frc})\vert_{\cO_{\bar{g}}}$ complementary to $T\cO_{\bar{g}}$. Furthermore, $\mathrm{Im} (\Theta^{\frc})$ inherits a smooth exponential map from that of $\Met(P)$, whose restriction to a neighbourhood of the zero section of the normal bundle $\cN\cO_{\bar{g}}$ is an equivariant local diffeomorphism onto its image. This shows that point $(iv)$ in \cite[Theorem 3.28]{DiezRudolph} holds since this exponential map is associated with the restriction to $\mathrm{Im} (\Theta^{\frc})$ of the Levi-Civita connection of the weak $L^2$ Riemannian metric on $\Met(P)$. Therefore, we conclude that the action of $\Aut(P)$ on $\mathrm{Im} (\Theta^{\frc})$ admits a slice $\bar{\mathbb{S}}_{\bar{g}}$ at every $\bar{g}\in \mathrm{Im} (\Theta^{\frc})$, which is realized as the restriction of the $L^2$ exponential map of $\mathrm{Im} (\Theta^{\frc})$ to the normal bundle $\cN\cO_{\bar{g}}$ of $T\cO_{\bar{g}}$ in $T\mathrm{Im} (\Theta^{\frc})$. Since $\Theta^{\frc}$ is smooth, tame, and equivariant, defining:
\begin{equation*}
\mathbb{S}_{\bar{g}} :=    (\Theta^{\frc})^{-1}(\bar{\mathbb{S}}_{\bar{g}})
\end{equation*}

\noindent
we obtain a slice for the action of $\Aut(P)$ on $\Conf(P)$ at the unique $(g,A)\in \Conf(P)$ such that $\bar{g} = \Theta^{\frc}_{(g,A)}$. Since $\Theta^{\frc}\colon \Conf(P) \to \Met(P)$ preserves orthogonality with respect to the corresponding $L^2$ metrics, the normal bundle $\cN\cO_{\bar{g}}$ of $\cO_{\bar{g}}$ corresponds to the $L^2$ normal bundle of $\cO_{(g,A)}$ in $\Conf(P)$ and hence we conclude.
\end{proof}

\begin{remark}
The slice $\mathbb{S} $ is tame Fréchet submanifold of $\Conf(P)$ since it is locally modeled on $(T_{(g,A)}\cO_{(g,A)})^{\perp_g}$, which, being a closed vector subspace of $T_{(g,A)}\Conf(P)$, is a tame Fréchet space.
\end{remark}

\begin{remark}
Theorem 3.28 \cite{DiezRudolph} implies not only the existence of a slice but of a \emph{linear slice} as defined in \cite[Definition 2.8]{DiezRudolph}. This is important in order to construct Kuranishi models of moduli spaces of geometric structures \cite{MeerssemanNicolau,DiezRudolphII}, since it identifies the slice with the quotient by the isotropy group of an open neighborhood of $0$ in the tangent space of the slice at the given base point.  
\end{remark}

\noindent
As a direct consequence of the slice theorem we obtain the following corollaries, c.f. \cite[Proposition 4.1]{CorroKordass} and \cite[Corollary 4.2]{CorroKordass}, where analog statements are proved in the case of Riemannian metrics on a compact manifold. The proofs in our case are formally identical and are therefore omitted.

\begin{corollary}
Let $(g,A)\in \Conf(P)$. There exists an open neighborhood  $\cU$ of the identity in $\Aut(P)$ and an open neighborhood $\cV$ of $(g,A)$ in $\Conf(P)$ such that for any $(g^{\prime},A^{\prime})\in \cV$ the symmetry group $\cI_{(g^{\prime},A^{\prime})}$ of $(g^{\prime},A^{\prime})$ is conjugate to a subgroup of $\cI_{(g,A)}$ via an element in $\cU$.
\end{corollary}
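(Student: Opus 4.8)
The plan is to read off the statement from the local tube structure furnished by Theorem~\ref{thm:slice}, via the classical argument comparing the isotropy groups of nearby points through a slice. To begin, I would fix a slice $\mathbb{S}$ at $(g,A)$ as in Theorem~\ref{thm:slice}, together with its defining properties in the sense of~\cite[Definition 2.2]{DiezRudolph}: the slice is invariant under the compact isotropy group $\cI_{(g,A)}$, and it is \emph{thin}, in the sense that $\Phi_u(\mathbb{S})\cap\mathbb{S}\neq\emptyset$ forces $u\in\cI_{(g,A)}$; equivalently, the map induced by $(u,s)\mapsto\Phi_u(s)$ is an equivariant diffeomorphism from $\Aut(P)\times_{\cI_{(g,A)}}\mathbb{S}$ onto an open neighborhood $T$ of the orbit $\cO_{(g,A)}$.

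The first step is the inclusion $\cI_s\subseteq\cI_{(g,A)}$ for every $s\in\mathbb{S}$. Indeed, if $u\in\cI_s$ then $\Phi_u(s)=s\in\mathbb{S}$, so that $s\in\Phi_u(\mathbb{S})\cap\mathbb{S}$; thinness of the slice then yields $u\in\cI_{(g,A)}$.

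The only genuinely delicate point is to arrange that the conjugating element lie in a prescribed neighborhood $\cU$ of the identity. For this I would invoke the local smooth tame section of the projection $\Upsilon\colon\Aut(P)\to\cI_{(g,A)}\backslash\Aut(P)$ provided by Proposition~\ref{prop:quotientorbit}, normalized so that the base coset is sent to the identity. Pulling back the tube diffeomorphism along such a section $\sigma$ exhibits a neighborhood of $(g,A)$ in $T$ as the collection of points $\Phi_{\sigma(c)}(s)$, with $c$ ranging over a neighborhood of the base coset in $\cI_{(g,A)}\backslash\Aut(P)$ and $s$ over a neighborhood of $(g,A)$ in $\mathbb{S}$. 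By continuity of $\sigma$ and of the inverse tube map, I can then shrink $\cV\subset T$ so that every $(g',A')\in\cV$ admits a representation $(g',A')=\Phi_u(s)$ with $s\in\mathbb{S}$ and $u=\sigma(c)\in\cU$. This is exactly where the homeomorphism property of the tube and the existence of local sections of $\Upsilon$ enter, and it is where I expect the main technical effort to lie.

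It remains to combine the two ingredients. From the composition law $\Phi_{u_2}\circ\Phi_{u_1}=\Phi_{u_1 u_2}$ and the definition of the isotropy group one computes $\cI_{\Phi_u(s)}=u^{-1}\,\cI_s\,u$; hence, for $(g',A')=\Phi_u(s)$ chosen as above,
\begin{equation*}
u\,\cI_{(g',A')}\,u^{-1}=\cI_s\subseteq\cI_{(g,A)}\, .
\end{equation*}
Therefore $\cI_{(g',A')}$ is conjugate to the subgroup $\cI_s$ of $\cI_{(g,A)}$ via $u\in\cU$, which is the desired assertion, after replacing $\cU$ by a symmetric neighborhood of the identity if one wishes to phrase the conjugation in the opposite direction.
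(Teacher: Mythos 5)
Your proof is correct and follows exactly the standard slice-theorem argument that the paper has in mind: the paper omits the proof, stating that it is ``formally identical'' to \cite[Proposition 4.1]{CorroKordass} for the action of $\Diff(M)$ on $\Met(M)$, and that argument is precisely your combination of the thinness property ($\cI_s\subseteq\cI_{(g,A)}$ for $s\in\mathbb{S}$), the tube parametrization via a local section of $\Upsilon$, and the conjugation identity $\cI_{\Phi_u(s)}=u^{-1}\cI_s u$. No gaps.
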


\begin{corollary}
The subset of elements of $\Conf(P)$ with a trivial symmetry group is open in $\Conf(P)$.
\end{corollary}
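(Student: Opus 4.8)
The plan is to obtain this as an immediate formal consequence of the upper semicontinuity of symmetry groups established in the previous corollary, reducing the statement to an elementary observation about the trivial group. Write
\[
\Conf(P)^{\mathrm{free}} := \big\{ (g,A)\in \Conf(P) \,\,\vert\,\, \cI_{(g,A)} = \{e\} \big\}
\]
for the locus of pairs with trivial symmetry group, and fix a point $(g,A)\in \Conf(P)^{\mathrm{free}}$. First I would apply the previous corollary to produce an open neighbourhood $\cV$ of $(g,A)$ in $\Conf(P)$ such that, for every $(g',A')\in \cV$, the symmetry group $\cI_{(g',A')}$ is conjugate inside $\Aut(P)$ to a subgroup of $\cI_{(g,A)}$.

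The decisive point is then purely group-theoretic. Since $\cI_{(g,A)} = \{e\}$ is trivial, its only subgroup is $\{e\}$ itself, and conjugation by any element of $\Aut(P)$ is a group isomorphism; hence any group conjugate to a subgroup of $\{e\}$ is again trivial. Applying this to each $(g',A')\in \cV$ forces $\cI_{(g',A')} = \{e\}$, i.e. $\cV \subset \Conf(P)^{\mathrm{free}}$. Thus every point of $\Conf(P)^{\mathrm{free}}$ admits a neighbourhood contained in $\Conf(P)^{\mathrm{free}}$, which is precisely the assertion that $\Conf(P)^{\mathrm{free}}$ is open.

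I do not expect any genuine obstacle in this argument, as all the analytic work has already been absorbed into the previous corollary, whose proof in turn rests on the slice theorem (Theorem~\ref{thm:slice}): one embeds a neighbourhood of $(g,A)$ in the linear slice $\mathbb{S}$, and uses the compactness of $\cI_{(g,A)}$ as a Lie subgroup of $\Aut(P)$ (noted in \secref{subsec:IsotropyP}) together with the equivariance of the slice map $\mathrm{E}$ to conjugate the stabiliser of any nearby pair into $\cI_{(g,A)}$. If one wished to make the present corollary self-contained, the only step requiring care would be that the conjugation effecting the reduction to the group-theoretic triviality above is realized through elements of $\Aut(P)$ near the identity supplied by the slice; this is automatic from the linear slice of Theorem~\ref{thm:slice} and is formally identical to the Riemannian metric case of \cite{CorroKordass}.
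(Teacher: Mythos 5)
Your argument is correct and is precisely the intended one: the paper omits the proof, stating that it is formally identical to the Riemannian-metric case of \cite{CorroKordass}, where openness of the free locus is deduced from the preceding upper-semicontinuity corollary exactly as you do. The group-theoretic reduction (a group conjugate to a subgroup of the trivial group is trivial) is the whole content once the previous corollary is in hand, so there is nothing to add.
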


\noindent
Furthermore, the fact that the set of metrics with trivial isometry group and connections with trivial symmetry group is open and dense in $\Met(M)$ and $\cC(P)$, respectively, together with the fact that $\Conf(P)$ is the direct product of $\Met(M)$ and $\cC(P)$ equipped with the corresponding product Fréchet structures implies, in addition, the following result.

\begin{corollary}
The subset of elements of $\Conf(P)$ with a trivial symmetry group is open and dense in $\Conf(P)$.
\end{corollary}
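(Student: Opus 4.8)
The plan is to deduce the statement from the corresponding genericity results on the two factors $\Met(M)$ and $\cC(P)$ together with the product structure $\Conf(P) = \Met(M)\times \cC(P)$. Write $\Conf_0(P) \defeq \{(g,A)\in\Conf(P) \mid \cI_{(g,A)} = \{\Id\}\}$ for the set whose openness and density we wish to establish, and let $\Met_0 \subset \Met(M)$ and $\cC_0\subset\cC(P)$ denote, respectively, the set of metrics with $\Iso(M,g) = \{\Id\}$ and the set of connections with $\cI_A = \{\Id\}$. By hypothesis, $\Met_0$ and $\cC_0$ are open and dense in their respective factors.

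First I would record the elementary observation that separate triviality forces joint triviality, i.e.\ $\Met_0\times\cC_0 \subseteq \Conf_0(P)$. Indeed, if $g\in\Met_0$ and $A\in\cC_0$, then any $u\in\cI_{(g,A)}$ covers an isometry $f_u$ of $(M,g)$; since $\Iso(M,g) = \{\Id\}$ we get $f_u = \Id$, so $\pi\circ u = \pi$ and $u\in\cG(P)$ preserves $A$, whence $u\in\cI_A = \{\Id\}$ and $u = \Id$. Equivalently, in the short exact sequence for $\cI_{(g,A)}$ both the kernel $C[\mathrm{Hol}_m(A),\G]$ and the quotient $\Iso(M,g)^{\prime}$ vanish.

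Next, density of $\Conf_0(P)$ follows at once: the product of two open dense subsets is open and dense in the product Fréchet manifold $\Met(M)\times\cC(P)$ (density because any nonempty basic open set $U\times V$ meets $\Met_0\times\cC_0$ in $(U\cap\Met_0)\times(V\cap\cC_0)\neq\varnothing$), and $\Conf_0(P)$ contains this dense set $\Met_0\times\cC_0$. For openness I would simply invoke the preceding corollary, which already asserts that $\Conf_0(P)$ is open in $\Conf(P)$.

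The one point requiring care — and the main obstacle to a purely product-theoretic argument — is that the inclusion $\Met_0\times\cC_0\subseteq\Conf_0(P)$ is in general strict: a pair $(g,A)$ with $\Iso(M,g)\neq\{\Id\}$ can still have trivial joint symmetry group whenever no nontrivial isometry of $(M,g)$ lifts to an automorphism of $P$ preserving $A$. Consequently the product structure yields density cleanly but not openness, and openness genuinely rests on the slice-theoretic input of the previous corollary rather than on the genericity of the individual factors.
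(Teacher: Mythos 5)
Your argument is correct and follows the same route as the paper, which likewise derives the result from the openness and density of the trivial-symmetry loci in each factor together with the product Fr\'echet structure of $\Conf(P)$, with openness supplied by the preceding slice-theoretic corollary. Your additional observation that $\Met_0\times\cC_0\subseteq\Conf_0(P)$ can be strict, so that the product argument alone yields density but not openness, is a worthwhile clarification of the logic the paper leaves implicit.
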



\section{The local Kuranishi model}
\label{sec:Kuranishi}



\subsection{Preliminaries}


In this section, we construct the local Kuranishi moduli of Einstein-Yang-Mills pairs on $(P,\frc)$ modulo automorphisms in $\Aut(P)$. To do this, we consider the following smooth map of Fréchet manifolds defined using the Einstein-Yang-Mills system as follows:
\begin{equation} \label{eqn:E}
\begin{array}{rcl}
\cE := (\cE_1, \cE_2) \colon \Conf(P) &\to& \Gamma(T^{\ast}M\odot T^{\ast}M) \times \Omega^1(M,\frG_P)\\[5pt]
 (g,A) &\mapsto& \left(\cE_1(g,A) := -(\mG^g  - \cT(g,A))\, ,
 \cE_2(g,A) := 2\kappa\,\dd_A^{g\ast} F_{A} \right) 
\end{array}
\end{equation}

\noindent
where $\frG_P$ denotes the adjoint bundle of $P$, $\mG^g = \Ric^g - \frac{1}{2} s^g g$ denotes the Einstein tensor of $g$ and $\cT$ denotes the energy-momentum tensor introduced in Definition \ref{def:energymomentum}. The space of solutions to the Einstein-Yang-Mills system on $(P,\frc)$ is given by $\cE^{-1}(0)\subset \Conf(P)$, which we consider to be endowed with the subspace topology. The choice of signs in (\ref{eqn:E}) is needed later for the self-adjointness of the differential.

\begin{remark}
We could have chosen to define the map $\cE_1 \colon \Met(M) \to \Gamma(T^{\ast}M\odot T^{\ast}M)$ by using the Einstein equation written in terms of the Ricci curvature and the reverse energy-momentum tensor as $\mathrm{Ric}^g = \hat{\cT}(g,A)$. This form of the Einstein equation seems to be more convenient to study the linearization of the Einstein-Yang-Mills system since it only involves the Ricci curvature operator instead of the Ricci and scalar curvature operators that are contained in the Einstein tensor. However, the linearization problem in this form turns out to be less transparent since the associated deformation complex is not self-adjoint.  
\end{remark}

\noindent
The automorphism group $\Aut(P)$ acts on $\Conf(P)$ equivariantly with respect to $\cE$, that is, we have:
\begin{equation*}
\cE(f_u^{\ast} g , u^{\ast}A ) = f_u^{\ast} \cE(g,A)\, ,
\end{equation*}

\noindent
for every $u\in \Aut(P)$ and $(g,A) \in \Conf(P)$, whence $\Aut(P)$ preserves the solution space $\cE^{-1}(0)\subset \Conf(P)$. The quotient:
\begin{equation*}
\mathfrak{M}(P,\frc) =  \cE^{-1}(0)/\Aut(P)\, ,
\end{equation*}

\noindent
equipped with quotient topology is the moduli space of Einstein-Yang-Mills pairs. 

\begin{remark}
Note that if $[g,A]\in \mathfrak{M}(P,\frc)$ then $[\lambda g,A]\in \mathfrak{M}(P,\lambda\frc)$ for every real positive constant $\lambda >0$. This is in sharp contrast to the situation occurring for the moduli space of Einstein metrics, which is preserved by homotheties. 
\end{remark}

\noindent
For further reference, we also introduce the smooth map of Fréchet manifolds corresponding to the trace of the Einstein equation of the Einstein-Yang-Mills system:
\begin{align}
\label{eq:tracEinstein-Yang-Mills}
\s \colon \Conf(P) &\to \cC^{\infty}(M)\,,\\
\notag
(g,A) &\mapsto \s (g,A) := s^g - \frac{\kappa (n-4)}{(2-n)} \vert F_A \vert^2_{g,\mathfrak{c}}\, .
\end{align}

\noindent
Clearly we have $\cE^{-1}(0)\subset \s^{-1}(0)$ and $\mathfrak{M}(P,\frc)\subset \s^{-1}(0)/\Aut(P)$. We expect the latter to be \emph{infinite-dimensional}, exactly as it happens with the moduli space of constant scalar curvature metrics in more than two dimensions.


\subsection{Infinitesimal theory}
\label{subsec:infinitesimal}


Let $(g,A)\in \Conf(P)$. The differentials of $\cE_1 \colon \Conf(P) \to \Gamma(T^{\ast}M\odot T^{\ast}M)$ and $\cE_2 \colon \Conf(P) \to\Omega^1(M)$ evaluated at $(g,A)$ define linear maps of Fréchet spaces:
\begin{equation*}
\dd_{(g,A)}\cE_1\colon T_{(g,A)}\Conf(P)\to \Gamma(T^{\ast}M\odot T^{\ast}M)\, , \qquad \dd_{(g,A)}\cE_2\colon T_{(g,A)}\Conf(P)\to \Omega^1(M)\,,
\end{equation*}

\noindent
which we compute in the following lemma.

\begin{lemma}
\label{lemma:infinitesimalE1}
For every $(h,a) \in T_{(g,A)}\Conf(P)$ the following formulas hold: 
\begin{align*}
&\dd_{(g,A)}\cE_1(h,a)
\\
&= - \Big( \frac{1}{2} \Delta_L h - \delta^{g}\nabla^{g\ast} h - \frac{1}{2}\nabla^g \dd\mathrm{Tr}_g(h)  - \frac{1}{2} s^g h - \frac{1}{2} \big(\Delta_g \mathrm{Tr}_g (h) - g(\Ric^g , h) + \nabla^{g\ast}\nabla^{g\ast} h \big) g  \\ 
& \qquad - \frac{\kappa}{2}\vert F_A\vert^2_{g,\frc} h - \kappa F_A \circ_{h,\frc} F_A + 2\kappa F_A \circ_{g,\frc} \dd_A a - \kappa \big(\langle F_A , \dd_A a\rangle_{g,\frc} - \frac{1}{2} g(F_A\circ_{g,\frc} F_A , h)\big) g \Big)\,, \\
&\dd_{(g,A)}\cE_2(h,a) = 2\kappa \Big( \dd_A^{g\ast}\dd_A a - a\lrcorner_g^{\frg} F_A + h(\dd_A^{g\ast} F_A) - \frac{1}{2} \dd\mathrm{Tr}_g(h)\lrcorner_g  F_A + \dd^{g\ast}_A (F_A)^g_h \Big) \,,
\end{align*}

\noindent
where $\Delta^g_L$ denotes the Lichnerowicz Laplacian in the conventions introduced in Appendix \ref{app:curvatureformulae}.
\end{lemma}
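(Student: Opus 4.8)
The plan is to differentiate $\cE_1$ and $\cE_2$ summand by summand along a curve $(g+th,\,A+ta)$, feeding in the elementary variation formulas already established in the proof of Lemma~\ref{lemma:YMequations} together with the classical linearizations of the curvature operators recorded in Appendix~\ref{app:curvatureformulae}. The inputs I rely on are: the curvature variation $\frac{\dd}{\dd t}F_{A_t}\vert_{t=0} = \dd_A a$; the two variations of the Yang--Mills density, namely $\frac{\dd}{\dd t}\vert F_A\vert^2_{g_t,\frc}\vert_{t=0} = - g(h,\,F_A\circ_{g,\frc} F_A)$ in the metric direction and $2\langle F_A,\,\dd_A a\rangle_{g,\frc}$ in the connection direction; the linearization of the scalar curvature $\dd_{g}s(h) = \Delta_g \mathrm{Tr}_g(h) + \nabla^{g\ast}\nabla^{g\ast}h - g(\Ric^g,h)$ used already in Lemma~\ref{lemma:YMequations} (Lemma~\ref{lemma:variationRs}); and the linearization of the Ricci tensor $\dd_{g}\Ric(h) = \tfrac12 \Delta_L h - \delta^{g}\nabla^{g\ast}h - \tfrac12 \nabla^g \dd\mathrm{Tr}_g(h)$.

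For $\cE_1$, I write $\cE_1(g,A) = -\Ric^g + \tfrac12 s^g g + \tfrac{\kappa}{2}\vert F_A\vert^2_{g,\frc}\, g - \kappa\, F_A\circ_{g,\frc} F_A$ and differentiate each term. The first two summands reproduce, by the classical formulas above and the Leibniz rule $\frac{\dd}{\dd t}(\tfrac12 s^{g_t} g_t)\vert_{t=0} = \tfrac12 \dd_g s(h)\, g + \tfrac12 s^g h$, the entire purely metric part of the asserted expression (up to the overall minus sign, which turns $\dd_g\Ric(h)$ and $\dd_g s(h)$ into the signs displayed). For the energy--momentum summands I again use Leibniz: the variation of $\tfrac{\kappa}{2}\vert F_A\vert^2_{g,\frc}\, g$ splits into $\tfrac{\kappa}{2}\big(\frac{\dd}{\dd t}\vert F_A\vert^2_{g_t,\frc}\big)g + \tfrac{\kappa}{2}\vert F_A\vert^2_{g,\frc} h$, with the first factor expanded through the two density variations, while the variation of $\kappa\, F_A\circ_{g,\frc} F_A$ decomposes into a purely metric piece $\kappa\, F_A\circ_{h,\frc} F_A$ (the first variation of the fibre pairing $\langle\cdot,\cdot\rangle_{g,\frc}$ in the direction $h$, which is precisely the symbol $\circ_{h,\frc}$ with the sign convention fixed in Appendix~\ref{app:curvatureformulae}) and a connection piece $2\kappa\, F_A\circ_{g,\frc}\dd_A a$, the factor $2$ coming from the symmetry of $\circ_{g,\frc}$ in its two curvature slots. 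Collecting terms and inserting the overall sign yields the stated formula for $\dd_{(g,A)}\cE_1(h,a)$.

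For $\cE_2(g,A) = 2\kappa\, \dd_A^{g\ast}F_A$ there are three independent sources of variation, which I treat separately. First, differentiating $F_A$ in the direction $a$ produces the principal term $\dd_A^{g\ast}\dd_A a$. Second, the connection inside the covariant codifferential varies: since $\dd_{A+ta} = \dd_A + t\,[a\wedge\,\cdot\,]_{\frG_P}$, the $L^2$-adjoint acquires the first-order correction dual to wedging-and-bracketing with $a$, which applied to $F_A$ is the algebraic term $-\,a\lrcorner_g^{\frg}F_A$ (the superscript $\frg$ recording the Lie bracket). Third, and most delicately, the codifferential depends on $g$ through the Hodge star, the fibre pairing $\langle\cdot,\cdot\rangle_{g,\frc}$ and the volume form $\nu_g$; writing $\dd_A^{g\ast}$ in terms of $\ast_g$ and differentiating by the standard variation-of-$\ast_g$ formula of Appendix~\ref{app:curvatureformulae} produces the remaining three terms $h(\dd_A^{g\ast}F_A)$, the volume-form contribution $-\tfrac12\,\dd\mathrm{Tr}_g(h)\lrcorner_g F_A$, and $\dd_A^{g\ast}(F_A)^g_h$, where $(F_A)^g_h$ is the $h$-linear modification of $F_A$ obtained by inserting $h$ into the metric contraction. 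Multiplying by $2\kappa$ and assembling gives $\dd_{(g,A)}\cE_2(h,a)$.

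The main obstacle is the third source of variation for $\cE_2$: the metric dependence of the codifferential $\dd_A^{g\ast}$ on $\frG_P$-valued two-forms. This requires the precise first-order variation of the Hodge star (equivalently of $\langle\cdot,\cdot\rangle_{g,\frc}$ together with $\nu_g$) and a careful separation of the piece in which $h$ acts algebraically from the piece entering through $\dd\mathrm{Tr}_g(h)$. A secondary point demanding care is the consistency of all integration-by-parts and adjoint conventions with those fixed in Appendix~\ref{app:curvatureformulae}; this bookkeeping is exactly what makes the resulting deformation complex self-adjoint and is the reason for the sign choices made in the definition \eqref{eqn:E} of $\cE$.
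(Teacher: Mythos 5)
Your proposal follows essentially the same route as the paper's proof: differentiate term by term along the linear curve $(g+th,\,A+ta)$, feed in the standard linearizations of $\Ric$, $s$, the volume form and the Hodge star from Appendix~\ref{app:curvatureformulae}, and split the variation of $\dd_A^{g\ast}F_A$ into the connection piece (giving $\dd_A^{g\ast}\dd_A a - a\lrcorner_g^{\frg}F_A$) and the metric dependence of the codifferential (giving the remaining three terms via the variation of $\ast_g$), exactly as the paper does. The one caveat is the sign in your parenthetical on $\circ_{h,\frc}$: the first variation of $\langle\cdot,\cdot\rangle_{g,\frc}$ in the direction $h$ is $-\circ_{h,\frc}$ (the inverse metric varies by minus the raised $h$), and it is this minus sign, combined with the minus in front of $\kappa F_A\circ_{g,\frc}F_A$ in $\cT$, that produces the term $+\kappa F_A\circ_{h,\frc}F_A$ in $\dd_{(g,A)}\cE_1$ with the sign displayed in the statement.
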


\begin{remark}
The symbol $F_A\circ_{h,\frc} F_A \in \Gamma(T^{\ast} M \odot T^{\ast}M)$ denotes a symmetric tensor which is defined similarly to $F_A\circ_{g,\frc} F_A \in \Gamma(T^{\ast}M\odot T^{\ast}M)$ but in terms of $h\in \Gamma(T^{\ast}M\odot T^{\ast}M)$ instead of $g$. More explicitly, we define:
\begin{equation*}
(F_A\circ_{h,\frc} F_A)(v_1,v_2) := g(h, \frc(\iota_{v_1}F_A \otimes \iota_{v_2}F_A)),
\end{equation*}

\noindent
whence in local coordinates we have $(F_A\circ_{h,\frc} F_A)_{ij} = \sum_a (F^a_A)_{ik}\, (F^a_A)_{jl}\, h^{kl}$. On the other hand, the symbol $(\,\cdot\,)_h^g$ denotes the linear operation defined in Appendix \ref{app:curvatureformulae}, see \eqref{eq:lineagh}, trivially extended to differential forms taking values on a vector bundle.
\end{remark}

\begin{proof}
Let $g_t = g + t h$ and $A_t = A + t a$ be smooth curves of metrics and connections for $\vert t\vert$ small enough. Then, to first order in $t$, a direct computation shows that:
\begin{align*}
\kappa \cT(g_t,A_t) = & \kappa \cT(g,A) +   t \Big(\frac{1}{2}\vert F_A\vert^2_{g,\frc} h + F_A \circ_{h,\frc} F_A - 2 F_A \circ_{g,\frc} \dd_A a\Big) \\
& +   t \Big(\langle F_A , \dd_A a\rangle_{g,\frc} - \frac{1}{2} g(F_A\circ_{g,\frc} F_A , h)\Big)g   + o(t^2)\, .
\end{align*}

\noindent
The expression for the differential of $\cE_1 \colon \Conf(P) \to \Gamma(T^{\ast}M\odot T^{\ast}M)$ at $(g,A)$ follows now from the previous equation together with Lemma \ref{lemma:variationRs} and Corollary \ref{cor:variationEinstein}. To compute the differential of $\cE_2$ at $(g,A)$ we use that
$\dd_A^{g\ast} F_A= -\sum_{i=1}^n \iota_{e_i} \nabla_{e_i}^{g,A} F_A$ on an orthonormal basis $(e_1,\hdots,e_n)$, where $\nabla^{g,A}$ denotes the connection induced by $\nabla^g$ and $A$ on $\Omega^r(M,\frG_P)$. Then the variation of $\cE_2$ with respect to $A$ is: 
\begin{equation*}
\dd_{(g,A)}\cE_2(0,a) = 2\kappa \Big(\dd_A^{g\ast}\dd_A a - \sum_{i=1}^n [a(e_i) , \iota_{e_i} F_A]_{\frG_P} \Big) \in \Omega^1(M,\frG_P)
\end{equation*}

\noindent
where $[\cdot,\cdot]_{\frG_P}$ is the Lie bracket on the adjoint bundle $\frG_P$ of $P$. For ease of notation, we define:
\begin{equation*}
a\lrcorner_g^{\frg} F_A = \sum_{i=1}^n [a(e_i) , \iota_{e_i} F_A]_{\frG_P} \in \Omega^1(M,\frG_P)\, .
\end{equation*}

\noindent
To compute the variation of $\cE_2$ with respect to the metric we use the formula \eqref{eq:variationvolumeform} together with the identity $\dd_A^{g\ast}F_A = - (-1)^n \ast_g \dd_A \ast_g F_A$, obtaining:
\begin{align*}
&\dd_A^{(g + t h)\ast}F_A
\\
&=  \dd_A^{g\ast}F_A - (-1)^n t \Big( (\dd_g\star_{\dd_A \ast_g F_A}) (h)  +\ast_g \dd_A (\dd_g\star_{F_A}(h))\Big) + o(t^2)\\
& = \dd_A^{g\ast}F_A +  t \Big( \frac{1}{2} \mathrm{Tr}_g(h) \dd_A^{g\ast}F_A - (-1)^n(\ast_g (\dd_A \ast_g F_A)^g_h  + \ast_g \dd_A \Big(\frac{1}{2} \mathrm{Tr}_g(h) \ast_g F_A + \ast_g (F_A)^g_h)\Big)\Big) + o(t^2)\\
&  = \dd_A^{g\ast}F_A +  t \Big(  \mathrm{Tr}_g(h) \dd_A^{g\ast}F_A - (-1)^n\ast_g (\dd_A \ast_g F_A)^g_h  - \frac{1}{2} \dd\mathrm{Tr}_g(h)\lrcorner_g  F_A + \dd^{g\ast}_A (F_A)^g_h\Big) + o(t^2)\, .
\end{align*}

\noindent
Here we have used the identity:
\begin{equation*}
(-1)^n \ast_g ( \dd \mathrm{Tr}_g(h)\wedge \ast_g F_A) =  \dd\mathrm{Tr}_g(h)\lrcorner_g  F_A\, .
\end{equation*}

\noindent
Furthermore, we compute:
\begin{equation*}
(-1)^{n-1}\ast_g (\dd_A \ast_g F_A)^g_h = \ast_g (\ast_g^2\dd_A \ast_g F_A)^g_h = (-1)^{n-1} \ast_g (\ast_g \dd_A^{g\ast} F_A)^g_h = -  \tr_g(h) \dd_A^{g\ast} F_A +   h(\dd_A^{g\ast} F_A)\,,
\end{equation*}

\noindent
where we have used the identity $(-1)^n \ast_g (\ast_g u)_h^g = \tr_g(h) u - h(u)$ that holds for every one-form $u\in \Omega^1(M)$. Therefore, we obtain:
\begin{eqnarray*}
\dd_A^{(g + t h)\ast}F_A  = \dd_A^{g\ast}F_A +  t  \Big( h(\dd_A^{g\ast} F_A) - \frac{1}{2} \dd\mathrm{Tr}_g(h)\lrcorner_g  F_A + \dd^{g\ast}_A (F_A)^g_h \Big) + o(t^2)\, ,
\end{eqnarray*}
	
\noindent 
Altogether, we have:
\begin{eqnarray*}
\dd_{(g,A)}\cE_2(h,a) =	2\kappa  \Big( \dd_A^{g\ast}\dd_A a - a\lrcorner_g^{\frg} F_A + h(\dd_A^{g\ast} F_A) - \frac{1}{2} \dd\mathrm{Tr}_g(h)\lrcorner_g  F_A + \dd^{g\ast}_A (F_A)^g_h \Big)\,,
\end{eqnarray*}

\noindent
and therefore we conclude.
\end{proof}

\noindent
For ease of notation in the following we set $\dd_{(g,A)}\cE := (\dd_{(g,A)}\cE_1 ,\dd_{(g,A)}\cE_2 )$. At an Einstein-Yang-Mills pair $(g,A) \in \cE^{-1}(0)$ the differential of $\cE$ simplifies as follows.

\begin{proposition}
\label{prop:infinitesimalE2}
Let $(g,A)$ be an Einstein-Yang-Mills pair. For every $(h,a) \in T_{(g,A)}\Conf(P)$ the following formulas hold:
\begin{align*}
\dd_{(g,A)}\cE_1(h,a) & = - \Big( \frac{1}{2} \nabla^{g\ast} \nabla^g h   - \mathrm{R}^g_o (h) - \delta^{g}\nabla^{g\ast} h - \frac{1}{2}\nabla^g \dd\mathrm{Tr}_g(h)  - \frac{1}{2} \Delta_g \mathrm{Tr}_g (h) g      -\frac{1}{2} \big(\nabla^{g\ast}\nabla^{g\ast} h\big)\, g  \\ 
& - \kappa\,  h\circ_g (F_A\circ_{g,\frc} F_A) + \frac{\kappa \mathrm{Tr}_g(h)}{2(n-2)}\vert F_A \vert^2_{g,\mathfrak{c}} g  - \kappa F_A \circ_{h,\frc} F_A + 2\kappa F_A \circ_{g,\frc} \dd_A a - \kappa \langle F_A , \dd_A a\rangle_{g,\frc} g \Big)\,,\\
\dd_{(g,A)}\cE_2(h,a) & = 2\kappa\Big(  \dd_A^{g\ast}\dd_A a - a\lrcorner_g^{\frg} F_A - \frac{1}{2} \dd\mathrm{Tr}_g(h)\lrcorner_g  F_A + \dd^{g\ast}_A (F_A)^g_h \Big) .
\end{align*}
\end{proposition}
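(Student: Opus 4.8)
The plan is to derive both formulas directly from the general expressions in Lemma~\ref{lemma:infinitesimalE1} by imposing the Einstein-Yang-Mills equations \eqref{eq:YMequations}, which now hold because $(g,A)\in\cE^{-1}(0)$. The computation for $\dd_{(g,A)}\cE_2$ is immediate: the Yang-Mills equation $\dd_A^{g\ast}F_A = 0$ makes the term $h(\dd_A^{g\ast}F_A)$ in Lemma~\ref{lemma:infinitesimalE1} vanish identically, and the four remaining terms are already exactly those appearing in the statement. So the substance of the argument lies entirely in simplifying $\dd_{(g,A)}\cE_1$.

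For $\dd_{(g,A)}\cE_1$, first I would unfold the Lichnerowicz Laplacian $\Delta_L h$ using its definition from Appendix~\ref{app:curvatureformulae}, writing $\tfrac12\Delta_L h$ as the sum of the rough Laplacian $\tfrac12\nabla^{g\ast}\nabla^g h$, the curvature term $-\mathrm{R}^g_o(h)$, and a Ricci contribution of the form $\Ric^g\circ_g h$. This isolates the purely metric part $\tfrac12\nabla^{g\ast}\nabla^g h - \mathrm{R}^g_o(h) - \delta^{g}\nabla^{g\ast}h - \tfrac12\nabla^g\dd\mathrm{Tr}_g(h)$ together with the $g$-proportional terms $-\tfrac12\Delta_g\mathrm{Tr}_g(h)\,g - \tfrac12(\nabla^{g\ast}\nabla^{g\ast}h)\,g$, all of which carry over unchanged. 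The remaining curvature-dependent terms, namely the Ricci piece $\Ric^g\circ_g h$, the scalar term $-\tfrac12 s^g h$, the trace contribution $\tfrac12 g(\Ric^g,h)\,g$, and the matter terms $-\tfrac{\kappa}{2}\vert F_A\vert^2_{g,\frc}h$ and $\tfrac{\kappa}{2}g(F_A\circ_{g,\frc}F_A,h)\,g$, are then rewritten using the reversed Einstein equation $\Ric^g = \hat{\cT}(g,A)$ from \eqref{eqn:reverse} and the trace relation $s^g = \tfrac{\kappa(n-4)}{2-n}\vert F_A\vert^2_{g,\frc}$ derived in Subsection~\ref{subsec:elementaryEinstein-Yang-Millspairs}.

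The crux is a pair of algebraic cancellations. Since $g$ acts as the identity endomorphism one has $g\circ_g h = h$, so substituting $\Ric^g = \kappa(\tfrac{1}{n-2}\vert F_A\vert^2_{g,\frc}g - F_A\circ_{g,\frc}F_A)$ gives $\Ric^g\circ_g h = \kappa\big(\tfrac{1}{n-2}\vert F_A\vert^2_{g,\frc}h - h\circ_g(F_A\circ_{g,\frc}F_A)\big)$. Combining the scalar part of this with $-\tfrac12 s^g h$ and $-\tfrac{\kappa}{2}\vert F_A\vert^2_{g,\frc}h$, the coefficient of $\vert F_A\vert^2_{g,\frc}h$ collapses to $\kappa\big(\tfrac{1}{n-2} + \tfrac{n-4}{2(n-2)} - \tfrac12\big)=0$, leaving exactly $-\kappa\,h\circ_g(F_A\circ_{g,\frc}F_A)$. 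Likewise, using $g(\Ric^g,h) = \kappa\big(\tfrac{1}{n-2}\vert F_A\vert^2_{g,\frc}\mathrm{Tr}_g(h) - g(F_A\circ_{g,\frc}F_A,h)\big)$, the two $g$-proportional curvature terms combine into the single term $\tfrac{\kappa\,\mathrm{Tr}_g(h)}{2(n-2)}\vert F_A\vert^2_{g,\frc}\,g$. The surviving matter terms $-\kappa F_A\circ_{h,\frc}F_A$, $2\kappa F_A\circ_{g,\frc}\dd_A a$ and $-\kappa\langle F_A,\dd_A a\rangle_{g,\frc}\,g$ are untouched by the equations of motion and carry over verbatim, producing the asserted formula.

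I expect the main obstacle to be purely a matter of bookkeeping: the trace-free and $g$-proportional contributions must be sorted carefully, and the clean cancellation of the $\vert F_A\vert^2_{g,\frc}h$ term hinges on the precise dimensional factor in the trace relation. Consequently the conventions for $\Delta_L$, $\mathrm{R}^g_o$, and the products $\circ_g$ and $\circ_{h,\frc}$ fixed in Appendix~\ref{app:curvatureformulae} must be tracked consistently throughout, since a misplaced factor would spoil the identity $\tfrac{1}{n-2}+\tfrac{n-4}{2(n-2)}-\tfrac12=0$ on which the decoupling of the pure-$h$ term rests.
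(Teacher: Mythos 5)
Your proposal is correct and follows essentially the same route as the paper: substitute the on-shell relations $\Ric^g=\hat{\cT}(g,A)$ and $s^g=\tfrac{\kappa(n-4)}{2-n}\vert F_A\vert^2_{g,\frc}$ into Lemma \ref{lemma:infinitesimalE1}, unfold $\Delta_L$, and use $\dd_A^{g\ast}F_A=0$ for the second formula. The cancellation $\tfrac{1}{n-2}+\tfrac{n-4}{2(n-2)}-\tfrac12=0$ and the collapse of the $g$-proportional terms to $\tfrac{\kappa\,\mathrm{Tr}_g(h)}{2(n-2)}\vert F_A\vert^2_{g,\frc}\,g$ are exactly the bookkeeping the paper leaves implicit in its final "rearranging terms" step.
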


\begin{proof}
Since by assumption $(g,A)$ is an Einstein-Yang-Mills pair it satisfies:
\begin{equation}
\label{eq:RicciT}
\Ric^g  = \kappa \Big(\frac{1}{n-2}\vert F_A \vert^2_{g,\mathfrak{c}}\,  g  -F_A\circ_{g,\frc} F_A\Big)\, , \qquad s^g = \frac{\kappa (n-4)}{(2-n)} \vert F_A \vert^2_{g,\mathfrak{c}}\,,
\end{equation}

\noindent
which we use to compute:
\begin{align*}
h\circ_g \Ric^g  &= \kappa \Big(\frac{1}{n-2}\vert F_A \vert^2_{g,\mathfrak{c}}\, h  - h\circ_g (F_A\circ_{g,\frc} F_A)  \Big)\, , \quad \text{and}
\\
g(\Ric^g , h) &= \frac{\kappa }{n-2}\vert F_A \vert^2_{g,\mathfrak{c}}\,\mathrm{Tr}_g(h)  - \kappa \, g(F_A\circ_{g,\frc} F_A, h)  \,.
\end{align*}

\noindent
Hence, we obtain that:
\begin{equation*}
\frac{1}{2} \Delta_L^g h = \frac{1}{2} \nabla^{g\ast} \nabla^g h + \frac{\kappa}{n-2} \vert F_A \vert^2_{g,\frc} h - \kappa\,  h\circ_g (F_A\circ_{g,\frc} F_A) - \mathrm{R}^g_o (h)\,.
\end{equation*}

\noindent
Inserting these equations, together with the Yang-Mills equation $\dd^{g\ast}_A F_A =0$, into Lemma \ref{lemma:infinitesimalE1} and rearranging terms, the claim follows.
\end{proof}

\noindent
Associated to the smooth maps $\cE\colon \Conf(P) \to \Conf(P)$ and $\Phi_{(g,A)}\colon\Aut(P) \to \Conf(P)$ we obtain the deformation complex given in the following result. 

\begin{proposition}
For every $(g,A)\in \Conf(P)$ the following equation holds:
\begin{equation*}
(\dd_e \Phi_{(g,A)})^{\ast}(\cE(g,A))=0\, .
\end{equation*}

\noindent
Furthermore, if $(g,A)$ is an Einstein-Yang-Mills pair then the following sequence:
\begin{equation}
\label{eq:deformationcomplex}
0 \to \Gamma(\cA_P) \xrightarrow{\dd_e\Phi_{(g,A)}} T_{(g,A)}\Conf(P) \xrightarrow{\dd_{(g,A)}\cE} T_{(g,A)}\Conf(P) \xrightarrow{(\dd_e\Phi_{(g,A)})^{\ast}} \Gamma(\cA_P)\to 0
\end{equation}

\noindent
is a complex.
\end{proposition}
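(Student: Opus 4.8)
The plan is to treat the two assertions separately, since the first is an identity valid for arbitrary $(g,A)$ (a Noether/Bianchi identity) while the second holds only at a solution and is essentially formal once self-adjointness is in hand.

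For the first identity I would evaluate $(\dd_e\Phi_{(g,A)})^{\ast}$ on $\cE(g,A)=(\cE_1(g,A),\cE_2(g,A))$ using the explicit adjoint formula \eqref{eq:adjointdPhi} of Lemma~\ref{lemma:adjointdPhi}, which produces two components. The $\Gamma(\frG_P)$-component is $\dd_A^{g\ast}\cE_2 = 2\kappa\,(\dd_A^{g\ast})^2 F_A$, and this vanishes: testing against $\sigma\in\Gamma(\frG_P)$ and integrating by parts reduces it to the pointwise pairing $\langle F_A,[F_A,\sigma]\rangle_{g,\frc}$, and in an orthonormal coframe $\frc$-invariance together with $[x,x]=0$ give $\frc(F_{ij},[F_{ij},\sigma]) = \frc([F_{ij},F_{ij}],\sigma)=0$. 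The $\mathfrak{X}(M)$-component is $2\nabla^{g\ast}\cE_1 - \cE_2\lrcorner_g^{\frc}F_A$; here I would use the contracted Bianchi identity $\nabla^{g\ast}\mG^g=0$ together with Proposition~\ref{prop:DivergenceT}, giving $\nabla^{g\ast}\cE_1(v) = -\kappa\,\langle\dd_A^{g\ast}F_A,\iota_v F_A\rangle_{g,\frc}$, while the definition of $\lrcorner_g^{\frc}$ yields $(\cE_2\lrcorner_g^{\frc}F_A)(v) = -2\kappa\,\langle\dd_A^{g\ast}F_A,\iota_v F_A\rangle_{g,\frc}$. The two contributions cancel exactly, for either sign $\kappa$.

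For the complex at an Einstein-Yang-Mills pair there are two compositions to check. That $\dd_{(g,A)}\cE\circ\dd_e\Phi_{(g,A)}=0$ I would deduce purely from equivariance: since $\cE(f_u^{\ast}g,u^{\ast}A)=f_u^{\ast}\cE(g,A)$ and $\cE(g,A)=0$ by hypothesis, the composite $\cE\circ\Phi_{(g,A)}\colon\Aut(P)\to T_{(g,A)}\Conf(P)$ is identically zero, so its differential at the identity vanishes. The second composition $(\dd_e\Phi_{(g,A)})^{\ast}\circ\dd_{(g,A)}\cE=0$ is then formal once $L\coloneqq\dd_{(g,A)}\cE$ is known to be formally self-adjoint: writing $L=L^{\ast}$ gives $(\dd_e\Phi_{(g,A)})^{\ast}L = (\dd_e\Phi_{(g,A)})^{\ast}L^{\ast} = (L\,\dd_e\Phi_{(g,A)})^{\ast}=0$ by the first composition.

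The crux, and the step I expect to be the main obstacle, is therefore the self-adjointness of $\dd_{(g,A)}\cE$ at a solution. The conceptual route I would take is to observe that, with the sign conventions of \eqref{eqn:E}, $\cE$ is exactly the $L^2$-gradient of the Einstein-Yang-Mills functional $\cS_{P,\frc}$: the computation in Lemma~\ref{lemma:YMequations} shows $\dd_{(g,A)}\cS(h,a)=[\![(h,a),\cE(g,A)]\!]_{g,\frc}$. The linearization of a gradient at a critical point represents the Hessian, which is a symmetric bilinear form — the corrections arising from the $(g,A)$-dependence of the weak $L^2$ metric are all proportional to $\cE(g,A)=0$ and drop out. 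Hence $[\![\dd_{(g,A)}\cE(h_1,a_1),(h_2,a_2)]\!]_{g,\frc}=\mathrm{Hess}_{(g,A)}\cS\big((h_1,a_1),(h_2,a_2)\big)$ is symmetric, which is precisely self-adjointness. I would also note that the same fact can be checked by hand from the explicit expressions in Proposition~\ref{prop:infinitesimalE2} via integration by parts — the second-order terms are manifestly self-adjoint and the curvature and cross terms pair up — but the gradient argument makes transparent why the signs in \eqref{eqn:E} were chosen.
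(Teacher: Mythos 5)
Your proposal is correct, but it departs from the paper's proof at two points, and the comparison is instructive. For the identity $(\dd_e\Phi_{(g,A)})^{\ast}(\cE(g,A))=0$ the paper does not compute: it differentiates the $\Aut(P)$-invariance of the functional $\cS_{P,\frc}$ along a curve $u_t$ and uses that $\dd_{(g,A)}\cS_{P,\frc}(h,a)=[\![(h,a),\cE(g,A)]\!]_{g,\frc}$ (read off from the proof of Lemma~\ref{lemma:YMequations}) to transpose the infinitesimal action onto $\cE(g,A)$ — a Noether-identity argument; your direct verification via the contracted Bianchi identity, Proposition~\ref{prop:DivergenceT} and the $\frc$-invariance computation for $(\dd_A^{g\ast})^2F_A$ is exactly the ``explicit verification'' the paper alludes to but does not carry out, and all your signs check out. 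For the inclusion $\mathrm{Im}(\dd_{(g,A)}\cE)\subseteq\Ker((\dd_e\Phi_{(g,A)})^{\ast})$ the paper simply differentiates the universal identity $(\dd_e\Phi_{(g,A)})^{\ast}(\cE(g,A))=0$ along a curve in $\Conf(P)$ through the solution; the term coming from the $(g,A)$-dependence of the adjoint operator is proportional to $\cE(g,A)=0$ and drops out, so no self-adjointness is needed at this stage. Your route instead invokes the formal self-adjointness of $\dd_{(g,A)}\cE$ (the paper's Proposition~\ref{prop:dEselfadjoint}, proved only afterwards and by a lengthy integration by parts) and then takes adjoints of the already-established composition $\dd_{(g,A)}\cE\circ\dd_e\Phi_{(g,A)}=0$. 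This is logically sound and your Hessian argument for self-adjointness — that $\cE$ is the $L^2$-gradient of $\cS_{P,\frc}$, so its linearization at a critical point represents the symmetric Hessian, the metric-variation terms being proportional to $\cE(g,A)=0$ — is a genuinely cleaner justification of Proposition~\ref{prop:dEselfadjoint} than the paper's term-by-term computation. The trade-off is that the paper's argument for the complex is self-contained and cheaper, while yours front-loads a stronger fact (self-adjointness) that the paper needs anyway for Theorem~\ref{thm:infinitesimalfinite}; if you adopt your route you should state and prove the self-adjointness before, not after, this proposition.
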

 
\begin{proof}
Let $u_t\in \Aut(P)$ a smooth curve such that $u_0 = e$ and:
\begin{equation*}
\frac{\dd u_t}{\dd t}\vert_{t=0} = v\oplus \tau \in \mathfrak{X}(M) \oplus \Gamma(\frG_P)\, .
\end{equation*}

\noindent 
Since the Einstein-Yang-Mills functional is invariant under $\Aut(P)$-transformations we have:
\begin{equation*}
0 = \frac{\dd \cS_{P, \mathfrak{c}} (f^{\ast}_{u_t} g,u^{\ast}_t A)}{\dd t} \vert_{t=0} = \dd_{(g,A)}\cS_{P, \mathfrak{c}} (\cL_v g, \dd_A\tau + \iota_v F_A)\,,
\end{equation*}

\noindent
for every $(g,A)\in \Conf(P)$. Using the explicit expression for $\dd_{(g,A)}\cS_{P, \mathfrak{c}}\colon T_{(g,A)}\Conf(P) \to \mathbb{R}$ computed in the proof of Lemma \ref{lemma:YMequations}, this implies the following equation:
\begin{align*}
 &\dd_{(g,A)}\cS_{P, \mathfrak{c}}(\cL_v g, \dd_A\tau + \iota_v F_A)
 \\
 & = \int_M \langle (\cL_v g, \dd_A\tau + \iota_v F_A)  , ( s^g+ \kappa \vert F_A \vert^2_{g,\mathfrak{c}}) \frac{g}{2} -\Ric^g  - \kappa\, (F_A\circ_{g,\frc} F_A) , 2 \kappa \dd_A^{g\ast} F_{A}\rangle_{g,\mathfrak{c}}  \nu_g\\
& = \int_M \langle (v,  \tau  )  , (\dd_{e}\Phi_{(g,A)})^{\ast}(( s^g+ \kappa \vert F_A \vert^2_{g,\mathfrak{c}})  \frac{g}{2} -\Ric^g  - \kappa\, (F_A\circ_{g,\frc} F_A) , 2 \kappa \dd_A^{g\ast} F_{A}\rangle_{g,\mathfrak{c}}  \nu_g\,,
\end{align*}

\noindent
for every $(v,\tau) \in \mathfrak{X}(M) \oplus \Gamma(\frG_P)$. Hence:
\begin{equation*}
(\dd_{e}\Phi_{(g,A)})^{\ast}\Big(-\Ric^g+\frac12 ( s^g+ \kappa \vert F_A \vert^2_{g,\mathfrak{c}})  g   - \kappa\, (F_A\circ_{g,\frc} F_A) , 2 \kappa \dd_A^{g\ast} F_{A}\Big) = 0\, .
\end{equation*}

\noindent
The map $(\dd_{e}\Phi_{(g,A)})^{\ast}\colon T_{(g,A)}\Conf(P) \to \Gamma(\cA_P)$ was computed in \eqref{eq:adjointdPhi} and can be used to explicitly verify the previous identity, which in turn is equivalent to the following equation:
\begin{eqnarray*}
& (\dd_{e}\Phi_{(g,A)})^{\ast}(\cE_1(g,A) , \cE_2(g,A)) = 0.
\end{eqnarray*}

\noindent
Hence, we conclude that $(\dd_{e}\Phi_{(g,A)})^{\ast}(\cE(g,A))=0$ for every pair $(g,A)$, as claimed. Assuming now that $(g,A)\in \cE^{-1}(0)$ is an Einstein-Yang-Mills pair and differentiating the previous equation, we obtain:
\begin{eqnarray*}
(\dd_{e}\Phi_{(g,A)})^{\ast}  (\dd_{(g,A)}\cE_1 (h,a) , \dd_{(g,A)}\cE_2(h,a)) = 0\,,
\end{eqnarray*}

\noindent
for every $(h,a)\in T_{(g,A)}\Conf(P)$ and every Einstein-Yang-Mills pair $(g,A)$. This proves the fact that $\mathrm{Im}(\dd_{(g,A)}\cE)\subseteq \mathrm{Ker}((\dd_{e}\Phi_{(g,A)})^{\ast})$. That $\mathrm{Im}(\dd_e\Phi_{(g,A)})\subseteq \mathrm{Ker}(\dd_{(g,A)}\cE)$ follows by differentiating the identity:
\begin{equation*}
\cE\circ\Phi_{(g,A)}(u_t) = \cE(f_{u_t}^{\ast}g,u^{\ast}_t A) = 0\,,
\end{equation*}

\noindent
for every smooth family $u_t\in \Aut(P)$ such that $u_0 = e$ is the identity.
\end{proof}

\noindent
Elements $(h,a)\in T_{(g,A)}\Conf(P)$ in the kernel of $\dd_{(g,A)}\cE$ are the so-called \emph{infinitesimal deformations} of the Einstein-Yang-Mills pair $(g,A)$. We consider the complex \eqref{eq:deformationcomplex} as the deformation complex of Einstein-Yang-Mills pairs and we introduce its associated cohomology groups:
\begin{equation*}
\mathbb{H}^0_{(g,A)} = \mathrm{Ker}(\dd_e\Phi_{(g,A)})\, , \quad \mathbb{H}^1_{(g,A)} = \frac{\mathrm{Ker}(\dd_{(g,A)}\cE)}{\mathrm{Im}(\dd_e\Phi_{(g,A)})}\, , \quad \mathbb{H}^2_{(g,A)} = \frac{\mathrm{Ker}((\dd_{e}\Phi_{(g,A)})^{\ast})}{\mathrm{Im}(\dd_{(g,A)}\cE)}\, , \quad \mathbb{H}^3_{(g,A)} = \frac{\Gamma(\cA_P)}{\mathrm{Im}((\dd_e\Phi_{(g,A)})^{\ast})}
\end{equation*}

\noindent
Elements in the vector space $\mathbb{H}^0_{g,A}$ correspond to the \emph{infinitesimal symmetries} of $(g,A)$, whereas elements in the vector space $\mathbb{H}^1_{g,A}$ correspond to the \emph{infinitesimal deformations} of $(g,A)$ modulo the infinitesimal action of $\Aut(P)$. On the other hand, $\mathbb{H}^2_{g,A}$ corresponds to the \emph{obstruction space} for the infinitesimal deformations of $(g,A)$, as will become apparent in subsection \ref{subsec:Kuranishitheory}.  

\begin{lemma}
\label{lemma:deformationelliptic}
The complex \eqref{eq:deformationcomplex} is elliptic. 
\end{lemma}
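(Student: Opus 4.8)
The plan is to prove that the principal symbol sequence attached to the complex \eqref{eq:deformationcomplex} is exact for every nonzero covector $\xi \in T^\ast_m M$, $m \in M$. Since the three maps compose to zero as differential operators, their leading symbols automatically form a complex of vector spaces: $\dd_e\Phi_{(g,A)}$ and $(\dd_e\Phi_{(g,A)})^\ast$ are first order and $\dd_{(g,A)}\cE$ is second order, so each vanishing composition has an identically zero order-three symbol, which is exactly the composite of the two relevant leading symbols. Thus only exactness must be checked. The key structural observation, which reduces the whole problem to the two classical cases, is that at the level of principal symbols the complex splits as a direct sum of an \emph{Einstein block} acting on $h\in\Gamma(T^\ast M\odot T^\ast M)$ and a \emph{Yang--Mills block} acting on $a\in\Omega^1(M,\frG_P)$. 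Indeed, inspecting the formula for $\dd_e\Phi_{(g,A)}$, Lemma~\ref{lemma:adjointdPhi}, and Proposition~\ref{prop:infinitesimalE2}, every coupling term --- $\iota_v F_A$, $a\lrcorner_g^{\frc} F_A$, $F_A\circ_{g,\frc}\dd_A a$, $\dd\mathrm{Tr}_g(h)\lrcorner_g F_A$, $\dd_A^{g\ast}(F_A)^g_h$, and all algebraic $F_A$-terms --- is of strictly lower order than the leading order of the operator in which it appears, hence invisible to the principal symbol. Therefore $\sigma_\xi(\dd_e\Phi_{(g,A)})$, $\sigma_\xi(\dd_{(g,A)}\cE)$ and $\sigma_\xi((\dd_e\Phi_{(g,A)})^\ast)$ are all block diagonal, and it suffices to prove exactness of each block.

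For the Yang--Mills block one computes, using the formal-adjoint convention, $\sigma_\xi(\dd_A)(\tau)=\xi\otimes\tau$, $\sigma_\xi(\dd_A^{g\ast}\dd_A)(a)=\vert\xi\vert^2_g\, a-\langle\xi,a\rangle_g\,\xi$, and $\sigma_\xi(\dd_A^{g\ast})(a)=\langle\xi,a\rangle_g$, all valued in $\frG_P$. Splitting $T^\ast_m M=\RR\xi\oplus\xi^\perp$ shows that the kernel of the middle symbol is $\RR\xi\otimes\frG_P=\mathrm{Im}\,\sigma_\xi(\dd_A)$ and its image is $\xi^\perp\otimes\frG_P=\mathrm{Ker}\,\sigma_\xi(\dd_A^{g\ast})$, while the outer symbols are injective and surjective respectively. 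This is nothing but the $\frG_P$-twisted symbol complex of $0\to\Omega^0\to\Omega^1\to\Omega^1\to\Omega^0$, which is elliptic.

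The Einstein block is the genuine point, and I expect it to be the main obstacle. Keeping only the second-order-in-$h$ terms of Proposition~\ref{prop:infinitesimalE2} gives $\sigma_\xi(\dd_{(g,A)}\cE_1)$ as an explicit symmetric endomorphism of $S^2 T^\ast_m M$. I would decompose $S^2 T^\ast_m M=\RR(\xi\otimes\xi)\oplus(\xi\odot\xi^\perp)\oplus S^2(\xi^\perp)$ relative to $\xi$. The Einstein part of $\sigma_\xi(\dd_e\Phi_{(g,A)})$ is $v\mapsto 2\,\xi\odot v^\flat$, which is injective with image exactly the longitudinal subspace $L:=\RR(\xi\otimes\xi)\oplus(\xi\odot\xi^\perp)$; since the symbol sequence is a complex, $L\subseteq\mathrm{Ker}\,\sigma_\xi(\dd_{(g,A)}\cE_1)$. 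To obtain equality it remains to show injectivity of $\sigma_\xi(\dd_{(g,A)}\cE_1)$ on the transverse block $S^2(\xi^\perp)$. For $k$ with $\iota_\xi k=0$ and $k(\xi^\sharp,\xi^\sharp)=0$ several terms vanish, and projecting onto $S^2(\xi^\perp)$ leaves $-\tfrac12\vert\xi\vert_g^2\,k$ together with a pure-trace correction proportional to $\mathrm{Tr}_g(k)$; the equation $\sigma_\xi(\dd_{(g,A)}\cE_1)(k)=0$ then forces $k$ to be a multiple of the restricted metric, and taking the trace over $\xi^\perp$ yields a relation of the form $\mathrm{Tr}_g(k)\,(2-n)=0$, so that $\mathrm{Tr}_g(k)=0$ and hence $k=0$ because $n>2$. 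This gives $\mathrm{Ker}\,\sigma_\xi(\dd_{(g,A)}\cE_1)=L=\mathrm{Im}\,\sigma_\xi(\dd_e\Phi_{(g,A)})$, the symbol-level incarnation of the classical Berger--Ebin/Koiso splitting for Einstein deformations.

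Finally I would close the complex using self-adjointness. By the sign choices in \eqref{eqn:E}, the operator $\dd_{(g,A)}\cE$ is formally self-adjoint at an Einstein--Yang--Mills pair, so $\sigma_\xi(\dd_{(g,A)}\cE)$ is a symmetric endomorphism and $(\dd_e\Phi_{(g,A)})^\ast$ is the genuine formal adjoint of $\dd_e\Phi_{(g,A)}$. Exactness at the terminal $\Gamma(\cA_P)$, i.e.\ surjectivity of $\sigma_\xi((\dd_e\Phi_{(g,A)})^\ast)$, is then dual to the injectivity of $\sigma_\xi(\dd_e\Phi_{(g,A)})$ established above. Exactness at the second copy of $T_{(g,A)}\Conf(P)$ follows by taking orthogonal complements of the middle identity, proved for each block: since $\mathrm{Ker}\,\sigma_\xi(\dd_{(g,A)}\cE)=\mathrm{Im}\,\sigma_\xi(\dd_e\Phi_{(g,A)})$, symmetry of $\sigma_\xi(\dd_{(g,A)}\cE)$ gives $\mathrm{Im}\,\sigma_\xi(\dd_{(g,A)}\cE)=(\mathrm{Ker}\,\sigma_\xi(\dd_{(g,A)}\cE))^{\perp}=(\mathrm{Im}\,\sigma_\xi(\dd_e\Phi_{(g,A)}))^{\perp}=\mathrm{Ker}\,\sigma_\xi((\dd_e\Phi_{(g,A)})^\ast)$. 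Exactness at all four slots for every $\xi\neq 0$ is precisely ellipticity of \eqref{eq:deformationcomplex}. The decoupling observation is what makes the argument short; the only computationally delicate step is the Einstein block, specifically identifying $\mathrm{Ker}\,\sigma_\xi(\dd_{(g,A)}\cE_1)$ with the longitudinal subspace via the trace argument, where the hypothesis $n>2$ is essential.
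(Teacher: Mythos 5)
Your proof is correct and its core is the same as the paper's: reduce ellipticity to exactness of the principal-symbol sequence at each $\xi\neq 0$, note that every coupling term is of subleading order so the symbol complex splits into an Einstein block on $T^{\ast}_m M\odot T^{\ast}_m M$ and a Yang--Mills block on $T^{\ast}_m M\otimes\frG_{P,m}$, and do finite-dimensional linear algebra on each. Where you genuinely deviate is the back half of the complex. The paper proves exactness at the second copy of $T_{(g,A)}\Conf(P)$ and surjectivity of $\sigma_{m,\xi}((\dd_e\Phi_{(g,A)})^{\ast})$ by exhibiting explicit preimages in an adapted orthonormal frame (the $1/(n-2)$ coefficient in the preimage is one place where $n>2$ enters); you instead deduce both from middle exactness by orthogonal complementation, using that $\sigma_{m,\xi}(\dd_{(g,A)}\cE)$ is symmetric and that the outer symbols are mutual adjoints up to sign. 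That is shorter and less error-prone; the only caveat is that the symmetry of the symbol should be read off from its explicit formula rather than quoted from Proposition~\ref{prop:dEselfadjoint}, which the paper only establishes after this lemma (there is no circularity, but the ordering matters if you want to cite it). Your handling of the Einstein block's middle exactness --- splitting off the longitudinal subspace $\mathrm{Im}\,\sigma^1_{m,\xi}(\dd_e\Phi_{(g,A)})$, which lies in the kernel for free by the complex property, and checking injectivity on the transverse block via the trace relation $(2-n)\mathrm{Tr}_g(k)=0$ --- is a reorganization of the paper's frame expansion and isolates the role of the hypothesis $n>2$ in the same way. Both arguments are complete.
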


\begin{proof}
The symbol complex associated with \eqref{eq:deformationcomplex}  is of the form:
\begin{equation}
\label{eq:symbolcomplex}
0\to \cA_{P,m} \xrightarrow{\sigma(\dd_e\Phi_{(g,A)})} T^{\ast}_m M^{\odot 2}  \oplus (T^{\ast}_m M\otimes \frG_{P,m}) \xrightarrow{\sigma(\dd_{(g,A)}\cE)} T^{\ast}_m M^{\odot 2} \oplus (T^{\ast}_m M\otimes \frG_{P,m}) \xrightarrow{\sigma((\dd_e\Phi_{(g,A)})^{\ast})} \cA_{P,m}\to 0\,,
\end{equation}

\noindent
where $\sigma(-)$ denotes the principal symbol of the corresponding differential operator at the point $(m,\xi)$ with $m\in M$ and $\xi\in T^{\ast}_m M - \left\{ 0 \right\}$. By the standard theory of elliptic operators and complexes, the complex \eqref{eq:deformationcomplex} is elliptic if and only if \eqref{eq:symbolcomplex} is exact for every $m\in M$ and $\xi\in T^{\ast}_m M - \left\{ 0 \right\}$. A direct calculation gives:
\begin{align*}
\sigma_{m,\xi}(\dd_e\Phi_{(g,A)}) (v,\tau) = & (v\otimes \xi + \xi\otimes v , \xi\otimes\tau)\,,\\
\sigma_{m,\xi}(\dd_{(g,A)}\cE) (h,a) = & 
\big(-\big(- \vert\xi\vert^2_g h + \xi\otimes h(\xi)+ h(\xi)\otimes \xi   - \mathrm{Tr}_g (h)\,\xi\otimes \xi + (\vert\xi\vert^2_g \mathrm{Tr}_g(h)  - h(\xi,\xi))g \big), \\
& \quad 2\kappa \big(-2 \vert\xi\vert_g^2 \, a + 2 \xi \otimes \iota_{\xi}a\big)\big)\,, \\
\sigma_{m,\xi}((\dd_e\Phi_{(g,A)})^{\ast})(h,a) = & (- 2 h(\xi) , - \iota_{\xi}a)\,,
\end{align*}

\noindent
where we momentarily consider $(h,a)\in (T^{\ast}_m M\odot T^{\ast}_m M)\oplus (T^{\ast}_m M\otimes \frG_{P,m})$. A quick inspection of the previous expressions shows that the symbol complex \eqref{eq:symbolcomplex} splits as a direct sum of the following complexes:
\begin{equation}
\label{eq:exact1}
0\to T_m M \xrightarrow{\sigma^1_{m,\xi}(\dd_e\Phi_{(g,A)})} T^{\ast}_m M\odot T^{\ast}_m M \xrightarrow{\sigma^1_{m,\xi}(\dd_{(g,A)}\cE)} T^{\ast}_m M\odot T^{\ast}_m M \xrightarrow{\sigma^1_{m,\xi}((\dd_{e}\Phi_{(g,A)})^{\ast})} T_m M \to 0\\
\end{equation}
and
\begin{equation}
\label{eq:exact2}
0 \to \frG_{P,m} \xrightarrow{\sigma^2_{m,\xi}(\dd_e\Phi_{(g,A)})}  T^{\ast}_m M\otimes \frG_{P,m} \xrightarrow{\sigma^2_{m,\xi}(\dd_{(g,A)}\cE)} T^{\ast}_m M\otimes \frG_{P,m} \xrightarrow{\sigma^2_{m,\xi}((\dd_{e}\Phi_{(g,A)})^{\ast})} \frG_{P,m}  \to 0\,,
\end{equation} 

\noindent
where:
\begin{align*}
\sigma^1_{m,\xi}(\dd_e\Phi_{(g,A)}) (v) &= v\otimes \xi + \xi\otimes v \, ,\\
\sigma^2_{m,\xi}(\dd_e\Phi_{(g,A)}) (\tau) &= \xi\otimes\tau\,,\\
\sigma^1_{m,\xi}(\dd_{(g,A)}\cE) (h) &= 
-\big(- \vert\xi\vert^2_g h + \xi\otimes h(\xi)+ h(\xi)\otimes \xi   - \mathrm{Tr}_g (h)\,\xi\otimes \xi + (\vert\xi\vert^2_g \mathrm{Tr}_g(h)  - h(\xi,\xi))g \big)\,,\\
\sigma^2_{m,\xi}(\dd_{(g,A)}\cE) (a) &= 2\kappa \big(  -2 \vert\xi\vert_g^2 \, a + 2 \xi \otimes \iota_{\xi}a \big)\,,\\
\sigma^1_{m,\xi}((\dd_{e}\Phi_{(g,A)})^{\ast}) (h) &= - 2 h(\xi)   \, ,\\
\sigma^2_{m,\xi}((\dd_{e}\Phi_{(g,A)})^{\ast}) (a) &=   - \iota_{\xi}a\,.
\end{align*}

\noindent
The complex \eqref{eq:symbolcomplex} is exact if and only if the complexes \eqref{eq:exact1} and \eqref{eq:exact2} are exact. We begin with~\eqref{eq:exact2}. Note that the only solution to $\xi\otimes \tau = 0$ is $\tau = 0$ whence $\sigma^2_{m,\xi}(\dd_e\Phi_{(g,A)})$ is injective. An element $a\in T^{\ast}_m M\otimes \frG_{P,m}$ lies in the kernel of $\sigma^2_{m,\xi}(\dd_{(g,A)}\cE)$ if and only if:
\begin{equation*}
a = \frac{1}{\vert\xi\vert_g^2}\xi \otimes \iota_{\xi}a\,,
\end{equation*}

\noindent
namely if and only if $a$ is equal to its projection along $\xi$. Hence, $a\in \Ker(\sigma^2_{m,\xi}(\dd_{(g,A)}\cE))$ if and only if there exists an element $\tau\in \frG_{P,m}$ such that $a = \xi\otimes \tau$. Therefore:
\begin{equation*}
\mathrm{Im}(\sigma^2_{m,\xi}(\dd_e\Phi_{(g,A)}) ) = \mathrm{Ker}(\sigma^2_{m,\xi}(\dd_{(g,A)}\cE_{(g,A)}))\,.
\end{equation*}

\noindent
On the other hand, the kernel of $\sigma^2_{m,\xi}((\dd_{e}\Phi_{(g,A)})^{\ast})$ consists of all elements $a\in T^{\ast}_m M\otimes \frG_{P,m}$ such that $\iota_{\xi}a = 0$. Equivalently, $\Ker(\sigma^2_{m,\xi}((\dd_{e}\Phi_{(g,A)})^{\ast}))$ is given by all elements in $T^{\ast}_m M\otimes \frG_{P,m}$ whose projection along $\xi$ vanishes. Hence, every element $x\in\Ker(\sigma^2_{m,\xi}((\dd_{e}\Phi_{(g,A)})^{\ast}))$  can be obtained from an appropriate element $a\in T^{\ast}_m M\otimes \frG_{P,m}$ by subtracting its projection along $\xi$, namely:
\begin{equation*}
x = a - \frac{1}{\vert\xi\vert_g^2}\xi \otimes \iota_{\xi}a = - \frac{1}{2\vert\xi\vert_g^2} ( - 2 \vert\xi\vert_g^2 a + 2\xi\otimes \iota_{\xi}a) = \sigma^2_{m,\xi}(\dd_{(g,A)}\cE) \Big(- \frac{1}{2\vert\xi\vert_g^2} a\Big).
\end{equation*}

\noindent
This implies that $\mathrm{Im}(\sigma^2_{m,\xi}(\dd_{(g,A)}\cE) = \Ker(\sigma^2_{m,\xi}((\dd_{e}\Phi_{(g,A)})^{\ast}))$, and since $\sigma^2_{m,\xi}((\dd_{e}\Phi_{(g,A)})^{\ast})$ is clearly surjective, we conclude that the sequence \eqref{eq:exact2} is exact.

\noindent
Now we consider the sequence \eqref{eq:exact1}. The equation $v\otimes \xi + \xi \otimes v = 0$ has the only solution $v=0$, hence $\sigma^1_{m,\xi}(\dd_{(g,A)}\cE)$ is injective. Suppose that $h\in \mathrm{Ker}(\sigma^1_{m,\xi}(\dd_{(g,A)}\cE))$. Then:
\begin{equation*}
 h = \frac{1}{\vert\xi\vert^2_g} \big(\xi\otimes h(\xi) + h(\xi) \otimes \xi - \mathrm{Tr}_g(h)\, \xi\otimes \xi +  (\vert\xi\vert^2_g \mathrm{Tr}_g(h)  - h(\xi,\xi))g \big)\,.
\end{equation*}

\noindent
Choose an orthonormal frame of the form $(e_1, \hdots, e_{n-1}, \xi/\vert\xi\vert_g)$, in terms of which we write:
\begin{equation}
\label{eq:hexpression}
h = \frac{1}{\vert\xi\vert^4_g} h(\xi,\xi)\, \xi\otimes\xi + \frac{1}{\vert\xi\vert^2_g}  h(\xi,e_i)\, ( \xi\otimes e_i + e_i \otimes \xi) + \sum^{n-1}_{i,j}  h(e_1,e_j)\, e_i\otimes e_j\,.
\end{equation}

\noindent
Plugging this expression into the previous equation and combining all terms we obtain $h(e_i,e_j) = 0$ and therefore:
\begin{equation*}
h = \frac{1}{\vert\xi\vert^4_g} h(\xi,\xi)\, \xi\otimes\xi + \frac{1}{\vert\xi\vert^2_g}  h(\xi,e_i) \, ( \xi\otimes e_i +  e_i \otimes \xi) = v\otimes \xi + \xi \otimes v\,,
\end{equation*}

\noindent
where:
\begin{equation*}
v =\frac{1}{2\vert\xi\vert^4_g} h(\xi,\xi)\, \xi + \frac{1}{\vert\xi\vert^2_g} h(\xi,e_i) \, e_i \,.
\end{equation*}

\noindent
Hence, $\mathrm{Im}(\sigma^1_{m,\xi}(\dd_e\Phi_{(g,A)}))=\mathrm{Ker}(\sigma^1_{m,\xi}(\dd_{(g,A)}\cE))$. Suppose now that $h\in \Ker(\sigma^1_{m,\xi}((\dd_{e}\Phi_{(g,A)})^{\ast}))$, that is, $h$ satisfies:
\begin{equation*}
 h(\xi) = 0\, .
\end{equation*}

\noindent
Plugging the previous equation into \eqref{eq:hexpression} we obtain:
\begin{equation*}
h =   \sum_{i,j}^{n-1} h(e_i,e_j)\, e_i\otimes e_j =  - \frac{1}{\vert\xi\vert^2_g} \sigma^1_{m,\xi}(\dd_{(g,A)}\cE)\Big(\frac{1}{n-2} \sum_k h(e_k,e_k) \sum_{j}  e_j\otimes e_j  -   \sum_{i,j}^{n-1} h(e_i,e_j)\, e_i\otimes e_j  \Big) \, .
\end{equation*}


\noindent
Therefore, $\mathrm{Im}(\sigma^1_{m,\xi}(\dd_{(g,A)}\cE)) = \Ker(\sigma_{m,\xi}((\dd_{e}\Phi_{(g,A)})^{\ast}))$ and the sequence \eqref{eq:exact1} is exact. Since the map:
\begin{equation*}
\sigma^1_{m,\xi}((\dd_{e}\Phi_{(g,A)})^{\ast})\colon T^{\ast}_m M\odot T^{\ast}_m M \to T_m M 
\end{equation*}

\noindent
is clearly surjective we conclude.
\end{proof}

\noindent
Following N. Koiso, who considered the deformation problem of Einstein metrics \cite{KoisoI,KoisoII,KoisoIII}, we introduce the following terminology.

\begin{definition}
Given an Einstein-Yang-Mills pair $(g,A)$, an element $(h,a) \in T_{(g,A)}\Conf(P)$ is an \emph{essential deformation} of $(g,A)$ if $(h,a)\in \Ker(\dd_{(g,A)}\cE)\cap \Ker((\dd_{e}\Phi_{(g,A)})^{\ast})$.
\end{definition}

\noindent
Intuitively speaking, the essential deformations of an Einstein-Yang-Mills $(g,A)$ pair correspond precisely those infinitesimal deformations of $(g,A)$ that cannot be eliminated or \emph{gauged away} through an \emph{infinitesimal gauge transformation}, namely through the infinitesimal action of $\Aut^o(P)$ on $\Conf(P)$.  Applying standard Hodge theory of elliptic complexes to \eqref{eq:deformationcomplex} we immediately obtain the following orthogonal decompositions with respect to the $L^2$ metric determined by $g$ and $\frc$:
\begin{align}
\Gamma(\cA_P) &= \Ker(\Delta^{(0)}_{(g,A)}) \oplus \mathrm{Im}((\dd_e\Phi_{(g,A)})^{\ast})\, ,\\
\Gamma(\cA_P) &= \Ker(\Delta^{(3)}_{(g,A)}) \oplus \mathrm{Im}((\dd_{e}\Phi_{(g,A)})^{\ast})\,,\nonumber\\
T_{(g,A)}\Conf(P) &= \Ker(\Delta^{(1)}_{(g,A)}) \oplus \mathrm{Im}(\dd_e\Phi_{(g,A)}) \oplus \mathrm{Im}((\dd_{(g,A)}\cE)^{\ast})\,, \label{eq:decompositionTConf}\\
T_{(g,A)}\Conf(P) &= \Ker(\Delta^{(2)}_{(g,A)}) \oplus \mathrm{Im}(\dd_{(g,A)}\cE) \oplus \mathrm{Im}(\dd_{e}\Phi_{(g,A)})\,, \nonumber  
\end{align}

\noindent
where $\Delta^{(i)}_{(g,A)}$, $i=0,\hdots, 3$ are the Laplacians of the elliptic complex \eqref{eq:deformationcomplex}, which are explicitly given by:
\begin{align*}
\Delta^{(0)}_{(g,A)} &= (\dd_e\Phi_{(g,A)})^{\ast}\circ \dd_e\Phi_{(g,A)}\, , \\
\Delta^{(1)}_{(g,A)} &= (\dd_{(g,A)}\cE)^{\ast}\circ\dd_{(g,A)}\cE + \dd_e\Phi_{(g,A)} \circ (\dd_e\Phi_{(g,A)})^{\ast}\,,\\
\Delta^{(2)}_{(g,A)} &= \dd_{e}\Phi_{(g,A)} \circ (\dd_{e}\Phi_{(g,A)})^{\ast} + \dd_{(g,A)}\cE \circ(\dd_{(g,A)}\cE)^{\ast}\ \, ,\\
\Delta^{(3)}_{(g,A)} &= (\dd_{e}\Phi_{(g,A)})^{\ast} \circ \dd_{e}\Phi_{(g,A)}\, .
\end{align*}

\noindent
From these decompositions, we obtain the natural isomorphisms:
\begin{equation}
\mathbb{H}^i_{(g,A)} = \Ker(\Delta^{(i)}_{(g,A)})\, , \qquad i=0,\hdots,3\,,
\end{equation}

\noindent
which identify the cohomology groups of the elliptic complex \eqref{eq:deformationcomplex} with the kernels of its associated Laplacians. In particular:
\begin{equation}
\mathbb{H}^1_{(g,A)} = \Ker(\dd_{(g,A)}\cE)\cap \Ker((\dd_{e}\Phi_{(g,A)})^{\ast})
\end{equation}

\noindent
is identified with the vector space of essential deformations of $(g,A)$, in agreement with the expectation that $\mathbb{H}^1_{(g,A)}$ encodes the \emph{non-trivial} infinitesimal deformations of $(g,A)$. Hence, we will refer to $\mathbb{H}^1_{(g,A)}$ as the vector space of essential deformations of the Einstein-Yang-Mills pairs $(g,A)$.

\begin{proposition}
\label{prop:dEselfadjoint}
Let $(g,A)$ be an Einstein-Yang-Mills pair. Then, the differential operator
\begin{equation*}
\dd_{(g,A)}{\cE} := (\dd_{(g,A)}\cE_1 ,  \dd_{(g,A)}\cE_2) \colon T_{(g,A)}\Conf(P)\to T_{(g,A)}\Conf(P) 
\end{equation*}

\noindent
is formally self-adjoint with respect to the $L^2$-metric induced by $g$ and $\frc$. 
\end{proposition}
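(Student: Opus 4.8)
The plan is to avoid a direct verification from the explicit formulas of Proposition~\ref{prop:infinitesimalE2}, which would be long and error-prone, and instead to exploit the fact that $\cE$ is, up to the chosen signs, the weak $L^2$-gradient of the Einstein-Yang-Mills functional $\cS_{P,\frc}$. Concretely, the computation carried out in the proof of Lemma~\ref{lemma:YMequations} shows that for every $(g,A)\in\Conf(P)$ and every $(h,a)\in T_{(g,A)}\Conf(P)$ one has
\begin{equation*}
\dd_{(g,A)}\cS_{P,\frc}(h,a) = [\![\, \cE(g,A) , (h,a) \,]\!]_{g,\frc}\,,
\end{equation*}
and it is precisely the sign convention adopted in~\eqref{eqn:E} that makes this identity hold with no spurious constant; this is the meaning of the remark immediately following~\eqref{eqn:E}. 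So the first step is simply to record this gradient identity.

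The second step is to differentiate this relation once more. Since $\Conf(P)=\Met(M)\times\cC(P)$ is an open subset of the Fréchet vector space $\Gamma(T^{\ast}M\odot T^{\ast}M)\oplus\Omega^1(M,\frG_P)=T_{(g,A)}\Conf(P)$, I can use its flat affine structure and regard the tangent vectors $X,Y$ as constant vector fields. Writing $G_{(g,A)}\defeq[\![\,\cdot,\cdot\,]\!]_{g,\frc}$ for the base-point-dependent weak $L^2$ metric and applying the Leibniz rule to $\dd_{(g,A)}\cS_{P,\frc}(X)=G_{(g,A)}(\cE(g,A),X)$ in the direction $Y$, I obtain
\begin{equation*}
\dd^2_{(g,A)}\cS_{P,\frc}(X,Y) = (\dd_Y G)_{(g,A)}\big(\cE(g,A),X\big) + G_{(g,A)}\big(\dd_{(g,A)}\cE(Y),X\big)\,,
\end{equation*}
where $\dd_Y G$ is the directional derivative of the metric-valued map $(g,A)\mapsto G_{(g,A)}$, which depends on the base point only through $g$. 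The decisive input is that $\dd^2_{(g,A)}\cS_{P,\frc}$ is a symmetric bilinear form in $(X,Y)$, being the Hessian of a smooth function on an open subset of a vector space, i.e.\ the equality of mixed second Gateaux derivatives. Interchanging $X$ and $Y$ and subtracting then gives
\begin{equation*}
G_{(g,A)}\big(\dd_{(g,A)}\cE(Y),X\big) - G_{(g,A)}\big(\dd_{(g,A)}\cE(X),Y\big) = (\dd_X G)_{(g,A)}\big(\cE(g,A),Y\big) - (\dd_Y G)_{(g,A)}\big(\cE(g,A),X\big)\,.
\end{equation*}

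The third step is to invoke the hypothesis: since $(g,A)$ is an Einstein-Yang-Mills pair, $\cE(g,A)=0$, so the entire right-hand side vanishes and I conclude $G_{(g,A)}(\dd_{(g,A)}\cE(Y),X)=G_{(g,A)}(\dd_{(g,A)}\cE(X),Y)$, which is exactly the asserted formal self-adjointness of $\dd_{(g,A)}\cE$ with respect to the $L^2$ metric induced by $g$ and $\frc$. I expect the only genuine subtlety to be the base-point dependence of the weak $L^2$ metric $G$: a priori this dependence could obstruct self-adjointness, and indeed the term $(\dd_Y G)(\cE,X)$ is exactly the obstruction. The entire content of the argument is that this obstruction is proportional to $\cE(g,A)$ and therefore vanishes precisely on the locus $\cE^{-1}(0)$ — which is why the statement genuinely requires $(g,A)$ to be an Einstein-Yang-Mills pair and fails for a general configuration. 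A minor point to check with care is that the gradient identity really holds with these exact signs and that all relevant maps are smooth enough in the tame Fréchet category for the symmetry of mixed partials to apply; both follow from Lemma~\ref{lemma:YMequations} and the smoothness of $\cS_{P,\frc}$ established earlier.
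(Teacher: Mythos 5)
Your argument is correct, and it is genuinely different from the one in the paper. The paper proves Proposition \ref{prop:dEselfadjoint} by brute force: it takes the explicit formula for $\dd_{(g,A)}\cE$ from Proposition \ref{prop:infinitesimalE2} and integrates by parts term by term, pairing off the contributions that are not manifestly self-adjoint (the $\nabla^g\dd\tr_g(h)$ term against $(\nabla^{g\ast}\nabla^{g\ast}h)\,g$, the $F_A\circ_{g,\frc}\dd_A a$ term against $\dd_A^{g\ast}(F_A)^g_h$, and so on). You instead exploit the variational origin of $\cE$: the first-variation computation in the proof of Lemma \ref{lemma:YMequations} exhibits $\cE(g,A)$ as the $L^2$-gradient of $\cS_{P,\frc}$ with respect to $[\![\cdot,\cdot]\!]_{g,\frc}$ (and your reading of the sign remark after \eqref{eqn:E} is the right one: it is the relative normalization of $\cE_1$ and $\cE_2$ that must match the gradient, since rescaling one block of a symmetric operator destroys symmetry of the off-diagonal blocks). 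Differentiating the gradient identity on the flat affine chart, using the symmetry of second derivatives of smooth tame maps (Hamilton), and killing the base-point-dependence term of the weak metric via $\cE(g,A)=0$ then gives self-adjointness. The two routes buy different things: yours is shorter, structurally transparent, and isolates exactly where the hypothesis that $(g,A)$ is a critical point enters, whereas the paper's computation doubles as a nontrivial consistency check on the explicit formulas of Lemma \ref{lemma:infinitesimalE1} and Proposition \ref{prop:infinitesimalE2}, which your argument does not touch. The only points requiring care in your version --- that the gradient identity holds with these exact constants, that the weak $L^2$ metric's dependence on the base point is itself differentiable, and that mixed second Gateaux derivatives commute in the tame Fr\'echet category --- are all standard and are correctly flagged; I see no gap.
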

 
\begin{proof}
The formal adjoint $\dd_{(g,A)}{\cE}^{\ast}$ of $\dd_{(g,A)}{\cE}$ is determined by the usual relation:
\begin{equation*}
\int_M \langle \dd_{(g,A)}{\cE}(h,a), (\hat{h},\hat{a}) \rangle_{g,\frc} \nu_g = \int_M \langle (h,a), \dd_{(g,A)} {\cE}^{\ast}(\hat{h},\hat{a}) \rangle_{g,\frc} \nu_g
\end{equation*}

\noindent
holds for every $(h,a),(\hat{h},\hat{a}) \in T_{(g,A)}\Conf(P)$. We consider the terms in $\dd_{(g,A)} {\cE}(h,a)$ that are not manifestly self-adjoint. First of all, using the compatibility of the structure group $\G$ with the inner product $\frc$ we compute:
\begin{align*}
\int_M \langle a\lrcorner_g^{\frg} F_A , \hat{a}\rangle_{g,\frc} \nu_g & = \int_M \langle (\alpha_{\Lambda} \otimes \tau_{\Lambda}) \lrcorner_g^{\frg} (F_{\Sigma} \otimes \tau_{\Sigma} ), \hat{\alpha}_{\Gamma}\otimes \tau_{\Gamma} \rangle_{g,\frc} \nu_g = \int_M \langle \alpha_{\Lambda} \lrcorner_g F_{\Sigma} , \hat{\alpha}_{\Gamma} \rangle_{g}  \frc([\tau_{\Lambda},\tau_{\Sigma}]_{\frG_P},\tau_{\Gamma}) \nu_g\\
& = - \int_M \langle  \hat{\alpha}_{\Gamma}\lrcorner_g F_{\Sigma} , \alpha_{\Lambda}  \rangle_{g} \frc(\tau_{\Lambda} , [\tau_{\Sigma},\tau_{\Gamma}]_{\frG_P}) \nu_g = \int_M \langle \hat{a}\lrcorner_g^{\frg} F_A , a\rangle_{g,\frc} \nu_g
\end{align*}

\noindent
whence this term does in fact provide a self-adjoint contribution to $\dd_{(g,A)}{\cE}(h,a)$. Here $(\tau_{\Lambda})$ denotes a local frame of $\frG_P$, where $\Lambda, \Sigma, \Gamma = 1, \hdots , \dim(\mathfrak{g})$ are Lie algebra indices, and we have written locally:
\begin{equation*}
a = \sum_{\Lambda} \alpha_{\Lambda} \otimes \tau_{\Lambda}
\end{equation*}

\noindent
For the remaining terms of $\dd_{(g,A)}{\cE}^{\ast}$ of $\dd_{(g,A)} {\cE}$ that are not evidently self-adjoint, we compute as follows:
\begin{align*}
\int_M g(\nabla^g\dd \mathrm{Tr}_g(h), \hat{h})\nu_g & = \int_M g(\dd \mathrm{Tr}_g(h), \nabla^{g\ast} \hat{h}) \nu_g  = \int_M g(h,  (\nabla^{g\ast}\nabla^{g\ast} \hat{h})\, g) \nu_g \,, \\
\int_M g((\nabla^{g\ast}\nabla^{g\ast}h)\, g, \hat{h})\nu_g & =   \int_M g(h,\nabla^g\dd \mathrm{Tr}_g(\hat{h}))\nu_g \,,\\
\int_M g(\langle F_A , \dd_A a\rangle_{g,\frc} g , \hat{h}) \nu_g & = \int_M  \langle \mathrm{Tr}_g(\hat{h}) F_A , \dd_A a\rangle_{g,\frc} \nu_g = - \int_M  \langle \dd\mathrm{Tr}_g(\hat{h})\lrcorner_g F_A , a\rangle_{g,\frc} \nu_g\,, \\
\int_M \langle \dd\mathrm{Tr}_g(h)\lrcorner_g  F_A , \hat{a}\rangle_{g,\frc} \nu_g &  = - \int_M \langle \dd_A^{g\ast}(\mathrm{Tr}_g(h)   F_A) , \hat{a}\rangle_{g,\frc} \nu_g  =   - \int_M g(\langle  F_A , \dd_A \hat{a}\rangle_{g,\frc} g , h)\nu_g\,, \\ 
\int_M g(F_A \circ_{g,\frc} \dd_A a , \hat{h}) \nu_g & = \sum_{i,j} \int_M (F_A \circ_{g,\frc} \dd_A a) (e_i,e_j) \hat{h}(e_i,e_j) \nu_g = \sum_{i,j} \int_M \langle F_A(e_i) , \dd_A a (e_j)\rangle_{g,\frc} \hat{h}(e_i,e_j) \nu_g \\
& = \sum_{i,j} \int_M \langle \dd_A^{g\ast} ( \hat{h}(e_i,e_j) \, e_j\wedge F_A(e_i) ),   a  \rangle_{g,\frc} \nu_g = - \int_M \langle \dd^{g\ast}_A (F_A)^g_{\hat{h}} , a\rangle_{g,\frc} \nu_g \,,\\
\int_M \langle \dd^{g\ast}_A (F_A)^g_h , \hat{a}\rangle_{g,\frc} \nu_g & = \frac{1}{2} \sum_{i,j}\int_M  \frc((F_A)^g_h(e_i,e_j) , \dd^{g}_A \hat{a}(e_i,e_j)) \nu_g \\
&=   \sum_{i,j}\int_M  \frc(g(h(e_i) , F_A(e_j)) , \dd^{g}_A \hat{a}(e_i,e_j)) \nu_g\\
& = - \int_M g(F_A \circ_{g,\frc} \dd_A \hat{a} , h) \nu_g \,.
\end{align*}

\noindent
The terms in the previous equation are adjoint of each other when considered with the factors with which they occur in $\dd_{(g,A)}{\cE}$, and hence we conclude. Note that we have used the following identity:
\begin{equation*}
(F_A)^g_h(e_i,e_j) = g(h(e_i) , F_A(e_j)) - g(h(e_j) , F_A(e_i))
\end{equation*}

\noindent
which follows from \eqref{eq:lineagh}.  
\end{proof}

\begin{theorem}
\label{thm:infinitesimalfinite}
For every Einstein-Yang-Mills pair $(g,A)$ the vector space of essential deformations $\mathbb{H}^1_{(g,A)}$ is finite-dimensional and isomorphic to the vector space of obstructions $\mathbb{H}^2_{(g,A)}$.
 \end{theorem}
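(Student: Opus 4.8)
The plan is to deduce both assertions directly from the two analytic facts already established about the deformation complex \eqref{eq:deformationcomplex}: its ellipticity (Lemma \ref{lemma:deformationelliptic}) and the formal self-adjointness of its middle differential (Proposition \ref{prop:dEselfadjoint}). First I would invoke the standard Hodge theory of elliptic complexes over a compact manifold: since $M$ is compact and \eqref{eq:deformationcomplex} is elliptic, each associated Laplacian $\Delta^{(i)}_{(g,A)}$ is an elliptic, formally self-adjoint operator whose kernel is a finite-dimensional space of smooth sections, and the Hodge decompositions recorded in \eqref{eq:decompositionTConf} furnish the natural isomorphisms $\mathbb{H}^i_{(g,A)} \cong \Ker(\Delta^{(i)}_{(g,A)})$. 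In particular $\mathbb{H}^1_{(g,A)} \cong \Ker(\Delta^{(1)}_{(g,A)})$ is finite-dimensional, which settles the first claim.

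For the isomorphism $\mathbb{H}^1_{(g,A)} \cong \mathbb{H}^2_{(g,A)}$ the decisive observation is that the complex \eqref{eq:deformationcomplex} is formally \emph{self-dual}: its outer maps are $\dd_e\Phi_{(g,A)}$ and its formal adjoint $(\dd_e\Phi_{(g,A)})^{\ast}$, while its middle map $\dd_{(g,A)}\cE$ equals its own adjoint by Proposition \ref{prop:dEselfadjoint}. I would exploit this by comparing the two middle Laplacians term by term from their defining formulas. Substituting $(\dd_{(g,A)}\cE)^{\ast} = \dd_{(g,A)}\cE$, the term $(\dd_{(g,A)}\cE)^{\ast}\circ\dd_{(g,A)}\cE$ occurring in $\Delta^{(1)}_{(g,A)}$ becomes $\dd_{(g,A)}\cE\circ(\dd_{(g,A)}\cE)^{\ast}$, which is precisely the corresponding term in $\Delta^{(2)}_{(g,A)}$; and both Laplacians share the identical remaining term $\dd_e\Phi_{(g,A)}\circ(\dd_e\Phi_{(g,A)})^{\ast}$. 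Hence $\Delta^{(1)}_{(g,A)} = \Delta^{(2)}_{(g,A)}$ as operators on $T_{(g,A)}\Conf(P)$, so they have the same kernel, giving
$$\mathbb{H}^1_{(g,A)} \cong \Ker(\Delta^{(1)}_{(g,A)}) = \Ker(\Delta^{(2)}_{(g,A)}) \cong \mathbb{H}^2_{(g,A)}.$$

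Since all the substantive analytic input is already in place, there is no genuine obstacle remaining; the argument reduces to the elementary operator identity $\Delta^{(1)}_{(g,A)} = \Delta^{(2)}_{(g,A)}$. The only point meriting care is to verify that the harmonic representatives furnish honest isomorphisms with the abstract quotient cohomology groups $\mathbb{H}^1_{(g,A)}$ and $\mathbb{H}^2_{(g,A)}$ rather than a mere dimension count, but this is guaranteed precisely by the orthogonal Hodge decompositions \eqref{eq:decompositionTConf} that follow from the ellipticity of \eqref{eq:deformationcomplex}. I would emphasise in the write-up that it is the self-adjointness of $\dd_{(g,A)}\cE$ — itself the reason for the sign choices made in the definition \eqref{eqn:E} of $\cE$ — that renders the obstruction space $\mathbb{H}^2_{(g,A)}$ canonically identified with the space of essential deformations $\mathbb{H}^1_{(g,A)}$.
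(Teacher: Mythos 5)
Your proposal is correct and follows essentially the same route as the paper: finite-dimensionality from the ellipticity of the deformation complex (Lemma \ref{lemma:deformationelliptic}), and the identification $\mathbb{H}^1_{(g,A)}\cong\mathbb{H}^2_{(g,A)}$ from the self-adjointness of $\dd_{(g,A)}\cE$ (Proposition \ref{prop:dEselfadjoint}). The only cosmetic difference is that you phrase the conclusion as the operator identity $\Delta^{(1)}_{(g,A)}=\Delta^{(2)}_{(g,A)}$, whereas the paper observes that the two Hodge decompositions of $T_{(g,A)}\Conf(P)$ share the same image summands so that the identity map identifies the two kernels; these are equivalent.
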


\begin{proof}
The fact that $\mathbb{H}^1_{(g,A)}$ is finite-dimensional follows from the ellipticity of the complex \eqref{eq:symbolcomplex}, which is proven in Lemma \ref{lemma:deformationelliptic}. By Proposition \ref{prop:dEselfadjoint}, the differential operator $\dd_{(g,A)} {\cE} \colon T_{(g,A)}\Conf(P)\to T_{(g,A)}\Conf(P)$ is formally self-adjoint, and therefore we obtain:
\begin{eqnarray*}
& T_{(g,A)}\Conf(P) = \Ker(\Delta^{(1)}_{(g,A)}) \oplus \mathrm{Im}(\dd_{(g,A)}{\cE}) \oplus \mathrm{Im}(\dd_e\Phi_{(g,A)}) \\
& T_{(g,A)}\Conf(P) = \Ker(\Delta^{(2)}_{(g,A)}) \oplus \mathrm{Im}(\dd_{(g,A)}{\cE}) \oplus \mathrm{Im}(\dd_{e}\Phi_{(g,A)})  
\end{eqnarray*}

\noindent
Hence, the identity map on $T_{(g,A)}\Conf(P)$ induces a canonical isomorphism between $\Ker(\Delta^{(1)}_{(g,A)})$ and $\Ker(\Delta^{(2)}_{(g,A)})$ which immediately implies that $\mathbb{H}^1_{(g,A)}$ and $\mathbb{H}^2_{(g,A)}$ are isomorphic as finite-dimensional vector spaces.
\end{proof}

\noindent
By the previous result, if the moduli space of Einstein-Yang-Mills pairs around an Einstein-Yang-Mills pair $(g,A)$ is to be of positive dimension then it is necessarily obstructed. That is, the condition that $\mathbb{H}^2_{(g,A)} = 0$ trivializes the moduli problem of Einstein-Yang-Mills pairs, exactly as it happens for the moduli problems of Einstein metrics or Yang-Mills connections \cite{KoisoI,KoisoV}. We proceed now to examine in more detail the conditions for a pair $(h,a)$ to be an essential deformation.

\begin{lemma}
\label{lemma:infinitesimaltrace}
Let $(h,a) \in \Ker(\dd_{(g,A)}\cE) \subset T_{(g,A)}\Conf(P)$ be an infinitesimal deformation of $(g,A)$. Then, the following equations hold, depending on the dimension $n$:
\begin{align*}
&  \Delta_g \tr(h) +\nabla^{g\ast}\nabla^{g\ast}h + \kappa\, g( (F_A \circ_{g,\frc} F_A)^o , h^o ) = 0\, , \qquad \text{if }\ n=4 \,, \\
& \Delta_g \tr(h) +\nabla^{g\ast}\nabla^{g\ast} h +  \frac{s^g}{n} \tr_g(h) = 2\kappa \frac{n-4}{2-n} \langle \dd_A a , F_A \rangle_{g,\frc} + \frac{2\kappa}{2-n} g( (F_A \circ_{g,\frc} F_A)^o , h^o )\, , \qquad \text{if }\ n\neq 4 \,,
\end{align*}
	
\noindent
where $(F_A \circ_{g,\frc} F_A)^o$ and $h^o$ denote the trace-less projections of $F_A \circ_{g,\frc}, F_A$ and $h$, respectively.
\end{lemma}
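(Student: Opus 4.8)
The plan is to derive both trace identities from the single linearized Einstein equation $\dd_{(g,A)}\cE_1(h,a)=0$, which holds because $(h,a)\in\Ker(\dd_{(g,A)}\cE)$. Rather than tracing the lengthy tensor expression of Proposition \ref{prop:infinitesimalE2} term by term, which would force me to compute the $g$-traces of $\mathrm{R}^g_o(h)$, $\delta^g\nabla^{g\ast}h$ and $\nabla^g\dd\mathrm{Tr}_g(h)$ in the appendix conventions, I would route everything through the scalar function $\s$ of \eqref{eq:tracEinstein-Yang-Mills}. The starting point is the \emph{pointwise identity of functions} on $\Conf(P)$
\[
\tr_g\cE_1(g,A)=\tfrac{n-2}{2}\,\s(g,A),
\]
obtained from $\tr_g\mG^g=\tfrac{2-n}{2}s^g$ together with $\tr_g\cT(g,A)=\kappa\tfrac{n-4}{2}\vert F_A\vert^2_{g,\frc}$, the latter following from $\mathrm{Tr}_g(F_A\circ_{g,\frc}F_A)=2\vert F_A\vert^2_{g,\frc}$ as recorded in Subsection \ref{subsec:elementaryEinstein-Yang-Millspairs}.

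First I would differentiate this identity at the Einstein-Yang-Mills pair $(g,A)$. Since $\tr_g$ depends on the metric through $g^{-1}$, whose derivative along $h$ is $-h$ (with indices raised by $g$), the product rule yields $\dd_{(g,A)}(\tr_{(\cdot)}\cE_1)(h,a)=\tr_g\big(\dd_{(g,A)}\cE_1(h,a)\big)-g(h,\cE_1(g,A))$. At an Einstein-Yang-Mills pair $\cE_1(g,A)=0$, so the correction term drops and
\[
\tr_g\big(\dd_{(g,A)}\cE_1(h,a)\big)=\tfrac{n-2}{2}\,\dd_{(g,A)}\s(h,a).
\]
Because $\dd_{(g,A)}\cE_1(h,a)=0$ and $n>2$, the whole problem collapses to unpacking the single scalar equation $\dd_{(g,A)}\s(h,a)=0$ into the two asserted forms.

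Next I would compute $\dd_{(g,A)}\s$ from the definition \eqref{eq:tracEinstein-Yang-Mills} using the two linearizations already obtained in the proof of Lemma \ref{lemma:YMequations}, namely $\dd_{(g,A)}s^g(h)=\Delta_g\tr_g(h)+\nabla^{g\ast}\nabla^{g\ast}h-g(h,\Ric^g)$ and $\dd_{(g,A)}\vert F_A\vert^2_{g,\frc}(h,a)=-g(h,F_A\circ_{g,\frc}F_A)+2\langle F_A,\dd_A a\rangle_{g,\frc}$. Substituting the Einstein-Yang-Mills relations $\Ric^g=\kappa\big(\tfrac{1}{n-2}\vert F_A\vert^2_{g,\frc}g-F_A\circ_{g,\frc}F_A\big)$ and $s^g=\tfrac{\kappa(n-4)}{2-n}\vert F_A\vert^2_{g,\frc}$ from \eqref{eq:RicciT} then re-expresses $g(h,\Ric^g)$ purely in terms of $\tr_g(h)$ and $g(h,F_A\circ_{g,\frc}F_A)$, leaving an equation in the three quantities $\tr_g(h)$, $g(h,F_A\circ_{g,\frc}F_A)$ and $\langle\dd_A a,F_A\rangle_{g,\frc}$.

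The final step, which is the only delicate part, is the algebraic rearrangement into the stated trace-free form, and this is where the dimension split enters. For $n=4$ the coefficient $\tfrac{\kappa(n-4)}{2-n}$ vanishes, so $\s=s^g$ and $s^g=0$; the two curvature terms combine through $g\big((F_A\circ_{g,\frc}F_A)^o,h^o\big)=g(h,F_A\circ_{g,\frc}F_A)-\tfrac12\vert F_A\vert^2_{g,\frc}\tr_g(h)$ to give the first identity immediately. For $n\neq4$ I would collect the $g(h,F_A\circ_{g,\frc}F_A)$ terms, whose total coefficient simplifies to $\tfrac{2\kappa}{n-2}$, and then split off the trace-free pairing via $g\big((F_A\circ_{g,\frc}F_A)^o,h^o\big)=g(h,F_A\circ_{g,\frc}F_A)-\tfrac{2}{n}\vert F_A\vert^2_{g,\frc}\tr_g(h)$, converting the leftover $\vert F_A\vert^2_{g,\frc}\tr_g(h)$ contribution back into $\tfrac{s^g}{n}\tr_g(h)$ by means of the scalar-curvature prescription. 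The main obstacle is thus pure bookkeeping: verifying that the several $\vert F_A\vert^2_{g,\frc}\tr_g(h)$ coefficients recombine to exactly $\tfrac{s^g}{n}$ and that the residual curvature pairing lands precisely on the trace-free projections, so that the identity comes out in the normalized form claimed.
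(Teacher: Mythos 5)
Your proposal is correct and follows essentially the same route as the paper: pass to the scalar map $\s$ of \eqref{eq:tracEinstein-Yang-Mills}, compute $\dd_{(g,A)}\s(h,a)$ from the linearizations of $s^g$ and $\vert F_A\vert^2_{g,\frc}$, substitute the relations \eqref{eq:RicciT}, and split off the trace-free pairings, with the dimension split entering exactly where you say. The only difference is that you justify $(h,a)\in\Ker(\dd_{(g,A)}\s)$ explicitly via the identity $\tr_g\cE_1=\tfrac{n-2}{2}\s$ and the vanishing of $\cE_1(g,A)$ at an Einstein-Yang-Mills pair, a step the paper asserts without comment; your coefficient bookkeeping matches the paper's intermediate formulas.
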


\begin{proof}
Since by assumption $(h,a) \in \Ker(\dd_{(g,A)}\cE) \subset T_{(g,A)}\Conf(P)$ and $(g,A)$ is Einstein-Yang-Mills, $(g,A)$ and $(h,a)$ satisfy:
\begin{eqnarray*}
\s(g,A) = 0\, , \qquad (h,a) \in \Ker(\dd_{(g,A)}\s)
\end{eqnarray*}
	
\noindent
where $\s \colon \Conf(P) \to \cC^{\infty}(M)$ is defined in \eqref{eq:tracEinstein-Yang-Mills}. We compute:
\begin{eqnarray*}
\dd_{(g,A)}\s  (h,a) = \Delta_g \tr_g(h) + \nabla^{g\ast}\nabla^{g\ast} h- g(h,\Ric^g) + \kappa \frac{4-n}{2-n} \big(2 \langle \dd_A a, F_A \rangle_{g,\frc} - g(F_A\circ_{g,\frc} F_A,h)\big).
\end{eqnarray*}
	
\noindent
Substituting the first equation in \eqref{eq:RicciT} into the previous expression and manipulating, we obtain:
\begin{align*}
\dd_{(g,A)}\s (h,a) = & \, \Delta_g \tr_g(h) + \nabla^{g\ast}\nabla^{g\ast} h - \frac{\kappa}{n-2} \vert F_A\vert^2_{g,\frc} \tr_g(h) \\
& + \frac{2\kappa}{n-2} g(F_A\circ_{g,\frc}F_A , h) + 2\kappa \frac{n-4}{n-2} \langle \dd_A a , F_A\rangle_{g,\frc}
\end{align*}
	
\noindent
Writing:
\begin{equation*}
h = h^o + \frac{1}{n} \tr_g(h) g\, , \qquad F_A\circ_{g,\frc}F_A = (F_A\circ_{g,\frc}F_A)^o + \frac{2}{n} \vert F_A\vert^2_{g,\frc} g 
\end{equation*}

\noindent
we obtain:
\begin{align*}
\dd_{(g,A)}\s (h,a) = & \, \Delta_g \tr_g(h) + \nabla^{g\ast}\nabla^{g\ast} h + \kappa \frac{4-n}{n(n-2)} \vert F_A\vert^2_{g,\frc} \tr_g(h) \\
& + \frac{2\kappa}{n-2} g((F_A\circ_{g,\frc}F_A)^o , h^o) + 2\kappa \frac{n-4}{n-2} \langle \dd_A a , F_A\rangle_{g,\frc}
\end{align*}
	
\noindent
Hence, if $n=4$, we obtain the first equation in the statement of the Lemma. On the other hand, if $n\neq 4$, then the equation $\s(g,A) = 0$ implies:
\begin{eqnarray*}
\vert F_A \vert^2_{g,\mathfrak{c}} = \kappa \frac{2-n}{n-4}s^g\, .
\end{eqnarray*}
	
\noindent
Substituting this equation into the previous expression we obtain the second equation in the statement of the Lemma.
\end{proof} 

\noindent
We now consider in more detail the essential deformations of a given Einstein-Yang-Mills pair.
\begin{lemma}
\label{lemma:nablanablah}
The following formula holds:
\begin{eqnarray*}
\nabla^{g\ast}\nabla^{g\ast} h = \frac{1}{2} \langle \dd_A a , F_A \rangle_{g,\frc} 
\end{eqnarray*}
	
\noindent
for every pair $(h,a)\in \Ker((\dd_{e}\Phi_{(g,A)})^{\ast})$.
\end{lemma}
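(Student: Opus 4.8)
The plan is to unwind the kernel condition and then take one further divergence. By Lemma~\ref{lemma:adjointdPhi}, membership $(h,a)\in\Ker((\dd_{e}\Phi_{(g,A)})^{\ast})$ is equivalent to the pair of equations $2\,\nabla^{g\ast}h = a\lrcorner_g^{\frc}F_A$, viewed as an identity of one-forms on $M$, together with $\dd_A^{g\ast}a=0$; only the first of these is needed. Applying the codifferential $\nabla^{g\ast}$ on one-forms to both sides reduces the claim to the single formula
\begin{equation*}
\nabla^{g\ast}\big(a\lrcorner_g^{\frc}F_A\big) = \langle \dd_A a, F_A\rangle_{g,\frc}\,,
\end{equation*}
since then $2\,\nabla^{g\ast}\nabla^{g\ast}h = \nabla^{g\ast}(a\lrcorner_g^{\frc}F_A)$ yields the stated result.

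To establish this I would work in a local orthonormal frame $(e_1,\dots,e_n)$ that is normal at the base point, so that $\nabla^g_{e_i}e_j=0$ there. Writing $\omega := a\lrcorner_g^{\frc}F_A$, the definition of $\lrcorner_g^{\frc}$ gives $\omega(e_i) = -\sum_j \frc\big(a(e_j),F_A(e_i,e_j)\big)$, and $\nabla^{g\ast}\omega = -\sum_i e_i(\omega(e_i))$ at the point. Differentiating by the Leibniz rule, using that $\nabla^{g,A}$ is compatible with $\frc$ and that the Christoffel corrections vanish in the normal frame, splits $\nabla^{g\ast}\omega$ into a term containing $\nabla^{g,A}a$ and a term containing $\nabla^{g,A}F_A$:
\begin{equation*}
\nabla^{g\ast}\omega = \sum_{i,j}\frc\big((\nabla^{g,A}_{e_i}a)(e_j),F_A(e_i,e_j)\big) + \sum_{i,j}\frc\big(a(e_j),(\nabla^{g,A}_{e_i}F_A)(e_i,e_j)\big)\,.
\end{equation*}

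The second sum is handled by the identity $\dd_A^{g\ast}F_A = -\sum_i \iota_{e_i}\nabla^{g,A}_{e_i}F_A$ used earlier, which rewrites it as $-\langle a,\dd_A^{g\ast}F_A\rangle_{g,\frc}$; since $(g,A)$ is an Einstein-Yang-Mills pair, $A$ is a Yang-Mills connection and this term vanishes. For the first sum I would antisymmetrize: because $F_A(e_i,e_j)$ is skew in $i,j$, relabelling the summation indices shows that $\sum_{i,j}\frc\big((\nabla^{g,A}_{e_i}a)(e_j),F_A(e_i,e_j)\big) = \tfrac12\sum_{i,j}\frc\big((\nabla^{g,A}_{e_i}a)(e_j)-(\nabla^{g,A}_{e_j}a)(e_i),F_A(e_i,e_j)\big)$, whose first argument is exactly $(\dd_A a)(e_i,e_j)$; this equals $\langle\dd_A a,F_A\rangle_{g,\frc}$. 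Combining the two sums gives the displayed formula, and hence the lemma. The main hazard is purely bookkeeping: keeping the sign conventions consistent across the definition of $a\lrcorner_g^{\frc}F_A$, the codifferential $\nabla^{g\ast}$ on one-forms, and the expression for $\dd_A^{g\ast}F_A$. The only genuine geometric input beyond these conventions is the Yang-Mills equation, which is precisely what makes the $\nabla^{g,A}F_A$ term drop out.
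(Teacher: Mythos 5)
Your proof is correct and follows essentially the same route as the paper's: substitute the slice condition $2\nabla^{g\ast}h = a\lrcorner_g^{\frc}F_A$ from Lemma~\ref{lemma:adjointdPhi} and compute $\nabla^{g\ast}(a\lrcorner_g^{\frc}F_A)$ in an adapted local frame. You are in fact slightly more careful than the printed proof, which applies the Leibniz rule only to the factors coming from $a$ and silently drops the term containing $\nabla^{g,A}F_A$; as you observe, that term equals $-\langle a,\dd_A^{g\ast}F_A\rangle_{g,\frc}$ and vanishes precisely because $(g,A)$ is an Einstein--Yang--Mills pair, so the Yang--Mills equation is the (implicit) geometric input.
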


\begin{proof}
Since $(h,a)$ is an essential deformation, it satisfies the slice condition $2 \nabla^{g\ast} h = a\lrcorner_g^{\frc}F_A$, see \eqref{eq:adjointdPhi}. Hence:
\begin{equation*}
2 \nabla^{g\ast}\nabla^{g\ast} h = \nabla^{g\ast}(a\lrcorner_g^{\frc}F_A).
\end{equation*} 

\noindent
We chose a local frame $( e_i )$ of $TM$ adapted at a given basepoint and a local frame $(\tau_{\Lambda})$ of $\frG_P$. We compute:
\begin{align*}
\nabla^{g\ast}(a\lrcorner_g^{\frc}F_A)
&= - \sum_{i} \nabla^{g}_{e_i}(a_{\Lambda} \otimes \tau_{\Lambda}\lrcorner_g^{\frc} (F^{\Sigma}_A\otimes \tau_{\Sigma}))(e_i)
= - \sum_{i} \nabla^{g}_{e_i}  (F_A^{\Sigma}(a_{\Lambda},e_i) \frc(\tau_{\Lambda},\tau_{\Sigma}))\\
& = - \sum_{i} (F_A^{\Sigma}(\nabla^{g}_{e_i}  a_{\Lambda},e_i) \frc(\tau_{\Lambda},\tau_{\Sigma}) + F_A^{\Sigma}(a_{\Lambda},e_i) \frc(\nabla^{A}_{e_i} \tau_{\Lambda},\tau_{\Sigma})) \\
& = \sum_{i} \langle F_A^{\Sigma} , e_i \wedge \nabla^{g}_{e_i}  a_{\Lambda}\rangle_g \frc(\tau_{\Lambda},\tau_{\Sigma}) + \langle F_A^{\Sigma} , e_i \wedge  a_{\Lambda}\rangle_g \frc(\nabla^{A}_{e_i} \tau_{\Lambda},\tau_{\Sigma}))\\
&= \langle \dd_A a , F_A \rangle_{g,\frc}\,,
\end{align*}

\noindent
from which the equation follows.
\end{proof}

\noindent
We give now the final general characterization of the essential deformations of an Einstein-Yang-Mills pair $(g,A)\in \cE^{-1}(0)$ that we will use in the following.  

\begin{proposition}
\label{prop:infinitesimal}
Let $(g,A)$ be an Einstein-Yang-Mills pair. A pair $(h,a) \in T_{(g,A)}\Conf(P)$ is an essential deformation of $(g,A)$ if and only if it satisfies the following four equations:
\begin{align*}
&\frac{1}{2} \nabla^{g\ast}\nabla^g h - \mathrm{R}^g_o (h) - \delta^{g}\nabla^{g\ast} h - \frac{1}{2}\nabla^g \dd\mathrm{Tr}_g(h) \\ 
&-  \kappa \Big(F_A \circ_{h,\frc} F_A + h\circ_g (F_A\circ_{g,\frc} F_A) - 2 F_A \circ_{g,\frc} \dd_A a\Big)  - \frac{\kappa}{n-2} \Big( 2 \langle F_A , \dd_A a\rangle_{g,\frc} -  g(F_A\circ_{g,\frc} F_A , h) \Big) g = 0\,, \\
&\dd_A^{g\ast}\dd_A a - a\lrcorner_g^{\frg} F_A - \frac{1}{2} \dd\mathrm{Tr}_g(h)\lrcorner_g  F_A + \dd^{g\ast}_A (F_A)^g_h  = 0\, , \\
&2 \nabla^{g\ast}h = a \lrcorner_g^{\frc} F_A\, ,\\
&\dd^{g\ast}_A a = 0\,.
\end{align*}
\end{proposition}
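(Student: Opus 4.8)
The plan is to unpack the definition of essential deformation, $(h,a)\in\Ker(\dd_{(g,A)}\cE)\cap\Ker\big((\dd_e\Phi_{(g,A)})^{\ast}\big)$, one component at a time. First I would note that the condition $(\dd_e\Phi_{(g,A)})^{\ast}(h,a)=0$ is, by the explicit formula for the adjoint in Lemma \ref{lemma:adjointdPhi}, exactly the pair of equations $2\nabla^{g\ast}h=a\lrcorner_g^{\frc}F_A$ and $\dd^{g\ast}_A a=0$, i.e.\ the third and fourth equations of the statement. Likewise $\dd_{(g,A)}\cE_2(h,a)=0$ is, after dividing the formula of Proposition \ref{prop:infinitesimalE2} by the nonzero factor $2\kappa$, precisely the second equation. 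Hence the only substantive point is to show that, at an Einstein-Yang-Mills pair, $\dd_{(g,A)}\cE_1(h,a)=0$ is equivalent to the first equation.

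For this I would compare the expression for $\dd_{(g,A)}\cE_1(h,a)$ in Proposition \ref{prop:infinitesimalE2} with the left-hand side of the first equation, call it $\Xi(h,a)$. The terms that are not explicitly proportional to $g$ coincide term by term, so the difference $-\dd_{(g,A)}\cE_1(h,a)-\Xi(h,a)$ is a pure multiple of $g$. Computing the scalar coefficient of $g$ and inserting the explicit formula for $\dd_{(g,A)}\s$ recorded in the proof of Lemma \ref{lemma:infinitesimaltrace} (with $\s$ the trace map \eqref{eq:tracEinstein-Yang-Mills}), I expect to identify this coefficient as $-\tfrac12\,\dd_{(g,A)}\s(h,a)$, yielding the pointwise identity
\[
-\dd_{(g,A)}\cE_1(h,a)=\Xi(h,a)-\tfrac{1}{2}\,\dd_{(g,A)}\s(h,a)\,g ,
\]
valid for every $(h,a)\in T_{(g,A)}\Conf(P)$. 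I would pair this with the trace identity $\tr_g\big(\dd_{(g,A)}\cE_1(h,a)\big)=\tfrac{n-2}{2}\,\dd_{(g,A)}\s(h,a)$, obtained by differentiating the pointwise relation $\tr_g\cE_1(g,A)=\tfrac{n-2}{2}\,\s(g,A)$ — a consequence of $\tr_g\mG^g=\tfrac{2-n}{2}s^g$ and $\tr_g(F_A\circ_{g,\frc}F_A)=2\vert F_A\vert^2_{g,\frc}$ — and using that $\cE_1(g,A)=0$ at an Einstein-Yang-Mills pair, so that the variation of the operation $\tr_g$ itself contributes nothing.

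With these two identities the equivalence follows formally. For the forward direction, if $(h,a)$ is essential then $\dd_{(g,A)}\cE_1(h,a)=0$, and taking the trace gives $\dd_{(g,A)}\s(h,a)=0$; the displayed identity then reduces to $\Xi(h,a)=0$, the first equation. For the converse, assuming $\Xi(h,a)=0$ the displayed identity gives $\dd_{(g,A)}\cE_1(h,a)=\tfrac12\,\dd_{(g,A)}\s(h,a)\,g$, and taking the trace together with the trace identity forces $\tfrac{n-2}{2}\dd_{(g,A)}\s=\tfrac{n}{2}\dd_{(g,A)}\s$, hence $\dd_{(g,A)}\s(h,a)=0$ and therefore $\dd_{(g,A)}\cE_1(h,a)=0$. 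Combined with the second, third and fourth equations this places $(h,a)$ in $\Ker(\dd_{(g,A)}\cE)\cap\Ker\big((\dd_e\Phi_{(g,A)})^{\ast}\big)$, so it is essential.

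The main obstacle is purely computational: carefully matching the $g$-proportional parts of $\dd_{(g,A)}\cE_1$ against those of the first equation and recognizing the residual scalar as $-\tfrac12\,\dd_{(g,A)}\s$, which hinges on the precise formula for $\dd_{(g,A)}\s$ from Lemma \ref{lemma:infinitesimaltrace}. I would also point out that the slice conditions together with Lemma \ref{lemma:nablanablah} yield the relation $\nabla^{g\ast}\nabla^{g\ast}h=\tfrac12\langle\dd_A a,F_A\rangle_{g,\frc}$, which provides a convenient consistency check on the elimination of the second-order trace terms, although it is not logically required for the equivalence itself.
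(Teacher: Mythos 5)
Your proposal is correct and follows essentially the same route as the paper, whose proof is a one-line appeal to Proposition \ref{prop:infinitesimalE2} together with imposing $\dd_{(g,A)}\s(h,a)=0$ as computed in Lemma \ref{lemma:infinitesimaltrace}. Your identity $-\dd_{(g,A)}\cE_1(h,a)=\Xi(h,a)-\tfrac12\,\dd_{(g,A)}\s(h,a)\,g$ together with the trace relation $\tr_g(\dd_{(g,A)}\cE_1(h,a))=\tfrac{n-2}{2}\,\dd_{(g,A)}\s(h,a)$ checks out (I verified the $g$-proportional coefficients match), and it in fact makes the converse direction more explicit than the paper does.
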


\begin{proof}
This follows from Proposition \ref{prop:infinitesimalE2} after imposing the equation $\dd_{(g,A)}\s (h,a) = 0$ as computed in the proof of Lemma \ref{lemma:infinitesimaltrace}.  
\end{proof}

\noindent
Given an essential deformation $(h,a)$, the previous proposition immediately implies that $2 \nabla^{g\ast}h^o - a \lrcorner_g^{\frc} F_A \in \Omega^1(M)$ is exact and therefore defines the trivial class in $H^1(M,\mathbb{R})$. Given an Einstein-Yang-Mills pair $(g,A)$, an arbitrary trace-less symmetric two-tensor $h^o$ and an element $a\in\Omega^1(M)$, we will say that $(h^o,a)$ is \emph{completable} or can be completed into an essential deformation of $(g,A)$ if there exists a function $f\in C^{\infty}(M)$ such that $(h = f\, g + h^o , a)$ is an essential deformation. Thus, an obstruction for $(h^o,a)$ to be \emph{completable} is that the cohomology class $[2 \nabla^{g\ast}h^o - a \lrcorner_g^{\frc} F_A] \in H^1(M,\mathbb{R})$ vanishes. When necessary we will denote by $T^o_{(g,A)}\Conf(P)$ the subspace of $T_{(g,A)}\Conf(P)$  consisting of elements $(h^o,a)$ with $h^o$ traceless. If $[2 \nabla^{g\ast}h^o - a \lrcorner_g^{\frc} F_A] = 0$ then we say that $(h^o,a)$ is \emph{unobstructed}. In particular, we have the following result.

\begin{corollary}
\label{cor:obstruction}
Let $(h,a) \in \Ker(\dd_{(g,A)}\cE) \subset T_{(g,A)}\Conf(P)$ be an infinitesimal deformation of an Einstein-Yang-Mills pair $(g,A)$. Then, the following equations hold:
\begin{align*}
\int_M \frac{s^g}{n} \tr_g(h) \nu_g = \frac{2\kappa}{2-n} \int_M  g( (F_A \circ_{g,\frc} F_A)^o , h^o )  \nu_g \, , \qquad  0 = [2 \nabla^{g\ast}h^o - a \lrcorner_g^{\frc} F_A] \in H^{1}(M,\mathbb{R})
\end{align*}
	
\noindent
In particular, if $n=4$ we have $\int_M g( (F_A \circ_{g,\frc} F_A)^o , h^o) \nu_g = 0$.
\end{corollary}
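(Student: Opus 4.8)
The proof splits into two independent statements: the first displayed identity (with its $n=4$ specialization) is an integral identity, while the second is a cohomological vanishing, and I would treat them by separate arguments.

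For the integral identity the plan is to integrate the pointwise formula of Lemma~\ref{lemma:infinitesimaltrace} over the closed manifold $M$. For $n\neq 4$, integrating that identity against $\nu_g$ removes the two second-order terms, since $\int_M\Delta_g\tr_g(h)\,\nu_g=0$ and $\int_M\nabla^{g\ast}\nabla^{g\ast}h\,\nu_g=0$ are both integrals of a total divergence on a closed manifold. The cross term is killed by integration by parts together with the Yang--Mills equation:
\begin{equation*}
\int_M\langle \dd_A a,F_A\rangle_{g,\frc}\,\nu_g=\int_M\langle a,\dd_A^{g\ast}F_A\rangle_{g,\frc}\,\nu_g=0,
\end{equation*}
because $\dd_A^{g\ast}F_A=0$. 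What survives is precisely $\int_M\tfrac{s^g}{n}\tr_g(h)\,\nu_g=\tfrac{2\kappa}{2-n}\int_M g((F_A\circ_{g,\frc}F_A)^o,h^o)\,\nu_g$. For $n=4$ I would instead integrate the $n=4$ formula of the same lemma; the two divergence terms again drop out and one is left with $\int_M g((F_A\circ_{g,\frc}F_A)^o,h^o)\,\nu_g=0$. This is consistent with the general identity, since in dimension four the scalar-curvature prescription $s^g=\tfrac{\kappa(n-4)}{2-n}\vert F_A\vert^2_{g,\frc}$ forces $s^g=0$, so both sides of the first identity vanish.

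For the cohomological vanishing the plan is to exploit the slice condition. By Proposition~\ref{prop:infinitesimal} an essential deformation satisfies $2\nabla^{g\ast}h=a\lrcorner_g^{\frc}F_A$, and since $\nabla^{g\ast}(\tr_g(h)\,g)$ is a constant multiple of $\du\,\tr_g(h)$ we get
\begin{equation*}
2\nabla^{g\ast}h^o-a\lrcorner_g^{\frc}F_A=2\nabla^{g\ast}h^o-2\nabla^{g\ast}h=-\tfrac{2}{n}\nabla^{g\ast}(\tr_g(h)\,g),
\end{equation*}
which is manifestly exact, so its class in $H^1(M,\RR)$ is trivial. To reach a general infinitesimal deformation I would use the Hodge decomposition $\Ker(\dd_{(g,A)}\cE)=\mathbb{H}^1_{(g,A)}\oplus\mathrm{Im}(\dd_e\Phi_{(g,A)})$ from~\eqref{eq:decompositionTConf} to write $(h,a)=(h_0,a_0)+\dd_e\Phi_{(g,A)}(v,\tau)$ with $(h_0,a_0)$ essential, so that the essential summand contributes the trivial class by the computation above. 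The main obstacle is the gauge summand: one must check that the class $[2\nabla^{g\ast}h^o-a\lrcorner_g^{\frc}F_A]$ is unchanged upon adding $\dd_e\Phi_{(g,A)}(v,\tau)=(\cL_v g,\dd_A\tau+\iota_v F_A)$. I expect this to be the least routine step — it requires rewriting $(\iota_v F_A)\lrcorner_g^{\frc}F_A=-\iota_v(F_A\circ_{g,\frc}F_A)$ via the Einstein equation in the form $F_A\circ_{g,\frc}F_A=\tfrac{1}{n-2}\vert F_A\vert^2_{g,\frc}\,g-\kappa\,\Ric^g$ and controlling $\nabla^{g\ast}\cL_v g$ through a Bochner identity — and it is here that one must verify that no non-exact part of $2\nabla^{g\ast}h^o-a\lrcorner_g^{\frc}F_A$ survives.
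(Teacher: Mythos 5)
Your treatment of the integral identity is exactly the paper's argument: the official proof is the one line ``integrate the equations of Lemma~\ref{lemma:infinitesimaltrace}'', with the Laplacian and double-divergence terms dropping out on the closed manifold and the cross term $\int_M\langle \dd_A a,F_A\rangle_{g,\frc}\,\nu_g$ vanishing against $\dd_A^{g\ast}F_A=0$; your $n=4$ specialization is also as intended. No issues there.

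For the cohomological assertion, your computation for essential deformations (slice condition plus $\nabla^{g\ast}(f\,g)=-\dd f$, giving $2\nabla^{g\ast}h^o-a\lrcorner_g^{\frc}F_A=\tfrac{2}{n}\dd\,\mathrm{Tr}_g(h)$) is precisely what the paper does in the paragraph immediately preceding the corollary. However, the extension you propose to arbitrary elements of $\Ker(\dd_{(g,A)}\cE)$ via gauge-invariance of the class cannot be carried out, and you were right to flag it as the critical step: for a pure-gauge deformation the one-form in question is in general not even closed, so it does not define a class in $H^1(M,\RR)$ at all. Concretely, take $M=T^3$ flat, $P$ trivial with $A$ flat (so $F_A=0$, and the flat pair is Einstein-Yang-Mills), and $v=\sin(x_1)\,\partial_2$; then $(h,a)=(\cL_v g,0)\in\mathrm{Im}(\dd_e\Phi_{(g,A)})\subset\Ker(\dd_{(g,A)}\cE)$ is trace-free, and $2\nabla^{g\ast}h^o-a\lrcorner_g^{\frc}F_A=2\sin(x_1)\,\dd x_2$, whose exterior derivative is $2\cos(x_1)\,\dd x_1\wedge\dd x_2\neq 0$. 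So the second displayed equation of the corollary must be read as a statement about essential deformations (equivalently, about the completability obstruction for the pair $(h^o,a)$ discussed just before the corollary), not about all infinitesimal deformations; the paper's proof does not address this point either, and you should not attempt to close the gauge-invariance step --- it is false as stated. Your proof is complete once you restrict the cohomological claim to $(h,a)\in\Ker((\dd_e\Phi_{(g,A)})^{\ast})$ and drop the Hodge-decomposition reduction.
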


\begin{proof}
Integrate the equations of Lemma \ref{lemma:infinitesimaltrace}.
\end{proof}

\noindent
Hence, infinitesimal deformations of Einstein-Yang-Mills pairs $(g,A)$ are \emph{obstructed} in terms of a cohomology class and of the bilinear $F_A\circ_{g,\frc} F_A \in \Gamma(T^{\ast}M\odot T^{\ast}M)$ constructed out of $g$ and $A$. These are genuine obstructions to the deformation problem of the Einstein-Yang-Mills system. 

\begin{remark}
In fact, deformations of the Einstein-Yang-Mills system do not decouple even for \emph{pure} metric or Yang-Mills deformations. More specifically, an element $a\in\Omega^1(M,\frG_P)$ defines an essential deformation $(0,a)\in T_{(g,A)}\Conf(P)$ of $(g,A)$ if and only if:
\begin{equation}
\label{eq:(0,a)}
\dd_A^{g\ast}\dd_A a = a\lrcorner_g^{\frg} F_A\, , \qquad \dd_A^{g\ast}a = 0\, , \qquad F_A \circ_{g,\frc}\dd_A a = \frac{1}{n-2} \langle F_A , \dd_A a\rangle_{g,\frc} g\, , \qquad  a\lrcorner_g^{\frc} F_A = 0 
\end{equation}

\noindent
Solutions to the first and second equations above correspond to essential deformations of $A$ as a Yang-Mills connection \cite{KoisoV}. Hence, essential deformations of the form $(0,a)$ correspond to the subset of the essential deformations of $A$ as a Yang-Mills connection that satisfies the third and fourth equations above. On the other hand, if we consider deformations of the form $(h,0)\in T_{(g,A)}\Conf(P)$ we obtain a constrained coupled system that does not reduce to the differential system that characterizes infinitesimal deformations of a Ricci-flat metric. 
\end{remark}

\noindent
The previous remark together with Corollary \ref{cor:obstruction} shows that the metrics and connections in the moduli space of Einstein-Yang-Mills pairs are truly coupled. The fact that we are considering simultaneous deformations of $g$ and $A$ allows for a number of special types of deformations by choosing $h$ and $a$ of a particular type. An infinitesimal deformation $(h,a) \in T_{(g,A)}\Conf(P)$ is generated by a smooth family of automorphisms of $P$ if and only if it can be written as: 
\begin{equation*}
(h,a) = (2\delta^g v , \dd_A \tau + F_A(v))  
\end{equation*}

\noindent
in terms of a vector field $v\in\mathfrak{X}(M)$ and a section $\tau \in \Gamma(\frG_P)$. Pairs of this type automatically belong to the kernel of $\dd_{(g,A)}\cE\colon T_{(g,A)}\Conf(P) \to T_{(g,A)}\Conf(P)$ and are essential deformations of the given $(g,A)$ if and only if $v\in\mathfrak{X}(M)$ is Killing and $\tau\in \Gamma(\frG_P)$ is parallel with respect to the connection induced by $A$ on $\frG_P$. We consider in the following one of the most natural deformations of $(g,A)$, which consists of simply multiplying $g$ by a smooth family of positive functions and results in an infinitesimal deformation of the form $(h,a) = (f g, a)$ for a function $f\in C^{\infty}(M)$ and an arbitrary infinitesimal deformation $a\in \Omega^1(M,\frG_P)$ of $A$. 

\begin{corollary}
Let $(f\,g,a) \in T_{(g,A)}\Conf(P)$ be an essential deformation of $(g,A)$. If:
\begin{equation*}
\kappa \big(n (n-2) + 2 (n-2)^2 - 16 \kappa (n-2) + 8\kappa n \big)(4-n) \leq 0
\end{equation*}

\noindent
then $f = 0$ and $a\in \Omega^1(M,\frG_P)$ satisfies \eqref{eq:(0,a)}.
\end{corollary}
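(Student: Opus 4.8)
The plan is to feed the trace deformation $h=f\,g$ into the four equations of Proposition~\ref{prop:infinitesimal} and distil from them a single scalar identity for $f$ whose sign is controlled by the displayed inequality. Because $h=f\,g$ has vanishing traceless part, $h^o=0$, many terms simplify at once. Two constraints carry most of the bookkeeping: the slice condition (the third equation of Proposition~\ref{prop:infinitesimal}, see~\eqref{eq:adjointdPhi}) becomes $a\lrcorner_g^{\frc}F_A=-2\,\dd f$, so $a\lrcorner_g^{\frc}F_A$ is exact; and Lemma~\ref{lemma:nablanablah} becomes $\Delta_g f=\tfrac12\langle\dd_A a,F_A\rangle_{g,\frc}$, i.e. $\langle\dd_A a,F_A\rangle_{g,\frc}=2\,\Delta_g f$. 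These let me eliminate every occurrence of $a$ paired against $g$ or against $\dd f$, reducing the coupled system to relations between $f$ and $F_A$.

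First I would record the pointwise trace relation. Substituting $h=f\,g$ into Lemma~\ref{lemma:infinitesimaltrace} and using $\langle\dd_A a,F_A\rangle_{g,\frc}=2\Delta_g f$ gives
\begin{equation*}
C_1\,\Delta_g f+s^g f=0,\qquad C_1=(n+1)+\frac{4\kappa(n-4)}{n-2}.
\end{equation*}
After one integration by parts this yields $C_1\int_M|\nabla f|_g^2\,\nu_g=\int_M s^g f^2\,\nu_g$, but by itself it forces too restrictive a sign condition, so the decisive information must come from the \emph{traceless} part of the linearised Einstein equation. For $h=f\,g$ the system of Proposition~\ref{prop:infinitesimal} reduces, in its traceless part, to the pointwise identity
\begin{equation*}
\frac{2-n}{2}\,(\nabla^g\dd f)^o-\kappa f\,(F_A\circ_{g,\frc}F_A)^o+2\kappa\,(F_A\circ_{g,\frc}\dd_A a)^o=0,
\end{equation*}
where $(\,\cdot\,)^o$ denotes the $g$-traceless projection; this couples the traceless Hessian of $f$ to $F_A$ and to $\dd_A a$.

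The heart of the argument is then to pair this traceless equation with $\nabla^g\dd f$ and integrate over the compact $M$. A Bochner identity rewrites $\int_M|\nabla^g\dd f|^2\,\nu_g$ as $\int_M(\Delta_g f)^2\,\nu_g$ plus $\int_M\Ric^g(\nabla f,\nabla f)\,\nu_g$, and the Einstein-Yang-Mills relation $\Ric^g=\kappa\big(\tfrac{1}{n-2}|F_A|_{g,\frc}^2\,g-F_A\circ_{g,\frc}F_A\big)$ expresses the Ricci term through $F_A$. The cross term containing $\dd_A a$ is removed by a further integration by parts together with the linearised Yang-Mills equation (the second equation of Proposition~\ref{prop:infinitesimal}), the slice identity, and $\dd^{g\ast}_A a=0$, after which it too becomes an expression in $f$ and $F_A$. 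Combining the resulting integral identity with $C_1\int_M|\nabla f|_g^2\,\nu_g=\int_M s^g f^2\,\nu_g$ and eliminating the common integral $\int_M|F_A|_{g,\frc}^2 f^2\,\nu_g$ collapses everything to
\begin{equation*}
(\text{positive constant})\int_M\big|(\nabla^g\dd f)^o\big|^2\,\nu_g=\kappa\,\big[(3n-4)(n-2)-8\kappa(n-4)\big]\,(4-n)\int_M|\nabla f|_g^2\,\nu_g .
\end{equation*}
The left-hand side is manifestly nonnegative, so under the hypothesis, which makes the right-hand coefficient $\le0$, both sides vanish; hence $\nabla f\equiv0$ and $f$ is constant. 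The trace relation then gives $s^g f\equiv0$: for a non-trivial pair and $n\neq4$ one has $s^g\not\equiv0$, so a constant $f$ must vanish, while the borderline $n=4$ (where the coefficient vanishes identically) is exactly the decoupling regime of Theorem~\ref{thm:deformationsEYMASD} and is handled by the same traceless identity, forcing $\nabla^g\dd f=0$ and hence again $f=0$. In all cases $f=0$, so $h=0$ and the four equations of Proposition~\ref{prop:infinitesimal} reduce verbatim to the system~\eqref{eq:(0,a)}, which is the assertion.

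I expect the main obstacle to be precisely the tensorial integration-by-parts of the third step: handling the traceless projections, eliminating the $\dd_A a$ cross term through the linearised Yang-Mills equation and the slice condition, and tracking every sign and the factor of $2$ relating $\tr_g(F_A\circ_{g,\frc}F_A)$ to $|F_A|_{g,\frc}^2$, so that the accumulated coefficient assembles exactly into $\kappa[(3n-4)(n-2)-8\kappa(n-4)](4-n)$ with the correct orientation.
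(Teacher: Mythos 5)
Your first step is the paper's entire proof, and you then walk away from it. The paper's argument is exactly: substitute $h=f\,g$ into Lemma \ref{lemma:infinitesimaltrace}, use Lemma \ref{lemma:nablanablah} to replace $\langle \dd_A a, F_A\rangle_{g,\frc}$ by a multiple of $\Delta_g f$, and use $s^g=\tfrac{\kappa(n-4)}{2-n}\vert F_A\vert^2_{g,\frc}$; this yields the single scalar equation $\Delta_g f + \tfrac{2\kappa(n-4)}{D}\vert F_A\vert^2_{g,\frc}\,f=0$, where $D$ is precisely the bracket in the hypothesis. The hypothesis $\kappa D(4-n)\le 0$ is nothing but the statement that this coefficient is nonnegative, so pairing with $f$ and integrating (positivity of $\Delta_g$) forces $f=0$. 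Your assertion that the trace relation ``by itself forces too restrictive a sign condition'' is therefore the critical misstep: the hypothesis of the corollary is exactly the sign condition produced by the trace relation. The likely source of your belief is a sign slip: since $\nabla^{g\ast}(f\,g)=-\dd f$, one has $\nabla^{g\ast}\nabla^{g\ast}(f\,g)=-\Delta_g f$, so Lemma \ref{lemma:nablanablah} gives $\langle\dd_A a,F_A\rangle_{g,\frc}=-2\Delta_g f$, not $+2\Delta_g f$; this flips the $\kappa$-dependent part of your $C_1$ and misaligns it with the bracket $D$ in the hypothesis.

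The replacement argument you propose does not close the gap. Pairing the traceless linearized Einstein equation with $\nabla^g\dd f$ and invoking Bochner produces, after substituting $\Ric^g=\kappa\big(\tfrac{1}{n-2}\vert F_A\vert^2 g - F_A\circ_{g,\frc}F_A\big)$, integrals such as $\int_M (F_A\circ_{g,\frc}F_A)(\nabla f,\nabla f)\,\nu_g$ and $\int_M \vert F_A\vert^2_{g,\frc}\vert\nabla f\vert^2_g\,\nu_g$. These are not numerical multiples of $\int_M\vert\nabla f\vert^2_g\,\nu_g$, and the trace relation only lets you eliminate $\int_M\vert F_A\vert^2_{g,\frc}f^2\,\nu_g$, a different quantity; so the claimed collapse to $(\text{positive constant})\int_M\vert(\nabla^g\dd f)^o\vert^2\nu_g=\kappa D(4-n)\int_M\vert\nabla f\vert^2_g\,\nu_g$ is asserted rather than derived and is not credible in that form. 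Finally, even granting it, your endgame at the borderline cases is incomplete: $(\nabla^g\dd f)^o=0$ together with $\Delta_g f=0$ yields only that $f$ is constant, not that $f=0$; the conclusion $f=0$ requires the coefficient $\tfrac{2\kappa(n-4)}{D}\vert F_A\vert^2$ to be nontrivial somewhere, which is what the paper's one-line equation delivers directly.
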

 
\begin{proof}
Let $(f\,g,a)$ be an essential deformation of $(g,A)$. Using Lemmas \ref{lemma:infinitesimaltrace} and \ref{lemma:nablanablah} it follows that $f$ satisfies:
\begin{eqnarray*}
\Delta_g f + \frac{2\kappa (n-4)}{n (n-2) + 2 (n-2)^2 - 16 \kappa (n-2) + 8\kappa n } \vert F_A \vert_{g,\frc}^2 f = 0
\end{eqnarray*}

\noindent
Since $\Delta_g$ is a positive operator the conclusion follows.
\end{proof}


\subsection{The Kuranishi model}
\label{subsec:Kuranishitheory}


Let $(g,A)\in \cE^{-1}(0)$ be an Einstein-Yang-Mills pair and fix a slice $\mathbb{S} $ around $(g,A)$. Consider the restriction of $\cE$ to $\mathbb{S} $:
\begin{equation*}
\cE_{\mathbb{S} } \colon \mathbb{S}  \to T_{(g,A)}\Conf(P).
\end{equation*}

\noindent
Then, the differential of $\cE_{\mathbb{S} }$ defines a linear map:
\begin{equation*}
\dd_{(g,A)}\cE_{\mathbb{S} }\colon \mathrm{Ker}(\dd_e\Phi_{(g,A)})^{\ast} \to  T_{(g,A)}\Conf(P)
\end{equation*}

\noindent
given by evaluation of $\dd_{(g,A)}\cE(h,a)$ on $\mathrm{Ker}(\dd_e\Phi_{(g,A)})^{\ast}$. The local moduli space of Einstein-Yang-Mills pairs around an Einstein-Yang-Mills pair can be characterized similarly to the case of Einstein metrics or Yang-Mills connections, as proven by Koiso in \cite{KoisoI,KoisoII,KoisoIII,KoisoIV,KoisoV}.

\begin{theorem}
\label{thm:localKuranishi}
Let $(g,A)$ be an Einstein-Yang-Mills pair and $\mathbb{S} \subset T_{(g,A)}\Conf(P)$ a slice around $(g,A)$. Then, there exists an analytic closed submanifold $\cZ_{(g,A)}\subset \mathbb{S}$ of $\mathbb{S}$ such that:
\begin{equation*}
T_{(g,A)} \cZ  = \Ker(\dd_{(g,A)}\cE)\cap \Ker((\dd_{e}\Phi_{(g,A)})^{\ast})
\end{equation*}

\noindent
and $\cE^{-1}(0)\cap \mathbb{S} $ is an analytic subset of $\cZ_{(g,A)}$.
\end{theorem}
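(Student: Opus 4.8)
The plan is to realise $\cZ_{(g,A)}$ as a finite-dimensional reduction of the slice $\mathbb{S}$ obtained by solving the ``unobstructed'' part of the equation $\cE=0$, and to control the remaining gauge directions through the Bianchi-type identity $(\dd_e\Phi_{(g,A)})^\ast(\cE(g,A))=0$. First I would use the equivariant diffeomorphism $\mathrm{E}\colon\cU\to\mathbb{S}$ of Theorem \ref{thm:slice} to identify $\mathbb{S}$ with a neighbourhood $\cU$ of $0$ in $V\defeq(T_{(g,A)}\cO_{(g,A)})^{\perp_g}=\Ker\big((\dd_e\Phi_{(g,A)})^\ast\big)$. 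By Proposition \ref{prop:dEselfadjoint} the operator $\dd_{(g,A)}\cE$ is formally self-adjoint, so the Hodge decomposition \eqref{eq:decompositionTConf} specialises to the $L^2$-orthogonal splitting
\[
V=\mathbb{H}^1_{(g,A)}\oplus\mathrm{Im}(\dd_{(g,A)}\cE),
\]
with $\mathbb{H}^1_{(g,A)}$ finite-dimensional by Theorem \ref{thm:infinitesimalfinite}. I write $P_H$ for the $L^2$-projection onto $\mathbb{H}^1_{(g,A)}$ and $\Pi$ for the projection onto $\mathrm{Im}(\dd_{(g,A)}\cE)$, and set $\cE_{\mathbb{S}}\defeq\cE\circ\mathrm{E}$.

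Next I would define $\cZ_{(g,A)}$ as the zero set of the transverse component $\Psi\defeq\Pi\circ\cE_{\mathbb{S}}\colon\mathbb{S}\to\mathrm{Im}(\dd_{(g,A)}\cE)$. Its differential at $(g,A)$ is $\Pi\circ\dd_{(g,A)}\cE\vert_V$, which annihilates $\mathbb{H}^1_{(g,A)}$ and restricts to an isomorphism of $\mathrm{Im}(\dd_{(g,A)}\cE)$ onto itself, since $\dd_{(g,A)}\cE$ is self-adjoint with closed range and its inverse on the image is the tame elliptic Green operator (ellipticity being Lemma \ref{lemma:deformationelliptic}). Thus $\dd_{(g,A)}\Psi$ is a submersion in the transverse direction with split kernel exactly $\mathbb{H}^1_{(g,A)}$, a finite-dimensional and hence tame topological complement. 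The tame (Nash--Moser) implicit function theorem then produces, near $(g,A)$, a closed tame submanifold $\cZ_{(g,A)}=\Psi^{-1}(0)$ with $T_{(g,A)}\cZ_{(g,A)}=\mathbb{H}^1_{(g,A)}=\Ker(\dd_{(g,A)}\cE)\cap\Ker\big((\dd_e\Phi_{(g,A)})^\ast\big)$. As $\cE$ is a real-analytic differential operator, this reduction may be carried out in the real-analytic category following the framework of \cite{MeerssemanNicolau,DiezRudolphII}, so that $\cZ_{(g,A)}$ is an analytic finite-dimensional submanifold.

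It then remains to identify $\cE^{-1}(0)\cap\mathbb{S}$ inside $\cZ_{(g,A)}$. Any solution satisfies $\Psi=0$, whence $\cE^{-1}(0)\cap\mathbb{S}\subseteq\cZ_{(g,A)}$, and on $\cZ_{(g,A)}$ one has $\cE_{\mathbb{S}}=P_H\cE_{\mathbb{S}}+\dd_e\Phi_{(g,A)}\xi$ with $\xi\in(\Ker\dd_e\Phi_{(g,A)})^{\perp}\subset\Gamma(\cA_P)$. The crux is to show that the gauge term $\dd_e\Phi_{(g,A)}\xi$ vanishes as soon as $P_H\cE_{\mathbb{S}}=0$. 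For this I would invoke the moving identity $(\dd_e\Phi_x)^\ast(\cE(x))=0$: writing $(\dd_e\Phi_x)^\ast=(\dd_e\Phi_{(g,A)})^\ast+C_x$ with $C_{(g,A)}=0$, applying it to $\cE_{\mathbb{S}}(x)=\dd_e\Phi_{(g,A)}\xi(x)$, and using $(\dd_e\Phi_{(g,A)})^\ast\dd_e\Phi_{(g,A)}=\Delta^{(0)}_{(g,A)}$ with Green operator $G_0$, gives $\xi(x)=-G_0\,C_x\,\dd_e\Phi_{(g,A)}\xi(x)$. Since $\norm{C_x}\to0$ as $x\to(g,A)$, the operator $\mathrm{Id}+G_0C_x\dd_e\Phi_{(g,A)}$ is invertible on a smaller neighbourhood, forcing $\xi(x)=0$ and hence $\cE_{\mathbb{S}}(x)=0$. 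Therefore
\[
\cE^{-1}(0)\cap\mathbb{S}=\{\,x\in\cZ_{(g,A)}\ :\ P_H\cE_{\mathbb{S}}(x)=0\,\},
\]
the zero locus of the real-analytic map $P_H\cE_{\mathbb{S}}\colon\cZ_{(g,A)}\to\mathbb{H}^1_{(g,A)}$ into a finite-dimensional space, which is an analytic subset of $\cZ_{(g,A)}$.

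The main obstacle I expect lies in this last step: running the moving-gauge argument rigorously requires controlling $C_x=(\dd_e\Phi_x)^\ast-(\dd_e\Phi_{(g,A)})^\ast$ in operator norm on the relevant Sobolev completions and verifying that $\xi(x)$ depends analytically on $x$, so that elliptic regularity upgrades weak solutions to smooth ones and the finite-dimensional model remains analytic. Checking the tameness hypotheses for the Nash--Moser implicit function theorem (equivalently, the hypotheses of the general Kuranishi construction of \cite{MeerssemanNicolau,DiezRudolphII}) is routine given the ellipticity of Lemma \ref{lemma:deformationelliptic} and the self-adjointness of Proposition \ref{prop:dEselfadjoint}, but it is where the bulk of the analytic bookkeeping resides.
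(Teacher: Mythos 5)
Your construction of $\cZ_{(g,A)}$ as the zero set of $\mathrm{P}\circ\cE_{\mathbb{S}}$ (your $\Psi=\Pi\circ\cE_{\mathbb{S}}$) is exactly the paper's starting point, and the identification of the tangent space via the Hodge decomposition and self-adjointness of $\dd_{(g,A)}\cE$ matches Proposition \ref{prop:dEselfadjoint} and Theorem \ref{thm:infinitesimalfinite} as used there. The two routes diverge in the implementation. Where you invoke the Nash--Moser implicit function theorem directly in the tame category, the paper instead descends to the Sobolev completions $\mathbb{S}^s$, applies the ordinary Banach implicit function theorem to $\mathrm{P}^s\circ\cE^s_{\mathbb{S}}$ at each level $s>n+4$, and then uses ellipticity of the system $\big((\dd_e\Phi_{(g,A)})^{\ast}\circ\mathrm{E}^{-1},\,\cE\big)$ to show that the resulting submanifolds $\cZ^q$ consist only of smooth elements and are independent of $q$, before passing to the projective limit. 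This is the standard Ebin--Koiso device precisely because it sidesteps the verification of tame estimates for a whole neighbourhood of inverses that Nash--Moser demands; your acknowledgment that this verification is ``where the bulk of the analytic bookkeeping resides'' is accurate, and the paper's route is the safer one. Conversely, your final step goes beyond the paper: the moving-gauge argument, using $(\dd_e\Phi_x)^{\ast}(\cE(x))=0$ together with the invertibility of $\mathrm{Id}+G_0C_x\dd_e\Phi_{(g,A)}$ near $(g,A)$ to kill the component of $\cE_{\mathbb{S}}$ along $\mathrm{Im}(\dd_e\Phi_{(g,A)})$, reduces $\cE^{-1}(0)\cap\mathbb{S}$ to the zero locus of the finite-dimensional obstruction map $P_H\cE_{\mathbb{S}}\colon\cZ_{(g,A)}\to\mathbb{H}^1_{(g,A)}$. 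The paper only observes the inclusion $\cE^{-1}(0)\cap\mathbb{S}\subseteq\cZ_{(g,A)}$ and asserts analyticity of the zero set from there; your finite-dimensional reduction is the cleaner (and standard Kuranishi) justification of that analyticity claim, so on this point your argument is actually more complete than the published one, modulo the operator-norm control of $C_x$ on Sobolev completions that you correctly flag.
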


\begin{proof}
By the Hodge decomposition \eqref{eq:decompositionTConf} we have:
\begin{equation*}
\mathrm{Im} (\dd_{(g,A)}\cE_{\mathbb{S} }) = \mathrm{Im} (\dd_{(g,A)}\cE) \subset T_{(g,A)}\Conf(P).
\end{equation*}

\noindent
Denote by:
\begin{equation*}
\mathrm{P}\colon T_{(g,A)}\Conf(P)\to \mathrm{Im} (\dd_{(g,A)}\cE) 
\end{equation*}

\noindent
the natural projection in the orthogonal decomposition given in \eqref{eq:decompositionTConf}. Then, the map:
\begin{equation*}
\mathrm{P}\circ \cE_{\mathbb{S} } \colon \mathbb{S}  \to \mathrm{Im} (\dd_{(g,A)}\cE)
\end{equation*}

\noindent
is a smooth map that has a surjective derivative at $(g,A)\in \Conf(P)$. On the other hand, the slice $\mathbb{S} $ as well as the map $ \cE_{\mathbb{S} }  \colon \mathbb{S}  \to T_{(g,A)}\Conf(P)$ can be obtained as the projective limit of their canonical extensions $\mathbb{S}^s$ and $\cE^s_{\mathbb{S} }$ to the corresponding Sobolev completions in the Sobolev norm $H_s =L^2_s$ with $s>n+4$. Extensions to Sobolev spaces will be denoted with the superscript $s>n+4$. Then:
\begin{equation*}
\mathrm{P}^s\circ \cE^s_{\mathbb{S} } \colon \mathbb{S}^s \to \mathrm{Im} (\dd_{(g,A)}\cE^s)
\end{equation*}

\noindent
has a surjective derivative at $(g,A)$ and thus by the inverse function theorem there exists an open neighborhood $\cK^s\subset \mathbb{S}^s$ of $(g,A)$ such that:
\begin{equation*}
\cZ^s =  (\mathrm{P}^s\circ \cE^s_{\mathbb{S} })^{-1}(0)\cap \cK^s
\end{equation*}

\noindent
is a smooth submanifold of $\mathbb{S}^s$. By the implicit function theorem, it follows that: 
\begin{equation*}
T_{(g,A)} \cZ^s  = \Ker(\dd_{(g,A)}\cE^s)\cap \Ker(\dd_{e}\Phi^s_{(g,A)})^{\ast}\,.
\end{equation*}

\noindent
Furthermore, the zero set of $\cE^s_{\mathbb{S} }$ restricted to $\cK^s$ belongs to the zero set of the restriction of $\mathrm{P}^s\circ \cE^s_{\mathbb{S} }$ to $\cK^s$ and therefore defines an analytic subset of $\cZ^s$. Fix an integer $s$ satisfying $s>n+4$ and consider $q\geq s$. Define:
\begin{equation*}
\cZ^q  = \cZ^s  \cap \mathbb{S}^q \, , \qquad \cK^q = \cK^s\cap \mathbb{S}^q 
\end{equation*}

\noindent
Every $(g^{\prime},A^{\prime}) \in \cZ^q_{(g,A)}$ satisfies:
\begin{equation*}
(\dd_{e}\Phi_{(g,A)}^{q})^{ \ast}\circ \mathrm{E}^{-1}_q ((g^{\prime},A^{\prime})) = 0\, , \qquad  \cE(g^{\prime},A^{\prime}) = 0
\end{equation*}

\noindent
where $\mathrm{E}_q\colon \cU_q \to \cE^q_{\mathbb{S} }$ is the exponential map. This system is elliptic, hence $(g^{\prime},A^{\prime})$ is smooth. Therefore, $\cZ^q= \cZ^s$ for every $q\geq s$ and, in particular, $\cZ^q$ consists only of smooth elements. Since $\mathrm{P}^s\circ \cE^s_{\mathbb{S} } \colon \mathbb{S}^s \to \mathrm{Im} (\dd_{(g,A)}\cE^s)$ has surjective derivative on $\cZ^s$, hence also on $\cZ^q$, it follows that for every element:
\begin{equation*}
(h_o,a_o) \in \mathrm{Im} (\dd_{(g^{\prime},A^{\prime})}(\mathrm{P}^s\circ \cE^s_{\mathbb{S}}\vert_{\cK^q}))
\end{equation*}

\noindent
there exist elements $(h_1,a_1) \in \Ker((\dd_{e}\Phi_{(g,A)}^s)^{ \ast})$ and $(h_2,a_2) \in \Ker (\dd_{(g,A)}\cE^s)$ such that:
\begin{equation*}
 \dd_{(g^{\prime},A^{\prime})}(\mathrm{P}^s\circ \cE^s_{\mathbb{S}})(h_1,a_1) = (h_0,a_0)  + (h_2,a_2) .
\end{equation*}

\noindent
Hence:
\begin{equation*}
 \dd_{(g,A)}\cE^s \circ \dd_{(g^{\prime},A^{\prime})}(\mathrm{P}^s\circ \cE^s_{\mathbb{S}})(h_1,a_1) = \dd_{(g,A)}\cE^s(h_0,a_0) \, , \qquad   \dd_{e}\Phi_{(g,A)}^{s \ast}(h_1,a_1) = 0.
\end{equation*}

\noindent
Therefore, $(h_1,a_1)\in \cK^q$ and consequentely $\mathrm{P}^s\circ \cE^s_{\mathbb{S}}\vert_{\cK^q}$ has surjective derivative at every point in $\cZ^q$, implying that the latter is a closed analytic submanifold of $\cK^q$. Hence, we end up with a system:
\begin{equation*}
\left\{ \cK^q, \cZ^q , \mathrm{P}^s\circ \cE^s_{\mathbb{S} }\vert_{\cZ^q}\right\}_{q\geq s}
\end{equation*}

\noindent
of open sets, closed analytic submanifolds of the latter, and smooth maps. Since $\cZ^q$ does not depend on $q$ and contains only smooth connections, the projective limit gives the desired smooth Fréchet closed submanifold $\cZ\subset \cK$ of the open set $\cK\subset \mathbb{S}$ of the slice $\mathbb{S}$.
\end{proof}  

\begin{remark}
By the proof of the previous theorem, we conclude that \cite[Theorem 5.3]{DiezRudolphII} holds and therefore we obtain a Kuranishi chart around every Einstein-Yang-Mills pair $(g,A)$ in the sense of \cite{DiezRudolphII}, see Definition \cite[Definition 5.1]{DiezRudolphII}.
\end{remark}

\noindent
Following N. Koiso \cite{KoisoI,KoisoII,KoisoIII} we introduce the following terminology.

\begin{definition}
Let $(g,A)\in \cE^{-1}(0)$ be an Einstein-Yang-Mills pair. If the vector space of infinitesimal deformations of $(g,A)$ is of dimension zero, then $(g,A)$ is \emph{infinitesimally rigid}. 
\end{definition}

\noindent
There is also a natural notion of \emph{local} rigidity of Einstein-Yang-Mills pairs, which is the natural generalization of the rigidity notion introduced by Koiso for Einstein metrics \cite{KoisoI,KoisoII,KoisoIII}. 

\begin{definition}
Let $(g,A)\in \cE^{-1}(0)$ be an Einstein-Yang-Mills pair. If there exists an $\Aut(P)$-invariant open set $\mathfrak{V}$ of $(g,A)$ containing $\cO_{(g,A)}$ and such that every Einstein-Yang-Mills pair in $\mathfrak{V}$ is an element of $\cO_{(g,A)}\subset \mathfrak{V}$ then $(g,A)$ is \emph{rigid}.
\end{definition}

\begin{corollary}
If an Einstein-Yang-Mills pair is infinitesimally rigid then it is rigid. 
\end{corollary}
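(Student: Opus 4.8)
The plan is to feed the hypothesis of infinitesimal rigidity into the Kuranishi description of Theorem~\ref{thm:localKuranishi} and then transport the resulting local uniqueness statement back to an $\Aut(P)$-invariant neighbourhood of the orbit by means of the slice theorem (Theorem~\ref{thm:slice}).

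First I would translate the hypothesis. By definition, infinitesimal rigidity of $(g,A)$ is the vanishing of its space of essential deformations $\mathbb{H}^1_{(g,A)} = \Ker(\dd_{(g,A)}\cE)\cap\Ker((\dd_e\Phi_{(g,A)})^{\ast})$. By Theorem~\ref{thm:localKuranishi} this space is exactly the tangent space $T_{(g,A)}\cZ_{(g,A)}$ to the analytic submanifold $\cZ_{(g,A)}\subset\mathbb{S}$. Since $\cZ_{(g,A)}$ is a finite-dimensional analytic (in particular smooth) submanifold of the slice whose tangent space at $(g,A)$ is trivial, after shrinking $\mathbb{S}$ it reduces to the single point $\{(g,A)\}$. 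As $\cE^{-1}(0)\cap\mathbb{S}$ is an analytic subset of $\cZ_{(g,A)}$ containing the base point $(g,A) = \mathrm{E}(0)$, I conclude $\cE^{-1}(0)\cap\mathbb{S} = \{(g,A)\}$: the pair $(g,A)$ is the only Einstein-Yang-Mills pair in the slice.

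Next I would use the slice theorem to promote this to a statement about the orbit. The slice $\mathbb{S}$ comes with an equivariant tube, i.e.\ the $\Aut(P)$-saturation $\mathfrak{V} := \Phi(\mathbb{S}\times\Aut(P))$ is an $\Aut(P)$-invariant open neighbourhood of $\cO_{(g,A)}$, and every point of $\mathfrak{V}$ lies in the $\Aut(P)$-orbit of a point of $\mathbb{S}$; this is precisely the content of the slice and linear-slice properties of \cite[Definition 2.2, Definition 2.8]{DiezRudolph}. Let $(g',A')\in\mathfrak{V}$ be an Einstein-Yang-Mills pair and write $(g',A') = \Phi_u(s)$ with $u\in\Aut(P)$ and $s\in\mathbb{S}$. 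Equivariance of $\cE$ gives $\cE(g',A') = \cE(\Phi_u(s)) = f_u^{\ast}\cE(s)$; since $(g',A')\in\cE^{-1}(0)$, this forces $\cE(s)=0$, so $s$ is an Einstein-Yang-Mills pair in the slice and therefore $s = (g,A)$ by the previous paragraph. Hence $(g',A') = \Phi_u(g,A)\in\cO_{(g,A)}$, which is exactly the assertion that $(g,A)$ is rigid, with $\mathfrak{V}$ as the invariant witnessing neighbourhood.

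The step I expect to be most delicate is the interface between the shrinking of the slice and the tube: reducing $\mathbb{S}$ so that $\cZ_{(g,A)}$ collapses to a point must be carried out through an $\cI_{(g,A)}$-invariant shrinking, so that the smaller slice still produces, via the equivariant tube of Theorem~\ref{thm:slice}, an $\Aut(P)$-invariant open neighbourhood of the \emph{whole} orbit. This is guaranteed because the tube is an equivariant diffeomorphism $\Aut(P)\times_{\cI_{(g,A)}}\mathbb{S}\to\mathfrak{V}$ and $\cI_{(g,A)}$ is compact (Subsection~\ref{subsec:IsotropyP}), so averaging over $\cI_{(g,A)}$ turns any neighbourhood of $0$ in $(T_{(g,A)}\cO_{(g,A)})^{\perp_g}$ into an $\cI_{(g,A)}$-invariant one, which still defines a genuine slice. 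The remaining points—that a smooth submanifold with vanishing tangent space is locally a single point and that an analytic subset of it inherits this—are standard and I would treat them as routine.
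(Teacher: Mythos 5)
Your proposal is correct and follows exactly the route the paper intends: the corollary is stated as an immediate consequence of Theorem~\ref{thm:localKuranishi} (with no written proof), and your argument --- $\mathbb{H}^1_{(g,A)}=0$ forces $\cZ_{(g,A)}$, hence the analytic subset $\cE^{-1}(0)\cap\mathbb{S}$, to collapse to the base point, after which equivariance of $\cE$ and the openness of the saturated tube $\Phi(\mathbb{S}\times\Aut(P))$ place every nearby Einstein-Yang-Mills pair on the orbit $\cO_{(g,A)}$ --- is precisely the standard Koiso-style deduction being invoked. Your handling of the one delicate point (shrinking the slice $\cI_{(g,A)}$-invariantly, which is legitimate since $\cI_{(g,A)}$ is compact) is also correct.
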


\begin{remark}
Note that we cannot expect the converse to the previous proposition to hold, since it already fails in the Einstein case \cite{KoisoIII}.
\end{remark}


\section{Einstein-Yang-Mills deformations of instantons on Calabi-Yau two-folds}
\label{sec:instantonK3}



\subsection{Preliminaries}


Throughout this section, we take $M$ to be four-dimensional, hence $n=4$. In this dimension the Riemannian Hodge dual squares to the identity. We say that a connection $A$ on $P$ is anti-self-dual if:
\begin{equation*}
\ast_g F_A = - F_A\,.
\end{equation*}

\noindent
Self-dual connections are defined similarly in terms of the opposite sign. 

\begin{proposition}
\label{prop:RicciFlatASD}
Let $g$ be a Ricci-flat metric on $M$ and $A$  an anti-self-dual connection on $P$. Then $(g,A)$ is an Einstein-Yang-Mills pair.
\end{proposition}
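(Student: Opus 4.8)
The plan is to verify directly that $(g,A)$ satisfies both equations of the Einstein-Yang-Mills system \eqref{eq:YMequations}, treating the Yang-Mills and Einstein equations separately, since they decouple completely under the present hypotheses. The observation that reduces the Einstein equation to a purely algebraic statement is that Ricci-flatness forces $s^g = \tr_g \Ric^g = 0$, so the Einstein tensor $\mG^g = \Ric^g - \tfrac{1}{2}s^g g$ vanishes identically. By Definition \ref{def:energymomentum} the Einstein equation is $\mG^g = \cT(g,A)$ with $\cT(g,A) = \tfrac{\kappa}{2}\vert F_A\vert^2_{g,\frc}\, g - \kappa\, F_A \circ_{g,\frc} F_A$, so it suffices to prove that $\cT(g,A) = 0$, i.e. that the \emph{instanton identity} $F_A \circ_{g,\frc} F_A = \tfrac{1}{2}\vert F_A\vert^2_{g,\frc}\, g$ holds.

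For the Yang-Mills equation I would use that in dimension $n=4$ the codifferential on $\frG_P$-valued two-forms is $\dd_A^{g\ast} = -\ast_g \dd_A \ast_g$. Since $A$ is anti-self-dual, $\ast_g F_A = -F_A$, whence $\dd_A \ast_g F_A = -\dd_A F_A = 0$ by the Bianchi identity $\dd_A F_A = 0$, and therefore $\dd_A^{g\ast} F_A = 0$. The self-dual case is identical, using $\ast_g F_A = F_A$. This settles the second equation of \eqref{eq:YMequations}.

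The heart of the argument, and the step I expect to be the main obstacle, is the pointwise algebraic identity for an anti-self-dual curvature. Writing $F_A = \sum_a F^a \otimes \tau_a$ in a $\frc$-orthonormal local frame $(\tau_a)$ of $\frG_P$, each component $F^a$ is an anti-self-dual real two-form (since $\ast_g$ acts only on the form factor) and $(F_A \circ_{g,\frc} F_A)_{ij} = \sum_a F^a_{ik}\,F^a{}_j{}^k$, so it suffices to prove, for a single anti-self-dual two-form $\omega$ on a four-dimensional inner-product space, that $\omega_{ik}\,\omega_j{}^k = \tfrac{1}{4}\,(\omega_{kl}\,\omega^{kl})\, g_{ij}$. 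I would justify this representation-theoretically: in dimension four the traceless part of the quadratic map $\omega \mapsto (\omega_{ik}\,\omega_j{}^k)$ takes values in $\Sym^2_0 T^\ast M$, and under $\mathrm{SO}(4)$ one has the isomorphism $\Sym^2_0 \cong \wedge^2_+ \otimes \wedge^2_-$; consequently this traceless part depends only on the mixed self-dual/anti-self-dual product and vanishes whenever $\omega$ is purely (anti-)self-dual. Alternatively one checks the identity by a direct computation on the standard basis $e^1\wedge e^2 - e^3\wedge e^4$, $e^1\wedge e^3 - e^4\wedge e^2$, $e^1\wedge e^4 - e^2\wedge e^3$ of $\wedge^2_-$. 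Summing over $a$ and using $\tr_g(F_A \circ_{g,\frc} F_A) = 2\vert F_A\vert^2_{g,\frc}$ then yields $F_A \circ_{g,\frc} F_A = \tfrac{1}{2}\vert F_A\vert^2_{g,\frc}\, g$, so $\cT(g,A) = 0$ and the Einstein equation $\mG^g = \cT(g,A)$ holds trivially. Together with the Yang-Mills equation, this shows that $(g,A)\in \cE^{-1}(0)$ is an Einstein-Yang-Mills pair.
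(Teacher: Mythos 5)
Your proof is correct and follows essentially the same route as the paper: Ricci-flatness kills the Einstein tensor, and the pointwise instanton identity $F_A\circ_{g,\frc}F_A=\tfrac{1}{2}\vert F_A\vert^2_{g,\frc}\,g$ kills the energy-momentum tensor, while the Yang--Mills equation follows from anti-self-duality and the Bianchi identity. The only difference is that you actually prove the instanton identity (and verify the Yang--Mills equation explicitly), whereas the paper invokes both without proof; your representation-theoretic justification and the basis check are both valid.
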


\begin{proof}
Since $g$ is Ricci flat its Einstein tensor vanishes. Furthermore, using the identity:
\begin{equation*}
F_A\circ_{g,\frc} F_A = \frac{1}{2} \vert F_A \vert^2_{g,\mathfrak{c}}
\end{equation*}

\noindent
it follows that
\begin{equation*}
\cT(g,A) = \frac{\kappa}{2} \vert F_A \vert^2_{g,\mathfrak{c}}\,  g  - \kappa\, F_A\circ_{g,\frc} F_A
\end{equation*}

\noindent
and we conclude.
\end{proof}

\noindent
Therefore, anti-self-dual instantons on Ricci-flat four-manifolds provide, in case they exist, a distinguished class of Einstein-Yang-Mills pairs. We will refer to such pairs simply as \emph{anti-self-dual Einstein-Yang-Mills pairs}.  


\subsection{Infinitesimal deformations of anti-self-dual Einstein-Yang-Mills pairs}


In this subsection, we prove a refinement of Proposition \ref{prop:infinitesimal} for anti-self-dual Einstein-Yang-Mills pairs in four dimensions. As we will see in the following, this refinement allows to \emph{decouple} the differential system determining $(h^o,a)$ from the differential condition satisfied by $\mathrm{Tr}_g(h)$.

\begin{theorem}
\label{thm:deformationsEYMASD}
Let $(g,A)$ be an anti-self-dual Einstein-Yang-Mills pair on a principal bundle $P$ over a four-dimensional manifold $M$. An unobstructed pair $(h^o,a) \in T^o_{(g,A)}\Conf(P)$ is a completable essential deformation of $(g,A)$ if and only if:
\begin{equation}\label{eq:deformationshgeneralASD}
\begin{array}{l}
 \frac{1}{2}\nabla^{g\ast}\nabla^g h^o   -  \mathrm{R}^g_o (h^o) - 2 \delta^{g}\nabla^{g\ast} h^o - \frac{1}{6} (\nabla^{g\ast}\nabla^{g\ast}h^o) g + \frac{1}{2}\delta^g (a \lrcorner_g^{\frc} F_A) \\ 
 - \kappa \Big( F_A\circ_{h^o,\frc} F_A + \frac{1}{2} \vert F_A \vert_{g,\frc}^2 h^o - 2 F_A \circ_{g,\frc} \dd_A a + \langle F_A , \dd_A a\rangle_{g,\frc} g \Big) = 0 \,,
\end{array}
\end{equation}
\begin{equation}
\dd_A^{g\ast}\dd_A a - a\lrcorner_g^{\frg} F_A + \dd^{g\ast}_A (F_A)^g_{h^o}  = 0\, , \qquad    \dd^{g\ast}_A a = 0\,. \label{eq:deformationsAgeneralASD}
\end{equation}

\noindent
If that is the case, the completed essential deformation $(h,a)$ satisfies $h = \displaystyle\frac{f}4 g + h^o$, where $\dd f = 4\nabla^{g\ast} h^o - 2 \, a\lrcorner_g^{\frc} F_A $.
\end{theorem}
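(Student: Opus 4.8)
The plan is to specialise the general characterisation of essential deformations in Proposition \ref{prop:infinitesimal} to the anti-self-dual Ricci-flat setting and to eliminate the trace part of the deformation by means of the slice condition. Throughout I would write $h = \tfrac{f}{4}\,g + h^o$, where $f \defeq \tr_g(h)$ and $h^o$ is trace-less, and exploit the two structural features of an anti-self-dual Einstein-Yang-Mills pair in dimension four (Proposition \ref{prop:RicciFlatASD}): the metric is Ricci-flat, so $\Ric^g=0$ and $\mathrm{R}^g_o$ annihilates the pure-trace direction; and $F_A\circ_{g,\frc}F_A = \tfrac12\vert F_A\vert^2_{g,\frc}\,g$, whence $(F_A\circ_{g,\frc}F_A)^o=0$. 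These identities are what make the trace of the deformation split off.

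First I would dispose of the two lower-order equations of Proposition \ref{prop:infinitesimal}. The condition $\dd^{g\ast}_A a = 0$ is untouched, while the slice condition $2\nabla^{g\ast}h = a\lrcorner_g^{\frc}F_A$, after inserting $h = \tfrac{f}{4}g + h^o$ and using $\nabla^{g\ast}(f g) = -\dd f$, becomes
\[
\dd f = 4\,\nabla^{g\ast}h^o - 2\,(a\lrcorner_g^{\frc}F_A).
\]
This is exactly the asserted formula for the trace, and it is solvable for $f$ (uniquely up to an additive constant) precisely when its right-hand side is exact, i.e.\ when $(h^o,a)$ is unobstructed in the sense that $[2\nabla^{g\ast}h^o - a\lrcorner_g^{\frc}F_A]=0$ in $H^1(M,\RR)$; this is what establishes the completability statement and reduces everything to a claim about $(h^o,a)$. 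Substituting $h = \tfrac f4 g + h^o$ into the connection equation of Proposition \ref{prop:infinitesimal}, the Yang-Mills equation $\dd^{g\ast}_A F_A = 0$ together with the fact that $(F_A)^g_{fg/4}$ is proportional to $f\,F_A$ (from the definition of $(\cdot)^g_h$ recalled in the proof of Proposition \ref{prop:dEselfadjoint}) shows that $\dd^{g\ast}_A(F_A)^g_{fg/4}$ cancels against $-\tfrac12\dd\tr_g(h)\lrcorner_g F_A$, leaving precisely \eqref{eq:deformationsAgeneralASD}. This is the sense in which the connection part decouples from $f$.

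The substantive step is the Einstein equation. Inserting the decomposition and the structural identities into the first equation of Proposition \ref{prop:infinitesimal}, the energy-momentum contributions $F_A\circ_{h,\frc}F_A$ and $h\circ_g(F_A\circ_{g,\frc}F_A)$ each produce a term $\tfrac{f}{8}\vert F_A\vert^2_{g,\frc}\,g$, and these pure-trace pieces cancel against the $\tfrac{\kappa}{n-2}\,g(F_A\circ_{g,\frc}F_A,h)\,g$ term, leaving exactly the $\kappa$-part of \eqref{eq:deformationshgeneralASD}. For the geometric terms I would use the displayed formula for $\dd f$ to rewrite the Hessian $\nabla^g\dd f$, turning $-\tfrac12\nabla^g\dd\tr_g(h) - \delta^{g}\nabla^{g\ast}h$ into $-2\delta^{g}\nabla^{g\ast}h^o + \tfrac12\delta^{g}(a\lrcorner_g^{\frc}F_A)$; and I would use the trace equation of Lemma \ref{lemma:infinitesimaltrace}, which for an anti-self-dual pair reduces (via $(F_A\circ_{g,\frc}F_A)^o=0$) to $\Delta_g f = -\tfrac43\nabla^{g\ast}\nabla^{g\ast}h^o$, to fix the remaining pure-trace term $\tfrac18(\Delta_g f)\,g$ as $-\tfrac16(\nabla^{g\ast}\nabla^{g\ast}h^o)\,g$. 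Collecting these yields \eqref{eq:deformationshgeneralASD}.

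For the converse I would reverse these substitutions: given unobstructed $(h^o,a)$ solving \eqref{eq:deformationshgeneralASD}--\eqref{eq:deformationsAgeneralASD}, define $f$ by integrating the displayed equation, set $h = \tfrac f4 g + h^o$, and verify the four conditions of Proposition \ref{prop:infinitesimal}; the slice condition and $\dd^{g\ast}_A a=0$ hold by construction, and the connection equation follows by undoing the cancellation. The delicate point, and the main obstacle, is the pure-trace component of the Einstein equation: one must check that taking the trace of \eqref{eq:deformationshgeneralASD}, combined with the identity $\nabla^{g\ast}(a\lrcorner_g^{\frc}F_A) = \langle \dd_A a, F_A\rangle_{g,\frc}$ from the proof of Lemma \ref{lemma:nablanablah}, reproduces the trace equation $\Delta_g f = -\tfrac43\nabla^{g\ast}\nabla^{g\ast}h^o$, so that the $g$-proportional remainder left by the slice substitution vanishes and the full first equation of Proposition \ref{prop:infinitesimal} is recovered. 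It is exactly this consistency of the trace with the slice-determined $f$ that realises the decoupling, and carrying it out requires careful bookkeeping of the numerical constants and of the appendix conventions for $\mathrm{R}^g_o$, $\delta^{g}$ and $(\cdot)^g_h$.
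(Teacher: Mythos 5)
Your proposal is correct and follows essentially the same route as the paper's proof: specialise Proposition \ref{prop:infinitesimal}, decompose $h=\tfrac f4 g+h^o$, read off $\dd f$ from the slice condition (whence completability is exactly the unobstructedness of $[2\nabla^{g\ast}h^o-a\lrcorner_g^{\frc}F_A]$), observe that the trace decouples from the connection equation via $\dd^{g\ast}_A(F_A)^g_h=\dd^{g\ast}_A(F_A)^g_{h^o}+\tfrac12\dd\tr_g(h)\lrcorner_g F_A$, and eliminate the residual $\tfrac18(\Delta_g f)\,g$ term using the trace identity and Lemma \ref{lemma:nablanablah}. Your bookkeeping (including the coefficients $-\tfrac16$ and the cancellation of the pure-trace $\kappa$-terms, and the consistency check of the trace in the converse direction) matches what the paper does.
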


\begin{proof}
By Proposition \ref{prop:infinitesimal}, a pair $(h,a)$ is an essential deformation of an anti-self-dual Einstein-Yang-Mills pair $(g,A)$ in four dimensions if and only if the following differential system holds:

\begin{equation}\label{eq:inf14d}
\begin{array}{l}
  \frac{1}{2} \nabla^{g\ast}\nabla^g h - \mathrm{R}^g_o (h) - \delta^{g}\nabla^{g\ast} h - \frac{1}{2}\nabla^g \dd\mathrm{Tr}_g(h)  \\
  -  \kappa \Big(F_A \circ_{h,\frc} F_A + h\circ_g (F_A\circ_{g,\frc} F_A) - 2 F_A \circ_{g,\frc} \dd_A a  - \langle F_A , \dd_A a\rangle_{g,\frc} g + \frac{1}{2} g(F_A\circ_{g,\frc} F_A , h) g\Big)  = 0 \,,
\end{array}
\end{equation}
\begin{equation}
    \dd_A^{g\ast}\dd_A a - a\lrcorner_g^{\frg} F_A - \frac{1}{2} \dd\mathrm{Tr}_g(h)\lrcorner_g  F_A + \dd^{g\ast}_A (F_A)^g_h  = 0\, , \qquad 2 \nabla^{g\ast}h = a \lrcorner_g^{\frc} F_A\, , \qquad  \dd^{g\ast}_A a = 0\,.\label{eq:inf24d}
\end{equation}

\noindent
A direct computation shows that:
\begin{equation*}
\dd^{g\ast}_A (F_A)^g_h = \dd^{g\ast}_A (F_A)^g_{h^o} + \frac{1}{2} \dd\mathrm{Tr}_g(h)\lrcorner_g  F_A\,,
\end{equation*}

\noindent
and therefore the first equation in \eqref{eq:inf24d} becomes:
\begin{equation*}
\dd_A^{g\ast}\dd_A a - a\lrcorner_g^{\frg} F_A - \frac{1}{2} \dd\mathrm{Tr}_g(h)\lrcorner_g  F_A + \dd^{g\ast}_A (F_A)^g_h  = \dd_A^{g\ast}\dd_A a - a\lrcorner_g^{\frg} F_A + \dd^{g\ast}_A (F_A)^g_{h^o} = 0.
\end{equation*}

\noindent
In particular, the trace of $h$ decouples from this equation. On the other hand, the slice condition  $2 \nabla^{g\ast}h = a \lrcorner_g^{\frc} F_A$ is equivalent to:
\begin{equation}
\label{eq:dh}
\frac{1}{2} \dd\mathrm{Tr}_g(h) = 2 \nabla^{g\ast} h^o - a \lrcorner_g^{\frc} F_A\,.
\end{equation}

\noindent
This equation isolates the differential of $\mathrm{Tr}_g(h)$ in terms of $h^o$ and $a$. Substituting now:
\begin{equation*}
h = h^o + \frac{\mathrm{Tr}_g(h)}{4} g
\end{equation*}

\noindent
in \eqref{eq:inf14d} it can be checked that all the terms proportional to $\mathrm{Tr}_g(h)$ drop out and only terms involving derivatives of $\mathrm{Tr}_g(h)$ remain. Using  \eqref{eq:dh} as well as Lemma \ref{lemma:nablanablah} in  \eqref{eq:inf14d}, we substitute the exterior derivative of $\mathrm{Tr}_g(h)$ in terms of $h^o$, $a$ and their derivatives, obtaining Equation \eqref{eq:deformationshgeneralASD}. Conversely, let $(h^o,a) \in T^o_{(g,A)}\Conf(P)$ be an unobstructed pair satisfying the differential system given in  \eqref{eq:deformationshgeneralASD} and \eqref{eq:deformationsAgeneralASD}. Since $(h^o,a)$ is unobstructed, there exists a function $f\in C^{\infty}(M)$ satisfying:
\begin{equation*}
\frac{1}{2} \dd f =2 \nabla^{g\ast} h^o - a \lrcorner_g^{\frc} F_A
\end{equation*}

\noindent
Defining:
\begin{equation*}
    h = \frac{f}{4} g + h^o
\end{equation*}

\noindent
and using this equation to substitute $h^o$ in terms of $h$ and $f$ in Equations \eqref{eq:deformationshgeneralASD} and \eqref{eq:deformationsAgeneralASD}, we obtain the differential system \eqref{eq:inf14d} and \eqref{eq:inf24d} and hence we conclude.
\end{proof}


\subsection{Anti-self-dual infinitesimal deformations}


We consider the following smooth map of manifolds:
\begin{eqnarray*}
\cE_o := (\cE^o_1,\cE^o_2) \colon \Conf(P) &\to& \Gamma(T^{\ast}M\odot T^{\ast}M) \times \Omega^2(M,\frG_P)\,,\\
 (g,A) &\mapsto& \left(\cE_1^o(g,A) := \mathrm{Ric}^g\, ,
 \cE_2^o(g,A) := \ast_g F_{A} + F_A\right) .
\end{eqnarray*}

\noindent
We will refer to the differential system $\cE_o(g,A) = 0$ as the \emph{anti-self-dual Einstein-Yang-Mills system}. The space of solutions to the anti-self-dual Einstein-Yang-Mills system is given by $\cE^{-1}_o(0)\subset \Conf(P)$, which we consider to be endowed with the subspace topology. The automorphism group $\Aut(P)$ acts on $\Conf(P)$ equivariantly with respect to $\cE_o$ whence $\Aut(P)$ preserves the solution space $\cE^{-1}_o(0)\subset \Conf(P)$. The quotient:
\begin{equation*}
\mathfrak{M}_o(P,\frc) =  \cE^{-1}_{o}(0)/\Aut(P)\, ,
\end{equation*}

\noindent
equipped with quotient topology is the moduli space of \emph{anti-self-dual Einstein-Yang-Mills pairs}. 

\begin{lemma}
\label{lemma:differentialASD}
The differential of $\cE_o  \colon \Conf(P)  \to  \Gamma(T^{\ast}M\odot T^{\ast}M) \times \Omega^2(M,\frG_P)$ at an anti-self-dual Einstein-Yang-Mills pair $(g,A)$ is given by:
\begin{align*}
\dd_{(g,A)}\cE^o_1(h,a) & = \frac{1}{2}\nabla^{g\ast}\nabla^g h -  \mathrm{R}^g_o (h) - \delta^{g}\nabla^{g\ast} h - \frac{1}{2}\nabla^g \dd\mathrm{Tr}_g(h)\,,  \\
\dd_{(g,A)}\cE^o_2(h,a) & =   \ast_g\dd_A a + \dd_A a  + \ast_g (F_A)^g_{h^o}\,,
\end{align*}

\noindent
where $h = \displaystyle \mathrm{Tr}_g(h) g+h^o$.
\end{lemma}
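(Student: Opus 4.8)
\emph{The plan.} The first formula concerns only $\cE_1^o(g,A)=\Ric^g$, which depends on the metric alone; hence $\dd_{(g,A)}\cE_1^o(h,a)$ carries no $a$-dependence and equals the linearization of the Ricci operator in the direction $h$. I would start from the classical expression for this linearization recorded in Appendix \ref{app:curvatureformulae} (cf. Lemma \ref{lemma:variationRs} and Corollary \ref{cor:variationEinstein}), namely $\dd_g\Ric(h)=\tfrac12\Delta^g_L h-\delta^g\nabla^{g\ast}h-\tfrac12\nabla^g\dd\mathrm{Tr}_g(h)$, and then exploit the fact that an anti-self-dual Einstein-Yang-Mills pair has $g$ Ricci-flat (Proposition \ref{prop:RicciFlatASD}). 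Setting $\Ric^g=0$ in the defining relation of the Lichnerowicz Laplacian makes its zeroth-order Ricci terms vanish, so that $\tfrac12\Delta^g_L h=\tfrac12\nabla^{g\ast}\nabla^g h-\mathrm{R}^g_o(h)$, and the stated formula for $\dd_{(g,A)}\cE_1^o$ follows at once.

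For $\cE_2^o(g,A)=\ast_g F_A+F_A$ I would linearize its two arguments separately. Differentiating along the curve $(g,A+ta)$ and using $F_{A+ta}=F_A+t\,\dd_A a+t^2\,a\wedge a$ together with the $A$-independence of $\ast_g$, the connection variation contributes $\ast_g\dd_A a+\dd_A a$. Differentiating along $(g+th,A)$ instead, the curvature $F_A$ is unchanged, so only the Hodge star varies; here I would reuse the variation-of-Hodge-star formula already established in the proof of Lemma \ref{lemma:infinitesimalE1}, which gives $\tfrac{\dd}{\dd t}(\ast_{g+th}F_A)\vert_{t=0}=\tfrac12\mathrm{Tr}_g(h)\,\ast_g F_A+\ast_g(F_A)^g_h$.

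The only step demanding care, and the one where dimension four is genuinely used, is the simplification of this last term. I would split $h$ into its trace and trace-free parts, $h=\tfrac14\mathrm{Tr}_g(h)\,g+h^o$, use the linearity of $(\,\cdot\,)^g_{(\,\cdot\,)}$ in its subscript to write $(F_A)^g_h=(F_A)^g_{h^o}+\tfrac14\mathrm{Tr}_g(h)\,(F_A)^g_g$, and evaluate $(F_A)^g_g=-2F_A$ directly from the definition \eqref{eq:lineagh}. Because $n=4$, the factor $\tfrac14\cdot(-2)=-\tfrac12$ makes the two contributions proportional to $\mathrm{Tr}_g(h)\,\ast_g F_A$ cancel exactly, leaving $\ast_g(F_A)^g_{h^o}$. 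Adding the connection and metric variations then yields $\dd_{(g,A)}\cE_2^o(h,a)=\ast_g\dd_A a+\dd_A a+\ast_g(F_A)^g_{h^o}$. The main obstacle is thus not conceptual but a matter of carefully tracking the Hodge-star variation and confirming that the trace piece disappears precisely in the four-dimensional setting; note that the anti-self-duality of $A$ itself is not separately needed for these identities, the hypothesis entering only through the Ricci-flatness of $g$.
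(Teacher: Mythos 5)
Your proposal is correct and follows essentially the same route as the paper: linearize $\Ric$ via Lemma \ref{lemma:variationRs} and use Ricci-flatness to reduce $\tfrac12\Delta_L h$ to $\tfrac12\nabla^{g\ast}\nabla^g h-\mathrm{R}^g_o(h)$, then combine the curvature variation $\ast_g\dd_A a+\dd_A a$ with the Hodge-star variation \eqref{eq:variationvolumeform}. The only difference is that the paper states the cancellation $\tfrac12\mathrm{Tr}_g(h)\ast_gF_A+\ast_g(F_A)^g_h=\ast_g(F_A)^g_{h^o}$ without justification, whereas you verify it explicitly via $(F_A)^g_g=-2F_A$ and the four-dimensional factor $\tfrac14$ — a worthwhile addition, and your observation that only Ricci-flatness (not anti-self-duality of $A$) is used is accurate.
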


\begin{proof}
The expression for $\dd_{(g,A)}\cE^o_1(h,a)$ follows directly from Lemma \ref{lemma:variationRs} after imposing $\mathrm{Ric}^g = 0$. To obtain $\dd_{(g,A)}\cE^o_2(h,a)$ we compute:
\begin{equation*}
\dd_{(g,A)}\cE^o_2(h,a) = \ast_g \dd_A a + \dd_A a + \frac{1}{2} \mathrm{Tr}_g(h) \ast_g F_A + \ast_g (F_A)^g_h = \ast_g \dd_A a + \dd_A a + \ast_g (F_A)^g_{h^o}\,,
\end{equation*}

\noindent
and we conclude.
\end{proof}

\noindent
The vector space of \emph{essential deformations} of an anti-self-dual Einstein-Yang-Mills pair as an anti-self-dual Einstein-Yang-Mills pair, in contrast to as a general Einstein-Yang-Mills pair, is defined as follows:
\begin{equation*}
\mathbb{E}_{(g,A)} := \left\{ (h,a) \in T_{(g,A)}\Conf(P) \,\, \vert\,\, \dd_{(g,A)}\cE^o(h,a) = 0,  \,\, (\dd_e\Phi_{(g,A)})^{\ast}(h,a) = 0\right\}\,.
\end{equation*}

\noindent
Essential deformations of anti-self-dual Einstein-Yang-Mills pairs as anti-self-dual Einstein-Yang-Mills pairs are characterized by the following proposition.

\begin{proposition}
\label{prop:completionRicciFlat}
A pair $(h^o,a)\in T^o_{(g,A)}\Conf(P)$ can be completed into an essential deformation $(h,a)\in \mathbb{E}_{(g,A)}$ of $(g,A)$ as an anti-self-dual Einstein-Yang-Mills if and only if:
\begin{align}
&  \frac{1}{2}\nabla^{g\ast}\nabla^g h^o   -  \mathrm{R}^g_o (h^o) - 2 \delta^{g}\nabla^{g\ast} h^o - \frac{1}{6} (\nabla^{g\ast}\nabla^{g\ast}h^o ) g + \frac{1}{2}\delta^g (a \lrcorner_g^{\frc} F_A) = 0 \,,\label{eq:RicciFlatEinsteinDeformations}\\
& \ast_g\dd_A a + \dd_A a  + \ast_g (F_A)^g_{h^o} = 0\, , \qquad    \dd^{g\ast}_A a = 0\,. \label{eq:YMDeformations}
\end{align}

\noindent
In particular, $\langle F_A , (F_A)^g_{h^o}\rangle_{g,\frc} = 0$ and $\ast_g (F_A)^g_{h^o} =(F_A)^g_{h^o}$
\end{proposition}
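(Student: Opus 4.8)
The plan is to unwind the two defining conditions of $\mathbb{E}_{(g,A)}$, namely $\dd_{(g,A)}\cE^o(h,a)=0$ together with $(\dd_e\Phi_{(g,A)})^{\ast}(h,a)=0$, by decomposing $h=\tfrac{f}{4}g+h^o$ with $f=\mathrm{Tr}_g(h)$ and eliminating the trace $f$ via the slice condition, following the strategy of the proof of Theorem \ref{thm:deformationsEYMASD}. Inserting Lemma \ref{lemma:differentialASD} and Lemma \ref{lemma:adjointdPhi}, the four equations defining an essential deformation read: the linearized Ricci equation $\tfrac{1}{2}\nabla^{g\ast}\nabla^g h-\mathrm{R}^g_o(h)-\delta^g\nabla^{g\ast}h-\tfrac{1}{2}\nabla^g\dd\mathrm{Tr}_g(h)=0$; the linearized anti-self-duality equation $\ast_g\dd_A a+\dd_A a+\ast_g(F_A)^g_{h^o}=0$; and the slice conditions $2\nabla^{g\ast}h=a\lrcorner_g^{\frc}F_A$ and $\dd^{g\ast}_A a=0$. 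Since $\dd_{(g,A)}\cE^o_2$ depends on $h$ only through $h^o$, the second and fourth of these reproduce \eqref{eq:YMDeformations} \emph{verbatim}, with no reference to $f$; thus the connection part of the problem decouples from the trace at once.

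It remains to reduce the linearized Ricci equation to \eqref{eq:RicciFlatEinsteinDeformations}. The only undifferentiated $f$-term is $\mathrm{R}^g_o(\tfrac{f}{4}g)=\tfrac{f}{4}\Ric^g$, which vanishes because $g$ is Ricci-flat, leaving an expression in $h^o$ and in the derivatives $\nabla^g\dd f$, $\Delta_g f$ of $f$. The slice condition is equivalent to $\dd f=4\nabla^{g\ast}h^o-2\,a\lrcorner_g^{\frc}F_A$, so a completion $f$ exists exactly when $(h^o,a)$ is unobstructed, and this is the completion formula in the statement. I would then substitute this expression for $\dd f$, use the identity $\nabla^{g\ast}(a\lrcorner_g^{\frc}F_A)=\langle\dd_A a,F_A\rangle_{g,\frc}$ from the proof of Lemma \ref{lemma:nablanablah} to rewrite $\Delta_g f$, and finally invoke the \emph{trace} of the resulting Ricci equation, which reads $\nabla^{g\ast}\nabla^{g\ast}h^o=\tfrac{3}{8}\langle\dd_A a,F_A\rangle_{g,\frc}$, to trade the leftover scalar $\langle\dd_A a,F_A\rangle_{g,\frc}$ for $\tfrac{8}{3}\nabla^{g\ast}\nabla^{g\ast}h^o$. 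This collapses the coefficient of the $g$-proportional term to exactly $-\tfrac{1}{6}$ and removes the spurious $\langle\dd_A a,F_A\rangle_{g,\frc}\,g$ contribution, yielding \eqref{eq:RicciFlatEinsteinDeformations}. The converse direction runs these steps backwards: given unobstructed $(h^o,a)$ solving \eqref{eq:RicciFlatEinsteinDeformations}–\eqref{eq:YMDeformations}, define $f$ by the completion formula, set $h=\tfrac{f}{4}g+h^o$, and verify $(h,a)\in\mathbb{E}_{(g,A)}$. I expect the main obstacle to be exactly this bookkeeping, in particular checking that the trace of \eqref{eq:RicciFlatEinsteinDeformations} reproduces the same scalar relation on $h^o$ and $a$ that is used to derive it, so that the substitution is reversible and the clean coefficient $-\tfrac{1}{6}$ is legitimate rather than an independent constraint.

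The final two assertions I would prove pointwise from the anti-self-duality of $F_A$. For traceless $h^o$ the trace term in the variation of the Hodge star drops out, so the formula underlying Lemma \ref{lemma:differentialASD} shows that the linearization of $\ast_g$ on $2$-forms at fixed argument is $\beta\mapsto\ast_g(\beta)^g_{h^o}$. Differentiating the identity $\ast_{g+th^o}^2=\Id$ on $\Omega^2(M)$ at $t=0$ shows that this linearization anticommutes with $\ast_g$; applying the resulting relation to $\beta=F_A$ and using $\ast_g F_A=-F_A$ gives $(F_A)^g_{h^o}=\ast_g(F_A)^g_{h^o}$, so that $(F_A)^g_{h^o}$ is self-dual. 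Finally $F_A$ (anti-self-dual) and $(F_A)^g_{h^o}$ (self-dual) are pointwise orthogonal for $\langle\cdot,\cdot\rangle_{g,\frc}$, since $\langle F_A,(F_A)^g_{h^o}\rangle_{g,\frc}=\langle\ast_g F_A,\ast_g(F_A)^g_{h^o}\rangle_{g,\frc}=-\langle F_A,(F_A)^g_{h^o}\rangle_{g,\frc}$ forces $\langle F_A,(F_A)^g_{h^o}\rangle_{g,\frc}=0$, which completes the argument.
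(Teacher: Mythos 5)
Your proof is correct and takes essentially the same route as the paper, whose proof simply invokes Lemma \ref{lemma:differentialASD} and Theorem \ref{thm:deformationsEYMASD} with $\kappa$ formally set to zero: you are inlining exactly that argument (the decomposition $h=\tfrac{f}{4}g+h^o$, elimination of $\dd f$ via the slice condition, and the scalar identity $\nabla^{g\ast}\nabla^{g\ast}h^o=\tfrac{3}{8}\langle \dd_A a,F_A\rangle_{g,\frc}$, which indeed produces the coefficient $-\tfrac{1}{6}$ and is reproduced by the trace of \eqref{eq:RicciFlatEinsteinDeformations}, so the substitution is reversible as you anticipated). Your derivation of the self-duality of $(F_A)^g_{h^o}$ and of $\langle F_A,(F_A)^g_{h^o}\rangle_{g,\frc}=0$ by differentiating $\ast_{g_t}^2=\mathrm{Id}$ in a traceless direction is also correct, and is a cleaner justification than the paper's remark that this "can be checked directly" from the explicit formula.
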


\begin{remark}
We have the following explicit formula for $(F_A)^g_{h^o} \in \Omega^2(M,\frG_P)$:
\begin{equation*}
(F_A)^g_{h^o}(v_1 , v_2) = -F_A(h^o(v_1),v_2) + F_A(h^o(v_2),v_1)\,,
\end{equation*}

\noindent
for every $v_1,v_2\in\mathfrak{X}(M)$. Using this equation it can be checked directly that $(F_A)^g_{h^o}$ is indeed self-dual.
\end{remark}

\begin{proof}
Follows from Lemma \ref{lemma:differentialASD} and Theorem \ref{thm:deformationsEYMASD} after formally setting $\kappa = 0$.
\end{proof}

\noindent
Since every anti-self-dual Einstein-Yang-Mills pair is automatically Einstein-Yang-Mills, it follows that every essential deformation of $(g,A)$ as an anti-self-dual Einstein-Yang-Mills pair must be an essential deformation of $(g,A)$ as an Einstein-Yang-Mills pair. Nonetheless, it is an instructive exercise to verify this explicitly.

\begin{proposition}
\label{prop:AYMEssential}
We have a natural inclusion $\mathbb{E}_{(g,A)}\subset \mathbb{H}^1_{(g,A)}$.
\end{proposition}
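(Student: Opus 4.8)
The plan is to upgrade the set-theoretic inclusion $\cE_o^{-1}(0)\subset\cE^{-1}(0)$, which is exactly the content of Proposition \ref{prop:RicciFlatASD}, to the level of linearizations. Since both $\mathbb{E}_{(g,A)}$ and $\mathbb{H}^1_{(g,A)}$ are by definition subspaces of $\Ker\big((\dd_e\Phi_{(g,A)})^{\ast}\big)$, it suffices to establish the pointwise inclusion $\Ker\big(\dd_{(g,A)}\cE_o\big)\subseteq\Ker\big(\dd_{(g,A)}\cE\big)$ at the fixed anti-self-dual Einstein-Yang-Mills pair $(g,A)$. The key idea is to write each component of $\cE$ as a composite of a smooth, point-dependent operator with a component of $\cE_o$. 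Linearizing such a composite at a point where $\cE_o$ vanishes, every Leibniz term in which the variation does not fall on the inner $\cE_o$ carries an undifferentiated factor $\cE_o(g,A)=0$ and therefore drops out, so that $\dd_{(g,A)}\cE$ factors through $\dd_{(g,A)}\cE_o$ and the inclusion of kernels follows.

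For the Yang-Mills component I would use the second Bianchi identity $\dd_A F_A=0$ together with the four-dimensional formula $\dd_A^{g\ast}F_A=-\ast_g\dd_A\ast_g F_A$ recorded in the proof of Lemma \ref{lemma:infinitesimalE1}. Substituting $\ast_g F_A=\cE_2^o(g,A)-F_A$ and discarding $\dd_A F_A$ by Bianchi yields the exact identity $\cE_2(g,A)=-2\kappa\,\ast_g\dd_A\,\cE_2^o(g,A)$ of smooth maps on $\Conf(P)$. Differentiating at $(g,A)$, where $\cE_2^o(g,A)=0$, the terms in which the variation hits $\ast_g$ or the connection inside $\dd_A$ carry the undifferentiated factor $\cE_2^o(g,A)=0$, leaving $\dd_{(g,A)}\cE_2=-2\kappa\,\ast_g\dd_A\circ\dd_{(g,A)}\cE_2^o$; hence $\dd_{(g,A)}\cE_2^o(h,a)=0$ forces $\dd_{(g,A)}\cE_2(h,a)=0$.

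For the Einstein component I would first rewrite $\cE_1=-\cE_1^o+\tfrac12(\tr_g\cE_1^o)\,g+\cT$, using $\mG^g=\Ric^g-\tfrac12 s^g g$ together with $s^g=\tr_g\Ric^g=\tr_g\cE_1^o$. The remaining task is to express the energy-momentum tensor through $\cE_2^o$. Splitting $F_A=F_A^++F_A^-$ into self-dual and anti-self-dual parts and using the four-dimensional identities $F_A^{\pm}\circ_{g,\frc}F_A^{\pm}=\tfrac12\vert F_A^{\pm}\vert^2_{g,\frc}\,g$ (the polarization of the identity used in Proposition \ref{prop:RicciFlatASD}), one obtains $F_A\circ_{g,\frc}F_A-\tfrac12\vert F_A\vert^2_{g,\frc}\,g=2\,F_A^+\circ_{g,\frc}F_A^-$. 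Since $\cE_2^o(g,A)=\ast_g F_A+F_A=2F_A^+$, this reads $\cT(g,A)=-\kappa\,\cE_2^o(g,A)\circ_{g,\frc}F_A^-$, and therefore $\cE_1=-\cE_1^o+\tfrac12(\tr_g\cE_1^o)\,g-\kappa\,\cE_2^o\circ_{g,\frc}F_A^-$. Linearizing at $(g,A)$, every term carrying an undifferentiated $\cE_1^o(g,A)=0$ or $\cE_2^o(g,A)=0$ disappears, so that, using $F_A^-=F_A$ at the anti-self-dual pair,
\[
\dd_{(g,A)}\cE_1(h,a)=-\dd_{(g,A)}\cE_1^o(h,a)+\tfrac12\big(\tr_g\,\dd_{(g,A)}\cE_1^o(h,a)\big)\,g-\kappa\,\big(\dd_{(g,A)}\cE_2^o(h,a)\big)\circ_{g,\frc}F_A\,,
\]
which vanishes whenever $(h,a)\in\Ker(\dd_{(g,A)}\cE_1^o)\cap\Ker(\dd_{(g,A)}\cE_2^o)$.

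Combining the two factorizations gives $\Ker(\dd_{(g,A)}\cE_o)\subseteq\Ker(\dd_{(g,A)}\cE)$, and intersecting with the common slice condition $(\dd_e\Phi_{(g,A)})^{\ast}(h,a)=0$ yields $\mathbb{E}_{(g,A)}\subseteq\mathbb{H}^1_{(g,A)}$. I expect the main obstacle to be bookkeeping rather than conceptual: one must check the signs in $\dd_A^{g\ast}F_A=-\ast_g\dd_A\ast_g F_A$ and in $F_A^{\pm}\circ_{g,\frc}F_A^{\pm}=\tfrac12\vert F_A^{\pm}\vert^2_{g,\frc}\,g$ against the paper's conventions, and argue carefully that all Leibniz correction terms genuinely carry an undifferentiated factor of $\cE_o(g,A)$, using crucially that \emph{both} components $\cE_1^o(g,A)$ and $\cE_2^o(g,A)$ vanish at the base pair. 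As a consistency check, the same conclusion can be read off by comparing Proposition \ref{prop:completionRicciFlat} with Theorem \ref{thm:deformationsEYMASD}: the Einstein equation of the latter differs from \eqref{eq:RicciFlatEinsteinDeformations} precisely by the $\kappa$-terms, and the computation above shows these vanish on $\Ker(\dd_{(g,A)}\cE_o)$.
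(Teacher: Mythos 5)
Your argument is correct, and it reaches the inclusion by a different route than the paper. The paper's proof is a direct verification: it takes the explicit characterizations of essential deformations from Theorem \ref{thm:deformationsEYMASD} and Proposition \ref{prop:completionRicciFlat} and checks term by term that the linearized anti-self-dual equations imply the linearized Einstein--Yang--Mills equations, the key steps being the decomposition $2F_A\circ_{g,\frc}\dd_A a=\langle F_A,\dd_A a\rangle_{g,\frc}\,g+2F_A\circ_{g,\frc}(\dd_A a)^{+}$ together with $-F_A\circ_{g,\frc}(F_A)^g_{h^o}=\tfrac12\vert F_A\vert^2_{g,\frc}h^o+F_A\circ_{h^o,\frc}F_A$ for the Einstein component, and the application of $-\ast_g\dd_A$ to the linearized anti-self-duality equation for the Yang--Mills component. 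Your proof packages exactly these computations at the nonlinear level: the identities $\cE_2=-2\kappa\ast_g\dd_A\cE_2^o$ (Bianchi) and $\cT=-\kappa\,\cE_2^o\circ_{g,\frc}F_A^{-}$ (four-dimensional self-dual algebra) exhibit $\cE$ as a composite through $\cE_o$, and the chain rule at a zero of $\cE_o$ kills every Leibniz correction and yields the factorization of linearizations, hence the kernel inclusion. This buys two things: it explains structurally \emph{why} the inclusion holds, and it gives the slightly stronger statement $\Ker(\dd_{(g,A)}\cE_o)\subseteq\Ker(\dd_{(g,A)}\cE)$ with no reference to the slice condition, which in the paper's route is baked into the characterizations it compares. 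The convention checks you flag do work out: your formula for $\dd_{(g,A)}\cE_1$ reproduces Proposition \ref{prop:infinitesimalE2} at an anti-self-dual pair after substituting $F_A\circ_{g,\frc}F_A=\tfrac12\vert F_A\vert^2_{g,\frc}\,g$ and $h=h^o+\tfrac14\tr_g(h)\,g$, and the Yang--Mills factorization reproduces it via $\dd^{g\ast}_A(F_A)^g_h=\dd^{g\ast}_A(F_A)^g_{h^o}+\tfrac12\dd\tr_g(h)\lrcorner_g F_A$. The only cosmetic imprecision is that the Leibniz term in which the variation hits $\ast_g$ carries the factor $\dd_A\cE_2^o(g,A)$ rather than $\cE_2^o(g,A)$ itself, but this of course still vanishes at the base pair.
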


\begin{proof}
Let $(h,a) \in \mathbb{E}_{(g,A)}$. The pair $(h,a)$ satisfies  \eqref{eq:deformationshgeneralASD} if and only if:
\begin{equation}
\label{eq:conditionh}
\langle F_A , \dd_A a\rangle_{g,\frc} g - 2 F_A \circ_{g,\frc} \dd_A a + F_A\circ_{h^o,\frc} F_A + \frac{1}{2} \vert F_A \vert_{g,\frc}^2 h^o  = 0.
\end{equation}

\noindent
We compute:
\begin{align*}
2 F_A \circ_{g,\frc} \dd_A a &= 2 F^{-}_A \circ_{g,\frc} (\dd_A a)^{-} + 2 F^{-}_A \circ_{g,\frc} (\dd_A a)^{+} = \langle F_A,\dd_A a\rangle_{g,\frc} g + 2 F_A \circ_{g,\frc} (\dd_A a)^{+} \\
& = \langle F_A , \dd_A a\rangle_{g,\frc} g - F_A \circ_{g,\frc} (F_A)^g_{h^o}\,.
\end{align*}

\noindent
On the other hand, we have:
\begin{eqnarray*}
 - (F_A \circ_{g,\frc}  (F_A)^g_{h^o})(v_1,v_2) = \frac{1}{2} \langle F_A(v_1) , F_A (h^o(v_2)) \rangle_{g,\frc} + \frac{1}{2} \langle F_A(v_2) , F_A (h^o(v_1)) \rangle_{g,\frc}  + (F_A \circ_{h^o,\frc} F_A) (v_1,v_2)\,.
\end{eqnarray*}

\noindent
Furthermore:
\begin{equation*}
 \langle F_A(v_1) , F_A (h^o(v_2)) \rangle_{g,\frc} +   \langle F_A(v_2) , F_A (h^o(v_1)) \rangle_{g,\frc} = \vert F_A \vert^2_{g,\frc} h^o\,,
\end{equation*}

\noindent
which finally gives:
\begin{equation*}
2 F_A \circ_{g,\frc} \dd_A a =  \langle F_A , \dd_A a\rangle_{g,\frc} g+ \frac{1}{2} \vert F_A \vert^2_{g,\frc} h^o + F_A \circ_{h^o,\frc} F_A\,,
\end{equation*}

\noindent
whence \eqref{eq:conditionh} is satisfied. Applying now the differential operator $-\ast_g \dd_A$ to the first equation in \eqref{eq:YMDeformations} we obtain:
\begin{equation*}
\dd^{g\ast}_A\dd_A a -\ast_g \dd^2_A a  + \dd_A^{g\ast} (F_A)^g_{h^o} = 0\,.
\end{equation*}

\noindent
We compute:
\begin{eqnarray*}
& \dd^2_A a  = e_j \wedge \nabla^{g,A}_{e_j} (e_i \wedge \nabla^{g,A}_{e_i} (a^{\Lambda}\otimes \tau_{\Lambda}) )  = e_j\wedge e_i \wedge a^{\Lambda} \otimes \nabla^A_{e_j}\nabla^A_{e_i} \tau_{\Lambda} = \frac{1}{2} e_j\wedge e_i \wedge a^{\Lambda} \otimes  [F_A(e_i,e_j), \tau_{\Lambda}]_{\frG_P} \\
& = F^{\Sigma}_A \wedge a^{\Lambda} \otimes [\tau_{\Sigma},\tau_{\Lambda}]_{\frG_P} \,. 
\end{eqnarray*}

\noindent
This implies:
\begin{equation*}
\ast_g \dd^2_A a = \ast_g (F^{\Sigma}_A \wedge a^{\Lambda})  \otimes [\tau_{\Sigma},\tau_{\Lambda}]_{\frG_P}  = - (a^{\Lambda} \lrcorner_g F_A^{\Sigma}) \otimes [\tau_{\Sigma},\tau_{\Lambda}]_{\frG_P}  = a \lrcorner_g^{\frc} F_A\,.
\end{equation*}

\noindent
Thus, the first equation in \eqref{eq:YMDeformations} is satisfied as well.
\end{proof}

\noindent
By Proposition \ref{prop:RicciFlatASD} we have a natural injection:
\begin{equation*}
\mathbb{I}\colon \mathfrak{M}_o(P,\frc) \hookrightarrow \mathfrak{M}(P,\frc)
\end{equation*}

\noindent
of $\mathfrak{M}_o(P,\frc)$ into the moduli space of Einstein-Yang-Mills pairs. By the structural result given in Theorem \ref{thm:localKuranishi} it is clear that $\mathbb{I}$ is a local injective immersion which is isometric with respect to the natural $L^2$ metric on both $\mathfrak{M}_o(P,\frc)$ and $\mathfrak{M}(P,\frc)$. A natural question arises in this context: Is every infinitesimal deformation of $(g,A)$ as an Einstein-Yang-Mills pair automatically an infinitesimal deformation as an anti-self-dual Einstein-Yang-Mills pair? A positive answer to this question would give the Einstein-Yang-Mills analog of the theorem by which every Ricci flat deformation of a Calabi-Yau metric is again Calabi-Yau.

\begin{theorem}
\label{thm:CYcase}
Let $(M,g)$ be a Calabi-Yau two-fold and let $(g,A)$ be an anti-self-dual Einstein-Yang-Mills pair. Then $\mathbb{E}_{(g,A)} = \mathbb{H}^1_{(g,A)}$.
\end{theorem}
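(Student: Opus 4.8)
The plan is to establish the remaining inclusion $\mathbb{H}^1_{(g,A)} \subseteq \mathbb{E}_{(g,A)}$, the reverse being Proposition \ref{prop:AYMEssential}. Fix $(h,a) \in \mathbb{H}^1_{(g,A)}$, write $h = \tfrac{1}{4}\mathrm{Tr}_g(h)\,g + h^o$, and, following Lemma \ref{lemma:differentialASD}, introduce the self-dual $\frG_P$-valued two-form
\[
\eta := \ast_g \dd_A a + \dd_A a + \ast_g (F_A)^g_{h^o} = 2(\dd_A a)^+ + (F_A)^g_{h^o} \in \Omega^2_+(M,\frG_P).
\]
Since $(h,a)$ already satisfies the common slice condition $(\dd_e\Phi_{(g,A)})^{\ast}(h,a)=0$, membership in $\mathbb{E}_{(g,A)}$ amounts to $\dd_{(g,A)}\cE^o(h,a)=0$, i.e. to $\eta = 0$ together with \eqref{eq:RicciFlatEinsteinDeformations}; so the whole proof reduces to proving $\eta = 0$. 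The first step is to note that the Yang--Mills equation \eqref{eq:deformationsAgeneralASD} is equivalent to $\dd_A^{g\ast}\eta = 0$: applying $-\ast_g\dd_A$ and using $\ast_g (F_A)^g_{h^o}=(F_A)^g_{h^o}$ and $\ast_g\dd_A^2 a = a\lrcorner_g^{\frg} F_A$ exactly reproduces \eqref{eq:deformationsAgeneralASD}, as in the proof of Proposition \ref{prop:AYMEssential}. Because $\eta$ is self-dual we have $\dd_A^{g\ast}\eta = -\ast_g\dd_A\eta$, so $\dd_A^{g\ast}\eta=0$ also forces $\dd_A\eta = 0$, and $\eta$ is a twisted-harmonic self-dual form.

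Next I would upgrade harmonicity to parallelism by a Weitzenböck argument. Writing $\Delta_A = \nabla_A^{\ast}\nabla_A + \mathcal{W}$, the zeroth-order operator $\mathcal{W}$ splits into the Riemannian Weitzenböck curvature on the $\Lambda^2 T^{\ast}M$-factor and the derivation action of $F_A$. On the self-dual part the Riemannian piece is $\tfrac{s^g}{3} - 2W^+$, which vanishes because a Calabi--Yau surface is Ricci-flat ($s^g=0$) and half-flat ($W^+=0$). The bundle piece is the action of $F_A$, seen through $\Lambda^2 T^{\ast}M \cong \mathfrak{su}(2)_+ \oplus \mathfrak{su}(2)_-$, on the $\Lambda^2_+$-component of $\eta$; since $A$ is anti-self-dual, $F_A$ takes values in $\mathfrak{su}(2)_-$, which commutes with $\mathfrak{su}(2)_+$, so this term annihilates self-dual forms. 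Hence $\Delta_A\eta = \nabla_A^{\ast}\nabla_A\eta$, and $\int_M |\nabla_A\eta|^2\,\nu_g = 0$ gives $\nabla_A\eta = 0$.

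Using the parallel hyperkähler frame $\omega_1,\omega_2,\omega_3$ of $\Lambda^2_+ T^{\ast}M$, I would write $\eta = \sum_\alpha \omega_\alpha\otimes\sigma_\alpha$; parallelism of $\eta$ and of the $\omega_\alpha$ forces each $\sigma_\alpha\in\Gamma(\frG_P)$ to be parallel. Set $\mu_\alpha := \frc(F_A,\sigma_\alpha)\in\Omega^2(M)$: it is anti-self-dual since $F_A$ is, and it is closed and coclosed by the Bianchi identity $\dd_A F_A=0$ together with $\nabla^A\sigma_\alpha=0$, hence harmonic. Unwinding \eqref{eq:circgc} gives $\phi := F_A\circ_{g,\frc}\eta = \sum_\alpha \mu_\alpha\circ_g\omega_\alpha$. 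Since each $\mu_\alpha$ is a harmonic anti-self-dual (hence primitive $(1,1)$) form and $\omega_\alpha$ is the matching parallel Kähler form, the tensor $\mu_\alpha\circ_g\omega_\alpha$ is transverse-traceless and Lichnerowicz-harmonic, $\Delta^g_L(\mu_\alpha\circ_g\omega_\alpha)=0$, by the standard identification of infinitesimal Einstein deformations with harmonic forms on Kähler--Ricci-flat manifolds; thus $\phi$ is a harmonic transverse-traceless tensor.

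Finally I would close through the Einstein equation. Substituting $(\dd_A a)^+ = \tfrac12(\eta - (F_A)^g_{h^o})$ into the $\kappa$-terms of \eqref{eq:deformationshgeneralASD} and invoking the identity $F_A\circ_{g,\frc}(F_A)^g_{h^o} = -\big(\tfrac12|F_A|^2_{g,\frc}h^o + F_A\circ_{h^o,\frc}F_A\big)$ from Proposition \ref{prop:AYMEssential}, those terms collapse to $\kappa\phi$, so \eqref{eq:deformationshgeneralASD} becomes $\dd_{(g,A)}\cE^o_1(h,a) + \kappa\phi = 0$, i.e. the linearized Ricci tensor equals $-\kappa\phi$. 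Pairing this in $L^2$ with $\phi$ and integrating by parts, the gauge terms $\delta^g\nabla^{g\ast}h$ and $\nabla^g\dd\mathrm{Tr}_g(h)$ drop out because $\phi$ is transverse-traceless, leaving $\int_M g(\dd_{(g,A)}\cE^o_1(h,a),\phi)\,\nu_g = \tfrac12\int_M g(h,\Delta^g_L\phi)\,\nu_g = 0$; hence $\kappa\|\phi\|_{L^2}^2 = 0$ and $\phi = 0$. This yields both the Einstein part \eqref{eq:RicciFlatEinsteinDeformations} and, through $\|\eta\|_{L^2}^2 = \int_M\langle\eta,(F_A)^g_{h^o}\rangle_{g,\frc}\,\nu_g = -\int_M g(\phi,h^o)\,\nu_g = 0$, the vanishing $\eta=0$, i.e. \eqref{eq:YMDeformations}; therefore $(h,a)\in\mathbb{E}_{(g,A)}$. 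The main obstacle is precisely this last mechanism: the coupling term $\phi = F_A\circ_{g,\frc}\eta$ obstructing the decoupling is simultaneously forced into the image of the linearized Einstein operator (by the equation) and into its kernel (being harmonic transverse-traceless, via the hyperkähler Lichnerowicz identity), so self-adjointness annihilates it. Verifying the harmonicity of $\mu_\alpha\circ_g\omega_\alpha$ and the exact vanishing of the bundle-curvature term in the Weitzenböck formula are the two technical points requiring care, and the reducible case, where nonzero parallel $\sigma_\alpha$ genuinely occur, is exactly where this argument is indispensable.
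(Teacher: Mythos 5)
Your proof is correct, and while its first half coincides with the paper's own argument (both show that $\eta=2(\dd_Aa)^{+}+(F_A)^g_{h^o}$, which the paper calls $b$, is a twisted-harmonic self-dual form, upgrade it to a $\nabla^{g,A}$-parallel form via the Weitzenböck formula using $W^{+}=0$, $\mathrm{Ric}^g=0$ and anti-self-duality of $F_A$, and expand it in the parallel frame of $\Lambda^2_{+}$), the mechanism you use to conclude $\eta=0$ is genuinely different. The paper never touches the linearized Einstein equation at this final stage: it splits $\frG_P=\frF_P\oplus\frF_P^{\perp}$ along the parallel sections $\sigma_\alpha$, observes that $F_A$, and hence $(F_A)^g_{h^o}$, takes values in $\frF_P^{\perp}$, and reduces the $\frF_P$-component of $\eta=\dd_Aa+\ast_g\dd_Aa+(F_A)^g_{h^o}$ to the abelian statement $x_a=\dd\alpha_a+\ast_g\dd\alpha_a$ with $x_a$ parallel, which dies by scalar Hodge theory. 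You instead feed $\eta$ back into \eqref{eq:deformationshgeneralASD}: the $\kappa$-terms there collapse to $\kappa\phi$ with $\phi=F_A\circ_{g,\frc}\eta=\sum_\alpha\mu_\alpha\circ_g\omega_\alpha$, a transverse-traceless $\Delta^g_L$-harmonic tensor by classical hyperk\"ahler deformation theory, so $L^2$-pairing the equation with $\phi$ and using self-adjointness of $\Delta^g_L$ gives $\kappa\Vert\phi\Vert^2_{L^2}=0$, and then $\Vert\eta\Vert^2_{L^2}=\int_M\langle\eta,(F_A)^g_{h^o}\rangle_{g,\frc}\,\nu_g=-\int_Mg(\phi,h^o)\,\nu_g=0$. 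I checked the sign identity $\langle\eta,(F_A)^g_{h^o}\rangle_{g,\frc}=-g(F_A\circ_{g,\frc}\eta,h^o)$, the collapse of the $\kappa$-terms, and the transversality of $\mu\circ_g\omega$; all hold. What each route buys: the paper's is more elementary and uses only the linearized Yang--Mills equation at the end, whereas yours is more conceptual, making explicit where the Einstein equation and the hypothesis $\kappa\neq0$ enter and why the coupling term $\phi$ must vanish (it lies simultaneously in the kernel and the image of a self-adjoint elliptic operator); the price is the additional classical input that $\mu\circ_g\omega$ is Lichnerowicz-harmonic for $\mu$ harmonic anti-self-dual and $\omega$ parallel, which you rightly flag as the technical point to verify --- it is standard, being exactly the description of the infinitesimal Einstein deformations of a hyperk\"ahler surface, but it is an ingredient the paper's proof does not need.
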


\begin{proof}
It is enough to prove that $\mathbb{H}^1_{(g,A)} \subset \mathbb{E}_{(g,A)}$ if $(g,A)$ is an anti-self-dual Einstein-Yang-Mills pair. By Theorem \ref{thm:deformationsEYMASD} and Proposition \ref{prop:completionRicciFlat} a pair $(h^o, a)$ that can be completed into an essential deformation of $(g,A)$ can also be completed into an anti-self-dual essential deformation of $(g,A)$ if and only if:
\begin{align}
&  F_A\circ_{h^o,\frc} F_A + \frac{1}{2} \vert F_A \vert_{g,\frc}^2 h^o - 2 F_A \circ_{g,\frc} \dd_A a + \langle F_A , \dd_A a\rangle_{g,\frc} g  = 0\,, \label{eq:condition1} \\
&  \ast_g\dd_A a + \dd_A a  + (F_A)^g_{h^o} = 0\,. \label{eq:condition2}
\end{align}

\noindent
Let $(h,a)\in \mathbb{H}^1_{(g,A)}$. Then, $(h^o,a)$ satisfies:
\begin{equation*}
\dd_A^{g\ast}\dd_A a - a\lrcorner_g^{\frg} F_A + \dd^{g\ast}_A (F_A)^g_{h^o}  = -\ast_g \dd_A(\ast_g \dd_A a  + \dd_A a  +  (F_A)^g_{h^o}) = 0\,.
\end{equation*}

\noindent
Hence:
\begin{equation*}
\dd_A a + \ast_g \dd_A a  + (F_A)^g_{h^o} = b\,,
\end{equation*}

\noindent
where $b\in \Ker(\dd_A) \subset \Omega^2(M,\frG_P)$. Furthermore, since the left hand side of the previous equation is self-dual, $b$ is also self-dual and in addition $b\in \Ker(\dd^{g\ast}_A) \subset \Omega^2(M,\frG_P)$. Hence $b$ is a self-dual two-form with values in $\frG_P$ harmonic with respect to $g$ and $A$. Similarly to the proof of Proposition \ref{prop:AYMEssential}, we compute:
\begin{align*}
2 F_A \circ_{g,\frc} \dd_A a &= 2 F^{-}_A \circ_{g,\frc} (\dd_A a)^{-}  +2 F^{-}_A \circ_{g,\frc} (\dd_A a)^{+} = \langle F_A , \dd_A a \rangle_{g,\frc} g  +2 F_A \circ_{g,\frc} (b - (F_A)^g_{h^o}) \\
& = 2 F_A \circ_{g,\frc} b + \langle F_A , \dd_A a \rangle_{g,\frc} g + \frac{1}{2} \vert F_A \vert^2_{g,\frc} h^o + F_A \circ_{h^o,\frc} F_A\,.
\end{align*}

\noindent
Therefore, equations \eqref{eq:condition1} and \eqref{eq:condition2} hold if and only if $b=0$. In order to prove that this is indeed the case, we consider the Weitzenböck formula for the Laplacian associated to $\nabla^g$ and $A$ acting on the bundle of two forms with values in the adjoint bundle $\frG_P$. A computation gives:
\begin{eqnarray*}
((\dd^g_A \dd_A^{g\ast} + \dd_A^{g\ast}\dd^g_A) k)(v_1,v_2) = (\nabla^{g,A \ast} \nabla^{g,A} k)(v_1,v_2) - (\cR^{g,A}_{v_1,e_i} k)(e_i,v_2) + (\cR^{g,A}_{v_2,e_i} k)(e_i,v_1)\, , \quad k\in \Omega^2(\frG_P)\,,
\end{eqnarray*}

\noindent
where $\dd^g_A\colon \Omega^2(\frG_P) \to \Omega^3(\frG_P)$ is the exterior covariant derivative determined by $\nabla^g$ and $A$ on $\Omega^2(\frG_P)$, $\dd^{g\ast}_A\colon \Omega^2(\frG_P)\to \Omega^1(\frG_P)$ is its formal adjoint, $\nabla^{g,A}$ is the connection induced by $\nabla^g$ and $A$ on $\Omega^2(\frG_P)$, $(\nabla^{g,A})^{\ast}\colon\Omega^2(\frG_P) \to \Omega^1(\frG_P)$ is its formal adjoint and $\cR^{g,A}\in \Omega^2(\End(\wedge^2 T^{\ast}M\otimes \frG_P))$ is the curvature of $\nabla^{g,A}$. We have the identity:
\begin{eqnarray*}
(\cR^{g,A}_{v_1,v_2} k)(v_3,v_4) = [F_A(v_1,v_2) , k(v_3,v_4)]_{\frG_P} - k(R^g_{v_1,v_2}v_3,v_4) - k(v_3,R^g_{v_1,v_2} v_4)\,,
\end{eqnarray*}

\noindent
for every $v_1, \hdots , v_4 \in \mathfrak{X}(M)$. Using the fact that $g$ is Ricci-flat, we obtain:
\begin{eqnarray*}
& (\cR^{g,A}_{v_1,e_i} b)(e_i,v_2) - (\cR^{g,A}_{v_2,e_i} b)(e_i,v_1) = [F_A(v_1, e_i) , b(e_1,v_2)]_{\frG_P} -  [F_A(v_2, e_i) , b(e_1,v_1)]_{\frG_P} \\ 
& - b(e_i,R^g_{v_1,e_i} v_2) + b(e_i,R^g_{v_2,e_i} v_1)\,.
\end{eqnarray*}

\noindent
Using that $F_A$ is anti-self-dual while $b$ is self-dual it follows that:
\begin{eqnarray*}
[F_A(v_1, e_i) , b(e_1,v_2)]_{\frG_P} -  [F_A(v_2, e_i) , b(e_1,v_1)]_{\frG_P} = 0\,.
\end{eqnarray*}

\noindent
Furthermore:
\begin{eqnarray*}
b(e_i,R^g_{v_1,e_i} v_2) - b(e_i,R^g_{v_2,e_i} v_1) = b(e_i,R^g_{v_1,v_2} e_i) = 0\,,
\end{eqnarray*}

\noindent
where we have used the first Bianchi identity for $R^g$ and the fact that $g$ is anti-self-dual since it is Calabi-Yau. We conclude that: 
\begin{eqnarray*}
((\dd^g_A \dd_A^{g\ast} + \dd_A^{g\ast}\dd^g_A) b)(v_1,v_2) = (\nabla^{g,A \ast} \nabla^{g,A} b)(v_1,v_2) \,,
\end{eqnarray*}

\noindent
and therefore $\nabla^{g,A} b = 0$. As $(M,g)$ is a Calabi-Yau, there is a parallel $(2,0)$-form $\theta$ and an associated parallel Kähler form $\omega$. Hence, the complexification of the bundle of self-dual two forms $\wedge_{+}^2 M$ decomposes as follows:
\begin{equation*}
\wedge^2_+ M \otimes \CC=\wedge^{2,0} M\oplus \underline{\RR}\cdot  \omega\oplus \wedge^{0,2} M
\end{equation*}

\noindent
is trivial with the trivial connection. Let $x_1,x_2,x_3$ be a parallel frame of $\Omega_+^2(M)$. Since the element $b\in \Omega^2_+(\frG_P)$ is parallel with respect to $\nabla^{g,A}$, writing $b=\sum x_a\, b_a$ in terms of sections $b_a\in \Gamma(\frG_P)$ with $a=1,2,3$, we obtain:
\begin{equation*}
0=\nabla^{g,A} b= \sum_{a = 1}^3 x_a \,\nabla^A b_a\,,
\end{equation*}

\noindent
and so $\nabla^A b_a=0$ for every $a=1,2,3$. 
Assume that they are linearly independent (if not, just throw out those that are linearly dependent and work with
the remaining ones).
Let $\frF_P$ be the subbundle generated by
the sections $b_a$, so we have a decomposition
$\frG_P=\frF_P\oplus \frF_P^\perp$. The bundle
$\frF_P$ is trivial with a trivial flat connection
and $b=\sum x_a \, b_a \in \Gamma (\frF_P)$.
Hence $F_A\in \Omega^2(\frF_P^\perp)$.

Let $b_l$  be a frame for $\frF_P^\perp$, and note that $\nabla^A b_l \in \Omega^1 (\frF_P^\perp)$. As $F_A\in \Omega^2(\frF_P^\perp)$, We have that $(F_A)^g_{h_o}\in \Omega^2(\frF_P^\perp)$ and therefore the equation $b = \dd_A a + \ast_g \dd_A a + (F_A)^g_{h_0}$ implies that the  $b_a$-component of $a=\sum \alpha_a\otimes b_a + \sum \alpha_l \otimes b_l$, where $\alpha_a, \alpha_l \in C^{\infty}(M)$, gives  $\sum x_a \otimes b_a  =(\dd\alpha_a + \ast_g\dd \alpha_a) \otimes b_a$, that is, $x_a= \dd \alpha_a + \ast_g \dd \alpha_a$. Recall that $x_a$ is parallel, so in particular it is closed. Hence $\dd^{g\ast} \dd\alpha_a = 0$, from which $\dd\alpha_a = 0$, and hence $x=0$, and thus $b=0$, as required.
\end{proof}


\appendix



\section{Various differential operators in Riemannian geometry}
\label{app:curvatureformulae}


In order to fix notation and conventions, in this section we collect the Riemannian formulas that we will need throughout the main body of the article. We will mostly follow \cite[\S 1]{Besse} with an opposite sign for the definition of the Riemann curvature tensor. Let $(M,g)$ be a compact and oriented Riemannian manifold of dimension $n$ with Riemannian volume form $\nu_g$. We consider the exterior algebra $\wedge T^{\ast}M$ of $M$ as the subspace of the tensor algebra $\cT(M)$ of $M$ determined by the image of the following vector bundle map:
\begin{equation*}
\wedge T^{\ast}M \hookrightarrow \cT(M) \, , \qquad \alpha_1\wedge \cdots \wedge \alpha_r \to \sum_{\sigma \in S_r} (-1)^{\sigma} \alpha_{\sigma(1)}\otimes \cdots \otimes \alpha_{\sigma(r)}\,,
\end{equation*}

\noindent
where $\alpha_1 , \hdots , \alpha_r \in \Omega^1(M)$ and $\sigma \in S_r$ runs over all permutations of the set of $r$ elements. Therefore we consider differential forms as skew-symmetric $(r,0)$ tensors, that is:
\begin{equation*}
(\alpha_1\wedge \cdots \wedge \alpha_r) (v_1 , \cdots, v_r) = \sum_{\sigma \in S_r} (-1)^{\sigma}  (\alpha_1\otimes \cdots \otimes \alpha_r) (v_{\sigma(1)} , \cdots, v_{\sigma(r)} ) = \sum_{\sigma \in S_r} (-1)^{\sigma} \alpha_1 (v_{\sigma(1)}) \cdots   \alpha_r (v_{\sigma(r)})\, ,  
\end{equation*}

\noindent
for every $v_1, \hdots v_r \in TM$. Consequently, in our conventions the wedge product of two differential forms is given by the formula:
\begin{equation*}
(\omega_1\wedge \omega_2)(v_1, \hdots , v_{r_1 + r_2}) = \frac{1}{r_1! r_2!} \sum_{\sigma\in S_{r_1+r_2}} (-1)^{\sigma}\omega_1(v_{\sigma(1)},\hdots , v_{\sigma(r_1)})\omega_2(v_{\sigma(r_1+1)},\hdots , v_{\sigma(r_1+r_2)})
\end{equation*}

\noindent
where $\omega_1 \in \wedge^{r_1}T^{\ast}M$, $\omega_2 \in \wedge^{r_2}T^{\ast}M$ and $v_1,\hdots , v_{r_1+r_2} \in TM$. In particular, for every $\omega \in \wedge T^{\ast}M$ we can write:
\begin{equation*}
\omega  = \sum_{i_1,\hdots , i_r}  \frac{1}{r!}\omega_{i_1 , \hdots , i_r} e^{i_1} \wedge \hdots \wedge e^{i_r} = \sum_{i_1 < \hdots < i_r}   \omega_{i_1 , \hdots , i_r} e^{i_1} \wedge \hdots \wedge e^{i_r} = \sum_{i_1,\hdots , i_r} \omega_{i_1 , \hdots , i_r} e^{i_1} \otimes \hdots \otimes e^{i_r}\,,
\end{equation*}

\noindent
where $(e^1,\hdots, e^{n})$ is any given coframe.

\noindent
The metric induced by $g$ on the tensor bundles over $M$ will be denoted again by $g( \cdot ,\cdot)$, whereas the determinant metric determined by $g$ on the exterior algebra bundle of $M$ will be denoted by $\langle\cdot ,\cdot \rangle_g$. In our conventions the Hodge dual operator $\ast_g\colon \wedge T^{\ast} M \to \wedge T^{\ast}M$ on $(M,g)$ is determined by the following equation:
\begin{equation*}
\alpha\wedge \ast_g \beta = \langle \alpha , \beta\rangle_g \nu_g\, ,
\end{equation*}

\noindent
for any pair of differential forms $\alpha$ and $\beta$ on $M$. In particular, if $(e^1,\hdots , e^n)$ is a local orthonormal frame then:
\begin{equation*}
\ast_g(e^{i_1}\wedge \hdots \wedge e^{i_k}) = \frac{1}{(n-k)!} \sum_{i_{k+1}, \hdots , i_n}\epsilon_{i_1\hdots i_k i_{k+1}\hdots i_d}e^{i_{k+1}}\wedge \hdots \wedge e^{i_n}\, .
\end{equation*}

\noindent
where $\epsilon_{i_1\hdots i_k i_{k+1}\hdots i_n}$ is the Levi-Civita symbol in the convention $\epsilon_{1\hdots,d} = 1$. Furthermore, if $\alpha , \beta \in \Omega^r(M)$ we have:
\begin{equation*}
\ast_g^2 \alpha = (-1)^{r(n-r)}\alpha \, , \qquad \langle \alpha , \ast_g\beta \rangle_g = (-1)^{r(n-r)}\langle \ast_g\alpha ,\beta \rangle_g\, .
\end{equation*}

\noindent
In addition, we have:
\begin{equation*}
\langle u\wedge \alpha , \beta \rangle_g = \langle \alpha , \iota_{u^{\sharp}}\beta\rangle_g \,,
\end{equation*}

\noindent
for any $u\in \Omega^1(M)$, $\alpha \in \Omega^r(M)$ and $\beta \in \Omega^{r+1}(M)$, where the superscript $\sharp$ denotes musical isomorphism with respect to $g$. The exterior derivative of an $r$-form $\alpha \in \Omega^r(M)$ is defined through the following formula:
\begin{equation*}
\dd \alpha (v_0,\hdots ,v_r) = \sum_{i=0}^{r} (-1)^i v_i (\alpha(v_0, \hdots , \hat{v}_i, \hdots , v_r)) + \sum_{0\leq i< j \leq k} (-1)^{i+j}\alpha([v_i,v_j], v_0,\hdots, \hat{v}_i, \hdots , \hat{v}_j, \hdots , v_r)\,,
\end{equation*}

\noindent
where $v_0 , \hdots, v_r \in \mathfrak{X}(M)$ and the symbol \emph{hat} removes the underlying element. Therefore, in local coordinates, we have:
\begin{equation*}
\dd\alpha = \frac{1}{r!} \partial_{i_0}\alpha_{i_1 , \hdots , i_r} \dd x^{i_0} \wedge \hdots \wedge \dd x^{i_r}\,,
\end{equation*}

\noindent
where we are using the Einstein summation convention. Consequently, the local components of $\dd\alpha$ are given by:
\begin{equation*}
(\dd\alpha)_{i_0 , \hdots , i_r} = (r+1) \partial_{[i_0}\alpha_{i_1 , \hdots , i_r]}\,,
\end{equation*}

\noindent
where $[i_0,\hdots, i_r]$ denotes the skew-symmetrization of $(i_0,\hdots , i_r)$ with the following normalization factor:
\begin{equation*}
 \partial_{[i_0}\alpha_{i_1 , \hdots , i_r]} = \frac{1}{(r+1)!}\sum_{\sigma\in S_{r+1}}  (-1)^\sigma
 \partial_{\sigma(i_0)}\alpha_{\sigma(i_1) , \hdots , \sigma(i_r)}\, . 
\end{equation*}

\noindent
We define the Lie derivative of an $(r,0)$-tensor $\tau \in \Gamma(T^{\ast}M^{\otimes^r})$ by the following expression:
\begin{equation*}
(\cL_v \tau) (v_1,\hdots , v_r) = v \cdot  \tau (v_1,\hdots , v_r) - \sum_{i=1}^{r} \tau (v_1,\hdots , [v,v_i] , \hdots , v_r)\, .
\end{equation*}

\noindent
Let $\left\{f_t\right\}_{t\in\mathbb{R}}$ be the one-parameter group of diffeomorphisms generated by $v\in \mathfrak{X}(M)$. Then, we can equivalently write:
\begin{equation*}
\cL_v \tau = \frac{\dd}{\dd t} f^{\ast}_t \tau\vert_{t=0}\, . 
\end{equation*}

\noindent
for the Lie derivative of $\alpha$ along $v$, where $f^{\ast}_t \tau$ denotes the pullback of $\tau$ by the diffeomorphism $f_t \colon M\to M$, $t\in \mathbb{R}$. In particular, for an $r$-form $\alpha \in \Omega^r(M)$ we have the \emph{Cartan formula}:
\begin{equation*}
\cL_v \alpha = \dd\iota_v\alpha + \iota_v \dd \alpha\, .
\end{equation*}

\noindent
The Levi-Civita connection on $(M,g)$ will be denoted by $\nabla^g$. In terms of $\nabla^g$ the exterior differential of $\alpha \in \Omega^r(M)$ is given by skew-symmetrization in all entries:
\begin{equation*}
\dd \alpha(v_0,\hdots , v_r) = \sum_{i=0}^{r} (-1)^i (\nabla^g_{v_i}\alpha)(v_0 , \hdots , \hat{v}_i , \hdots , v_r) 
\end{equation*}

\noindent
or, equivalently, by:
\begin{equation*}
\dd \alpha = \sum_{i=1}^n e^i\wedge \nabla^g_{e_i} \alpha
\end{equation*}

\noindent
where $(e_1,\hdots , e_n)$ is a local orthonormal frame and $(e^1,\hdots , e^n)$ is its dual coframe. The formal adjoint $\dd^{g\ast}
\colon \Omega^{r+1}(M) \to \Omega^r(M)$ of the exterior derivative with respect to the $L^2$ norm defined by $g$ is given by:
\begin{equation*}
\dd^{g\ast}\colon \Omega^{r+1}(M) \to \Omega^r(M)\, , \qquad \alpha \mapsto - (-1)^{nr}\ast_g\dd\ast_g \alpha \, .
\end{equation*}

\noindent
In particular, we have:
\begin{equation*}
\dd^{g\ast}\alpha = -\sum_{i=1}^n \iota_{e_i} \nabla^g_{e_i} \alpha
\end{equation*}

\noindent
in terms of a local orthonormal frame $(e_1,\hdots , e_n)$ and its dual $(e^1,\hdots , e^n)$.

\noindent
Consider now the Levi-Civita connection as a differential operator:
\begin{equation*}
\nabla^g \colon \Gamma(T^{\ast}M^{\otimes^r}) \to \Gamma(T^{\ast}M^{\otimes^{r+1}})\, , \qquad \tau \mapsto \nabla^g\tau\, .
\end{equation*}

\noindent
The formal adjoint of this differential operator with respect to the $L^2$ norm defined by $g$ can be computed to be:
\begin{equation*}
(\nabla^{g\ast}\tau) (v_1,\hdots , v_r) = - \sum_{i=1}^n (\nabla^g_{e_i} \tau)(e_i,v_1,\hdots , v_r)\, ,\qquad v_1, \hdots , v_r  \in TM\, ,
\end{equation*}

\noindent
where $\tau$ is an $(r,0)$-tensor and $(e_1,\hdots , e_n)$ is a local orthornormal frame. We will refer to $\nabla^{g\ast}$ as the \emph{divergence operator} of $g$. Equivalently, we can write $\nabla^{g\ast}$ in terms of the following trace with respect to $g$:
\begin{equation*}
(\nabla^{g\ast}\tau) (v_1,\hdots ,v_r) =  - \mathrm{Tr}_g((\nabla^g\tau)(\,\cdot\, , v_1,\hdots , v_r))\, , \qquad v_1 , \hdots , v_r\in TM\, .
\end{equation*}

\noindent
In particular, the divergence of a tensor $\tau \in \Gamma(T^{\ast}M^{\otimes r+1})$ is given in local components by:
\begin{equation*}
(\nabla^{g\ast} \tau)_{i_1 , \hdots , i_r} = - (\nabla^{g})^{i_0}\tau_{i_0, i_1,\hdots , i_r}\, .
\end{equation*}

\noindent
The divergence operator restricts to a differential operator between differential forms which we denote for simplicity by the same symbol. For an $(r+1)$-form $\alpha \in \Omega^{r+1}(M)$ we have the following relation between the coadjoint and divergence operators:
\begin{equation*}
\nabla^{g\ast} \alpha = \dd^{g\ast}\alpha    \, , \qquad \alpha \in \Omega^{r+1}(M)\,,
\end{equation*}

\noindent
whence the restriction of $\nabla^{g\ast}$ to the differential forms is precisely the formal adjoint of the exterior derivative. Similarly, the divergence operator defines by restriction a map:
\begin{equation*}
\nabla^{g\ast}\colon \Gamma(T^{\ast}M^{\odot^{r+1}}) \to \Gamma(T^{\ast}M^{\odot^{r}})
\end{equation*}

\noindent
between symmetric tensors, which we denote again by the same symbol for ease of notation. On the other hand, in the same way that the skew-symmetrization of the Levi-Civita connection $\nabla^g$ defines a natural differential operator between differential forms, namely the exterior derivative, the symmetrization $\delta^g$ of $\nabla^g$ defines a differential operator between symmetric tensors given by:
\begin{equation*}
\delta^g\colon \Gamma(T^{\ast}M^{\odot^{r}}) \to \Gamma(T^{\ast}M^{\odot^{r+1}})\, ,\quad \tau\mapsto	(\delta^g \tau) (v_0,\hdots , v_r) = \frac{1}{r+1} \sum_{i=0}^{r}  (\nabla^g_{v_i}\tau)(v_0 , \hdots , \hat{v}_i , \hdots , v_r) \,.
\end{equation*}

\noindent
It can be checked that as defined above the formal adjoint of $\delta^g$ is precisely the restriction of $\nabla^{g\ast}$ to the symmetric tensors, that is:  
\begin{equation*}
\delta^{g\ast} = \nabla^{g\ast}\vert_{\Gamma(T^{\ast}M^{\odot^{r+1}})}\colon \Gamma(T^{\ast}M^{\odot^{r+1}}) \to \Gamma(T^{\ast}M^{\odot^r})\, .
\end{equation*}

\noindent
Using the exterior derivative $\dd$ and its adjoint operator $\dd^{g\ast}$, we define the Laplacian $\Delta_g$ on differential forms:
\begin{equation*}
\Delta_g \colon \Omega^r(M) \to \Omega^r(M) \, , \quad \alpha \mapsto \dd \dd^{g\ast}\alpha + \dd^{g\ast}\dd\alpha\, ,
\end{equation*}

\noindent
which in this form is sometimes called the \emph{Hodge-de-Rham Laplacian} or the \emph{positive} Laplacian. When acting on functions $f\in\cC^{\infty}(M)$ we have the identities:
\begin{equation*}
\Delta_g f = \dd^{g\ast}\dd f = -\mathrm{Tr}_g(\nabla^g\dd f) = (\nabla^g)^{\ast}\dd f\, .
\end{equation*}

\noindent
In our conventions the Riemannian curvature $\mathrm{R}^g$ of $g$ is given by:
\begin{equation*}
\mathrm{R}^g_{v_1,v_2} v_3 = \nabla^g_{v_1}\nabla^g_{v_2}v_3 - \nabla^g_{v_2}\nabla^g_{v_1}v_3 - \nabla^g_{[v_1,v_2]}v_3 \, , \qquad  v_1, v_2, v_3 \in \mathfrak{X}(M)\, .
\end{equation*}

\noindent
The Ricci tensor is in turn obtained from $\mathrm{R}^g$ by taking the following trace:
\begin{equation*}
\mathrm{Ric}^g(v_1,v_2) = \mathrm{Tr}(v_3\mapsto \mathrm{R}^g(v_3,v_1)v_2)\, .
\end{equation*}

\noindent
Therefore, in local coordinates, we have:
\begin{equation*}
\Ric^g(\partial_i , \partial_j) = \sum_{k=1}^n\mathrm{R}_{\,\,kij}^{g\,\,\,\,\,\, k}\, .
\end{equation*}

\noindent
Finally, the scalar curvature of $g$ is given by the trace of the Ricci tensor as defined above. By \emph{lowering} one index with the metric $g$, we will usually consider $\mathrm{R}^g$ as a section $\mathrm{R}^g \in \Gamma(\wedge^2 T^{\ast}M \otimes \wedge^2 T^{\ast}M)$.

\noindent
For further reference, given $v_1 , v_2 \in \mathfrak{X}(M)$ fixed, we define the following differential operator in terms of the Levi-Civita connection:
\begin{equation*}
D_{v_1, v_2}\colon \Met(M) \to \mathfrak{X}(M)\qquad g\mapsto\nabla^g_{v_1}v_2 \, .
\end{equation*}

\noindent
Its differential at a metric $g\in \Met(M)$ defines a map
\begin{equation*}
\dd_g D_{v_1, v_2}\colon \Gamma(T^{\ast}M\odot T^{\ast}M) \to \mathfrak{X}(M)
\end{equation*}

\noindent
given by:
\begin{equation*}
g(\dd_g D_{v_1, v_2} h, v_3) =\frac{1}{2}\big( (\nabla^g_{v_1}h)(v_2,v_3) + (\nabla^g_{v_2}h)(v_1,v_3) - (\nabla^g_{v_3}h)(v_1,v_2)\big), 
\end{equation*}

\noindent
for every $h\in \Gamma(T^{\ast}M\odot T^{\ast}M)$ and $v_3 \in \mathfrak{X}(M)$. In the following we consider the Ricci and scalar curvatures as smooth maps of Fréchet manifolds:
\begin{alignat*}{3}
\mathrm{Ric}\colon \Met(M) &\to \Gamma(T^{\ast}M\odot T^{\ast}M)\, ,
&\qquad g &\mapsto \mathrm{Ric}^g\, ,\\  
s\colon \Met(M) &\to C^{\infty}(M)\, , & \qquad g &\mapsto s^g\, .
\end{alignat*}

\noindent
Note that the Ricci tensor of $g$ satisfies the well-known identity:
\begin{equation*}
\nabla^{g\ast}\Ric^g = -\frac{1}{2} \dd s^g\, .
\end{equation*}

\noindent
We introduce the \emph{Lichnerowicz Laplacian} $\Delta^g_L$ associated to a Riemannian metric $g\in \Met(M)$ as follows:
\begin{eqnarray*}
 \Delta^g_L \colon \Gamma(T^{\ast}M\odot T^{\ast}M) &\to &\Gamma(T^{\ast}M\odot T^{\ast}M)\\
 h &\mapsto& \Delta^g_L h =  \nabla^{g\ast}\nabla^g h +  2\, h\circ_g \Ric^g   - 2 \mathrm{R}^g_o (h)
\end{eqnarray*}

\noindent
where:
\begin{equation*}
h\circ_g \Ric^g(v_1,v_2) = \frac{1}{2}\big(g(\iota_{v_1}h,\iota_{v_2}\Ric^g) + g(\iota_{v_2}h,\iota_{v_1}\Ric^g)\big)\, ,
\end{equation*}

\noindent
for every $v_1,v_2 \in TM$. Furthermore, we have defined $\mathrm{R}_o^g(h)\in \Gamma(T^{\ast}M\odot T^{\ast}M)$ to be the symmetric tensor determined by:
\begin{equation*}
\mathrm{R}^g_o (h)(v_1,v_2) = g(\mathrm{R}^g(\,\cdot\,, v_1, v_2 , \,\cdot\,),h)\, ,
\end{equation*}

\noindent
for every $v_1,v_2 \in TM$. In terms of an orthonormal frame $(e_1 , \hdots , e_n)$ we can equivalently write:
\begin{equation*}
\mathrm{R}^g_o (h)(v_1,v_2) = \sum_{i=1}^n h(\mathrm{R}^g(e_i,v_1)v_2 , e_i)\, .
\end{equation*}

\noindent
In particular, the local components of $\mathrm{R}^g(h)$ are given simply by $\mathrm{R}^g(h)_{jk} = \mathrm{R}^g_{\,\,ijkl} h^{il}$.

\noindent
The differentials of the operators $\Ric$ and $s$ at a point $g$ will be denoted respectively by:
\begin{eqnarray*}
\dd_g \mathrm{Ric}\colon\Gamma(T^{\ast}M\odot T^{\ast}M) \to \Gamma(T^{\ast}M\odot T^{\ast}M)\, , \quad \dd_g s\colon \Gamma(T^{\ast}M\odot T^{\ast}M) \to C^{\infty}(M)\, .
\end{eqnarray*}

\noindent
We have the following classical result.

\begin{lemma}\cite[\S 1]{Besse}
\label{lemma:variationRs}
The differentials of the Ricci and scalar curvature operators at $g$ are respectively given by:
\begin{align*}
  \dd_g\Ric(h) &= \frac{1}{2} \Delta_L h - \delta^{g}\nabla^{g\ast} h - \frac{1}{2}\nabla^g \dd\mathrm{Tr}_g(h) \,, \\ 
  \dd_g s (h) &=  \Delta_g \tr_g(h) + \nabla^{g\ast}\nabla^{g\ast} h- g(h,\Ric^g) \,,
\end{align*}

\noindent
for every $h\in \Gamma(T^{\ast}M\odot T^{\ast}M)$.
\end{lemma}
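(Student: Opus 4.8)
The plan is to carry out the standard two-stage variational computation along the linear path $g_t = g + t h$ and differentiate at $t=0$, as in Besse. Since the curvature is built from the Levi-Civita connection, I would first linearize the connection, then the curvature, and only afterwards take the relevant traces. One structural observation organizes everything: the Ricci tensor $\Ric^g(v_1,v_2) = \mathrm{Tr}(v_3 \mapsto \mathrm{R}^g(v_3,v_1)v_2)$ is the trace of a $g$-independent endomorphism, so its linearization only feels the variation of the curvature endomorphism; by contrast the scalar curvature $s^g = \mathrm{Tr}_g \Ric^g$ contracts with the metric itself, and so picks up an extra term when $g$ is varied.

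For the first formula I would use that the variation of the connection is exactly the tensor $\Gamma := \dd_g D_{\,\cdot\,,\,\cdot\,}(h)$ computed in the appendix, so that $g(\Gamma(X,Y),Z) = \frac12((\nabla^g_X h)(Y,Z) + (\nabla^g_Y h)(X,Z) - (\nabla^g_Z h)(X,Y))$. The linearized curvature is then given by the Palatini-type identity $\dot{\mathrm{R}}^g(X,Y)Z = (\nabla^g_X \Gamma)(Y,Z) - (\nabla^g_Y \Gamma)(X,Z)$, valid in the paper's curvature convention because the difference of two connections is a tensor. Tracing over $X$ in an orthonormal frame yields $\dd_g\Ric(h)(v_1,v_2) = \sum_i \langle (\nabla^g_{e_i}\Gamma)(v_1,v_2), e_i\rangle - \sum_i \langle (\nabla^g_{v_1}\Gamma)(e_i,v_2), e_i\rangle$. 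The second sum is immediate: since $\sum_i g(\Gamma(e_i,v_2),e_i) = \frac12(\dd\mathrm{Tr}_g h)(v_2)$, the remaining two contributions canceling by symmetry of $h$, it produces exactly $-\frac12 \nabla^g\dd\mathrm{Tr}_g(h)$, matching the last term of the stated formula.

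The main obstacle is the first sum, which must be reorganized into $\frac12\Delta^g_L h - \delta^g\nabla^{g\ast}h$. After substituting the expression for $\Gamma$, this sum contains the connection Laplacian $\nabla^{g\ast}\nabla^g h$ together with two terms in which a pair of covariant derivatives has to be commuted. Applying the Ricci identity to interchange those derivatives is the delicate step: it produces precisely the curvature contractions $\mathrm{R}^g_o(h)$ and $h\circ_g\Ric^g$ entering the definition of $\Delta^g_L$ in the appendix, while the leftover divergence-type terms assemble into $\delta^g\nabla^{g\ast}h$. Recognizing that the result repackages as $\frac12\Delta^g_L h$ plus the two lower-order terms completes the Ricci formula; this curvature bookkeeping is where all the real work lies.

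For the scalar curvature I would differentiate $s^g = g^{ij}\Ric^g_{ij}$. Using $\tfrac{\dd}{\dd t}g_t^{ij}\vert_{t=0} = -h^{ij}$, the variation of the inverse metric contributes $-g(h,\Ric^g)$, while the term $\mathrm{Tr}_g(\dd_g\Ric(h))$ is evaluated by taking the metric trace of the Ricci formula just obtained. Here $\mathrm{Tr}_g(\nabla^g\dd\mathrm{Tr}_g h) = -\Delta_g\mathrm{Tr}_g(h)$ by the identity $\Delta_g f = -\mathrm{Tr}_g(\nabla^g\dd f)$ recorded in the appendix, and the remaining traces collapse to the double divergence $\nabla^{g\ast}\nabla^{g\ast}h$, giving $\dd_g s(h) = \Delta_g\mathrm{Tr}_g(h) + \nabla^{g\ast}\nabla^{g\ast}h - g(h,\Ric^g)$ as claimed.
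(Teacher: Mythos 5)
The paper offers no proof of this lemma; it is quoted from Besse \S 1 (Theorem 1.174), and your outline is precisely the standard argument given there: linearize the connection via the tensor $\dd_g D(h)$, apply the Palatini identity for the linearized curvature, trace, and then trace again against the varying metric for the scalar curvature. Your structural claims all check out in the paper's conventions (in particular the sign bookkeeping for $-\tfrac12\nabla^g\dd\mathrm{Tr}_g(h)$, the identity $\mathrm{Tr}_g(\nabla^g\dd f)=-\Delta_g f$, and the cancellation $\mathrm{Tr}_g(h\circ_g\Ric^g)=\mathrm{Tr}_g(\mathrm{R}^g_o(h))$ that makes the trace of $\tfrac12\Delta_L h$ collapse to $\tfrac12\Delta_g\mathrm{Tr}_g(h)$); the only part you leave genuinely unexecuted is the Ricci-identity manipulation that reassembles the first sum into $\tfrac12\Delta_L h-\delta^g\nabla^{g\ast}h$, which you correctly flag as the real work but do not carry out.
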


\noindent
We also define the Einstein curvature operator as the following smooth map:
\begin{equation*}
\mathbb{G}\colon \Met(M) \to \Gamma(T^{\ast}M\odot T^{\ast}M)\, , \qquad g\mapsto \mathbb{G}^g = \Ric^g - \frac{1}{2} s^g g
\end{equation*}

\noindent
in terms of the Einstein tensor of $g$. The previous lemma immediately implies the following result.

\begin{corollary}
\label{cor:variationEinstein}
The differential of the Einstein curvature operator at $g$ is given by:
\begin{equation*}
\dd_g\mathbb{G}(h) = \frac{1}{2} \Delta_L h - \delta^{g}\nabla^{g\ast} h - \frac{1}{2}\nabla^g \dd\mathrm{Tr}_g(h)  - \frac{1}{2} s^g h - \frac{1}{2} \big(\Delta_g \mathrm{Tr}_g (h) + \nabla^{g\ast}\nabla^{g\ast} h - g(\Ric^g , h) \big) g\,,
\end{equation*}

\noindent
for every $h\in \Gamma(T^{\ast}M\odot T^{\ast}M)$.
\end{corollary}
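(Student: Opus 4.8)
The plan is to obtain $\dd_g\mathbb{G}$ by differentiating the defining expression $\mathbb{G}^g = \Ric^g - \frac{1}{2} s^g g$ term by term, using linearity of the Fréchet differential together with the Leibniz rule, and then substituting the formulas for $\dd_g\Ric$ and $\dd_g s$ furnished by Lemma~\ref{lemma:variationRs}.

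First I would treat the Ricci term. By linearity of the differential, the contribution of $\Ric^g$ to $\dd_g\mathbb{G}(h)$ is simply $\dd_g\Ric(h)$, which by Lemma~\ref{lemma:variationRs} equals $\frac{1}{2}\Delta_L h - \delta^g\nabla^{g\ast}h - \frac{1}{2}\nabla^g\dd\mathrm{Tr}_g(h)$. This accounts verbatim for the first three summands of the claimed formula.

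Next I would treat the term $-\frac{1}{2} s^g g$, which is $-\frac{1}{2}$ times the pointwise product of the scalar-valued map $g\mapsto s^g$ with the inclusion map $g\mapsto g$ into $\Gamma(T^{\ast}M\odot T^{\ast}M)$. The Leibniz rule for this product yields
\begin{equation*}
\dd_g(s^g g)(h) = \big(\dd_g s(h)\big)\, g + s^g\, h\,,
\end{equation*}
where the second summand arises because the differential of the identity map $g\mapsto g$ in the direction $h$ is $h$ itself. Inserting the expression $\dd_g s(h) = \Delta_g\mathrm{Tr}_g(h) + \nabla^{g\ast}\nabla^{g\ast}h - g(h,\Ric^g)$ from Lemma~\ref{lemma:variationRs} and multiplying by $-\frac{1}{2}$ produces precisely the remaining summands $-\frac{1}{2}s^g h$ and $-\frac{1}{2}\big(\Delta_g\mathrm{Tr}_g(h) + \nabla^{g\ast}\nabla^{g\ast}h - g(\Ric^g,h)\big)g$.

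Assembling the two contributions reproduces the stated formula exactly. There is no genuine analytic obstacle here: the only point deserving a moment of care is the Leibniz rule applied to the product of a function and a tensor, together with the trivial observation that the inclusion $\Met(M)\hookrightarrow \Gamma(T^{\ast}M\odot T^{\ast}M)$ has identity differential. Everything else reduces to a direct substitution of the classical variation formulas recorded in Lemma~\ref{lemma:variationRs}.
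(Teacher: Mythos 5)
Your proposal is correct and is exactly the paper's argument: the paper states this corollary as an immediate consequence of Lemma~\ref{lemma:variationRs}, i.e.\ the same term-by-term differentiation of $\mathbb{G}^g = \Ric^g - \tfrac{1}{2}s^g g$, with the Leibniz rule producing the extra $-\tfrac{1}{2}s^g h$ term from differentiating the metric factor. Nothing is missing.
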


\noindent
Define now the smooth map of Fréchet manifolds $\nu\colon \Met(M) \to \Omega^n (M)$ by $g\mapsto \nu_g$. Its differential at $g\in \Met(M)$ is given by:
\begin{equation}
\label{eq:variationdetg}
\dd_g\nu \colon \Gamma(T^{\ast}M\odot T^{\ast}M) \to \Omega^n (M)\, , \qquad h\mapsto \frac{1}{2}\mathrm{Tr}_g(h) \nu_g
\end{equation}
 
\noindent
We use this expression in Section \ref{sec:EinsteinYM} to compute the critical points of the Einstein-Yang-Mills functional.

\noindent
Given $\alpha \in \Omega^r(M)$, consider now the smooth map of Fréchet manifolds:
\begin{equation*}
\star_{\alpha}\colon \Met(M) \to \Omega^{n-r}(M)\, , \quad g\mapsto \ast_g\alpha\, .
\end{equation*}

\noindent
Its differential at $g\in \Met(M)$ defines a linear map:
\begin{equation*}
\dd_g\star_{\alpha} \colon \Gamma(T^{\ast}M\odot T^{\ast}M)\to \Omega^{n-r}(M)
\end{equation*}

\noindent
given by:
\begin{equation}
\label{eq:variationvolumeform}
(\dd_g\star_{\alpha})(h) = \frac{1}{2} \mathrm{Tr}_g(h) \ast_g\alpha + \ast_g (\alpha^g_h)\, , \qquad h\in \Gamma(T^{\ast}M\odot T^{\ast}M)\,,
\end{equation}

\noindent
where $(-)^g_h\colon \Omega^r(M) \to \Omega^r(M)$ is the linear operation defined as $(\alpha)^g_h = 0$ if $r=0$ and otherwise:
\begin{equation}
\label{eq:lineagh}
(\alpha)^g_h(v_1, \hdots, v_r) = - \sum_{i=1}^r (-1)^{r+i} g(h(v_i),\alpha(v_1,\hdots ,\hat{v}_i, \hdots , v_r)),
\end{equation}

\noindent
where $v_1,\hdots ,v_r\in \mathfrak{X}(M)$. Equivalently:
\begin{equation*}
(\alpha)^g_h = - \sum_{k=1}^n h(e_k)\wedge \iota_{e_k}\alpha 
\end{equation*}

\noindent
in terms of any orthonormal frame $(e_1,\hdots , e_n)$.

\phantomsection
\bibliographystyle{JHEP}


\end{document}